\documentclass[reqno]{amsart}
\usepackage{amsfonts}
\usepackage{mathrsfs}
\usepackage{color}
\usepackage{cite}
\usepackage{amscd}
\usepackage{latexsym}
\usepackage{amsfonts}
\usepackage{graphicx}
\usepackage{CJK,indentfirst,amsmath,amsfonts,amssymb,amsthm,cite,cases, subeqnarray,setspace}
\allowdisplaybreaks[4]

\oddsidemargin .5cm \evensidemargin .5cm
\marginparwidth 106pt \marginparsep 7pt \topmargin 0.5cm
\headsep 19pt
\headheight 13pt
\textheight 595pt
\textwidth  400pt  
\sloppy
\begin{document}
\title[Modulation approximation for the non-isentropic Euler-Poisson system]
{Modulation approximation for the non-isentropic\\ Euler-Poisson system}
\author{Huimin Liu and Xueke Pu*}

\address{Huimin Liu \newline
Faculty of Applied Mathematics, Shanxi University of Finance and Economics, Taiyuan 030006, P.R. China}
\email{hmliucqu@163.com}

\address{Xueke Pu \newline
Department of Applied Mathematics, Guangzhou University, Guangzhou 510006, P.R. China}
\email{xuekepu@gzhu.edu.cn}

\thanks{*Corresponding author: Xueke Pu, xuekepu@gzhu.edu.cn\\
The first author H. Liu was supported by the Basic Research Project of Shanxi Province of China under 202403021211156 and 202303021211140, the second author X. Pu was supported by NSFC (Grant 12471220) and the Natural Science Foundation of Guangdong Province of China under 2019A1515012000.}
\subjclass[2000]{35M20; 35Q35} \keywords{Modulation approximation; Nonlinear Schr\"odinger equation; Non-isentropic Euler-Poisson system}

\begin{abstract}
As a formal approximation, the nonlinear Schr\"{o}dinger (NLS) equation can be derived to describe the evolution of the envelopes of small oscillating wave packets-like solutions to the Euler-Poisson system. In this paper we rigorously justify that the wave packets for the non-isentropic Euler-Poisson system can be approximated  by solutions of the NLS equation over a physically relevant $\mathcal{O}(\epsilon^{-2})$ time scale. Besides the difficulties such as resonances at $k=0$ and $k=\pm k_0$ and loss of derivatives arising in the modulation approximation problem in the isentropic Euler-Poisson system, new difficulties arise in the non-isentropic case. In the non-isentropic Euler-Poisson system, new resonances at wave number $k=\pm 2k_0$ appear which necessitate rescaling the correction to the modulation approximation differently for different wave numbers. In addition, it is more difficult to obtain the uniform estimates for the error $(R_{0},R_{1},R_{-1})$ between the real solutions and the approximate solutions, due to the extra interactions with the temperature. To overcome the difficulties aroused by resonances and loss of derivatives, we find several important structural identities between the diagonalized unknowns and apply a series of normal-form transforms, to obtain uniform estimates for the error over the desired $\mathcal{O}(\epsilon^{-2})$ long time scale.
\end{abstract}

\maketitle \numberwithin{equation}{section}
\newtheorem{proposition}{Proposition}[section]
\newtheorem{theorem}{Theorem}[section]
\newtheorem{lemma}[theorem]{Lemma}
\newtheorem{remark}[theorem]{Remark}
\newtheorem{hypothesis}[theorem]{Hypothesis}
\newtheorem{definition}{Definition}[section]
\newtheorem{corollary}{Corollary}[section]
\newtheorem{assumption}{Assumption}[section]

\tableofcontents{}

\section{\textbf{Introduction}}
\setcounter{section}{1}\setcounter{equation}{0}

In one-dimensional space, the full Euler-Poisson system for ions in a plasma is governed by
\begin{subequations}\label{fEP}
\begin{numcases}{}
\partial_{t}n+\partial_{x}(nu)=0,\\
m\partial_{t}(nu)+\partial_{x}(mnu^{2}+p)+en\partial_{x}\phi=0,\\
\partial_{t}w+\partial_{x}(w u+pu)+enu\partial_{x}\phi=0,\\
-\partial_{x}^{2}\phi+4\pi e\bar{n}e^{e\phi/T_{e}}=4\pi en,
\end{numcases}
\end{subequations}
where $n(x,t)$, $u(x,t)$, $w(x,t)$ and $\phi(x,t)$ are the number density, the velocity, the total energy and the electric potential of the ions at time $t$ and position $x$, respectively. This is a widely used important model to depict the dynamics of the ions in a plasma.  In kinetic theory, the Euler-Poisson system \eqref{fEP} can be formally derived from Vlasov-Poisson-Boltzmann system for the ion-acoustic flow through the macro-micro decomposition around local Maxwellians \cite{DL15}. In above, $\gamma>1$ is the ratio of specific heat, the pressure function $p(x,t)$ satisfies the state equation $p=RnT$ and the total energy $w(x,t)$ is defined by
\begin{align*}
w=\frac{1}{2}mnu^{2}+\frac{p}{\gamma-1},
\end{align*}
where $T(x,t)$ represents the temperature of ions and $R$ is the gas constant. The constants $m$, $e$, $T_{e}$ and $\bar{n}$ represent the mass of ions, the electron charge, the temperature of electron and the equilibrium density of electrons, respectively. In the unknowns $(n,u,T,\phi)$, we have the following non-isentropic Euler-Poisson system
\begin{subequations}\label{EP}
\begin{numcases}{}
\partial_{t}n+\partial_{x}(nu)=0,\\
m\partial_{t}u+mu\partial_{x} u+R\partial_{x}T+R\frac{\partial_{x}n}{n}T+e\partial_{x}\phi=0,\\
\partial_{t}T+u\partial_{x}T+(\gamma-1)T\partial_{x} u=0,\\
-\partial_{x}^{2}\phi+4\pi e\bar{n}e^{e\phi/T_{e}}=4\pi en.
\end{numcases}
\end{subequations}
For the sake of clarity, we set all the parameters $m,R,4\pi e, \bar{n}$ and $e/T_{e}$ to be $1$ in this paper.

The Euler-Poisson system was extensively studied in the past two decades. The list could be very long, so for readers' convenience, we only introduce some results mainly concerning the global well-posedness and the singular limits, in particular the long wavelength limits, for the Cauchy problem. For other interesting aspects, such as derivation of the model, stability in various situations, quasineutral limits and so on,  that are not mentioned here, the interested readers can refer to references cited therein. Guo \cite{Guo98} firstly obtained the global irrotational solutions with small initial data for the 3D electron Euler-Poisson system. The 2D global smooth irrotational small solutions are constructed for the electron Euler-Poisson system \cite{IP13, Jang12, JLZ14, LW14}. Furthermore, Guo, Han and Zhang \cite{GHZ17} proved that no shocks form for the 1D electron Euler-Poisson system. For the ion Euler-Poisson system, Guo and Pausader \cite{GP11} established the global irrotational solution by using the scattering theory. For the long wavelength limit for the ion Euler-Poisson system, the formal derivation of the KdV equation was conducted early in \cite{SG69}. Guo and Pu \cite{GP14} proved  rigorously the mathematical justification in the cases of hot and cold ions. This result was extended further to Kadomtsev-Petviashvili limit and Zakharov-Kuznetsov limit in high-dimensional spaces \cite{P13,LLS} and to Vlasov-Poisson system in \cite{HK13}. Later on, Liu and Yang \cite{LY20} extended the single directional result to the bidirectional long wavelength limit for the one dimensional Euler-Poisson system. A modified KdV limit result at critical densities can also be obtained \cite{PX21}. In addition, authors \cite{LP16, LP23} obtained the quantum KdV and KP limit in 1D and 2D spaces for a reduced two-fluid Euler-Poisson system. For the non-isentropic Euler-Poisson system \eqref{EP}, there are relatively few results in both physics and mathematics. The existence and the asymptotic stability of stationary solutions for the full Euler-Poisson system in half-line was investigated by Duan \emph{et al} \cite{DYZ21}. 
Convergence for all time of the non-isentropic Euler-Poisson system was studied by Liu and Peng \cite{LP18}. The long wavelength limit for the non-isentropic Euler-Poisson system \eqref{EP} was proved rigorously in \cite{ZY23} recently.

The modulation approximation is an important topic that was extensively studied in recent years, in particular for many important dispersive PDE systems. It is asked how well a solution of a system can be approximated by modulation over desired long timespan. In order not to deviate too much from the main topic, we will give a short remark on the state of the art for this issue after the statement of the main Theorem \ref{Thm1}. For now, we only mention the modulation approximation for the isentropic Euler-Poisson system by the authors \cite{LP19}, where a rigorous justification of the NLS equation was established. However, for the non-isentropic Euler-Poisson system \eqref{EP}, there is currently no relevant discussion on the modulation approximation. This will be conducted in the present paper.

Let us consider the following approximation around the constant solution $(n,u,T)=(1,0,1)$
\begin{equation}
\begin{split}\label{fom}
\begin{pmatrix} n-1 \\ u \\ T-1 \end{pmatrix}=\epsilon\Psi_{NLS}+\mathcal{O}(\epsilon^{2}),
\end{split}
\end{equation}
with
\begin{equation}
\begin{split}\label{LS}
\epsilon\Psi_{NLS}=\epsilon A(\epsilon(x-c_{g}t),\epsilon^{2}t) e^{i(k_{0}x-\omega_{0}t)}\varrho(k_{0})+c.c..
\end{split}
\end{equation}
Then the NLS equation can be derived for the complex amplitude $A$,
\begin{equation}
\begin{split}\label{A}
\partial_{T}A=i\mu_{1}\partial_{X}^{2}A+i\mu_{2}A|A|^{2},
\end{split}
\end{equation}
where $0<\epsilon\ll1$ is a small perturbation parameter, $T=\epsilon^{2}t\in\mathbb{R}$ is the slow time scale and $X=\epsilon(x-c_{g}t)\in\mathbb{R}$ is the slow spatial scale and coefficients $\mu_{j}=\mu_{j}(k_{0})\in \mathbb{R}$ with $j\in\{1,2\}$. Such an approximation is often called \emph{modulation approximation} in literature (see e.g. \cite{TW12} for similar approximations in water wave problems). In the above modulation approximation, $\omega_{0}>0$ is the basic temporal wave number associated to the basic spatial wave number $k_{0}>0$ of the underlying temporally and spatially oscillating wave train $e^{i(k_{0}x-\omega_{0}t)}$, $c_{g}$ is the group velocity and `c.c.' denotes the complex conjugate. The NLS is derived in order to describe the slow modulations in time and in space of the wave train $e^{i(k_{0}x-\omega_{0}t)}$ and the time and space scales of the modulations are $\mathcal{O}(1/\epsilon^{2})$ and $\mathcal{O}(1/\epsilon)$, respectively. For the non-isentropic Euler-Poisson system \eqref{EP}, the basic spatial wave number $k=k_{0}$ and the basic temporal wave number $\omega=\omega_{0}$ are related via the following linear \emph{dispersion relation}
\begin{equation}
\begin{split}\label{equation3}
\omega(k)=k\sqrt{\gamma+\frac{1}{1+k^{2}}}=k\widehat{q}(k), \ \ \ \ \widehat{q}(k)=\sqrt{\gamma+\frac{1}{1+k^{2}}},
\end{split}
\end{equation}
where $\gamma>1$ is the ratio of specific heat. From this dispersion relation, $\varrho(k_{0})=(1, \widehat{q}(k_{0}), \gamma-1)^{T}$ in \eqref{LS} and the group velocity $c_{g}=\omega'(k_{0})$ of the wave packet can be computed. This ansatz \eqref{fom} leads to waves moving to the right. To obtain waves moving to the left, $-\omega_{0}$ and $c_{g}$ have to be replaced by $\omega_{0}$ and $-c_{g}$, respectively.

Such modulation approximation is important in describing slow modulations in time and space of potential spatial and temporal oscillation wave packet in dispersive systems \cite{AS81}. In 1968, Zakharov \cite{Z68} provided the first formal derivation of the modulation approximation for the 2D water wave problem. Although the modulation approximation is very successful in many areas, such as \cite{L01, SS99}, it may also yield false predictions for the behavior of the original system. In fact, there are counterexamples where the modulation approximation fails \cite{S05, SSZ15}. Thus, it is necessary to rigorously justify in mathematics the validity of the modulation approximation for a given system by giving uniform error estimates over a relevant long timescale. There are many attempts in the past decades, especially for the modulation approximation to water wave problems. The interested reader may refer to \cite{D17,D21,D,DS06,DSW16,H22,K88,S98,S05,S1998,S11,SSZ15,SW11,TW12} for rigourous mathematically justification of such approximation in various situations.

%
%

There recently has been some progress on the modulation approximation of the isentropic Euler-Poisson system; see \cite{LP19,LBP24} for instance. However, for the non-isentropic Euler-Poisson system, to the best of our knowledge, there are few results on the same issue. In this paper, we will study the modulation approximation to the one-dimensional non-isentropic Euler-Poisson system \eqref{EP}, or equivalently for \eqref{fEP} and provide uniform error estimates in Sobolev spaces over desired $\mathcal{O}(\epsilon^{-2})$ long timescales, thus rigorously justify the modulation approximation for the non-isentropic Euler-Poisson system. The main result is stated in the following theorem.
\begin{theorem}\label{Thm1}
Fix $s_{A}\geq6$. Then for all $k_{0}\neq0$ and for all $C_{1}, \ T_{0}>0$ there exist $C_{2}>0$ and $\epsilon_{0}>0$ such that for all solutions $A\in C([0,T_{0}],H^{s_{A}}(\mathbb{R},\mathbb{C}))$ of the NLS equation \eqref{A} with
\begin{equation*}
\begin{split}
\sup_{T\in[0,T_{0}]}\|A(\cdot,T)\|_{H^{s_{A}}}(\mathbb{R},\mathbb{C})\leq C_{1},
\end{split}
\end{equation*}
the following holds. For all $\epsilon\in(0,\epsilon_{0})$, there are solutions
\begin{equation*}
\begin{split}
\begin{pmatrix} n-1 \\ u \\ T-1 \end{pmatrix}\in \big(C([0,T_{0}/\epsilon^{2}],H^{s_{A}}(\mathbb{R},\mathbb{R}))\big)^{3}
\end{split}
\end{equation*}
of the non-isentropic Euler-Poisson system \eqref{EP} that satisfy
\begin{equation*}
\begin{split}
\sup_{t\in[0,T_{0}/\epsilon^{2}]}\left\|\begin{pmatrix} n-1 \\ v \\ T-1 \end{pmatrix}-\epsilon\Psi_{NLS}(\cdot,t)\right\|_{(H^{s_{A}}(\mathbb{R},\mathbb{R}))^{3}}
\leq C_{2}\epsilon^{3/2}.
\end{split}
\end{equation*}
\end{theorem}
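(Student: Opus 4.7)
The plan is to follow the residual--improvement plus normal--form scheme that is standard for justifying the NLS modulation approximation. First I would diagonalize the linearization of \eqref{EP} about $(n,u,T)=(1,0,1)$: writing $V=(n-1,u,T-1)^{T}$ and passing to Fourier, the linearized operator has two propagating branches $\pm\omega(k)=\pm k\widehat{q}(k)$ from \eqref{equation3} together with a transported temperature branch, so there is a smooth splitting $\widehat V=\varrho_{+}(k)\widehat U_{+}+\varrho_{-}(k)\widehat U_{-}+\varrho_{0}(k)\widehat U_{0}$. I would then build the refined ansatz
\begin{equation*}
\epsilon\Psi=\sum_{1\le j+|\ell|\le N}\epsilon^{j}A_{j,\ell}\bigl(\epsilon(x-c_{g}t),\epsilon^{2}t\bigr)\,E^{\ell}+\mathrm{c.c.},\qquad E=e^{i(k_{0}x-\omega_{0}t)},
\end{equation*}
with profiles $A_{j,\ell}$ chosen recursively so that the residual of \eqref{EP} at $\epsilon\Psi$ is $\mathcal{O}(\epsilon^{N+3/2})$ in $H^{s_{A}}$ uniformly on $[0,T_{0}/\epsilon^{2}]$. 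The solvability condition at order $\epsilon^{3}E$ is exactly the NLS equation \eqref{A} for $A=A_{1,1}$; the coefficients at $k=0$, $k=\pm k_{0}$, and the \emph{new} resonant mode $k=\pm 2k_{0}$ that is generated by the temperature equation must be produced separately, since the symbol of the diagonalized linear operator is singular or near--singular at those wave numbers.

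Next I would write the true solution as $V=\epsilon\Psi+\epsilon^{\beta}R$ with $\beta=3/2$, split $R$ in the diagonal basis as $(R_{0},R_{1},R_{-1})$, and---this is specific to the non--isentropic case---rescale each component \emph{separately} on Fourier blocks around $k=0,\pm k_{0},\pm 2k_{0}$, so that a spare factor of $\epsilon$ becomes available at every resonant frequency. The resulting error system is, schematically,
\begin{equation*}
\partial_{t}R=i\Omega(\partial_{x})R+\epsilon\,\mathcal{B}(\Psi,R)+\epsilon^{\beta}\mathcal{N}(R,R)+\epsilon^{-\beta}\mathrm{Res}(\epsilon\Psi),
\end{equation*}
and the dangerous part is the $\epsilon\,\mathcal{B}(\Psi,R)$ term, which would produce $e^{Ct}$ growth under a naive energy estimate and destroy the time interval $t\sim\epsilon^{-2}$. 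I would remove its non--resonant contributions by a sequence of normal--form transforms $R\mapsto R+\epsilon M(\Psi,R)$ with symbols solving the cohomological equations involving the denominators $\omega_{\sigma'}(k)\pm\omega_{0}-\omega_{\sigma}(k\pm k_{0})$; these denominators are bounded below outside the resonant set $\{0,\pm k_{0},\pm 2k_{0}\}$, and inside it the block--wise rescaling of the previous step furnishes the needed gain. The loss of derivatives stemming from the quasilinear terms of \eqref{EP} would be absorbed by algebraic identities among $R_{0},R_{\pm 1}$ coming from the conservation form of \eqref{fEP}, which replace the top--order commutators by tame cubic quantities.

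Once these transforms are installed, an $H^{s_{A}}$ energy identity for the transformed error $\widetilde R$ yields
\begin{equation*}
\tfrac{d}{dt}\|\widetilde R\|_{H^{s_{A}}}^{2}\le C\epsilon^{2}\|\widetilde R\|_{H^{s_{A}}}^{2}+C\epsilon^{1+\beta}\|\widetilde R\|_{H^{s_{A}}}^{3}+C\epsilon^{2},
\end{equation*}
and Gronwall's inequality on $[0,T_{0}/\epsilon^{2}]$ keeps $\|\widetilde R\|_{H^{s_{A}}}$ bounded by a constant $C_{2}$; inverting the normal--form transform and the block rescaling turns this into the claimed $\mathcal{O}(\epsilon^{3/2})$ bound on $V-\epsilon\Psi_{NLS}$. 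The hardest point, and the place where the isentropic argument of \cite{LP19} does not transfer verbatim, is the new resonance at $k=\pm 2k_{0}$ combined with the extra temperature--velocity coupling: one must simultaneously identify structural identities in the diagonal system that cancel the loss of derivatives and design mode--dependent rescalings that kill the small denominators at $2k_{0}$ without spoiling the cancellations already achieved at $k=0$ and $k=\pm k_{0}$. With the correct set of normal--form transforms and rescalings in hand, construction of $\Psi$ and the concluding Gronwall loop are technical but follow the established template.
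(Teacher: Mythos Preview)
Your overall architecture—refined ansatz, diagonalization, normal forms to kill the $\mathcal{O}(\epsilon)$ bilinear terms, then an energy/Gronwall argument on $[0,T_0/\epsilon^2]$—matches the paper. Two of your concrete mechanisms, however, diverge from what actually makes the argument close.

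First, the resonance at $k=\pm 2k_0$. You propose to handle it by a mode-dependent rescaling, analogous to the weight used near $k=0$. The paper does \emph{not} rescale there. The $2k_0$-resonance arises only in the normal-form kernel $n_{0,\pm1,1}^{1,1}$ attached to the zero-speed (temperature) branch, and the paper observes (its Lemma~\ref{L10}) that the numerator $b_{0,\pm1,1}^{1,1}(k,k-\ell,\ell)$ vanishes identically at $k=2\ell$, because the relevant quadratic piece has the symmetric form $-(\gamma-1)(\gamma-2)q^{-2}\bigl[\psi\,\partial_x(qR)-(\partial_x q\psi)R\bigr]$. The resonance is therefore trivial and no rescaling is needed; an extra weight at $\pm 2k_0$ would sit in the high-frequency region $|k|>\delta$ and interfere with the derivative-loss bookkeeping below. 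The only genuine weight is $\vartheta$ near $k=0$; the $k=\pm k_0$ resonance is absorbed by replacing $\vartheta$ with $\vartheta_0=\vartheta-\epsilon$ in the relevant kernel, not by a separate block rescaling.

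Second, the high-frequency $\mathcal{O}(\epsilon)$ terms and loss of derivatives. You say you would eliminate all of $\epsilon\mathcal{B}(\Psi,R)$ by normal forms and then absorb the derivative loss via conservation-law identities. The paper explicitly does \emph{not} normal-form away the high-frequency pieces $\epsilon B^{1,1}_{j,l,n}(\psi_l,\mathcal{R}^1_n)$, because doing so costs two derivatives in the transformed system and the energy does not close. Instead it uses the would-be normal-form operators $N^{1,1}_{j,l,n}$ to build a \emph{modified energy} $\mathcal{E}_s$ with cross terms $\epsilon\int\partial_x^\ell\mathcal{R}^1_j\,\partial_x^\ell N^{1,1}_{j,l,n}(\psi_l,\mathcal{R}^1_n)\,dx$; the normal-form identity then produces the cancellation at the level of $\partial_t\mathcal{E}_s$. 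What survives still carries one excess derivative and is controlled via the structural identities of Lemma~\ref{RR} (expressing $\Omega(\mathcal{R}^1_1-\mathcal{R}^1_{-1})$ and $\partial_x(\mathcal{R}^1_1+\mathcal{R}^1_{-1})$ as time derivatives modulo lower order) together with the kernel asymptotics and near-skewness relations of Lemma~\ref{L8}. These identities come from the diagonalized error system itself, not from the conservation form of \eqref{fEP}.

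A smaller but genuine gap: you take $\beta=3/2$, whereas the paper takes $\beta=7/2$ together with the weight $\vartheta$. With $\beta=3/2$ the quadratic-in-$R$ term in the error equation carries the coefficient $\epsilon^{\beta}=\epsilon^{3/2}$, not the $\epsilon^{1+\beta}$ you wrote in your energy inequality, and $\epsilon^{3/2}\|R\|_{H^s}^{3}$ integrated over $t\sim\epsilon^{-2}$ does not close the bootstrap. The paper's larger $\beta$, paid for by pushing the residual down to $\mathcal{O}(\epsilon^{11/2})$, is what renders the nonlinear-in-$R$ terms harmless.
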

Before we proceed for the formal derivation and rigorous justification of the modulation approximation to the non-isentropic Euler-Poisson system in subsequent sections, we give some remarks on the literature of this issue. Besides the inherent difficulties, the proof of the main Theorem \ref{Thm1} shares similar difficulties in modulation approximation problems with various systems, such as the water wave system, the Klein-Gordon and the Boussinesq system.

For semilinear systems without quadratic terms, the modulation approximation was justified over $\mathcal{O}(\epsilon^{-2})$ timescales by directly using Gronwall's inequality, as in \cite{KS}. However for systems with nonlinear quadratic terms, such a result is not trivial since direct application of the Gronwall's inequality only yields uniform bounds for the error over $\mathcal{O}(\epsilon^{-1})$ timescales, due to the $\mathcal{O}(\epsilon)$ terms in the error system aroused by the quadratic terms. By a near identity change of variables, the so-called normal-form transform, these $\mathcal{O}(\epsilon)$ terms can be eliminated if there are no resonances or only trivial resonances occur \cite{K88} (`trivial' will be made clear below).  The normal-form transform method was also developed to make it applicable to systems with non-trivial resonances at $k=\pm k_{0}$; refer to \cite{DS06,S98,S1998,S05}.

For quasilinear systems with quadratic nonlinearities, justification of such modulation approximation becomes more difficult, due to the loss of derivatives caused by the nonlinearities. 
For example, for the 2D water wave problem without surface tension and in a canal of finite depth in Lagrangian coordinates, where the quadratic terms lose only half a derivative, the modulation approximation is justified with the help of normal-form transforms and the Cauchy-Kowalevskaya theorem \cite{DSW16,SW11}. For the quasilinear KdV equation, the modulation approximation was justified by simply applying a Miura transform \cite{S11}. For a nonlinear Klein-Gordon equation with a quasilinear quadratic term that loses one derivative, the modulation approximation was justified by using the normal-form transform to construct a modified energy to overcome the problem of loss of one derivative \cite{D17}. We note that no resonances occurs there. For systems where the quadratic terms lose more than one derivative, the modulation approximation can be still justified, but only in several particular cases, where the linear operators provide the same number of derivatives that the nonlinearities lose \cite{D21,H22}.

For the isentropic Euler-Poisson system, we note that the continuous linear dispersion relation causes trivial resonance at $k=0$ and non-trivial resonances at $k=\pm k_{0}$, and the nonlinear quadratic terms lose one derivative resulting the loss of two derivatives in the transformed system when applying the normal-form transform. Although with these difficulties, the authors of this paper rigorously justified the modulation approximation for the isentropic Euler-Poisson system in Sobolev spaces over an $\mathcal{O}(\epsilon^{-2})$ long timescale in \cite{LP19}. In this situation, the linear operators provide one derivative to compensate the loss of regularity. We also justified the modulation approximation for the pressureless isentropic Euler-Poisson system where the linear operators provides no regularities recently \cite{LBP24}.



Back to non-isentropic Euler-Poisson system, Theorem \ref{Thm1} provides the first rigorous justification of the modulation approximation, by showing uniform error estimates over $\mathcal{O}(\epsilon^{-2})$ long timescales. As in many other modulation approximation results, quadratic terms will introduce $\mathcal{O}(\epsilon)$ terms in the error equation, and makes the uniform estimates only lives over $\mathcal{O}(\epsilon^{-1})$ long timescale if we naively apply the Gr\"onwall inequality directly. One classical method is to apply the normal-form transform to eliminate such quadratic terms, as did in the seminal paper of Shatah \cite{Shatah85}. Two problems will arise, i.e., resonances and loss of derivatives. These difficulties appear in both the isentropic \cite{LP19} and non-isentropic Euler-Poisson systems in this paper. However, the situation becomes much more complex and difficult in several aspects in the non-isentropic case in the present paper.

Firstly, both $\mathcal{O}(1)$ and $\mathcal{O}(\epsilon)$ terms will appear in the error system in the non-isentropic case. We try to use normal form transform to eliminate these terms. Specifically, the treatment of the resonances at $k=0$ and $k=\pm k_0$ is closely related the whether or not the quasilinear terms is close to zero in Fourier space. The treatment of other extra resonances will be discussed in detail in the next paragraph. Therefore, as just did in \cite{LP19}, we introduce the weight function \eqref{theta} and accordingly the projections \eqref{projections} to obtain the projected error system \eqref{BR} and \eqref{BR1} for $(R^0_j, R^1_j)$ for $j\in\{0,\pm1\}$. Roughly, the $(R^0_j, R^1_j)$ represents the error of the diagonalized unknowns of the original unknowns, i.e., density, velocity and temperature. Note the $\vartheta^{-1}$ at the right hand side of the transformed error system \eqref{BR} and \eqref{BR1} and $\hat\vartheta^{-1}(k)=\mathcal{O}(\epsilon^{-1})$ for $|k|<\delta$. Then many of the seemingly $\mathcal{O}(\epsilon)$ and $\mathcal{O}(\epsilon^2)$ terms are indeed $\mathcal{O}(1)$ and $\mathcal{O}(\epsilon)$ terms, which will be eliminated by the normal-form transform. An important difference between the isentropic and non-isentropic system lies in the fact that we can express the nonlinear terms in the isentropic Euler-Poisson system in terms of an $x$-derivative of an antiderivative and $k\hat\vartheta^{-1}(k)=\mathcal{O}(1)$ for $|k|<\delta$, and hence only the $\mathcal{O}(\epsilon)$ terms introduced by quadratic terms need to be eliminated. Cubic terms will not arise such a problem. But for the non-isentropic Euler-Poisson system, nonlinear terms do not share such a structure and hence both $\mathcal{O}(1)$ and $\mathcal{O}(\epsilon)$ terms will appear in the error system, coming from the quadratic and cubic terms.

Secondly, more resonances will appear in the non-isentropic case. The isentropic Euler-Poisson system has only trivial resonance at $k=0$ and non-trivial ones at $k=\pm k_0$, while for the non-isentropic system, extra resonances at $k=\pm 2k_0$ appear due to the temperature equation. We will see where these new resonances come from. We first note the absence of linear terms for the first equation in the diagonalized system \eqref{Diago}. When we eliminated all the low frequencies terms by the normal-form transform, we obtain a new error system \eqref{RJ}. Note that the $\mathcal{O}(\epsilon)$ terms in \eqref{RJ} only consist of the high frequency terms, and high frequency terms will lose one derivative when applying estimates. It is then difficult to obtain uniform estimates over  $\mathcal{O}(\epsilon^{-2})$ long timescales if we directly apply the Gr\"onwall inequality. We then try to use the corresponding normal-form transform to construct a modified energy \eqref{modiener}, which is found to be equivalent to the $H^s$-norm of the error $(\mathcal{R}^{0},\mathcal{R}^{1})$, thanks to Lemma \ref{L8} that reveals some structural identities of this normal-form transform. Then thanks to the structure of the quasilinear quadratic terms of the non-isentropic Euler-Poisson system, we have Lemma \ref{L10}, from which we see that the normal-form transform is well-defined and these resonances are trivial. Therefore, we need to get uniform estimate for the modified energy.


Thirdly, the uniform energy estimates is much more complex in the non-isentropic case. In particular, there are more complex interacting terms need estimating. In the isentropic case, the error system only involves unknowns $(\mathcal{R}^{0}_j,\mathcal{R}^{1}_j)$ for $j\in\{\pm1\}$; which in the non-isentropic case, it involves unknowns $(\mathcal{R}^{0}_j,\mathcal{R}^{1}_j)$ for $j\in\{0,\pm1\}$ and one of the unknowns has no linear terms in the error system (see \eqref{Diago} for the diagonalized system). We need uniform $\mathcal{O}(\epsilon^{2})$ estimates the interactions like
\[\epsilon^{2}\sum_{j\in\{\pm1\}}\int a_{j}\partial_x^{l}\mathcal{R}_{j}\partial_x^{l+1}\mathcal{R}_{-j}dx\ (\text{type I})\ \ \ \  \text{and} \ \ \ \  \epsilon^{2}\sum_{j\in\{\pm1\}}\int b_{j}\partial_x^{l}\mathcal{R}_{0}\partial_x^{l+1}\mathcal{R}_{j}dx\ (\text{type II}),\]
for $a_j,b_j\in H^2$ and $\mathcal{R}_{j}\in H^{l}$. Only the first type appear in the isentropic case. Indeed, in this case, according to Lemma \ref{L9}, the first type can be bounded by terms like
\begin{equation*}
\epsilon^{2}\int (a_{-1}+a_{1})\partial_x^{l}(\mathcal{R}_{1}+\mathcal{R}_{-1}) \partial_x^{l+1}(\mathcal{R}_{1}-\mathcal{R}_{-1})dx + \mathcal{O}(\epsilon^{2})
\end{equation*}
which can be finally estimated by further modifying the energy and integration by parts, thanks to the structure
\begin{equation}\label{struct}
\partial_x(\mathcal{R}_{1}-\mathcal{R}_{-1}) =\partial_t(\mathcal{R}_{1}+\mathcal{R}_{-1}) - \partial_x(\mathcal{R}_{1}+\mathcal{R}_{-1})  + \mathcal{O}(\epsilon^{2}).
\end{equation}
However, in the non-isentropic case, some nonlinear terms in the equation for $\mathcal{R}_{1}$ and $\mathcal{R}_{-1}$ interact with $\mathcal{R}_{0}$ and hence we cannot obtain structure identities like \eqref{struct}. Fortunately, according to the asymptotic behaviors of the dispersive relation in Lemma \ref{qq2}, the terms with derivatives landing on $\mathcal{R}_{0}$ cancel and the terms with derivatives not landing on $\mathcal{R}_{0}$ can be well bounded. Therefore, the type I terms are well bounded. Now we consider the type II terms. According to \eqref{part20} in Lemma \ref{L9}, we need to estimate terms like
\begin{equation}\label{typeII}
\epsilon^{2}\int (b_{1}-b_{-1})\partial_x^{l}\mathcal{R}_{0} \partial_x^{l+1}(\mathcal{R}_{1}-\mathcal{R}_{-1})dx \ \  \text{and} \ \ \epsilon^{2}\int (b_{1}+b_{-1})\partial_x^{l}\mathcal{R}_{0} \partial_x^{l+1}(\mathcal{R}_{1}+\mathcal{R}_{-1})dx.
\end{equation}
The first one can be rewritten as
\begin{align}\label{struct-2}
\epsilon^{2}\int \partial_x^{l}\mathcal{R}_{0}[\cdots]dx =  \epsilon^{2}\int \partial_x^{l}(\mathcal{R}_{0}+\mathcal{R}_{1}+\mathcal{R}_{-1}) [\cdots]dx -\epsilon^{2}\int \partial_x^{l}(\mathcal{R}_{1}+\mathcal{R}_{-1}) [\cdots] dx.
\end{align}
Note that the second integral belongs to type I, and can be well bounded. For the first integral, we note that after careful computations, we have Lemma \ref{RR}, which reveals important structure identities between the unknowns $\mathcal{R}_{j}$ for $j\in\{0,\pm1\}$. Then the first integral can be rewritten as
\begin{align*}
& \epsilon^{2}\int (b_{1}-b_{-1}) \partial_x^{l}(\mathcal{R}_{0}+\mathcal{R}_{1}+\mathcal{R}_{-1}) \partial_x^{l+1}(\mathcal{R}_{1}-\mathcal{R}_{-1})dx \\
= & \frac{\epsilon^{2}}{2}\frac{d}{dt}\int (b_{1}-b_{-1}) \left(\partial_x^{l}(\mathcal{R}_{0}+\mathcal{R}_{1}+\mathcal{R}_{-1})\right)^2 dx \\
 & -{\epsilon^{2}}\int \partial_x(b_{1}-b_{-1}) \left(\partial_x^{l}(\mathcal{R}_{0}+\mathcal{R}_{1} +\mathcal{R}_{-1})\right)^2 dx +\mathcal{O}(\epsilon^{2}) \\
= &  \frac{\epsilon^{2}}{2}\frac{d}{dt}\int (b_{1}-b_{-1}) \left(\partial_x^{l}(\mathcal{R}_{0}+\mathcal{R}_{1}+\mathcal{R}_{-1})\right)^2 dx +\mathcal{O}(\epsilon^{2})
\end{align*}
and can be well bounded. The second one in \eqref{typeII} can also be well bounded, thanks to the evolutionary equation for $\mathcal{R}_{0}$ and \eqref{1++}. In summary, all these terms can be well bounded by suitably modifying the energy.

\textbf{Notations.} We denote the Fourier transform of a function $u\in L^{2}(\mathbb{R},\mathbb{K})$, with $\mathbb{K}=\mathbb{R}$ or $\mathbb{K}=\mathbb{C}$ by
\begin{equation*}
\begin{split}
\widehat{u}(k)=\frac{1}{2\pi}\int_{\mathbb{R}}u(x)e^{-ikx}dx.
\end{split}
\end{equation*}
Let $H^{s}(\mathbb{R},\mathbb{K})$ be the space of functions mapping from $\mathbb{R}$ into $\mathbb{K}$ for which the norm
\begin{equation*}
\begin{split}
\|u\|_{H^{s}(\mathbb{R},\mathbb{K})}= \left(\int_{\mathbb{R}}|\widehat{u}(k)|^{2}(1+|k|^{2})^{s}dk\right)^{1/2}
\end{split}
\end{equation*}
is finite. We also write $L^{2}$ and $H^{s}$ instead of $L^{2}(\mathbb{R},\mathbb{R})$ and $H^{s}(\mathbb{R},\mathbb{R})$. Moreover, we use the space $L^{p}(m)(\mathbb{R},\mathbb{K})$ defined by $u\in L^{p}(m)(\mathbb{R},\mathbb{K})$ such that $\sigma^{m}u\in L^{p}(\mathbb{R},\mathbb{K})$, where $\sigma(x)=(1+x^{2})^{1/2}$. Furthermore, we write $A\lesssim B$ if $A\leq CB$ for a constant $C>0$, and $A=\mathcal{O}(B)$ if $A\lesssim B$ and $B\lesssim A$.

\textbf{Organizations.} This paper is organized as follows: In Section 2, we derive formally the NLS equation. In Section 3, we modify the approximation solution by adding high order terms and applying a cutoff function, and then give estimates for the residual. In Section 4, we derive a modified evolutionary system for the error $R$ by rescaling the error with an additional power of $\epsilon$ to solve the problem induced by the non-trivial resonances at $k=\pm k_{0}$. In Section 5, we apply the project operators to the error system to extract the low frequency and high frequency terms of order $\mathcal{O}(1)$ and $\mathcal{O}(\epsilon)$, and then apply normal-form transforms \emph{twice} to eliminate the low frequency terms and obtain the transformed system for the modified error $\mathcal{R}$. In Section 6, we analyse the structure and properties for the normal-form transform, to eliminate the high frequency terms of $\mathcal{O}(\epsilon)$, and then construct a modified energy by utilizing these transforms to deal with the difficulties induced by the loss of derivatives.

\section{\textbf{Formal NLS approximation}}
In this section, we present a short and formal derivation of the NLS from the Euler-Poisson system \eqref{EP}. We rewrite \eqref{EP} in terms of $(\rho,v,\theta)=(n-1,u,T-1)$ and isolate the linear, quadratic and higher order terms to obtain
\begin{subequations}\label{normalEP}
\begin{numcases}{}
\partial_{t}\rho+\partial_{x}v+\partial_{x}(\rho v)=0,\\
\partial_{t}v+\partial_{x}\rho+\partial_{x}\theta+\partial_{x}\phi+v\partial_{x} v-\partial_{x}\frac{\rho^{2}}{2}+\theta\partial_{x}\rho\\
 \ \ \ \ \ \ \  =-\partial_{x}[\ln(1+\rho)-\rho+\frac{\rho^{2}}{2}](1+\theta)
+\theta\partial_{x}\frac{\rho^{2}}{2},\\
\partial_{t}\theta+(\gamma-1)\partial_{x}v+(\gamma-1)\theta\partial_{x}v+v\partial_{x}\theta=0,\\
\rho=(1-\partial_{x}^{2})\phi+\frac{\phi^{2}}{2}+[e^{\phi}-1-\phi-\frac{\phi^{2}}{2}].
\end{numcases}
\end{subequations}
For small $\rho$, the last line defines an inverse operator $\rho\mapsto\phi(\rho)$, which is further expanded into the linear, quadratic, cubic and higher order terms
\begin{equation}
\begin{split}\label{Poisson}
\phi(\rho)=&(1-\partial_{x}^{2})^{-1}\rho-\frac{1}{2}(1-\partial_{x}^{2})^{-1}
[(1-\partial_{x}^{2})^{-1}\rho]^{2}\\
&-\frac{1}{3!}(2+\partial_{x}^{2})(1-\partial_{x}^{2})^{-2}
\big[(1-\partial_{x}^{2})^{-1}\rho\big]^{3}+\mathcal{M}(\rho),
\end{split}
\end{equation}
where $\mathcal{M}$ satisfies some good properties \cite{GP11}. In this way, we can rewrite the system \eqref{normalEP} as
\begin{equation}
\begin{split}\label{diag}
\partial_{t}&\begin{pmatrix}\rho\\ v\\ \theta \end{pmatrix}+\begin{pmatrix}0&\partial_{x}&0\\ \partial_{x}(1-\partial_{x}^{2})^{-1}+\partial_{x}&0&\partial_{x}\\0&(\gamma-1)\partial_{x}&0
\end{pmatrix}\begin{pmatrix}\rho\\ v\\ \theta\end{pmatrix}\\
=&\begin{pmatrix}-\partial_{x}(\rho v) \\-\partial_{x}\frac{v^{2}}{2}+\partial_{x}\frac{\rho^{2}}{2}
-\theta\partial_{x}\rho
+\frac{1}{2}\partial_{x}(1-\partial_{x}^{2})^{-1}[(1-\partial_{x}^{2})^{-1}\rho]^{2}
\\ -v\partial_{x}\theta-(\gamma-1)\theta\partial_{x}v\end{pmatrix}+\begin{pmatrix} 0 \\ \mathcal{H}(\rho,\theta) \\ 0 \end{pmatrix},
\end{split}
\end{equation}
where $\mathcal{H}(\rho,\theta)$ is at least cubic
\begin{align*} \mathcal{H}(\rho,\theta)=&\frac{1}{3!}\partial_{x}(2+\partial_{x}^{2})(1-\partial_{x}^{2})^{-2}
\big[(1-\partial_{x}^{2})^{-1}\rho\big]^{3}
+\partial_{x}(\frac{\rho^{3}}{3})-\theta\partial_{x}\frac{\rho^{2}}{2}\\
&+\left(-\partial_{x}(\ln(1+\rho)-\rho+\frac{\rho^{2}}{2}-\frac{\rho^{3}}{3})(1+\theta)
+\theta\partial_{x}\frac{\rho^{3}}{3}\right).
\end{align*}
Let $$S=\begin{pmatrix}1&1&1\\0&-q(|\partial_{x}|)& q(|\partial_{x}|)\\(\gamma-1)-q^{2}(|\partial_{x}|)&\gamma-1&\gamma-1 \end{pmatrix}\ \ \  \text{and}\ \ \ \begin{pmatrix}\rho\\ v\\\theta\end{pmatrix}=S\begin{pmatrix}U_{0}\\ U_{1}\\U_{-1}\end{pmatrix}.$$
We can diagonalize the linear part of the equation \eqref{diag}, to obtain
\begin{equation}
\begin{split}\label{Diago}
\partial_{t}U_{0}&=Q_{0}(U,U),\\
\partial_{t}U_{j}&=j\Omega U_{j}+Q_{j}(U,U)+N_{j}(U),
\end{split}
\end{equation}
where $U=(U_{0},U_{1},U_{-1})^{T}$, $j\in\{1,-1\}$ and $\widehat{\Omega}(k)=\omega(k)=k\widehat{q}(k)$ with the linear dispersive relation $ \omega(k)$ in \eqref{equation3}. For later purpose, we also write \eqref{Diago} in the following short form
\begin{equation}
\begin{split}\label{short}
\partial_{t}U=\Lambda U +Q (U,U)+N(U).
\end{split}
\end{equation}
After careful calculation, the quadratic term $Q_{j}$ and the high order term $N_{j}$ take the form
\begin{align}\label{qua}
Q_{0}(U,U)
=&\frac{1}{q^{2}}\left(q(U_{1}-U_{-1})\partial_{x}q^{2}U_{0}\right)\nonumber\\
&-\frac{(\gamma-1)}{q^{2}}\Big(((\gamma-2-q^{2})U_{0}+(\gamma-2)U_{1}
+(\gamma-2)U_{-1}))\partial_{x}q(U_{1}-U_{-1})\Big),\nonumber\\
Q_{j}(U,U)
=&-\frac{1}{2q^{2}}\left(q(U_{1}-U_{-1})\partial_{x}q^{2}U_{0}\right)
+\frac{1}{2}\Big(q(U_{1}-U_{-1})\partial_{x}(U_{0}+U_{1}+U_{-1})\Big)\nonumber\\
&+\frac{1}{2}\Big((U_{0}+U_{1}+U_{-1})\partial_{x}q(U_{1}-U_{-1})\Big)\nonumber\\
&+\frac{\gamma-1}{2q^{2}}\bigg(\Big((\gamma-2-q^{2})U_{0}+(\gamma-2)U_{1}
+(\gamma-2)U_{-1}\Big)\partial_{x} q(U_{1}-U_{-1})\bigg)\nonumber\\
&+\frac{j\partial_{x}}{4q}\big(q(U_{1}-U_{-1})\big)^{2}\nonumber\\
&+\frac{j}{2q}\left(\Big((\gamma-2-q^{2})U_{0}+(\gamma-2)U_{1}
+(\gamma-2)U_{-1}\Big)\partial_{x}(U_{0}+U_{1}+U_{-1})\right)\\
&-\frac{j\partial_{x}}{4q(1-\partial_{x}^{2})}
\Big(\frac{U_{0}+U_{1}+U_{-1}}{1-\partial_{x}^{2}}\Big)^{2},\nonumber\\
N_{j}(U)=&-\frac{j}{12q}\frac{\partial_{x}(2+\partial^{2}_{x})}{(1-\partial^{2}_{x})^{2}}
\left(\frac{U_{0}+U_{1}+U_{-1}}{1-\partial^{2}_{x}}\right)^{3}
-\frac{j}{6q}\partial_{x}(U_{0}+U_{1}+U_{-1})^{3}\nonumber\\
&+\frac{j}{4q}\left((\gamma-1-q^{2})U_{0}+(\gamma-1)U_{1}+(\gamma-1)U_{-1}\right)
\partial_{x}(U_{0}+U_{1}+U_{-1})^{2}+\widetilde{\mathcal{H}}(U^{4})\nonumber.
\end{align}
In order to derive the NLS equation as an approximation equation for system \eqref{Diago} with \eqref{qua}, we make the ansatz
\begin{equation}
\begin{split}\label{app}
\begin{pmatrix} U_{0}\\ U_{1} \\ U_{-1} \end{pmatrix}=\epsilon\widetilde{\Psi}
=\epsilon\widetilde{\Psi}_{1}+\epsilon\widetilde{\Psi}_{-1}+\epsilon^{2}\widetilde{\Psi}_{0}
+\epsilon^{2}\widetilde{\Psi}_{2}+\epsilon^{2}\widetilde{\Psi}_{-2},
\end{split}
\end{equation}
with
\begin{equation*}
\begin{split}
&\epsilon\widetilde{\Psi}_{\pm1}=
\epsilon\widetilde{A}_{\pm1}(\epsilon(x-c_{g}t),\epsilon^{2}t)E^{\pm1}\begin{pmatrix} 0 \\ 0 \\ 1 \end{pmatrix},\\
&\epsilon^{2}\widetilde{\Psi}_{0}=\begin{pmatrix}
\epsilon^{2}\widetilde{A}_{00}(\epsilon(x-c_{g}t),\epsilon^{2}t) \\ \epsilon^{2}\widetilde{A}_{01}(\epsilon(x-c_{g}t),\epsilon^{2}t)\\ \epsilon^{2}\widetilde{A}_{0-1}(\epsilon(x-c_{g}t,\epsilon^{2}t)\end{pmatrix},\\
&\epsilon^{2}\widetilde{\Psi}_{\pm2}=\begin{pmatrix}
\epsilon^{2}\widetilde{A}_{(\pm2)0}(\epsilon(x-c_{g}t),\epsilon^{2}t)E^{\pm2} \\ \epsilon^{2}\widetilde{A}_{(\pm2)1}(\epsilon(x-c_{g}t),\epsilon^{2}t)E^{\pm2} \\ \epsilon^{2}\widetilde{A}_{(\pm2)-1}(\epsilon(x-c_{g}t),\epsilon^{2}t)E^{\pm2}\end{pmatrix},
\end{split}
\end{equation*}
where $E^{j}=e^{ij(k_{0}x-\omega_{0}t)}$, $\omega_{0}=\omega(k_{0})$, $\widetilde{A}_{-1}=\overline{\widetilde{A}}_{1}$ and $\widetilde{A}_{-j\ell}=\overline{\widetilde{A}}_{j\ell}$ with $\ell\in\{0, \ \pm1\}$. Inserting \eqref{app} into \eqref{Diago} and replacing $\omega(k)$ with their Taylor expansions around $k=j k_{0}$ in all terms of $\Omega \widetilde{A}_{j}E^{j}$ and $\Omega \widetilde{A}_{j\ell}E^{j}$, then we balance the coefficients of the $\epsilon^{m}E^{j}$ in the following. Similar expansions can be found in Lemma 25 of \cite{SW11}, for the quasilinear water wave model, for example.

Due to the definition of $\omega_{0}$ and $c_{g}=\omega'(k_{0})$, the coefficients of $\epsilon E^{1}$ and $\epsilon^{2} E^{1}$ vanish identically. In the same way, all terms of $\epsilon^{2} E^{0}$ also vanish identically in the light of the linear terms.

For $\epsilon^{2} E^{2}$, we have
\begin{align}\label{E2}
-2\omega_{0}\widetilde{A}_{20}&=\gamma_{20}(\widetilde{A}_{1})^{2},\nonumber\\
(-2\omega_{0}-\omega(2k_{0})\widetilde{A}_{21}&=\gamma_{21}(\widetilde{A}_{1})^{2},\\
(-2\omega_{0}+\omega(2k_{0})\widetilde{A}_{2-1}&=\gamma_{2-1}(\widetilde{A}_{1})^{2},\nonumber
\end{align}
where
\begin{align*}
\gamma_{20}&=\frac{-2k_{0}\widehat{q}(k_{0})}
{9\widehat{q}^{2}(2k_{0})},\\
\gamma_{2j}&=\frac{k_{0}\widehat{q}(k_{0})}{9\widehat{q}^{2}(2k_{0})}
+\frac{jk_{0}\widehat{q}^{2}(k_{0})}{2\widehat{q}(2k_{0})}
-\frac{k_{0}\widehat{q}^{3}(k_{0})}{\widehat{q}^{2}(2k_{0})}
-\frac{jk_{0}}{6\widehat{q}(k_{0})}
-\frac{jk_{0}}{2\widehat{q}(2k_{0})(1+4k_{0}^{2})(1+k_{0}^{2})^{2}},
\end{align*}
with $j\in\{\pm1\}$. Since $\omega_{0}\neq0$ and $-2\omega_{0}\pm\omega(2k_{0})\neq0$, the coefficients $\widetilde{A}_{20}$ and $\widetilde{A}_{2\pm1}$ are determined in terms of $(\widetilde{A}_{1})^{2}$.

For $\epsilon^{3} E^{0}$, we have
\begin{align}\label{E0}
-c_{g}\partial_{X}\widetilde{A}_{00}&=\gamma_{00}\partial_{X}(|\widetilde{A}_{1}|^{2}),\nonumber\\
-c_{g}\partial_{X}\widetilde{A}_{01}&=-\omega'(0)\partial_{X}\widetilde{A}_{01}
+\gamma_{01}\partial_{X}(|\widetilde{A}_{1}|^{2}),\\
-c_{g}\partial_{X}\widetilde{A}_{0j}&=\omega'(0)\partial_{X}\widetilde{A}_{0-1}
+\gamma_{0-1}\partial_{X}(|\widetilde{A}_{1}|^{2}),\nonumber
\end{align}
where $\gamma_{0\ell}\in\mathbb{R}$ with $\ell\in\{0, \ \pm1\}$. Since $c_{g}=\omega'(k_{0})$, we can express $\widetilde{A}_{0\ell}$ in terms of $|\widetilde{A}_{1}|^{2}$ by integrating the equations for $\epsilon^{3} E^{0}$ w.r.t. ${X}$.

For $\epsilon^{3} E^{1}$, we have
\begin{align*}
\partial_{T}\widetilde{A}_{1}=\frac{i\omega''(k_{0})}{2}\partial_{X}^{2}\widetilde{A}_{1}
+g_{1},
\end{align*}
where $g_{1}$ is a sum of multiples of $\widetilde{A}_{1}\big|\widetilde{A}_{1}\big|^{2}$, $\widetilde{A}_{1}\widetilde{A}_{0\ell}$ and $\widetilde{A}_{-1}\widetilde{A}_{2\ell}$. Eliminating $\widetilde{A}_{0\ell}$ and $\widetilde{A}_{2\ell}$ by the algebraic relations \eqref{E2} and \eqref{E0} obtained for $\epsilon^{2} E^{2}$ and $\epsilon^{3} E^{0}$, we obtain the final NLS equation
\begin{align}\label{NLS}
\partial_{T}\widetilde{A}_{1}=\frac{i\omega''(k_{0})}{2}\partial_{X}^{2}\widetilde{A}_{1}
+i\nu_{2}(k_{0})\widetilde{A}_{1}|\widetilde{A}_{1}|^{2},
\end{align}
with $\nu_{2}(k_{0})\in\mathbb{R}$.

\section{\textbf{Modified approximation and estimates for the residual}}
It is not easy to justify the approximation of the NLS equation \eqref{NLS} for the Euler-Poisson system \eqref{EP} directly. To make it easier to justify such approximation, we make two modifications of the approximation:
\begin{description}
  \item[Step 1] Extend $\epsilon\widetilde{\Psi}$ to $\epsilon\widetilde{\Psi}^{ext}$, by adding some higher order terms, and
  \item[Step 2] Modify $\epsilon\widetilde{\Psi}^{ext}$ to the final approximation $\epsilon\Psi$, by some cut-off function such that the support of $\epsilon\Psi$ in Fourier space is restricted to small neighborhoods of integer multiples of the basic wave number $k_{0}$.
\end{description}
These modifications make the approximation $\epsilon\Psi$ an analytic function and ensure that the residual (see \eqref{short})
\begin{equation}\label{res-pu}
Res(\varepsilon\Psi):=-\varepsilon\partial_{t}\Psi + \varepsilon\Lambda \Psi +Q (\varepsilon\Psi,\varepsilon\Psi)+N(\varepsilon\Psi)
\end{equation}
is small enough.

In the first step, we add higher order terms $\epsilon^{2}\widetilde{\Psi}^{add}$ to the formal approximation $\epsilon\widetilde{\Psi}$ in \eqref{app} and denote $\epsilon\widetilde{\Psi}^{ext}$ as follows
\begin{equation}
\begin{split}\label{ext}
\epsilon\widetilde{\Psi}^{ext}
:=\epsilon\widetilde{\Psi}
+\epsilon^{2}\widetilde{\Psi}^{add},
\end{split}
\end{equation}
where 
$\epsilon^{2}\widetilde{\Psi}^{add}$ is defined as
\begin{align*}
\epsilon^{2}\widetilde{\Psi}^{add}
=&\sum_{j=\pm1}\sum_{n=1,2,3,4}\epsilon^{1+n}\begin{pmatrix}
\widetilde{A}^{n}_{j0}(\epsilon(x-c_{g}t),\epsilon^{2}t) \\ \widetilde{A}^{n}_{j1}(\epsilon(x-c_{g}t),\epsilon^{2}t) \\ \widetilde{A}^{n}_{j-1}(\epsilon(x-c_{g}t),\epsilon^{2}t)\end{pmatrix}E^{j}\\
&+\sum_{j=\pm2}\sum_{n=1,2,3}\epsilon^{2+n}\begin{pmatrix}
\widetilde{A}^{n}_{j0}(\epsilon(x-c_{g}t),\epsilon^{2}t) \\ \widetilde{A}^{n}_{j1}(\epsilon(x-c_{g}t),\epsilon^{2}t) \\ \widetilde{A}^{n}_{j-1}(\epsilon(x-c_{g}t),\epsilon^{2}t)\end{pmatrix}E^{j}\\
&+\sum_{n=1,2,3}\epsilon^{2+n}\begin{pmatrix}
\widetilde{A}^{n}_{00}(\epsilon(x-c_{g}t),\epsilon^{2}t) \\ \widetilde{A}^{n}_{01}(\epsilon(x-c_{g}t),\epsilon^{2}t) \\ \widetilde{A}^{n}_{0-1}(\epsilon(x-c_{g}t),\epsilon^{2}t)\end{pmatrix}\\
&+\sum_{j=\pm3}\sum_{n=0,1,2}\epsilon^{3+n}\begin{pmatrix}
\widetilde{A}^{n}_{j0}(\epsilon(x-c_{g}t),\epsilon^{2}t) \\ \widetilde{A}^{n}_{j1}(\epsilon(x-c_{g}t),\epsilon^{2}t) \\ \widetilde{A}^{n}_{j-1}(\epsilon(x-c_{g}t),\epsilon^{2}t)\end{pmatrix}E^{j}\\
&+\sum_{j=\pm4}\sum_{n=0,1}\epsilon^{4+n}\begin{pmatrix}
\widetilde{A}^{n}_{j0}(\epsilon(x-c_{g}t),\epsilon^{2}t) \\ \widetilde{A}^{n}_{j1}(\epsilon(x-c_{g}t),\epsilon^{2}t) \\ \widetilde{A}^{n}_{j-1}(\epsilon(x-c_{g}t),\epsilon^{2}t)\end{pmatrix}E^{j}\\
&+\sum_{j=\pm5}\epsilon^{5}\begin{pmatrix}
\widetilde{A}^{0}_{j0}(\epsilon(x-c_{g}t),\epsilon^{2}t) \\ \widetilde{A}^{0}_{j1}(\epsilon(x-c_{g}t),\epsilon^{2}t) \\ \widetilde{A}^{0}_{j-1}(\epsilon(x-c_{g}t),\epsilon^{2}t)\end{pmatrix}E^{j},
\end{align*}
with $\widetilde{A}^{n}_{-j\ell}=\overline{\widetilde{A}^{n}_{j\ell}}$. The amplitudes $\widetilde{A}_{j\ell}$ and $\widetilde{A}_{\pm1}$ from the last section are regarded as $\widetilde{A}^{0}_{j\ell}$ and $\widetilde{A}^{0}_{\pm1-1}$ hereafter. Inserting \eqref{ext} into system \eqref{Diago}, we find that the residual is formally at least of order $\mathcal{O}(\epsilon^{6})$ if $\widetilde{A}^{n}_{j\ell}$ is chosen in a suitable way. The process of choosing $\widetilde{A}^{n}_{j\ell}$ is similar to the derivation of the formulas for $\epsilon\widetilde{\Psi}_{\pm1}$, $\epsilon\widetilde{\Psi}_{\pm2}$ and $\epsilon\widetilde{\Psi}_{0}$ in the above section. Specifically, $\widetilde{A}^{n}_{j\ell}$ is determined by letting the coefficients of $\epsilon^{m}E^{j}$ to be zero. Since we focus on real approximate solutions for the non-isentropic Euler-Poisson system \eqref{EP}, it is enough to determine all amplitudes $\widetilde{A}^{n}_{j\ell}$ with $j\geq0$.

For $\epsilon^{j+n}E^{j}$ with $j\in\{2,3,4,5\}$, we can obtain the equations for $\widetilde{A}^{n}_{j\ell}$ as follows
\begin{align}\label{Anj}
-j\omega_{0}\widetilde{A}^{n}_{j0}&=h^{n}_{j0},\nonumber\\
(-j\omega_{0}+\omega(jk_{0}))\widetilde{A}^{n}_{j1}&=h^{n}_{j1},\\
(-j\omega_{0}-\omega(jk_{0}))\widetilde{A}^{n}_{j-1}&=h^{n}_{j-1},\nonumber
\end{align}
where $h^{n}_{j\ell}$ with $\ell\in\{0, \ \pm1\}$ depends polynomially on $\widetilde{A}^{n'}_{j'\ell'}$ with $n'\leq n$. In particular, if $n'=n$ then $|j'|<j$. Since $j\omega_{0}\neq0$ and $-j\omega_{0}\pm\omega(jk_{0})\neq0$ for $j\in\{2,3,4,5\}$, each $\widetilde{A}^{n}_{j\ell}$ on the LHS can be uniquely expressed in terms of $\widetilde{A}^{n'}_{j'\ell'}$ on the RHS.


We next consider the equations for $j=0$ and obtain the equations for $\widetilde{A}^{n}_{0\ell}$ as follows
\begin{align}\label{An0}
-c_{g}\partial_{X}\widetilde{A}^{n}_{00}&=h^{n}_{00},\nonumber\\
-c_{g}\partial_{X}\widetilde{A}^{n}_{01}&=-\omega'(0)\partial_{X}\widetilde{A}^{n}_{01}+h^{n}_{01},\\
-c_{g}\partial_{X}\widetilde{A}^{n}_{0-1}&=\omega'(0)\partial_{X}\widetilde{A}^{n}_{0-1}+h^{n}_{0-1},
\nonumber
\end{align}
where $h^{n}_{0\ell}$ depend polynomially on $\widetilde{A}^{n'}_{j'\ell'}$ with $n'\leq n$. In particular, if $n'=n$ then $j'=\pm1$. Similar to \eqref{E0}, all $h^{n}_{0\ell}$ can be written as an $X$-derivative of an expression containing $\widetilde{A}^{n'}_{j'\ell'}$. 
Since $c_{g}\neq0$ and $c_{g}\neq \pm\widetilde{A}^{n}_{0\ell}$, each $\widetilde{A}^{n}_{0\ell}$ can be determined in terms of $\widetilde{A}^{n'}_{j'\ell'}$ by a straightforward integration.


Now we only need to consider the equations for $\widetilde{A}^{n}_{1\ell}$. In the last section, we deduced that $\widetilde{A}^{0}_{10}=\widetilde{A}^{0}_{11}=0$ and $\widetilde{A}^{0}_{1-1}=\widetilde{A}_{1}$ satisfy the NLS equation \eqref{NLS}. Furthermore, the amplitudes $\widetilde{A}^{n}_{1\ell}$ with $n\in\{1, \ 2, \ 3, \ 4\}$ and $\ell\in\{0, \ 1\}$ satisfy the equations
\begin{align}\label{A1n}
-\omega_{0}\widetilde{A}^{n}_{10}&=h^{n}_{10},\nonumber\\
(-\omega_{0}-\omega(k_{0}))\widetilde{A}^{n}_{11}&=h^{n}_{11},
\end{align}
where $h^{n}_{1\ell}$ depend polynomially on $\widetilde{A}^{n'}_{j'\ell'}$ 
with $n'<n$. Since $\omega_{0}\neq0$ and $-\omega_{0}-\omega(k_{0})\neq0$, $\widetilde{A}^{n}_{10}$ and $\widetilde{A}^{n}_{11}$ are determined in terms of $\widetilde{A}^{n'}_{j'\ell'}$. Moreover, we have $h^{1}_{10}=h^{1}_{11}=0$ since there are no terms of order $\mathcal{O}(\epsilon^{2})$ at the wave numbers $k=\pm k_{0}$. Therefore, $\widetilde{A}^{1}_{10}=\widetilde{A}^{1}_{11}=0$.

Due to \eqref{A1n} and the expressions for $\widetilde{A}^{n}_{j\ell}$ with $j\in\{0, 2, 3, 4, 5\}$,  the amplitudes $\widetilde{A}^{n}_{j\ell}$ for all $j, \ell$ and $n$ are uniquely determined if the amplitudes $\widetilde{A}^{n'}_{1-1}$ with $n'\leq n$ are determined. Thus, it remains to turn to the equations for $\widetilde{A}^{n}_{1-1}$ with $n\in\{1, \ 2, \ 3, \ 4\}$. Note that we have
\begin{align}\label{ANLS}
\partial_{T}\widetilde{A}^{n}_{1-1}=i\frac{\omega''(k_{0})}{2}\partial^{2}_{X}\widetilde{A}^{n}_{1-1}
+h^{n}_{1-1},
\end{align}
where $h^{n}_{1-1}$ are affine in $\widetilde{A}^{n}_{1-1}$ and depend polynomially on $\widetilde{A}^{n'}_{j'\ell'}$ 
with $n'<n$. According to the form of $g^{n}_{1-1}$, $\widetilde{A}^{n}_{1-1}$ for $n\in\{1, \ 2, \ 3, \ 4\}$ satisfy inhomogeneous but linear Schr\"odinger equations. Therefore it is not difficult to establish the existence of solutions of these equations in $T\in[0, \ T_{0}]$ with $T_{0}=\mathcal{O}(1)$ by using the variation of constants formula and Gronwall's lemma whenever $\widetilde{A}_{1}$ is the solution of the NLS equation \eqref{NLS} for $T\in[0, \ T_{0}]$. Finally, since the amplitude $\widetilde{A}^{4}_{1-1}$ does not appear in the equation for any other amplitudes $\widetilde{A}^{n}_{j\ell}$, we can set
\begin{align}\label{An4}
\widetilde{A}^{4}_{1-1}=0.
\end{align}

Till now all amplitudes $\widetilde{A}^{n}_{j\ell}$ hae been determined. For the regularity of the $\widetilde{A}^{n}_{j\ell}$, we note that derivatives of the right-hand-side of \eqref{Anj}-\eqref{ANLS} may come either from the derivatives in the nonlinear terms of \eqref{Diago} or from the dispersion relation $\omega(k)$. However, each such derivative provides one extra power of $\epsilon$, thanks to the scaling on time-space of $\widetilde{A}^{n}_{j\ell}$. As a result, the maximum number of derivatives that can occur in the equations for $\widetilde{A}^{n}_{j\ell}$ is $n$ if $(j, \ \ell)\neq(\pm1, \ -1)$ and $n+2$ if $(j, \ \ell)=(\pm1, \ -1)$ with $n\in\{1, \ 2, \ 3\}$. Therefore, we have the following Lemma.

\begin{lemma}\label{L1}
Fix $s_{A}\geq6$. Let $\widetilde{A}_{1}\in C([0, \ T_{0}], \ H^{s_{A}})$ be a solution of the NLS equation \eqref{NLS}. Then $\widetilde{A}^{n}_{j\ell}$ determined by \eqref{Anj}-\eqref{An4} exists for all $T\in [0, \ T_{0}]$ and satisfies $\widetilde{A}^{n}_{j\ell}\in C([0, T_{0}], H^{s_{A}-n})$ if $(j, \ell)\neq(\pm1, -1)$ or $\widetilde{A}^{n}_{j\ell}\in C([0, T_{0}], H^{s_{A}-n-2})$ if $(j, \ell)=(\pm1, -1)$ with $n\in\{1, 2, 3\}$.
\end{lemma}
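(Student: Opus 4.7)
The plan is to construct the amplitudes $\widetilde{A}^{n}_{j\ell}$ recursively, ordered lexicographically by the pair $(n,|j|)$, and then track the regularity gained or lost at each step. The base case is already available: $\widetilde{A}^{0}_{\pm 1,-1} = \widetilde{A}_{\pm 1}$ solves the NLS equation \eqref{NLS} and lies in $C([0,T_{0}],H^{s_{A}})$ by hypothesis, and the remaining $\mathcal{O}(\epsilon)$ amplitudes vanish by construction in Section~2.

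For the inductive step with $|j|\in\{2,3,4,5\}$, the equations \eqref{Anj} are purely algebraic since the coefficients $-j\omega_{0}$ and $-j\omega_{0}\pm\omega(jk_{0})$ are all nonzero. By the ordering, $h^{n}_{j\ell}$ is a polynomial in previously determined amplitudes $\widetilde{A}^{n'}_{j'\ell'}$ with either $n'<n$, or $n'=n$ and $|j'|<|j|$, so $\widetilde{A}^{n}_{j\ell}$ is obtained by a single division. For $j=0$, the equations \eqref{An0} would be degenerate except that, exactly as in \eqref{E0}, $h^{n}_{0\ell}$ is the $X$-derivative of an expression in lower-order amplitudes; a single integration in $X$ (using spatial decay of the known amplitudes) determines $\widetilde{A}^{n}_{0\ell}$.

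For $j=\pm 1$ with $\ell\in\{0,1\}$, I would again invert the algebraic system \eqref{A1n}, using that $-\omega_{0}\neq 0$ and $-\omega_{0}-\omega(k_{0})\neq 0$; here $h^{n}_{1\ell}$ depends only on amplitudes with strictly smaller $n'<n$, which is why $\widetilde{A}^{1}_{10}=\widetilde{A}^{1}_{11}=0$ follows from the absence of any $\mathcal{O}(\epsilon^{2})$ source at $k=\pm k_{0}$. The only non-algebraic equation is \eqref{ANLS} for $(j,\ell)=(\pm 1,-1)$; here the inductive ordering guarantees that $h^{n}_{1,-1}$ is affine in $\widetilde{A}^{n}_{1,-1}$ with coefficients polynomial in amplitudes with $n'<n$, so \eqref{ANLS} is an inhomogeneous linear Schr\"odinger equation. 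Global existence on $[0,T_{0}]$ then follows from the variation-of-constants formula together with Gr\"onwall's inequality, as in standard Schr\"odinger theory. Finally, \eqref{An4} assigns $\widetilde{A}^{4}_{1,-1}=0$, which is consistent because this amplitude never appears in the recursion.

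The remaining task, which will be the main bookkeeping obstacle, is the regularity count. Each spatial derivative coming from the nonlinearities in \eqref{Diago} or from a Taylor coefficient of $\omega(k)$ about $k=jk_{0}$ acts on a function of the slow variable $X=\epsilon(x-c_{g}t)$, so it produces a $\partial_{X}$ at the price of one extra power of $\epsilon$. Tracing these factors through the definition of $h^{n}_{j\ell}$ shows that the maximal number of $\partial_{X}$'s falling on already-constructed amplitudes is $n$ when $(j,\ell)\neq(\pm 1,-1)$, and $n+2$ when $(j,\ell)=(\pm 1,-1)$ (the two extra derivatives coming from the Schr\"odinger operator $\partial_{X}^{2}$ in \eqref{ANLS}). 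An induction on $n$ using the Sobolev algebra property of $H^{s_{A}-n}$, which is valid since $s_{A}\geq 6>1/2$ and $n\leq 4$, then delivers the claimed regularities $\widetilde{A}^{n}_{j\ell}\in C([0,T_{0}],H^{s_{A}-n})$, respectively $H^{s_{A}-n-2}$, closing the argument.
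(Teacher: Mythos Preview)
Your proposal is correct and follows essentially the same approach as the paper: the recursive construction via \eqref{Anj}--\eqref{An4} with the case split on $j$, the variation-of-constants/Gr\"onwall argument for the linear Schr\"odinger equations \eqref{ANLS}, and the derivative count based on each $\partial_{x}$ costing one power of $\epsilon$ are exactly what the paper does in the discussion immediately preceding the lemma. If anything, your write-up is more explicit than the paper's, which simply records the key observation about the maximal number of derivatives being $n$ (respectively $n+2$) and states the lemma as a consequence.
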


In the second step, we use a Fourier truncation procedure to modify the extended approximation $\epsilon\widetilde{\Psi}^{ext}$ into $\epsilon\Psi$. We introduce the characteristic function
\begin{align*}
\chi_{[-\delta,\delta]}(k)=\Big\{\begin{matrix} 1, \ |k|\leq\delta, \\ 0, \ |k|>\delta\end{matrix}
\end{align*}
and define
\begin{align}\label{SE}
A^{n}_{j\ell}:=\mathcal{F}^{-1}(\chi_{[-\delta,\delta]}\mathcal{F}\widetilde{A}^{n}_{j\ell})(x).
\end{align}
Then the amplitudes $A^{n}_{j\ell}$ have the compact support
\begin{align*}
\{k\in\mathbb{R}:|k-\ell k_{0}|<\delta\}
\end{align*}
in Fourier space for some $\delta>0$ sufficiently small, but independent of $0<\epsilon\ll1$.

Hence, we get our final approximation $\epsilon\Psi$ with
\begin{equation}
\begin{split}\label{modi}
\epsilon\Psi
:=\epsilon\Psi_{1}+\epsilon\Psi_{-1}+\epsilon^{2}\Psi_{p},
\end{split}
\end{equation}
where
\begin{align}\label{Psi}
\epsilon\Psi_{\pm1}=&\epsilon\psi_{\pm1}\begin{pmatrix}0\\0\\1\end{pmatrix}
=\epsilon A_{\pm1}(\epsilon(x-c_{g}t),\epsilon^{2}t)E^{\pm1}\begin{pmatrix} 0 \\ 0 \\ 1 \end{pmatrix},\nonumber\\
\epsilon^{2}\Psi_{p}=&\epsilon^{2}\begin{pmatrix}
\psi_{p_{0}} \\ \psi_{p_{1}}\\ \psi_{p_{-1}}\end{pmatrix}=
\epsilon^{2}\Psi^{1}_{1}
+\epsilon^{2}\Psi^{1}_{-1}+\epsilon^{2}\Psi_{0}+\epsilon^{2}\Psi_{2}
+\epsilon^{2}\Psi_{-2}+\epsilon^{3}\Psi_{h},\nonumber\\
\epsilon^{2}\Psi^{1}_{\pm1}=&\epsilon^{2}\psi^{1}_{\pm1}\begin{pmatrix}0\\0\\1\end{pmatrix}
=\epsilon A^{1}_{\pm1}(\epsilon(x-c_{g}t),\epsilon^{2}t)E^{\pm1}\begin{pmatrix} 0 \\ 0 \\ 1 \end{pmatrix},\nonumber\\
\epsilon^{2}\Psi_{0}
=&\epsilon^{2}\begin{pmatrix}
\psi_{00} \\ \psi_{01}\\ \psi_{0-1}\end{pmatrix}
=\epsilon^{2}\begin{pmatrix}
A_{00}(\epsilon(x-c_{g}t),\epsilon^{2}t)\\ A_{01}(\epsilon(x-c_{g}t),\epsilon^{2}t) \\ A_{0-1}(\epsilon(x-c_{g}t),\epsilon^{2}t)\end{pmatrix},\nonumber\\
\epsilon^{2}\Psi_{\pm2}
=&\epsilon^{2}\begin{pmatrix}
\psi_{(\pm2)0}\\ \psi_{(\pm2)1}\\ \psi_{(\pm2)-1}\end{pmatrix}
=\epsilon^{2}\begin{pmatrix}
A_{(\pm2)0}(\epsilon(x-c_{g}t),\epsilon^{2}t)\\ A_{(\pm2)1}(\epsilon(x-c_{g}t),\epsilon^{2}t)\\ A_{(\pm2)-1}(\epsilon(x-c_{g}t),\epsilon^{2}t)\end{pmatrix}E^{\pm2}, \\
\epsilon^{3}\Psi_{h}
=&\sum_{j=\pm1}\sum_{n=2,3,4}\epsilon^{1+n}\begin{pmatrix}
A^{n}_{j0}(\epsilon(x-c_{g}t),\epsilon^{2}t) \\ A^{n}_{j1}(\epsilon(x-c_{g}t),\epsilon^{2}t)\\ A^{n}_{j-1}(\epsilon(x-c_{g}t),\epsilon^{2}t)\end{pmatrix}E^{j}\nonumber\\
&+\sum_{j=\pm2}\sum_{n=1,2,3}\epsilon^{2+n}\begin{pmatrix}
A^{n}_{j0}(\epsilon(x-c_{g}t),\epsilon^{2}t) \\ A^{n}_{j1}(\epsilon(x-c_{g}t),\epsilon^{2}t) \\ A^{n}_{j-1}(\epsilon(x-c_{g}t),\epsilon^{2}t)\end{pmatrix}E^{j}\nonumber\\
&+\sum_{n=1,2,3}\epsilon^{2+n}\begin{pmatrix}
A^{n}_{00}(\epsilon(x-c_{g}t),\epsilon^{2}t) \\ A^{n}_{01}(\epsilon(x-c_{g}t),\epsilon^{2}t)\\ A^{n}_{0-1}(\epsilon(x-c_{g}t),\epsilon^{2}t)\end{pmatrix}\nonumber\\
&+\sum_{j=\pm3}\sum_{n=0,1,2}\epsilon^{3+n}\begin{pmatrix}
A^{n}_{j0}(\epsilon(x-c_{g}t),\epsilon^{2}t) \\ A^{n}_{j1}(\epsilon(x-c_{g}t),\epsilon^{2}t) \\ A^{n}_{j-1}(\epsilon(x-c_{g}t),\epsilon^{2}t)\end{pmatrix}E^{j}\nonumber\\
&+\sum_{j=\pm4}\sum_{n=0,1}\epsilon^{4+n}\begin{pmatrix}
A^{n}_{j0}(\epsilon(x-c_{g}t),\epsilon^{2}t) \\ A^{n}_{j1}(\epsilon(x-c_{g}t),\epsilon^{2}t)\nonumber \\ A^{n}_{j-1}(\epsilon(x-c_{g}t),\epsilon^{2}t)\end{pmatrix}E^{j}\nonumber\\
&+\sum_{j=\pm5}\epsilon^{5}\begin{pmatrix}
A^{0}_{j0}(\epsilon(x-c_{g}t),\epsilon^{2}t) \\ A^{0}_{j1}(\epsilon(x-c_{g}t),\epsilon^{2}t) \\ A^{0}_{j-1}(\epsilon(x-c_{g}t),\epsilon^{2}t)\end{pmatrix}E^{j}.\nonumber
\end{align}
Here and throughout the remainder of the paper, we will use upper case $\Psi$ to denote vector valued functions and lower case $\psi$ to denote scalar functions.

Since the Fourier transform of the functions in the extended approximation $\epsilon\widetilde{\Psi}^{ext}$ are concentrated around the wave numbers $\ell k_{0}$ if $\widetilde{A}^{n}_{j\ell}$ is sufficiently regular, $\epsilon\widetilde{\Psi}^{ext}$ only changes slightly by the Fourier truncation procedure. This fact is a consequence of the estimate
\begin{align}\label{1/2}
\|\chi_{[-\delta,\delta]\epsilon^{-1}}\widehat{f}(\epsilon^{-1}\cdot)\|_{L^{2}(m)}
\leq C(\delta)\epsilon^{m+M-1/2}\|f\|_{H^{m+M}}
\end{align}
for all $m, M\geq0$. The Fourier truncation procedure makes the final approximation $\epsilon\Psi$ an analytic function and the estimate much simpler for the error. For more related strategies refer to \cite{SW11, DSW16}.

\begin{lemma}\label{L2}
Let $s_{N}\geq6$ and $\tilde{A}_{1}\in C([0,T_{0}], \ H^{s_{N}}(\mathbb{R},\mathbb{C}))$ be a solution of the NLS equations \eqref{NLS} with
\begin{equation*}
\begin{split}
\sup_{T\in[0,T_{0}]}\|\tilde{A}_{1}\|_{H^{s_{N}}}\leq C_{A}.
\end{split}
\end{equation*}
Then for all $s\geq0$ there exist $C_{Res}, C_{\Psi}$ and $\epsilon_{0}>0$ depending on $C_{A}$ such that for all $\epsilon\in(0,\epsilon_{0})$ the corresponding approximation $\epsilon\Psi$ satisfies
\begin{equation}
\begin{split}\label{Aesti-1}
\sup_{t\in[0,T_{0}/\epsilon^{2}]}\|Res(\epsilon\Psi)\|_{H^{s}}\leq C_{Res} \epsilon^{11/2},
\end{split}
\end{equation}
\begin{equation}
\begin{split}\label{Aesti-2}
\sup_{t\in[0,T_{0}/\epsilon^{2}]}\|\epsilon\Psi-\epsilon\widetilde{\Psi}_{1}-\epsilon\widetilde{\Psi}_{-1}
\|_{H^{s_{N}}}\leq C_{\Psi}\epsilon^{3/2},
\end{split}
\end{equation}
\begin{equation}
\begin{split}\label{Aesti-3}
\sup_{t\in[0,T_{0}/\epsilon^{2}]}(\|\widehat{\Psi}_{\pm1}\|_{L^{1}(s+1)}
+\|\widehat{\Psi}_{p}\|_{L^{1}(s+1)})\leq C_{\Psi}.
\end{split}
\end{equation}
\end{lemma}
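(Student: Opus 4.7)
The plan is to leverage the careful construction of $\epsilon\Psi$ in Section 3. By design, the amplitudes $\widetilde{A}^{n}_{j\ell}$ entering $\epsilon\widetilde{\Psi}^{ext}$ were chosen precisely so that, after inserting $\epsilon\widetilde{\Psi}^{ext}$ into the diagonalized system \eqref{Diago}, the coefficient of every $\epsilon^{m}E^{j}$ with $m+|j|\leq 5$ vanishes identically by the equations \eqref{Anj}, \eqref{An0}, \eqref{A1n}, \eqref{ANLS} and the NLS equation \eqref{NLS}. Hence the residual of the extended (non-truncated) approximation is formally of order $\mathcal{O}(\epsilon^{6})$, and one then quantifies the additional error introduced by the Fourier truncation \eqref{SE} using estimate \eqref{1/2}, which costs exactly one factor $\epsilon^{-1/2}$ and thus produces the bound $\mathcal{O}(\epsilon^{11/2})$ stated in \eqref{Aesti-1}.

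Concretely, for \eqref{Aesti-1}, I would first expand $Res(\epsilon\widetilde{\Psi}^{ext})$ by substituting the ansatz into \eqref{short}. In every term $\Omega \widetilde{A}_{j\ell}E^{j}$ I replace $\omega(k)$ by its Taylor expansion around $k=jk_{0}$, each spatial derivative landing on a slow envelope producing an extra power of $\epsilon$ via the scaling $X=\epsilon(x-c_{g}t)$. Collecting powers of $\epsilon$ and of $E^{j}$ and using the construction equations cancels all contributions of order $\mathcal{O}(\epsilon^{m}E^{j})$ for $m+|j|\leq 5$, leaving a sum of terms involving $\widetilde{A}^{n}_{j\ell}$ of total order $\mathcal{O}(\epsilon^{6})$. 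The $H^{s}$ norm of each such term is then controlled by the $H^{s_{A}-n-2}$ regularity of the amplitudes provided by Lemma \ref{L1} together with the scaling identity $\|f(\epsilon\cdot)E^{j}\|_{H^{s}}\leq C\epsilon^{-1/2}\|f\|_{H^{s}}$. The passage from $\epsilon\widetilde{\Psi}^{ext}$ to $\epsilon\Psi$ is handled by \eqref{1/2}: since the $\widetilde{A}^{n}_{j\ell}$ are sufficiently regular (by Lemma \ref{L1}), the truncated difference $\widetilde{A}^{n}_{j\ell}-A^{n}_{j\ell}$ is exponentially small in Sobolev norm of the rescaled envelope, hence makes an $\mathcal{O}(\epsilon^{N})$ contribution for any $N$, leaving $\mathcal{O}(\epsilon^{11/2})$ as the dominant bound.

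The estimate \eqref{Aesti-2} is a direct consequence of the splitting \eqref{modi}, which gives $\epsilon\Psi-\epsilon\Psi_{1}-\epsilon\Psi_{-1}=\epsilon^{2}\Psi_{p}$. Since every component of $\Psi_{p}$ is of the form $A(\epsilon(x-c_{g}t),\epsilon^{2}t)E^{j}$ with $A$ band-limited and belonging to a suitable $H^{s_{A}-n}$, the scaling gives $\|\epsilon^{2}\Psi_{p}\|_{H^{s_{N}}}\lesssim\epsilon^{2}\cdot\epsilon^{-1/2}=\epsilon^{3/2}$, comparing $\epsilon\Psi$ with the original NLS profile $\epsilon\widetilde{\Psi}_{\pm1}$ up to an $\epsilon^{3/2}$ error. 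Estimate \eqref{Aesti-3} follows from the compactness of the support of each $\widehat{A^{n}_{j\ell}(\epsilon(\cdot-c_{g}t))E^{j}}$ in a neighborhood $\{|k-jk_{0}|\leq\delta\}$ of fixed size (independent of $\epsilon$): on each such neighborhood the weight $\sigma(k)^{s+1}$ is bounded by a constant depending only on $j$, and Sobolev embedding $H^{s_{A}}\hookrightarrow\mathcal{F}L^{1}$ together with the scaling gives $\|\widehat{\Psi}\|_{L^{1}(s+1)}\leq C_{\Psi}$ uniformly in $\epsilon$.

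The main obstacle is the bookkeeping required in Step 1: one must verify that every derivative appearing in the quadratic and cubic nonlinearities of \eqref{Diago}, as well as every derivative generated by the Taylor expansion of $\omega(k)$ around $k=jk_{0}$, is absorbed by at most $s_{A}-n-2$ derivatives on the amplitudes (the worst case being $(j,\ell)=(\pm 1,-1)$ in Lemma \ref{L1}). This is the reason for requiring $s_{A}\geq 6$ and the reason Lemma \ref{L1} tracks how many derivatives land on which amplitude. Once this regularity accounting is checked, the $\epsilon^{11/2}$ bound is essentially tight because the truncation \eqref{1/2} with $m=0$ always costs $\epsilon^{-1/2}$ relative to the formal $\mathcal{O}(\epsilon^{6})$ residual.
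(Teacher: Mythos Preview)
Your proposal is correct and follows essentially the same approach as the paper's proof: the construction of $\epsilon\widetilde{\Psi}^{ext}$ forces the residual to be formally $\mathcal{O}(\epsilon^{6})$, the scaling $\|A(\epsilon\cdot)\|_{L^{2}}=\epsilon^{-1/2}\|A\|_{L^{2}}$ costs half a power of $\epsilon$, the Fourier truncation is controlled via \eqref{1/2} and Lemma~\ref{L1}, and the $L^{1}(s+1)$ bound \eqref{Aesti-3} follows from the compact Fourier support together with the scale-invariance of the $L^{1}$ norm of the Fourier transform. One small bookkeeping slip: the cancellation criterion is not ``$m+|j|\leq 5$'' but simply ``$m\leq 5$'' at every relevant wave number $j$ (the ansatz was built so that the residual is $\mathcal{O}(\epsilon^{6})$ uniformly in $j\in\{-5,\dots,5\}$, cf.\ the highest-order amplitudes $\epsilon^{5}\widetilde{A}^{\bullet}_{j\ell}$ in the definition of $\widetilde{\Psi}^{add}$); this does not affect the argument.
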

\begin{proof}
From the process of constructing the extended approximation $\epsilon\widetilde{\Psi}^{ext}$, we have formally $Res(\epsilon\widetilde{\Psi}^{ext})=\mathcal{O}(\epsilon^{6})$ and $\epsilon\widetilde{\Psi}-\epsilon\widetilde{\Psi}_{\pm1}=\mathcal{O}(\epsilon^{2})$ on the time interval $[0,T_{0}/\epsilon^{2}]$. However, since $\|A(\epsilon \cdot)\|_{L^{2}}=\epsilon^{-1/2}\|A\|_{L^{2}}$, we lose a factor $\epsilon^{-1/2}$ in \eqref{Aesti-1} and \eqref{Aesti-2}.

Due to the Fourier truncation procedure the final approximation $\epsilon\Psi$ has the form
\begin{align*}
\epsilon\Psi=\sum_{j=-5}^{5}a_{j}, \ \text{with} \ supp\mathcal{F}(a_{j})\subset(jk_{0}-\delta, \ jk_{0}+\delta).
\end{align*}
Thus, there exists a constant $C=C(k_{0})>0$ such that $\|\Psi\|_{H^{s}}\leq C\|\Psi\|_{L^{2}}$ and $\|\widehat{\Psi}\|_{L^{1}(s)}\leq C\|\widehat{\Psi}\|_{L^{1}}$ for all $s\geq0$. Now applying estimate \eqref{1/2} to $A_{j\ell}^{n}$  with $m=0$ and $M$ determined by the maximum regularity of the respective $\widetilde{A}_{j\ell}^{n}$, see Lemma \ref{L1}, we obtain \eqref{Aesti-1}-\eqref{Aesti-2} from construction of $\epsilon\Psi$, if $s_{N}\geq6$.

In contrast to the $L^{2}$-norm, we have $\|u(\epsilon \cdot)\|_{C_{b}^{0}}=\|u\|_{C_{b}^{0}}$ and $\|\widehat{u}\|_{L^{1}}=\|\epsilon^{-1}\widehat{A}(\epsilon^{-1}\cdot)\|_{L^{1}}$. Consequently, \eqref{Aesti-3} follows from construction of $\Psi_{\pm1}$ and $\Psi_{p}$.
\end{proof}
\begin{remark}
From bound \eqref{Aesti-3}, it follows that
\begin{align*}
\|\Psi R\|_{H^{s}}\leq C\|\Psi\|_{C_{b}^{s}}\|R\|_{H^{s}}\leq C\|\widehat{\Psi}\|_{L_{1}^{s}}\|R\|_{H^{s}},
\end{align*}
without loss of powers in $\epsilon$.
\end{remark}
Moreover, from an analogous argumentation as in the proof of Lemma 3.3 in \cite{D} it follows that $\partial_{t}\Psi_{\pm1}$ can be approximated by $-\Omega\Psi_{\pm1}$ respectively. More precisely, we have the following lemma.
\begin{lemma}\label{LO}
Fix $s>0$, there exists constant $C_{A}>0$ such that
\begin{align}\label{11s}
\|\partial_{t}\widehat{\psi}_{\pm1}+i\omega\widehat{\psi}_{\pm1}\|_{L^{1}(s)}\leq C_{A}\epsilon^{2}.
\end{align}
\end{lemma}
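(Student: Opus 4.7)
The plan is to exploit the wave-packet structure of $\psi_{\pm1}$ and the compact Fourier support of $A_{\pm 1}$ (contained in $[-\delta,\delta]$ by the cutoff \eqref{SE}) to reduce the claim to a Taylor expansion of the dispersion relation at the carrier wave numbers $\pm k_0$. From $\psi_{\pm 1}(x,t) = A_{\pm 1}(\epsilon(x-c_g t), \epsilon^2 t) E^{\pm 1}$, standard Fourier transform rules give
\[
\widehat{\psi}_{\pm 1}(k,t) = \epsilon^{-1}\, e^{\mp i \omega_0 t - i (k \mp k_0) c_g t}\, \widehat{A}_{\pm 1}\bigl((k \mp k_0)/\epsilon,\, \epsilon^2 t\bigr),
\]
so $\widehat{\psi}_{\pm 1}(\cdot, t)$ is supported in $\{k : |k \mp k_0| \le \epsilon \delta\}$. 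Differentiating in $t$ and adding $i\omega(k)\widehat{\psi}_{\pm 1}$, the phase factors combine with $\omega(k)$ into the ``dispersion-defect'' symbol $\omega(k) \mp \omega_0 - (k \mp k_0) c_g$, while the slow time in the argument of $\widehat A_{\pm 1}$ yields an isolated factor of $\epsilon^{2}$:
\[
\partial_t \widehat{\psi}_{\pm 1} + i\omega\widehat{\psi}_{\pm 1} = i\bigl[\omega(k) \mp \omega_0 - (k \mp k_0) c_g\bigr]\widehat{\psi}_{\pm 1} + \epsilon\, e^{\mp i \omega_0 t - i (k \mp k_0) c_g t}\, \partial_T \widehat A_{\pm 1}\bigl((k \mp k_0)/\epsilon,\, \epsilon^2 t\bigr).
\]

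The heart of the argument is the Taylor expansion of the dispersion-defect symbol at $\pm k_0$: since $\omega$ is odd, $\pm\omega_0 = \omega(\pm k_0)$, and since $\omega'$ is even, $c_g = \omega'(k_0) = \omega'(\pm k_0)$, so
\[
\omega(k) \mp \omega_0 - (k \mp k_0) c_g = \tfrac{1}{2}\omega''(\pm k_0)(k \mp k_0)^2 + \mathcal{O}\bigl((k \mp k_0)^3\bigr).
\]
Combined with $|k \mp k_0| \le \epsilon \delta$ on the support of $\widehat \psi_{\pm 1}$, this gives the pointwise bound $|\omega(k) \mp \omega_0 - (k \mp k_0) c_g| \le C \epsilon^2$. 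Hence the first term contributes at most $C\epsilon^2 \|\widehat{\psi}_{\pm 1}\|_{L^1(s)}$ in the $L^1(s)$-norm; the change of variables $K = (k \mp k_0)/\epsilon$, under which the weight $(1+|k|^2)^{s/2}$ stays bounded on the small support, converts this into $C_s\epsilon^2 \|\widehat A_{\pm 1}\|_{L^1} \le C_A \epsilon^2$.

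For the remainder term the same change of variables contributes one extra factor of $\epsilon$, so its $L^1(s)$-norm is bounded by $C_s \epsilon^2 \|\partial_T \widehat A_{\pm 1}(\cdot, \epsilon^2 t)\|_{L^1}$. To control $\partial_T A_{\pm 1}$ I would invoke the NLS equation \eqref{NLS} (noting that $A_{\pm 1}$ is a Fourier truncation of the NLS solution $\widetilde A_{\pm 1}$), which expresses $\partial_T A_{\pm 1}$ in terms of $\partial_X^2 A_{\pm 1}$ and $A_{\pm 1}|A_{\pm 1}|^2$; together with $s_A \ge 6$ and Sobolev embedding this gives $\|\partial_T \widehat A_{\pm 1}\|_{L^1} \le C_A$. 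Summing the two contributions produces \eqref{11s}. The argument is essentially routine once the $\mathcal{O}(\epsilon^2)$ gain from the dispersion defect is identified; that gain — encoded in the identity $c_g = \omega'(k_0)$ combined with the $\mathcal{O}(\epsilon)$-wide Fourier localisation of $\widehat A_{\pm 1}$ around $\pm k_0$ — is really the only substantive step and presents no genuine obstacle.
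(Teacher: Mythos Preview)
Your argument is correct and follows the standard route for this type of estimate. The paper itself does not give a proof of this lemma; it simply refers to an analogous argument in Lemma~3.3 of \cite{D}. What you have written is precisely that argument: compute $\widehat{\psi}_{\pm1}$ explicitly from the wave-packet ansatz, observe that its support lies in $\{|k\mp k_0|\le \epsilon\delta\}$ by the cutoff \eqref{SE}, and then use that $c_g=\omega'(\pm k_0)$ kills the linear term in the Taylor expansion of $\omega$ at $\pm k_0$, leaving an $\mathcal{O}(\epsilon^2)$ dispersion defect. The only mild subtlety you flagged---that $A_{\pm1}$ is the Fourier truncation of the NLS solution $\widetilde A_{\pm1}$ rather than the solution itself---is handled correctly: since $\widehat{A}_{\pm1}=\chi_{[-\delta,\delta]}\widehat{\widetilde A}_{\pm1}$, one has $\partial_T\widehat{A}_{\pm1}=\chi_{[-\delta,\delta]}\partial_T\widehat{\widetilde A}_{\pm1}$, and the NLS equation together with $s_A\ge 6$ gives the required $L^1$ bound on $\partial_T\widehat{\widetilde A}_{\pm1}$.
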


\section{\textbf{Evolutionary equations for the error $R$}}
In order to prove Theorem \ref{Thm1}, we need to obtain the uniform estimate for the error between the real solutions for the non-isentropic Euler-Poisson system \eqref{EP} and the formal approximation solutions $\epsilon\Psi$. In particular, we define the \emph{error}
\begin{align*}
\epsilon^{\beta}R=U-\epsilon\Psi
\end{align*}
between $U$ for the diagonalized system \eqref{short} of \eqref{EP} and the approximation solutions $\epsilon\Psi$. We need to show that it is of order $\mathcal{O}(\epsilon^{\beta})$ for all $t\in[0, T_{0}/\epsilon^{2}]$ and some $\beta>1$, i.e. we have to prove that $R$ is of order $\mathcal{O}(1)$ for all $t\in[0, T_{0}/\epsilon^{2}]$. Before giving the uniform estimate of the error $R$, we deduce the evolutionary equation for the error $R$ in this section.

Recall the final approximation $\epsilon\Psi$ in \eqref{modi}
\begin{align*}
\epsilon\Psi=\epsilon\begin{pmatrix} 0 \\ 0 \\ \psi_{1}+\psi_{-1}\end{pmatrix}
+\epsilon^{2}\begin{pmatrix} \psi_{p_{0}} \\ \psi_{p_{1}} \\ \psi_{p_{2}}\end{pmatrix}.
\end{align*}
For convenience of writing, we denote
\begin{align}\label{c123}
\psi_{c}:&=\psi_{1}+\psi_{-1},\
\varphi_{1}:=\psi_{p_{1}}-\psi_{p_{2}}, \nonumber\\
\varphi_{2}:&=(\gamma-2-q^{2})\psi_{p_{0}}+(\gamma-2)\psi_{p_{1}}+(\gamma-2)\psi_{p_{2}}, \
\varphi_{3}:=\psi_{p_{0}}+\psi_{p_{1}}+\psi_{p_{2}},\\
\varphi_{4}:&=\epsilon\psi_{c}+\epsilon^{2}\varphi_{3}, \
\varphi_{5}:=\epsilon(\gamma-1)\psi_{c}+\epsilon^{2}\varphi_{1} \nonumber.
\end{align}
Setting
\begin{align*}
U_{0}&=0+\epsilon^{2}\psi_{p_{0}}+\epsilon^{\beta}R_{0},\\
U_{1}&=0+\epsilon^{2}\psi_{p_{1}}+\epsilon^{\beta}R_{1},\\
U_{-1}&=\epsilon(\psi_{1}+\psi_{-1})+\epsilon^{2}\psi_{p_{-1}}+\epsilon^{\beta}R_{-1},
\end{align*}
for some $\beta>1$ sufficiently large, we see that the equations of $R_{j_{1}}$ with $j_{1}\in\{0,\pm1\}$ contain not only the diagonal terms $0, \Omega$ and $-\Omega$, but also terms of $\mathcal{O}(\epsilon)$ of the form $\epsilon B_{j_{1}j_{2}}(\psi_{c}, R_{j_{2}})$ with $j_{2}\in\{0,\pm1\}$. However, the appearance of terms of $\mathcal{O}(\epsilon)$ can perturb the linear evolution in such a way that the solutions begin to grow on time scale $\mathcal{O}(\epsilon^{-1})$ and hence we would lose control over the size of $R$ on the desired time scale $\mathcal{O}(\epsilon^{-2})$. Hence we need to remove these $\mathcal{O}(\epsilon)$ terms by making normal-form transforms of the form $\widetilde{R}_{j_{1}}=R_{j_{1}}+\epsilon N_{j_{1}j_{2}}(\psi_{c}, R_{j_{2}})$ in the equations for $R_{j_{1}}$. Unfortunately, we find that $N_{j_{1}j_{2}}(\psi_{c}, R_{j_{2}})$ is not well-defined because of the form of $\Omega$. More precisely, after careful calculation we find that the kernel function $n_{j_{1}j_{2}}(k,k-\ell,\ell)$ of $N_{j_{1}j_{2}}(\psi_{c}, R_{j_{2}})$ in Fourier space satisfies
\begin{align*}
n_{j_{1}j_{2}}(k,k-\ell,\ell)
=\frac{b_{j_{1}j_{2}}(k,k-\ell,\ell)}{-j_{1}\omega(k)-\omega(\pm k_{0})+j_{2}\omega(k\mp k_{0})},
\end{align*}
where $b_{j_{1}j_{2}}(k,k-\ell,\ell)$ is the kernel function of $B_{j_{1}j_{2}}(\psi_{c}, R_{j_{2}})$ in Fourier space. Note that the resonances $k=\pm k_{0}$ with $j_{1}=\mp1$ appear in the dominator of the expression of $n_{j_{1}j_{2}}$. This problem is induced by the behavior of $R_{j_{1}}$ near wave number zero. To solve this problem, we rescale the error by an additional power of $\epsilon$ for wave numbers close to zero. For some $\delta>0$ sufficiently small, but independent of $0<\epsilon\ll 1$, we define a weight function $\vartheta$ via the Fourier transform
\begin{equation}
\begin{split}\label{theta}
\widehat{\vartheta}(k)=\Big\{\begin{matrix} 1\ \ \ \ \ \ \ \ \ \ \ \ \ \ \ \ \ \ \ \ \ \ \ \ \ \ \ \ \text{for} \ \ |k|>\delta, \\ \epsilon+(1-\epsilon)| k|/\delta \ \ \ \ \ \ \ \ \ \ \text{for} \ \ |k|\leq\delta,\end{matrix}
\end{split}
\end{equation}
and make the following new ansatz
\begin{align}\label{var}
U_{0}&=0+\epsilon^{2}\psi_{p_{0}}+\epsilon^{\beta}\vartheta R_{0},\nonumber\\
U_{1}&=0+\epsilon^{2}\psi_{p_{1}}+\epsilon^{\beta}\vartheta R_{1},\\
U_{-1}&=\epsilon(\psi_{1}+\psi_{-1})+\epsilon^{2}\psi_{p_{-1}}+\epsilon^{\beta}\vartheta R_{-1},\nonumber
\end{align}
where $\beta=7/2$ and $\vartheta R_{j_{1}}$ is defined by $\widehat{\vartheta R}_{j_{1}}
=\widehat{\vartheta} \widehat{R}_{j_{1}}$. By this choice $\widehat{\vartheta} \widehat{R}_{j_{1}}(k)$ is small at the wave numbers close to zero reflecting the fact that the nonlinearity vanishes at $k=0$ in Fourier space.

Inserting \eqref{var} into \eqref{Diago}, after tedious computation, we have
\begin{align}\label{Rj}
\partial_{t}R_{0}=&-\frac{\epsilon}{\vartheta q^{2}}\left(q\psi_{c}\partial_{x}q^{2}\vartheta R_{0}\right)
+\frac{\epsilon(\gamma-1)}{\vartheta q^{2}}\left(\partial_{x} q\psi_{c}\vartheta((\gamma-2-q^{2})R_{0}+(\gamma-2)R_{1}+(\gamma-2)R_{-1})\right)\nonumber\\
&-\frac{\epsilon(\gamma-1)}{\vartheta q^{2}}\left((\gamma-2)\psi_{c}\partial_{x} q\vartheta(R_{1}-R_{-1})\right)
+\frac{\epsilon^{2}}{\vartheta q^{2}}\left(\partial_{x}q^{2}\psi_{p_{0}}q\vartheta(R_{1}-R_{-1})\right)\nonumber\\
&+\frac{\epsilon^{2}}{\vartheta q^{2}}\left(q\varphi_{1}\partial_{x}q^{2}\vartheta R_{0}\right)
-\frac{\epsilon^{2}(\gamma-1)}{\vartheta q^{2}}\left(\varphi_{2}\partial_{x}q\vartheta (R_{1}-R_{-1})\right)\nonumber\\
&-\frac{\epsilon^{2}(\gamma-1)}{\vartheta q^{2}}\left(\partial_{x}q\varphi_{1}
\vartheta\left((\gamma-2-q^{2})R_{0}+(\gamma-2)R_{1}+(\gamma-2)R_{-1})\right)\right)\nonumber\\
&-\frac{\epsilon^{\beta}(\gamma-1)}{\vartheta q^{2}}
\left(\vartheta\Big((\gamma-2-q^{2})R_{0}+(\gamma-2)R_{1}+(\gamma-2)R_{-1})\Big)
\partial_{x}q\vartheta(R_{1}-R_{-1})\right)\nonumber\\
&+\frac{\epsilon^{\beta}}{\vartheta q^{2}}\left(q\vartheta(R_{1}-R_{-1})\partial_{x}q^{2}\vartheta R_{0})\right)
+\frac{\epsilon^{-\beta}}{\vartheta}Res_{0}(\epsilon\Psi),\nonumber\\
\partial_{t}R_{j}=&j\Omega R_{j}
+\frac{\epsilon}{2\vartheta q^{2}}\left(q\psi_{c}
(\partial_{x}q^{2}\vartheta R_{0})\right)
-\frac{\epsilon\partial_{x}}{2\vartheta}\left(q\psi_{c}\vartheta(R_{0}+R_{1}+R_{-1})\right)\nonumber\\
&+\frac{\epsilon\partial_{x}}{2\vartheta}\left(\psi_{c}q\vartheta(R_{1}-R_{-1})\right)
+\frac{\epsilon(\gamma-1)(\gamma-2)}{2\vartheta q^{2}}\left(\psi_{c}\partial_{x}q \vartheta(R_{1}-R_{-1})\right)\nonumber\\
&-\frac{\epsilon(\gamma-1)}{2\vartheta q^{2}}\left(\partial_{x}q\psi_{c}
\vartheta((\gamma-2-q^{2})R_{0}+(\gamma-2)R_{1}+(\gamma-2)R_{-1})\right)
\nonumber\\
&-\frac{\epsilon j\partial_{x}}{2\vartheta q}\left(q\psi_{c}q\vartheta(R_{1}-R_{-1})\right)
+\frac{\epsilon j(\gamma-2)}{2\vartheta q}\left(\psi_{c}\partial_{x}\vartheta(R_{0}+R_{1}+R_{-1})\right)\nonumber\\
&+\frac{\epsilon j}{2\vartheta q}\left(\partial_{x}\psi_{c}
\vartheta((\gamma-2-q^{2})R_{0}+(\gamma-2)R_{1}+(\gamma-2)R_{-1})\right)\nonumber\\
&-\frac{\epsilon j\partial_{x}}{2\vartheta q(1-\partial_{x}^{2})}
\left(\frac{\psi_{c}}{1-\partial_{x}^{2}}
\frac{\vartheta (R_{0}+R_{1}+R_{-1})}{1-\partial_{x}^{2}}\right)
-\frac{\epsilon^{2}}{2\vartheta q^{2}}\left(q\varphi_{1}\partial_{x}q^{2}\vartheta R_{0}\right)\nonumber\\
&-\frac{\epsilon^{2}}{2\vartheta q^{2}}\left(\partial_{x}q^{2}\psi_{p_{0}}q\vartheta(R_{1}-R_{-1})\right)
+\frac{\epsilon^{2}\partial_{x}}{2\vartheta}
\left(q\varphi_{1}\vartheta(R_{0}+R_{1}+R_{-1})\right)\nonumber\\
&+\frac{\epsilon^{2}\partial_{x}}{2\vartheta}\left(\varphi_{3}q\vartheta(R_{1}-R_{-1})\right)
+\frac{\epsilon^{2}(\gamma-1)}{2\vartheta q^{2}}\left(\varphi_{2}\partial_{x}q\vartheta(R_{1}-R_{-1})\right)\nonumber\\
&+\frac{\epsilon^{2}(\gamma-1)}{2\vartheta q^{2}}\left(\partial_{x}q\varphi_{1}
\vartheta((\gamma-2-q^{2})R_{0}+(\gamma-2)R_{1}+(\gamma-2)R_{-1})\right)\nonumber\\
&+\frac{\epsilon^{2}j\partial_{x}}{2\vartheta q}\left(q\varphi_{1}q\vartheta(R_{1}-R_{-1}\right))
+\frac{\epsilon^{2}j}{2\vartheta q}\left(\varphi_{2}\partial_{x}\vartheta(R_{0}+R_{1}+R_{-1})\right)\nonumber\\
&+\frac{\epsilon^{2}j}{2\vartheta q}\left(\partial_{x}\varphi_{3}
\vartheta((\gamma-2-q^{2})R_{0}+(\gamma-2)R_{1}+(\gamma-2)R_{-1})\right)\nonumber\\
&-\frac{\epsilon^{2}j\partial_{x}}{2\vartheta q(1-\partial_{x}^{2})}
\left(\frac{\varphi_{3}}{1-\partial_{x}^{2}}
\frac{\vartheta (R_{0}+R_{1}+R_{-1})}{1-\partial_{x}^{2}}\right)\nonumber\\
&-\frac{\epsilon^{2}j}{4\vartheta q}\frac{\partial_{x}(2+\partial_{x}^{2})}{(1-\partial_{x}^{2})^{2}}
\left((\frac{\psi_{c}}{1-\partial_{x}^{2}})^{2}\frac{\vartheta (R_{0}+R_{1}+R_{-1})}{1-\partial_{x}^{2}}\right)\nonumber\\
&-\frac{\epsilon^{2}j\partial_{x}}{2\vartheta q}\left(\psi_{c}^{2}\vartheta(R_{0}+R_{1}+R_{-1})\right)
+\frac{\epsilon^{2}j}{2\vartheta q}\left(\psi_{c}\partial_{x}(\psi_{c}\vartheta(R_{0}+R_{1}+R_{-1}))\right)\nonumber\\
&+\frac{\epsilon^{2}j}{4\vartheta q}\left(\partial_{x}(\psi_{c}^{2})
\vartheta((\gamma-1-q^{2})R_{0}+(\gamma-1)R_{1}+(\gamma-1)R_{-1})\right)\nonumber\\
&-\frac{\epsilon^{3}j}{2\vartheta q}\frac{\partial_{x}(2+\partial_{x}^{2})}{(1-\partial_{x}^{2})^{2}}
\left(\frac{\varphi_{3}}{1-\partial_{x}^{2}}\frac{\psi_{c}}{1-\partial_{x}^{2}}
\frac{\vartheta (R_{0}+R_{1}+R_{-1})}{1-\partial_{x}^{2}}\right)\nonumber\\
&-\frac{\epsilon^{3}j\partial_{x}}{\vartheta q}\left(\psi_{c}\varphi_{3}\vartheta(R_{0}+R_{1}+R_{-1})\right)
+\frac{\epsilon^{3}(\gamma-1)j}{2\vartheta q}\left(\psi_{c}\partial_{x}(\varphi_{3}\vartheta(R_{0}+R_{1}+R_{-1}))\right)\nonumber\\
&+\frac{\epsilon^{3}j}{2\vartheta q}\left(\varphi_{2}\partial_{x}(\psi_{c}
\vartheta(R_{0}+R_{1}+R_{-1}))\right)\nonumber\\
&+\frac{\epsilon^{3}j}{2\vartheta q}\left(\varphi_{2}\partial_{x}(\psi_{c}\varphi_{3})
\vartheta((\gamma-1-q^{2})R_{0}+(\gamma-1)R_{1}+(\gamma-1)R_{-1})\right)\nonumber\\
&-\frac{\epsilon^{\beta}}{2\vartheta q^{2}}\left(q\vartheta(R_{1}-R_{-1})\partial_{x}q^{2}\vartheta R_{0}\right)
+\frac{\epsilon^{\beta}\partial_{x}}{2\vartheta}\left(q\vartheta(R_{1}-R_{-1})\vartheta(R_{0}+R_{1}+R_{-1})\right)\nonumber\\
&+\frac{\epsilon^{\beta}(\gamma-1)}{2\vartheta q^{2}}\left(\vartheta(
(\gamma-2-q^{2})R_{0}+(\gamma-2)R_{1}+(\gamma-2)R_{-1})\partial_{x}q\vartheta(R_{1}-R_{-1})\right)\nonumber\\
&+\frac{\epsilon^{\beta}j\partial_{x}}{4\vartheta q}(q\vartheta(R_{1}-R_{-1}))^{2}-\frac{\epsilon^{\beta}j\partial_{x}}{4\vartheta q(1-\partial_{x}^{2})}
\left(\frac{\vartheta (R_{0}+R_{1}+R_{-1})}{1-\partial_{x}^{2}}\right)^{2}\nonumber\\
&+\frac{\epsilon^{\beta}j}{2\vartheta q}\left(
\vartheta((\gamma-2-q^{2})R_{0}+(\gamma-2)R_{1}+(\gamma-2)R_{-1})
\partial_{x}\vartheta(R_{0}+R_{-1}+R_{1})\right)\nonumber\\
&-\frac{\epsilon^{\beta}j}{4\vartheta q}\frac{\partial_{x}(2+\partial_{x}^{2})}{(1-\partial_{x}^{2})^{2}}
\left(\frac{\varphi_{4}}{1-\partial_{x}^{2}}
\left(\frac{\vartheta (R_{0}+R_{1}+R_{-1})}{1-\partial_{x}^{2}}\right)^{2}\right)\nonumber\\
&-\frac{\epsilon^{\beta}j\partial_{x}}{2\vartheta q}\left(\varphi_{4}(\vartheta(R_{0}+R_{1}+R_{-1}))^{2}\right)
+\frac{\epsilon^{\beta}j}{4\vartheta q}\left(\varphi_{5}\partial_{x}(\vartheta(R_{0}+R_{1}+R_{-1}))^{2})\right)\nonumber\\
&+\frac{\epsilon^{\beta}j}{2\vartheta q}\left(\partial_{x}(\varphi_{4}
\vartheta(R_{0}+R_{1}+R_{-1}))\vartheta((\gamma-1-q^{2})R_{0}+(\gamma-1)R_{1}+(\gamma-1)R_{-1})\right)\nonumber\\
&-\frac{\epsilon^{2\beta}j}{12\vartheta q}\frac{\partial_{x}(2+\partial_{x}^{2})}{(1-\partial_{x}^{2})^{2}}
\left(
\left(\frac{\vartheta(R_{0}+R_{1}+R_{-1})}{1-\partial_{x}^{2}}\right)^{3}\right)
-\frac{\epsilon^{2\beta}j\partial_{x}}{6\vartheta q}\left((\vartheta(R_{0}+R_{1}+R_{-1}))^{3}\right)\nonumber\\
&+\frac{\epsilon^{2\beta}j}{4\vartheta q}\left(\vartheta((\gamma-1-q^{2})R_{0}+(\gamma-1)R_{1}+(\gamma-1)R_{-1})
\partial_{x}(\vartheta(R_{0}+R_{1}+R_{-1}))^{2})\right)\nonumber\\
&+\frac{\epsilon^{3}}{\vartheta}\widetilde{\mathcal{H}}(\vartheta R)
+\frac{\epsilon^{-\beta}}{\vartheta}Res_{j}(\epsilon\Psi),
\end{align}
where $j\in\{\pm1\}$, $\frac{1}{\vartheta}=\vartheta^{-1}$ and $\widehat{\frac{1}{\vartheta}}(k)=\widehat{\vartheta}^{-1}(k) =(\widehat{\vartheta}(k))^{-1}$. These RHS of these long equations can be grouped according to powers of $\epsilon$. However, we keep all these terms here and we will show how to treat these terms in the next section.

From the definition of $\vartheta$ in \eqref{theta}, it is obvious that $\widehat{\vartheta}^{-1}(k)$ is at most of order $\mathcal{O}(\epsilon^{-1})$ for $|k|\leq\delta$ but of order $\mathcal{O}(1)$ for $|k|>\delta$. Therefore, many terms that are formally of $\mathcal{O}(\epsilon)$ and $\mathcal{O}(\epsilon^{2})$ in above equations are actually of $\mathcal{O}(1)$ and $\mathcal{O}(\epsilon)$, respectively, due to the appearance of ${\vartheta}^{-1}$. Therefore, in order to justify the NLS approximation on the desired long time intervals $\mathcal{O}(\epsilon^{-2})$, we have to make normal-form transforms to remove these $\mathcal{O}(1)$ and $\mathcal{O}(\epsilon)$ terms, to simplify the equations of the error $R$.

\section{\textbf{Normal-form transforms}}
In order to eliminate the $\mathcal{O}(1)$ and $\mathcal{O}(\epsilon)$ terms on the right hand sides of evolutionary equations for $R$ in \eqref{Rj} we make a series of normal-form transforms that removes one or more of the bad terms. In the process of making normal-form transforms for the non-isentropic Euler-Poisson system \eqref{EP}, we find that the occurrence of the trivial resonance at wave number $k=0$ always implies the existence of nontrivial resonances at wave number $k=\pm k_{0}$. In addition, in the non-isentropic case, due to the appearance of the temperature $T$, one component of the diagonalized system of equations \eqref{Diago} has a linear frequency that is identically zero. As a result we get additional resonances at wave number $k=\pm 2k_{0}$. This necessitates different rescalings of the correction to the NLS approximation at different wave numbers. This problem does not occur in the isentropic case, as studied in our previous paper \cite{LP19}.

\subsection{\textbf{Basic strategies: a toy example}}\label{Sect5.1}

In this subsection, we will explain the general strategy with a quite specific example. We consider the terms from the second equation in \eqref{Rj}
\begin{align}\label{rn}
\partial_{t}R_{1}=&\Omega R_{1}
-\frac{\epsilon\partial_{x}}{2\vartheta}\left(q\psi_{c}\vartheta R_{1}\right)+\cdot\cdot\cdot
\end{align}
and we want to eliminate the term $\vartheta^{-1}\epsilon B(\psi_{c},\vartheta R_{1}):=\frac{\epsilon\partial_{x}}{2\vartheta}\left(q\psi_{c}\vartheta R_{1}\right)$ from the above equation.

Let
\begin{align}\label{RN}
\widetilde{R}_{1}=R_{1}+\epsilon N(\psi_{c},R_{1})
\end{align}
where $N$ is a bilinear form to be determined. 
Inserting \eqref{RN} to equation \eqref{rn}, we have
\begin{align*}
\partial_{t}\widetilde{R}_{1}=&\partial_{t}R_{1}+\epsilon N(\partial_{t}\psi_{c},R_{1})+\epsilon N(\psi_{c},\partial_{t}R_{1})\\
=&\Omega R_{1}
+\vartheta^{-1}\epsilon B(\psi_{c},\vartheta R_{1})
+\epsilon N(\partial_{t}\psi_{c},R_{1})+\epsilon N(\psi_{c},\partial_{t}R_{1})+\cdot\cdot\cdot\\
=&\Omega \widetilde{R}_{1}
+\vartheta^{-1}\epsilon B(\psi_{c},\vartheta R_{1})
-\epsilon \Omega N(\psi_{c},R_{1})
+\epsilon N(\Omega\psi_{c},R_{1})+\epsilon N(\psi_{c},\Omega R_{1})+\cdot\cdot\cdot,
\end{align*}
where we have used \eqref{11s} in Lemma \ref{LO}. Therefore, in order to eliminate the term $\vartheta^{-1}\epsilon B(\psi_{c},\vartheta R_{1})$, we choose $N$ such that
\begin{align}\label{B}
\vartheta^{-1}\epsilon B(\psi_{c},\vartheta R_{1})
-\epsilon \Omega N(\psi_{c},R_{1})
+\epsilon N(\Omega\psi_{c},R_{1})+\epsilon N(\psi_{c},\Omega R_{1})=0.
\end{align}
In order to compute the kernel $n(k,k-\ell,\ell)$ of $N$ in Fourier space, we take Fourier transform of \eqref{B} to obtain
\begin{align*}
&\int \widehat{\vartheta}^{-1}(k)b(k,k-\ell,\ell)\widehat{\psi}_{c}(k-\ell)\widehat{\vartheta}(\ell) \widehat{R}_{1}(\ell)d\ell\\
=&i\omega(k)\int n(k,k-\ell,\ell)\widehat{\psi}_{c}(k-\ell)\widehat{R}_{1}(\ell)d\ell\\
&-i\int n(k,k-\ell,\ell)\omega(k-\ell)\widehat{\psi}_{c}(k-\ell)\widehat{R}_{1}(\ell)d\ell\\
&-i\int n(k,k-\ell,\ell)\widehat{\psi}_{c}(k-\ell)\omega(\ell)\widehat{R}_{1}(\ell)d\ell,
\end{align*}
where $b(k,k-\ell,\ell)$ is the kernel of $B(\psi_{c}, R_{1})$ in Fourier space. Therefore
\begin{align*}
n(k,k-\ell,\ell)=\frac{-ib(k,k-\ell,\ell)}{\omega(k)-\omega(k-\ell)-\omega(\ell)}
\frac{\widehat{\vartheta}(\ell)}{\widehat{\vartheta}(k)}.
\end{align*}
The kernel $n(k,k-\ell,\ell)$ will be well-defined only when the denominator $\omega(k)-\omega(k-\ell)-\omega(\ell)$ is bounded away from zero, or in some cases, a zero in this expression is off-set by a zero of the numerator $b(k,k-\ell,\ell)$ at the same values of $k$ and $\ell$.

\subsection{\textbf{Frequency projections}}
Note that the size of the Fourier transform of the nonlinear terms in the error equations \eqref{Rj} depends on whether $k$ is close to zero or not. In order to separate the behavior in these regions more clearly we define projection operators $P^{0}$ and $P^{1}$ by the Fourier multipliers
\begin{equation}\label{projections}
\begin{split}
\widehat{P}^{0}(k)=\chi_{\mid k\mid\leq\delta}(k)\ \ \ \text{and} \ \ \ \widehat{P}^{1}(k)=\mathbf{1}-\widehat{P}^{0}(k),
\end{split}
\end{equation}
for a $\delta>0$ sufficiently small (the same $\delta$ in the definition of $\vartheta$), but independent of $0<\epsilon\ll1$. When necessary we will write $R=R^{0}+R^{1}$ with $R^{j}=P^{j}R$, for $j=0,1$. In the following, the superscripts $0,1$ always denote the spectrum projections, and should not be confused with the subscripts which denote the component of $R$.

Applying the projection operators $P^{0}$ and $P^{1}$ to system \eqref{Rj}, 
we have
\begin{align}\label{BR}
\partial_{t}R^{0}_{j}&=j\Omega R^{0}_{j}
+\frac{P^{0}}{\vartheta}\sum_{n\in\{0,\pm1\}}\Big(\epsilon \sum_{l\in\{\pm1\}}B^{0,1}_{j,l,n}(\psi_{l},\vartheta R^{1}_{n})
+\epsilon^{2} B^{0,0}_{j,0,n}(\psi_{0},\vartheta R^{0}_{n})\nonumber\\
&+\epsilon^{2} B^{0,1}_{j,0,n}(\psi_{0},\vartheta R^{1}_{n})
+\epsilon^{2}\sum_{l\in\{\pm2\}}B^{0,1}_{j,l,n}(\psi_{l},\vartheta R^{1}_{n})
+\epsilon^{2} \sum_{l\in\{\pm1\}}B^{0,1}_{j,l,n}(\psi^{1}_{l},\vartheta R^{1}_{n})\\
&+\epsilon^{2}j\sum_{l_{1},l_{2}\in\{\pm1\}}T^{0,1}_{j,l_{1},l_{2},n}(\psi_{l_{1}},\psi_{l_{2}},\vartheta R^{1}_{n})\Big)
+\mathcal{O}(\epsilon^{2}),\nonumber
\end{align}
\begin{align}\label{BR1}
\partial_{t}R^{1}_{j}&=j\Omega R^{1}_{j}+\frac{P^{1}}{\vartheta}\sum_{n\in\{0,\pm1\}}\Big(\epsilon \sum_{l\in\{\pm1\}}B^{1,0}_{j,l,n}(\psi_{l},\vartheta R^{0}_{n})
+\epsilon^{2} B^{1,0}_{j,0,n}(\psi_{0},\vartheta R^{0}_{n})\nonumber\\
&+\epsilon^{2} \sum_{l\in\{\pm2\}}B^{1,0}_{j,l,n}(\psi_{l},\vartheta R^{0}_{n})
+\epsilon^{2} \sum_{l\in\{\pm1\}}B^{1,0}_{j,l,n}(\psi^{1}_{l},\vartheta R^{0}_{n})\nonumber\\
&+\epsilon^{2}j\sum_{l_{1},l_{2}\in\{\pm1\}}T^{1,0}_{j,l_{1},l_{2},n}(\psi_{l_{1}},\psi_{l_{2}},\vartheta R^{0}_{n})\Big)\\
&+\frac{P^{1}}{\vartheta}\sum_{n\in\{0,\pm1\}}\Big(\epsilon
\sum_{l\in\{\pm1\}}B^{1,1}_{j,l,n}(\psi_{l},\vartheta R^{1}_{n})
+\epsilon^{2} B^{1,1}_{j,0,n}(\psi_{0},\vartheta R^{1}_{n})
+\epsilon^{2} \sum_{l\in\{\pm2\}}B^{1,1}_{j,l,n}(\psi_{l},\vartheta R^{1}_{n})\nonumber\\
&+\epsilon^{2} \sum_{l\in\{\pm1\}}B^{1,1}_{j,l,n}(\psi^{1}_{l},\vartheta R^{1}_{n})
+\epsilon^{2}j\sum_{l_{1},l_{2}\in\{\pm1\}}T^{1,1}_{j,l_{1},l_{2},n}(\psi_{l_{1}},\psi_{l_{2}},\vartheta R^{1}_{n})\Big)
+\mathcal{O}(\epsilon^{2})\nonumber,
\end{align}
where $j\in\{0, \pm1\}$.

According to \eqref{Rj}, we have for $j=0$
\begin{align}\label{Bc02}
&\epsilon P^{j_{1}}\sum_{n\in\{0,\pm1\}}B^{j_{1},j_{2}}_{0,l,n}(\psi_{l},R^{j_{2}}_{n})\nonumber\\
:=&\epsilon P^{j_{1}}\Big[-\frac{1}{q^{2}}\left(q\psi_{l}\partial_{x}q^{2} R^{j_{2}}_{0}\right)-\frac{\gamma-1}{q^{2}}\left((\gamma-2)\psi_{l}\partial_{x} q (R^{j_{2}}_{1}-R^{j_{2}}_{-1})\right)\\
&+\frac{\gamma-1}{q^{2}}\left(\partial_{x} q\psi_{l}((\gamma-2-q^{2}) R^{j_{2}}_{0}+(\gamma-2)(R^{j_{2}}_{1}+R^{j_{2}}_{-1})\right)\Big],\nonumber
\end{align}
\begin{align}\label{B00n}
&\epsilon^{2}P^{j_{1}}\sum_{n\in\{0,\pm1\}}B^{j_{1},j_{2}}_{0,0,n}(\psi_{0},R^{j_{2}}_{n})\nonumber\\
:=&\epsilon^{2}P^{j_{1}}\Big[\frac{1}{q^{2}}\left(\partial_{x}q^{2}\psi_{00}q (R^{j_{2}}_{1}-R^{j_{2}}_{-1})\right)
+\frac{1}{q^{2}}\left(q(\psi_{01}-\psi_{0-1})\partial_{x}q^{2}R^{j_{2}}_{0}\right)\nonumber\\
&-\frac{\gamma-1}{ q^{2}}\left(((\gamma-2-q^{2})\psi_{00}+(\gamma-2)(\psi_{01}+\psi_{0-1}))\partial_{x}q (R^{j_{2}}_{1}-R^{j_{2}}_{-1})\right)\\
&-\frac{\gamma-1}{q^{2}}\left(\partial_{x}q(\psi_{01}-\psi_{0-1})
\left((\gamma-2-q^{2})R^{j_{2}}_{0}+(\gamma-2)(R^{j_{2}}_{1}+ R^{j_{2}}_{-1})\right)\right)\Big],\nonumber
\end{align}
\begin{align}\label{B02n}
&\epsilon^{2}P^{j_{1}}\sum_{n\in\{0,\pm1\}}B^{j_{1},j_{2}}_{0,\pm2,n}(\psi_{\pm2},R^{j_{2}}_{n})\nonumber\\
:=&\epsilon^{2}P^{j_{1}}\bigg[\frac{1}{ q^{2}}\left(\partial_{x}q^{2}\psi_{\pm20}q (R^{j_{2}}_{1}-R^{j_{2}}_{-1})\right)
+\frac{1}{ q^{2}}\left(q(\psi_{\pm21}-\psi_{\pm2-1})\partial_{x}q^{2} R^{j_{2}}_{0}\right)\nonumber\\
&-\frac{\gamma-1}{q^{2}}\left(((\gamma-2-q^{2})\psi_{\pm20}
+(\gamma-2)(\psi_{\pm21}+\psi_{\pm2-1}))\partial_{x}q (R^{j_{2}}_{1}-R^{j_{2}}_{-1})\right)\\
&-\frac{\gamma-1}{q^{2}}\left(\partial_{x}q(\psi_{\pm21}-\psi_{\pm2-1})
\left((\gamma-2-q^{2})R^{j_{2}}_{0}+(\gamma-2)(R^{j_{2}}_{1}+ R^{j_{2}}_{-1})\right)\right)\bigg],\nonumber
\end{align}
and for $j\in\{\pm1\}$ we have
\begin{align}\label{Bc03}
&\epsilon P^{j_{1}}\sum_{n\in\{0,\pm1\}}B^{j_{1},j_{2}}_{j,l,n}(\psi_{l},R^{j_{2}}_{n})\nonumber\\
:=&\epsilon P^{j_{1}}\Bigg[\frac{1}{2q^{2}}\left(q\psi_{l}
(\partial_{x}q^{2}R^{j_{2}}_{0})\right)
-\frac{\partial_{x}}{2}\left(q\psi_{l}(R^{j_{2}}_{0}+R^{j_{2}}_{1}+R^{j_{2}}_{-1})\right)\nonumber\\
&+\frac{\partial_{x}}{2}\left(\psi_{l}q(R^{j_{2}}_{1}-R^{j_{2}}_{-1})\right)
+\frac{(\gamma-1)(\gamma-2)}{2q^{2}}\left(\psi_{l}\partial_{x}q (R^{j_{2}}_{1}-R^{j_{2}}_{-1})\right)\nonumber\\
&-\frac{\gamma-1}{2q^{2}}\left(\partial_{x}q\psi_{l}
((\gamma-2-q^{2})R^{j_{2}}_{0}+(\gamma-2)(R^{j_{2}}_{1}+R^{j_{2}}_{-1})\right)
\\
&-\frac{j\partial_{x}}{2q}\left(q\psi_{l}q(R^{j_{2}}_{1}-R^{j_{2}}_{-1})\right)
+\frac{j(\gamma-2)}{2 q}\left(\psi_{l}\partial_{x}(R^{j_{2}}_{0}+R^{j_{2}}_{1}+R^{j_{2}}_{-1})\right)\nonumber\\
&+\frac{j}{2q}\left(\partial_{x}\psi_{l}
((\gamma-2-q^{2})R^{j_{2}}_{0}+(\gamma-2)(R^{j_{2}}_{1}+R^{j_{2}}_{-1})\right)\nonumber\\
&-\frac{j\partial_{x}}{2q(1-\partial_{x}^{2})}
\left(\frac{\psi_{l}}{1-\partial_{x}^{2}}
\frac{(R^{j_{2}}_{0}+R^{j_{2}}_{1} +R^{j_{2}}_{-1})}{1-\partial_{x}^{2}}\right)\Bigg],\nonumber
\end{align}
\begin{align}\label{Bj0n}
&\epsilon^{2}P^{j_{1}}\sum_{n\in\{0,\pm1\}}B^{j_{1},j_{2}}_{j,0,n}(\psi_{0},R^{j_{2}}_{n})\nonumber\\
:=&\epsilon^{2}P^{j_{1}}\Bigg[-\frac{1}{2 q^{2}}\left(q(\psi_{01}-\psi_{0-1})\partial_{x}q^{2}R^{j_{2}}_{0}\right)\nonumber\\
&-\frac{1}{2 q^{2}}\left(\partial_{x}q^{2}\psi_{00}q(R^{j_{2}}_{1}-R^{j_{2}}_{-1})\right)
+\frac{\partial_{x}}{2}
\left(q(\psi_{01}-\psi_{0-1})(R^{j_{2}}_{0}+R^{j_{2}}_{1}+R^{j_{2}}_{-1})\right)\nonumber\\
&+\frac{\partial_{x}}{2}\left((\psi_{00}+\psi_{01}+\psi_{0-1})
q\vartheta(R^{j_{2}}_{1}-R^{j_{2}}_{-1})\right)\nonumber\\
&+\frac{\gamma-1}{2 q^{2}}\left(((\gamma-2-q^{2})\psi_{00}
+(\gamma-2)(\psi_{01}+\psi_{0-1}))\partial_{x}q(R^{j_{2}}_{1}-R^{j_{2}}_{-1})\right)\\
&+\frac{\gamma-1}{2q^{2}}\left(\partial_{x}q(\psi_{01}-\psi_{0-1})
((\gamma-2-q^{2})R^{j_{2}}_{0}+(\gamma-2)(R^{j_{2}}_{1}+R^{j_{2}}_{-1}))\right)\nonumber\\
&+\frac{j\partial_{x}}{2 q}\left(q(\psi_{01}-\psi_{0-1})q(R^{j_{2}}_{1}-R^{j_{2}}_{-1}\right))\nonumber\\
&+\frac{j}{2q}\left(((\gamma-2-q^{2})\psi_{00}+(\gamma-2)(\psi_{01}+\psi_{0-1}))
\partial_{x}(R^{j_{2}}_{0}+R^{j_{2}}_{1}+R^{j_{2}}_{-1})\right)\nonumber\\
&+\frac{j}{2q}\left(\partial_{x}(\psi_{00}+\psi_{01}+\psi_{0-1})
((\gamma-2-q^{2})R^{j_{2}}_{0}+(\gamma-2)(R^{j_{2}}_{1}+R^{j_{2}}_{-1}))\right)\nonumber\\
&-\frac{j\partial_{x}}{2q(1-\partial_{x}^{2})}
\left(\frac{(\psi_{00}+\psi_{01}+\psi_{0-1})}{1-\partial_{x}^{2}}
\frac{(R^{j_{2}}_{0}+R^{j_{2}}_{1}+R^{j_{2}}_{-1})}{1-\partial_{x}^{2}}\right)\Bigg],\nonumber
\end{align}
\begin{align}\label{Bj2n}
&\epsilon^{2}P^{j_{1}}\sum_{n\in\{0,\pm1\}}B^{j_{1},j_{2}}_{j,\pm2,n}(\psi_{\pm2},R^{j_{2}}_{n})\nonumber\\
:=&\epsilon^{2}P^{j_{1}}\Bigg[-\frac{1}{2 q^{2}}\left(q(\psi_{\pm21}-\psi_{\pm2-1})\partial_{x}q^{2} R^{j_{2}}_{0}\right)
-\frac{1}{2 q^{2}}\left(\partial_{x}q^{2}(\psi_{\pm20})q(R^{j_{2}}_{1}-R^{j_{2}}_{-1})\right)\nonumber\\
&+\frac{\partial_{x}}{2}
\left(q(\psi_{\pm21}-\psi_{\pm2-1})
(R^{j_{2}}_{0}+R^{j_{2}}_{1}+R^{j_{2}}_{-1})\right)\nonumber\\
&+\frac{\partial_{x}}{2}
\left((\psi_{\pm20}+\psi_{\pm21}+\psi_{\pm2-1})
q(R^{j_{2}}_{1}-R^{j_{2}}_{-1})\right)\nonumber\\
&+\frac{\gamma-1}{2q^{2}}\left(((\gamma-2-q^{2})\psi_{\pm20}
+(\gamma-2)(\psi_{\pm21}+\psi_{\pm2-1}))
\partial_{x}q(R^{j_{2}}_{1}-R^{j_{2}}_{-1})\right)\\
&+\frac{\gamma-1}{2q^{2}}\left(\partial_{x}q(\psi_{\pm21}-\psi_{\pm2-1})
((\gamma-2-q^{2})R^{j_{2}}_{0}+(\gamma-2)(R^{j_{2}}_{1}+R^{j_{2}}_{-1})\right)\nonumber\\
&+\frac{j\partial_{x}}{2 q}\left(q(\psi_{\pm21}-\psi_{\pm2-1})q(R^{j_{2}}_{1}-R^{j_{2}}_{-1}\right))\nonumber\\
&+\frac{j}{2q}\left(((\gamma-2-q^{2})\psi_{\pm20}
+(\gamma-2)(\psi_{\pm21}+\psi_{\pm2-1}))
\partial_{x}(R^{j_{2}}_{0}+R^{j_{2}}_{1}+R^{j_{2}}_{-1})\right)\nonumber\\
&+\frac{j}{2 q}\left(\partial_{x}(\psi_{\pm20}+\psi_{\pm21}+\psi_{\pm2-1})
((\gamma-2-q^{2})R^{j_{2}}_{0}+(\gamma-2)(R^{j_{2}}_{1}+R^{j_{2}}_{-1})\right)\nonumber\\
&-\frac{j\partial_{x}}{2q(1-\partial_{x}^{2})}
\left(\frac{(\psi_{\pm20}+\psi_{\pm21}+\psi_{\pm2-1})}{1-\partial_{x}^{2}}
\frac{(R^{j_{2}}_{0}+R^{j_{2}}_{1}+R^{j_{2}}_{-1})}{1-\partial_{x}^{2}}\right)\Bigg],\nonumber
\end{align}
\begin{align}\label{Bj12n}
&\epsilon^{2}P^{j_{1}}\sum_{n\in\{0,\pm1\}}T^{j_{1},j_{2}}_{j,l_{1},l_{2},n}
(\psi_{l_{1}},\psi_{l_{2}},R^{j_{2}}_{n})\nonumber\\
:=&\epsilon^{2}P^{j_{1}}\sum_{n\in\{0,\pm1\}}\bigg[-\frac{\partial_{x}}{2 q}\left(\psi_{l_{1}}\psi_{l_{2}}(R^{j_{2}}_{0}+R^{j_{2}}_{1}+R^{j_{2}}_{-1})\right)
+\frac{1}{3 q}\left(\psi_{l_{1}}\partial_{x}(\psi_{l_{2}}(R^{j_{2}}_{0}+R^{j_{2}}_{1}+R^{j_{2}}_{-1}))\right)\nonumber\\
&+\frac{1}{4q}\left(\partial_{x}(\psi_{l_{1}}\psi_{l_{2}})
((\gamma-2-q^{2})R^{j_{2}}_{0}+(\gamma-2)(R^{j_{2}}_{1}+R^{j_{2}}_{-1}))\right)\bigg],
\end{align}
for $j_{1}, j_{2}\in\{0,1\}$ and $B^{j_{1},j_{2}}_{j,l,n}(\psi^{1}_{l},R^{j_{2}}_{n})$ has similar expression with $B^{j_{1},j_{2}}_{j,l,n}(\psi_{l},R^{j_{2}}_{n})$ for $j\in\{0,\pm1\}$, only $\psi_{\pm1}$ being replaced by $\psi^{1}_{\pm1}$. Note that 
by definition, $\widehat{\vartheta}^{-1}(k)$ is at most of order $\mathcal{O}(\epsilon^{-1})$ for $k\leq\delta$ and is of order $\mathcal{O}(1)$ for $k>\delta$. Hence, we retain some terms in \eqref{BR} of the form $B_{j,0,n}$, $B_{j,2,n}$ and $T_{j,l_{1},l_{2},n}$ standing for trilinear terms, which will be treated in details below. Here we have used the fact that
\begin{align*}
P^{0}B_{j,l,n}^{0,0}(\psi_{l},\vartheta R^{0}_{n})=P^{0}B_{j,2,n}^{0,0}(\psi_{2},\vartheta R^{0}_{n})
=P^{0}T_{j,l_{1},l_{2},n}^{0,0}(\psi_{l_{1}},\psi_{l_{2}},\vartheta R^{0}_{n})=0
\end{align*}
for $j,n\in\{0,\pm1\}$
, due to the fact that $\widehat{\Psi}_{l}(k-\ell)=0$ unless $|k-\ell-lk_{0}|<\delta$ and $\widehat{R}^{0}(\ell)=0$ for $|\ell|>\delta$.

On the right hand side of the evolution for $R^{1}_{j}$, since $\frac{P^{1}}{\vartheta}=\mathcal{O}(1)$, all terms except $j\Omega R^{1}_{j}$ and $\frac{\epsilon P^{1}}{\vartheta}B^{1,r}_{j,l,n}(\psi_{l},\vartheta R^{l}_{n})$ with $r\in\{0, 1\}$ are at least of order $\mathcal{O}(\epsilon^{2})$ and then need not to be removed. Furthermore, the terms $\frac{\epsilon^{2}P^{0}}{\vartheta}B^{0,r}_{j,0,n}(\psi_{0},\vartheta R^{r}_{n})$ from \eqref{Bj0n} and $\frac{\epsilon^{2}P^{0}}{\vartheta}T^{0,r}_{j,1,-1,n}(\psi_{1},\psi_{-1},\vartheta R^{r}_{n})$ from \eqref{Bj12n} with $r\in\{0, \ 1\}$ in the evolutionary equation \eqref{BR} for $R^{0}_{j}$ are at least of $\mathcal{O}(\epsilon^{2})$ and need not to be removed either. Let us explain them one by one in the following.

Firstly, we show that the terms $\frac{\epsilon^{2}P^{0}}{\vartheta}B^{0,r}_{j,0,n}(\psi_{0},\vartheta R^{r}_{n})$ from \eqref{Bj0n} in the evolution equation \eqref{BR} for $R^{0}_{j}$ for $j,n\in\{0,\pm1\}$ and $r\in\{0,1\}$ are at least of $\mathcal{O}(\epsilon^{2})$. We split $B^{0,r}_{j,0,n}(\psi_{0},\vartheta R^{r}_{n})$ in the components
\begin{align*}
B^{0,r}_{j,0,n}(\psi_{0},\vartheta R^{r}_{n})
=\sum_{m=0,\pm1}B^{0,r,m}_{j,0,n}(\psi_{0m},\vartheta R^{r}_{n}).
\end{align*}
Note that each summand in $B^{0,r}_{j,0,n}$ contains as least one $x$-derivative and $\left|\ell\right|\leq|k|+|k-\ell|$. We obtain
\begin{align*}
\sup_{|\ell|=\mathcal{O}(1)}\widehat{P}^{0}b^{0,r,m}_{j,0,n}(k,k-\ell,\ell)
\leq C(|k|+ |k-\ell|)
\end{align*}
for $m\in\{0, \pm1\}$, where $b^{0,r,m}_{j,0,n}(k,k-\ell,\ell)$ is the kernel of $B^{0,r,m}_{j,0,n}$. Applying this estimate and noting that $\left|\frac{k}{\widehat{\vartheta}(k)}\right|\leq1$ and $\frac{k-\ell}{\epsilon}
\widehat{\psi}_{0m}(k-\ell)=\widehat{\partial_{X}\psi}_{0m}(k-\ell)$, we obtain
\begin{align*}
&\left|\int\frac{\widehat{P}^{0}(k)}{\widehat{\vartheta}(k)}b^{0,r,m}_{j,0,n}(k,k-\ell,\ell)
\widehat{\vartheta}(\ell)\widehat{\psi}_{0m}(k-\ell)\widehat{R}^{r}_{n}(\ell)d\ell\right|\\
\leq &C\left(\int|\widehat{P}^{0}(k)|\left|\frac{k}{\widehat{\vartheta}(k)}\right|
|\widehat{\psi}_{0m}(k-\ell)||\widehat{R}^{r}_{n}(\ell)|d\ell
+\int\left|\widehat{P}^{0}(k)||\frac{k-\ell}{\epsilon}\right|
|\widehat{\psi}_{0m}(k-\ell)||\widehat{R}^{r}_{n}(\ell)|d\ell\right)\\
\leq &C\left(\int|\widehat{P}^{0}(k)|
|\widehat{\psi}_{0m}(k-\ell)||\widehat{R}^{r}_{n}(\ell)|d\ell
+\int|\widehat{P}^{0}(k)|
|\widehat{\partial_{X}\psi}_{0m}(k-\ell)| |\widehat{R}^{r}_{n}(\ell)|d\ell\right),
\end{align*}
for $j, n, m\in\{0, \pm1\}$ and $r\in\{0, 1\}$. This implies
\begin{align}\label{b01i}
\left\|\frac{\epsilon^{2}P^{0}}{\vartheta}B^{0,r}_{j,0,n}(\psi_{0},\vartheta R^{r}_{n})\right\|_{H^{s}}
\leq \epsilon^{2}C(\Psi_{0})\|R^{r}_{n}\|_{H^{s}},
\end{align}
where $C(\Psi_{0})$ is independent of $\epsilon$, since $\widehat{\Psi}_{0}(k)$ is concentrated near $k=0$. Hence, the terms of $\frac{\epsilon^{2}P^{0}}{\vartheta}B^{0,r}_{j,0,n}(\psi_{0},\vartheta R^{r}_{n})$ with $r\in\{0, \ 1\}$ are of order $\mathcal{O}(\epsilon^{2})$ and need not to be eliminated.

Then we consider the trilinear terms $\frac{\epsilon^{2}P^{0}}{\vartheta}T^{0,r}_{j,1,-1,n}(\psi_{1},\psi_{-1},\vartheta R^{r}_{n})$ from \eqref{Bj12n} in the evolutionary equation \eqref{BR} for $R^{0}_{j}$ for $j,n\in\{0,\pm1\}$ and $r\in\{0,1\}$. Using $\left|p\right|\leq|k|+|k-p|$, we have
\begin{align}\label{t01}
\sup_{|p|=\mathcal{O}(1)}(\widehat{P}^{0}t^{0,r}_{j,1,-1,n})(k,k-\ell,\ell-p,p)
\leq C(|k|+|k-p|),
\end{align}
where $t^{0,r}_{j,1,-1,n}(k,k-\ell,\ell-p,p)$ is the kernel of $T^{0,r}_{j,1,-1,n}$. This estimate \eqref{t01} is obtained based on the fact that each summand of $T^{0,r}_{j,1,-1,n}$ contains at least one $x$-derivative. Applying \eqref{t01} we obtain
\begin{align*}
&\left|\int\int\frac{\widehat{P}^{0}(k)}{\widehat{\vartheta}(k)}t^{0,r}_{j,1,-1,n}(k,k-\ell,\ell-p,p)
\widehat{\vartheta}(\ell)\widehat{\psi}_{1}(k-\ell)\widehat{\psi}_{-1}(\ell-p)
\widehat{R}^{r}_{n}(p)d\ell dp\right|\\
\leq &C\bigg(\int\int|\widehat{P}^{0}(k)|\left|\frac{k}{\widehat{\vartheta}(k)}\right|
|\widehat{\psi}_{1}(k-\ell)\widehat{\psi}_{-1}(\ell-p)||\widehat{R}^{r}_{n}(p)|d\ell dp\\
&+\int\int\left|\widehat{P}^{0}(k)||\frac{k-p}{\epsilon}\right|
|\widehat{\psi}_{1}(k-\ell)\widehat{\psi}_{-1}(\ell-p)||\widehat{R}^{r}_{n}(p)|d\ell dp\bigg)\\
\leq &C\bigg(\int\int|\widehat{P}^{0}(k)|
|\widehat{\psi}_{1}(k-\ell)\widehat{\psi}_{-1}(\ell-p)||\widehat{R}^{r}_{n}|d\ell dp
+\int|\widehat{P}^{0}(k)|
|\widehat{\partial_{X}(\psi_{1}\psi_{-1})}(k-p)||\widehat{R}^{r}_{n}| dp\bigg),
\end{align*}
for $j,n\in\{0,\pm1\}$ and $r\in\{0,1\}$. This implies
\begin{align}\label{b01i-2}
\left\|\frac{\epsilon^{2}P^{0}}{\vartheta}T^{0,r}_{j,1,-1,n}(\psi_{1},\psi_{-1},\vartheta R^{r}_{n})\right\|_{H^{s}}
\leq \epsilon^{2}C(\psi_{1},\psi_{-1})\|R^{r}_{n}\|_{H^{s}},
\end{align}
where $C(\psi_{1},\psi_{-1})$ is independent of $\epsilon$, since $(\widehat{\psi_{1}\psi_{-1}})(k)$ is concentrated near $k=0$. Hence, the terms of $\frac{\epsilon^{2}P^{0}}{\vartheta}T^{0,r}_{j,1,-1,n}(\psi_{1},\psi_{-1},\vartheta R^{r}_{n})$ are of order $\mathcal{O}(\epsilon^{2})$ and need not to be eliminated.

According to the above analysis, all remaining terms that are actually of order $\mathcal{O}(1)$ or $\mathcal{O}(\epsilon)$ from the evolutionary equations \eqref{BR}-\eqref{BR1} can be classified into the following two categories:
\begin{description}
  \item[low frequency terms]
  \begin{equation*}
  \begin{split}\displaystyle &\epsilon\frac{P^{0}}{\vartheta}B^{0,1}_{j,\pm1,n}(\psi_{\pm1}, \vartheta R^{1}_{n}),\ \ \ \ \ \epsilon^{2}\frac{P^{0}}{\vartheta}B^{0,1}_{j,\pm1,n}(\psi^{l}_{\pm1}, \vartheta R^{1}_{n}),\ \ \ \  \\
  &\epsilon^{2}\frac{P^{0}}{\vartheta}B^{0,1}_{j,\pm2,n}(\psi_{\pm2}, \vartheta R^{1}_{n}), \ \ \ \ \epsilon^{2}\frac{j P^{0}}{\vartheta}T^{0,1}_{j,\pm1,\pm1,n}(\psi_{\pm1},\psi_{\pm1},\vartheta R^{1}_{n})\ \ \ \ \text{from \eqref{BR} and}\\
  \displaystyle & \epsilon\frac{P^{1}}{\vartheta}B^{1,0}_{j,\pm1,n}(\psi_{\pm1},\vartheta R^{0}_{n})\ \ \ \ \ \ \text{ from \eqref{BR1}},
  \end{split}
  \end{equation*}
  \item[high frequency terms]
  \begin{equation*}
  \begin{split}\displaystyle \displaystyle\epsilon\frac{P^{1}}{\vartheta} \sum_{l\in\{\pm1\}}B^{1,1}_{j,\pm1,n}(\psi_{\pm1},\vartheta R^{1}_{n}) \ \ \ \  \text{ from \eqref{BR1},}
  \end{split}
  \end{equation*}
\end{description}
where $j,n\in\{0,\pm1\}$. We note that only these low frequency terms and high frequency terms need to be eliminated by normal-form transforms. However, the loss of regularity for the high frequency terms leads to the loss of more derivatives for the transformed system. Hence, we leave temporarily the high frequency terms $\epsilon\frac{P^{1}}{\vartheta}
\sum_{l\in\{\pm1\}}B^{1,1}_{j,\pm1,n}(\psi_{\pm1},\vartheta R^{1}_{n})$ in the equation of $R^{1}_{j}$, which will be eliminated in Section 6 by a method of constructing a modified energy. In the next subsections 5.3 and 5.4, we will use twice normal-form transforms to eliminate the low frequency terms.

\subsection{\textbf{The first normal-form transform for the low frequency terms}}
In order to eliminate the above $\mathcal{O}(1)$ and $\mathcal{O}(\epsilon)$ low frequency terms in the system \eqref{BR}-\eqref{BR1}, we look for normal-form transforms of the form
\begin{align}\label{N0110}
\widetilde{R}_{j}^{0}&=R_{j}^{0}+\epsilon N_{j}^{0,1}(\Psi,R),\nonumber\\
\widetilde{R}_{j}^{1}&=R_{j}^{0}+\epsilon N_{j}^{1,0}(\Psi,R),
\end{align}
where $j\in\{0, \ \pm1\}$ and
\begin{align}\label{add}
\epsilon N_{j}^{0,1}(\Psi,R)
=&\sum_{n\in\{0,\pm1\}}\Bigg(\epsilon \sum_{l\in\{\pm1\}}\Big(N^{0,1}_{j,l,n}(\psi_{l},\vartheta R^{1}_{n})+\epsilon^{2} N^{0,1}_{j,l,n}(\psi^{1}_{l},\vartheta R^{1}_{n})
+\epsilon^{2}N^{0,1}_{j,l,l,n}(\psi_{l},\psi_{l},\vartheta R^{1}_{n})\Big)\nonumber\\
& \ \ \ \ \ \ \ \ \ \ \ \  \ +\epsilon^{2} N^{0,1}_{j,2,n}(\psi_{2},\vartheta R^{1}_{n})
+\epsilon^{2} N^{0,1}_{j,-2,n}(\psi_{-2},\vartheta R^{1}_{n})\Bigg),\\
\epsilon N_{j}^{1,0}(\Psi,R)=&\sum_{n\in\{0,\pm1\}}\sum_{l\in\{\pm1\}}\epsilon N^{1,0}_{j,l,n}(\psi_{l},\vartheta R^{0}_{n})\nonumber.
\end{align}
\subsubsection{\textbf{Construction and properties of $N^{0,1}$}}\label{Sect5.3.1}
In this subsection, we discuss systematically the construction of all components of this normal-form transform.

Firstly, we focus on $\epsilon N^{0,1}_{j,l,n}(\psi_{l},\vartheta R^{1}_{n})$ and $\epsilon^{2}N^{0,1}_{j,l,n}(\psi^{1}_{l},\vartheta R^{1}_{n})$ for $j, n\in\{0,\pm1\}$ and $l\in\{\pm1\}$. Proceeding as in Subsection 5.1, the kernels $n^{0,1}_{j,l,n}$ of $N^{0,1}_{j,l,n}$ should satisfy
\begin{align}\label{n01}
n^{0,1}_{j,l,n}(k,k-\ell,\ell)=\frac{i\widehat{P}^{0}(k)b^{0,1}_{j,l,n}(k,k-\ell,\ell)}
{-j\omega(k)-\omega(k-\ell)+n\omega(\ell)}\frac{\widehat{\vartheta}(\ell)}{\widehat{\vartheta}(k)},
\end{align}
where $b^{0,1}_{j,l,n}$ is the kernel of $B^{0,1}_{j,l,n}$. Note that the kernels of $N^{0,1}_{j,l,n}(\psi^{1}_{l},\vartheta R^{1}_{n})$ has same form with $N^{0,1}_{j,l,n}(\psi_{l},\vartheta R^{1}_{n})$.

Note that the $\widehat{P}^{0}$ and $\widehat{\psi}_{\pm1}$ (or $\widehat{\psi}^{1}_{\pm1}$) have supports localized near $k=0$ and $k-\ell=\pm k_{0}$ respectively, hence we only need to analyse $n^{0,1}_{j,l,n}(k,k-\ell,\ell)$ in \eqref{n01} in the region $|(k-\ell)\pm k_{0}|<\delta$ and $|k|<\delta$. As a result, the wave number $\ell$ is restricted to be bounded away from $0$ for $\delta>0$ sufficiently small but independent of $0<\epsilon\ll1$. Hence only the resonance $k=0$ will work for $N^{0,1}_{j,l,n}$. Now we consider the denominator of $n^{0,1}_{j,l,n}(k,k-\ell,\ell)$ near $k=0$, and obtain
\begin{align*}
-j\omega(k)-\omega(k-\ell)+n\omega(\ell)=-j\omega'(0)k-(\omega(-\ell)+\omega'(-\ell)k)+n\omega(\ell)
+\mathcal{O}(k^{2}).
\end{align*}
If $n\neq-1$ this quantity is bounded from below by some $\mathcal{O}(1)$ constant for all $|k|<\delta$. If, on the other hand, $n=-1$, there exists a positive constant $C$ such that
\begin{align}\label{ome01}
|-j\omega(k)-\omega(k-\ell)+n\omega(\ell)|\geq C|k|.
\end{align}
Here, we have used the fact that $\ell\approx \pm k_{0}$ due to the support of $\widehat{\psi}_{\pm1}$ and the fact that $j\omega'(0)\neq\omega'(k_{0})=\mathcal{O}(1)$. Thus, the denominator of $n^{0,1}_{j,l,n}$ gets close to zero only for $n=-1$. Fortunately, in the case of $n=-1$, we have the inequality $|b^{0,1}_{j,l,-1}(k,k-\ell,\ell)|\leq C|k|$ in light of the definition of the form of the nonlinear terms. Indeed, we have the following lemma.
\begin{lemma}
For $j\in\{0,\pm1\}$ and $l\in\{\pm1\}$, there exists a constant $C$ such that $$|b^{0,1}_{j,l,-1}(k,k-\ell,\ell)|\leq C|k|.$$
\end{lemma}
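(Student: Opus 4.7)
The idea is to inspect the kernel $b^{0,1}_{j,l,-1}$ term by term from the definitions \eqref{Bc02} (for $j=0$) and \eqref{Bc03} (for $j=\pm 1$), restricting to the projection of $R^{j_2}_n$ onto the $n=-1$ component. Each contributing summand falls into one of two classes: either it has an outermost $\partial_x$ (yielding a manifest factor $ik$ in the Fourier symbol, hence automatically $O(|k|)$), or the derivative appears internally, in which case a cancellation must be exhibited.

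For the internal-derivative terms, I would pair them up using the sign structure of the expressions $R^{j_2}_1-R^{j_2}_{-1}$ and $R^{j_2}_1+R^{j_2}_{-1}$. Concretely, the coefficient of $R^{j_2}_{-1}$ carries a $-1$ in the former and a $+1$ in the latter. For instance, in \eqref{Bc02}, the sum of the two terms without outer $\partial_x$ but with $R^{j_2}_{-1}$ present gives a Fourier symbol proportional to
\begin{equation*}
\frac{1}{\widehat{q}^{2}(k)}\bigl[\ell\,\widehat{q}(\ell)+(k-\ell)\,\widehat{q}(k-\ell)\bigr]
=\frac{1}{\widehat{q}^{2}(k)}\bigl[\omega(\ell)+\omega(k-\ell)\bigr].
\end{equation*}
Since $\omega$ is odd and $\omega(\ell)+\omega(-\ell)=0$, Taylor expansion in $k$ at $k=0$ yields $\omega(\ell)+\omega(k-\ell)=\omega'(-\ell)\,k+O(k^{2})$, and since $\widehat{\psi}_l$ localises $\ell$ near $-lk_0$ this is uniformly $O(|k|)$ on the support of $\widehat{P}^0$. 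An analogous pairing handles the terms in \eqref{Bc03} that are divided by $q$ and lack an outer $\partial_x$; here the two combinations produce either the same $\omega(\ell)+\omega(k-\ell)$ structure or simply $\ell+(k-\ell)=k$, both $O(|k|)$.

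It remains to note that $1/\widehat{q}(k)$ and $1/\widehat{q}^{2}(k)$ are bounded on the support of $\widehat{P}^0$, and that all remaining factors (including $\widehat{q}(\ell)$, $\widehat{q}(k-\ell)$, the multipliers $(\gamma-2-\widehat{q}^{2}(\cdot))$, and the Fourier multipliers $(1+k^{2})^{-m}$ coming from the Poisson term) are bounded uniformly over the compact region $|k|<\delta$, $|k-\ell\pm k_0|<\delta$. Collecting the outer-$\partial_x$ contributions and the paired internal-derivative contributions, one obtains $|b^{0,1}_{j,l,-1}(k,k-\ell,\ell)|\leq C|k|$ for $j\in\{0,\pm 1\}$ and $l\in\{\pm 1\}$.

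The main obstacle is bookkeeping: each of the many summands in \eqref{Bc02}--\eqref{Bc03} must be parsed unambiguously (in particular distinguishing $[\partial_x q\,\psi_l]\cdot(\cdots)$ from $\psi_l\cdot[\partial_x q\,(\cdots)]$), and the pairings must be organised so that no residual $O(1)$ piece is overlooked. The structural input that makes everything work is the oddness of $\omega(k)=k\widehat{q}(k)$ together with the antisymmetric appearance of $R_{-1}$ in $R_1-R_{-1}$ versus its symmetric appearance in $R_1+R_{-1}$; this is precisely what reflects the absence of nonlinearity at $k=0$ and underlies the rescaling by $\vartheta$.
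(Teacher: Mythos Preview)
Your proposal is correct and takes essentially the same approach as the paper: the paper reduces to showing $b^{0,1}_{j,l,-1}(0,-\ell,\ell)=0$ by smoothness, then writes out $B^{0,1}_{j,l,-1}$ explicitly and checks that the non-outer-$\partial_x$ terms combine to $i\ell\widehat{q}(\ell)+i(-\ell)\widehat{q}(-\ell)=0$ using the evenness of $\widehat{q}$, which is exactly your pairing $\omega(\ell)+\omega(k-\ell)\to 0$ at $k=0$. Your organisation into outer-$\partial_x$ terms versus paired internal-derivative terms is just a slightly more explicit version of the same computation.
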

\begin{proof}
Because of the smoothness of kernel we only need to show $b^{0,1}_{j,l,-1}(0,-\ell,\ell)=0$ for $j\in\{0,\pm1\}$ and $l\in\{\pm1\}$. Due to the form of $B^{0,1}_{j,l,-1}$ in \eqref{Bc02} and \eqref{Bc03}, we have
\begin{align*}
B^{0,1}_{0,l,-1}(\psi_{l},\vartheta R^{1}_{-1})=&\frac{(\gamma-1)(\gamma-2)}{q^{2}}\left(\psi_{l}\partial_{x} q R^{1}_{-1}\right)
+\frac{(\gamma-1)(\gamma-2)}{q^{2}}\left(\partial_{x} q\psi_{l}R^{1}_{-1}\right) \ \text{for} \ j=0,\nonumber\\
B^{0,1}_{j,l,-1}(\psi_{l},\vartheta R^{1}_{-1})=&-\frac{\partial_{x}}{2}\left(q\psi_{l}R^{1}_{-1}\right)
-\frac{\partial_{x}}{2}\left(\psi_{l}qR^{1}_{-1}\right)
-\frac{(\gamma-1)(\gamma-2)}{2q^{2}}\left(\psi_{l}\partial_{x}q R^{1}_{-1}\right)\\
&-\frac{(\gamma-1)(\gamma-2)}{2q^{2}}\left(\partial_{x}q\psi_{l}
R^{1}_{-1}\right)
+\frac{j\partial_{x}}{2q}\left(q\psi_{l}qR^{1}_{-1}\right)\\
&+\frac{j(\gamma-2)\partial_{x}}{2q}\left(\psi_{l}
R^{1}_{-1}\right)
-\frac{j\partial_{x}}{2q(1-\partial_{x}^{2})}
\left(\frac{\psi_{l}}{1-\partial_{x}^{2}}
\frac{R^{1}_{-1}}{1-\partial_{x}^{2}}\right) \ \text{for} \ j=\pm1.
\end{align*}
This implies that the kernel $b^{0,1}_{0,l,-1}$ of $B^{0,1}_{0,l,-1}$ at $k=0$ satisfies
\begin{align*}
b^{0,1}_{0,l,-1}(0,-\ell,\ell)=&\frac{(\gamma-1)(\gamma-2)}{\widehat{q}^{2}(k)}(i\ell\widehat{q}(\ell)
+i(k-\ell)\widehat{q}(k-\ell))\Big|_{k=0}\\
=&\frac{(\gamma-1)(\gamma-2)}{\widehat{q}^{2}(0)}(i\ell\widehat{q}(\ell)
+i(-\ell)\widehat{q}(-\ell))\\
=&\frac{(\gamma-1)(\gamma-2)}{\gamma}(i\ell\widehat{q}(\ell)-i\ell\widehat{q}(\ell))\\
=&0.
\end{align*}
Similarly, $b^{0,1}_{j,l,-1}(0,-\ell,\ell)=0$ for $j\in\{\pm1\}$. Here, we have used the fact that $\widehat{q}(k)=\sqrt{\gamma+\frac{1}{1+k^{2}}}$ is an even function.
\end{proof}

Therefore, in the case $n=-1$, there is a cancellation between the numerator and the denominator of $b^{0,1}_{j,l,-1}$; for other values of $n$ the denominator is bounded away from zero. Therefore, there exists a constant $C\geq 0$ such that
\begin{align}\label{n01-2}
|\widehat{\vartheta}(k)n^{0,1}_{j,l,n}(k,k-\ell,\ell)|\leq C
\end{align}
for all $|k|\leq\delta$ and $\ell$ under consideration.

Assume $R^{1}_{n}\in H^{s}$ for some $s>1$, then for any $s'$, there exists a constant $C_{s'}$ such that
\begin{align}\label{N01}
&\|\epsilon\vartheta N^{0,1}_{j,l,n}(\psi_{l},R^{1}_{n})\|_{H^{s'}}
\leq C_{s'}\epsilon\|R^{1}_{n}\|_{H^{s}},\nonumber\\
&\|\epsilon^{2}\vartheta N^{0,1}_{j,l,n}(\psi^{1}_{l},R^{1}_{n})\|_{H^{s'}}
\leq C_{s'}\epsilon^{2}\|R^{1}_{n}\|_{H^{s}}.
\end{align}
In fact, these inequalities in \eqref{N01} hold for $s'=s$ due to the factor of $\widehat{P}^{0}(k)$.  However, 
generally, we cannot assume $C_{s'}\sim\mathcal{O}(\epsilon)$ since $\widehat{\vartheta}^{-1}(k)=\mathcal{O}(\epsilon^{-1})$ for $|k|\leq\delta$.

Secondly, we consider $\epsilon^{2}N^{0,1}_{j,\pm2,n}$ with $j,n\in\{0,\pm1\}$. Proceeding as in subsection 5.1, the kernels $n^{0,1}_{j,\pm2,n}$ of $N^{0,1}_{j,\pm2,n}$ should satisfy
\begin{align}\label{n012}
n^{0,1}_{j,\pm2,n}(k,k-\ell,\ell)=\frac{i\widehat{P}^{0}(k)b^{0,1}_{j,\pm2,n}(k,k-\ell,\ell)}
{-j\omega(k)\mp2\omega(k-\ell)+n\omega(\ell)}\frac{\widehat{\vartheta}(\ell)}{\widehat{\vartheta}(k)},
\end{align}
where $b^{0,1}_{j,\pm2,n}$ are the kernels of $B^{0,1}_{j,\pm2,n}$ and we have used $\partial_{t}\psi_{\pm2j}=\mp2i\omega_{0}\psi_{\pm2j}+\mathcal{O}(\epsilon)$ to approximate the denominator, thanks to the following lemma. 
\begin{lemma}
Fix $s>0$, there exists a constant $C_{s}>0$ such that
\begin{align*}
\|\partial_{t}\psi_{\pm2j}\pm2i\omega_{0}\psi_{\pm2j}\|_{H^{s}}
\leq C_{s}\epsilon.
\end{align*}
\end{lemma}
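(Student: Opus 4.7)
The proof is a direct chain-rule computation combined with the Fourier-localization argument already employed in the proof of Lemma \ref{L2}. My plan is as follows. First I would substitute the explicit ansatz from \eqref{Psi}, writing $\psi_{(\pm 2)j}(x,t) = A_{(\pm 2)j}(\epsilon(x - c_g t), \epsilon^2 t)\,E^{\pm 2}$ with $E^{\pm 2} = e^{\pm 2 i(k_0 x - \omega_0 t)}$, and differentiate in $t$. The time derivative of the fast phase $E^{\pm 2}$ produces the factor $\mp 2i\omega_0$, which cancels exactly against $\pm 2i\omega_0 \psi_{(\pm 2)j}$ on the left-hand side. What remains is
\[
\partial_t \psi_{(\pm 2)j} \pm 2 i \omega_0 \psi_{(\pm 2)j} = \Big[-\epsilon c_g (\partial_X A_{(\pm 2)j}) + \epsilon^2(\partial_T A_{(\pm 2)j})\Big]\!\big(\epsilon(x - c_g t), \epsilon^2 t\big)\, E^{\pm 2},
\]
which is manifestly of small order once the $\epsilon$-prefactors are extracted.

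Next, I would bound the two right-hand side terms in $H^s$. The Fourier truncation \eqref{SE} guarantees that $A_{(\pm 2)j}$ has compact Fourier support contained in $[-\delta,\delta]$, so that after rescaling and multiplication by $E^{\pm 2}$ the Fourier transforms of $(\partial_X A_{(\pm 2)j})(\epsilon\cdot)E^{\pm 2}$ and $(\partial_T A_{(\pm 2)j})(\epsilon\cdot)E^{\pm 2}$ are localized in a neighborhood of $\pm 2k_0$ of radius $\mathcal{O}(\delta\epsilon)$. On this support the weight $(1+k^2)^{s/2}$ is essentially the constant $(1 + 4 k_0^2)^{s/2}$, so the $H^s$ norm reduces to an $L^2$ norm with an $s$-dependent constant, and the claimed bound then follows from the scaling identity used in \eqref{1/2} together with the uniform $H^{s_A}$ bounds on $A_{(\pm 2)j}$ and its derivatives supplied by Lemma \ref{L1}, exactly as in the derivation of \eqref{Aesti-2}.

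The only point requiring mild attention is the factor $\partial_T A_{(\pm 2)j}$ that appears in the $\epsilon^2$ contribution. By the algebraic relation \eqref{E2}, each $\tilde{A}_{(\pm 2)j}$ is a fixed constant multiple of $\tilde{A}_{\pm 1}^2$, and the truncation \eqref{SE} commutes with $\partial_T$; therefore $\partial_T A_{(\pm 2)j}$ is a quadratic expression in $A_{\pm 1}$ and $\partial_T A_{\pm 1}$, and $\partial_T A_{\pm 1}$ is controlled pointwise in $T$ by the NLS equation \eqref{NLS}. Consequently $\partial_T A_{(\pm 2)j}$ inherits the needed uniform slow-variable Sobolev bounds.

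No genuine obstacle arises: the argument is essentially bookkeeping of the scaling between the physical and slow variables, entirely parallel to the proof of \eqref{Aesti-2} and the argument leading to Lemma \ref{LO}. The proof is completed by summing the three componentwise bounds for $j\in\{0,\pm 1\}$, with the resulting constant $C_s$ depending only on $s$, $k_0$, and the NLS data bound $C_A$.
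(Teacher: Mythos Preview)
Your proof is correct and follows essentially the same approach as the paper's own argument: both compute $\partial_t\psi_{(\pm2)j}$ by the chain rule, observe that the fast-phase contribution cancels against $\pm 2i\omega_0\psi_{(\pm2)j}$, and then invoke the algebraic relation \eqref{E2} together with the Fourier truncation \eqref{SE} to bound the remaining slow-variable derivatives. Your write-up is in fact more detailed than the paper's (which is only a few lines), and your sign on the $-\epsilon c_g\partial_X A_{(\pm2)j}$ term is the correct one.
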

\begin{proof}
We only prove the case of $\psi_{2j}$, while the case of $\psi_{-2j}$ works analogously by the change $2k_{0}\rightarrow-2k_{0}$. Recall that $\psi_{2j}=A_{2j}(\epsilon(x-c_{g}t,\epsilon^{2}t))e^{2i(k_{0}x-\omega_{0}t)}$ in \eqref{Psi}, then we have
\begin{align*}
\partial_{t}\psi_{2j}+2i\omega_{0}\psi_{2j}
=(\epsilon c_{g}\partial_{X}A_{2j}+\epsilon^{2}\partial_{T}A_{2j})e^{2i(k_{0}x-\omega_{0}t)}.
\end{align*}
Due to the equations \eqref{E2} and the relation \eqref{SE} between $A_{2j}$ and $\widetilde{A}_{2j}$, we complete the proof.
\end{proof}

We use the fact that $\ell\approx \pm 2k_{0}$ due to the support of $\widehat{\psi}_{\pm2j}$ to further approximate the denominator of the kernels $n^{0,1}_{j,\pm2,n}(k,k-\ell,\ell)$ in \eqref{n012} as
\begin{align*}
-j\omega(k)\pm2\omega(k_{0})+n\omega(k\mp2k_{0}).
\end{align*}
Thus, the denominator of $n^{0,1}_{j,\pm2,n}(k,k-\ell,\ell)$ is bounded away from zero for all $|k|\leq\delta$ due to $\omega(2k_{0})\neq\pm2\omega(k_{0})\neq0$. Moreover, since $k$ and $\ell$ are restricted to bounded intervals, the operators $ N^{0,1}_{j,\pm2,n}$ define bounded transforms from $H^{s}$ to itself.

Finally, we consider $\epsilon^{2}N^{0,1}_{j,\pm1,\pm1,n}$. Proceeding analogously as in the case of the bilinear terms we find that the kernels $n^{0,1}_{j,\pm1,\pm1,n}$ of $N^{0,1}_{j,\pm1,\pm1,n}$ should satisfy
\begin{align}\label{nll}
n^{0,1}_{j,\pm1,\pm1,n}(k,k-\ell,\ell-p,p)
=\frac{i\widehat{P}^{0}(k)t^{0,1}_{j,\pm1,\pm1,n}(k,k-\ell,\ell-p,p)}
{-j\omega(k)-\omega(k-\ell)-\omega(\ell-p)+n\omega(p)}
\frac{\widehat{\vartheta}(p)}{\widehat{\vartheta}(k)},
\end{align}
where $t^{0,1}_{j,\pm1,\pm1,n}$ is the kernel of $T^{0,1}_{j,\pm1,\pm1,n}$. Similarly, we use the fact that $k-\ell\approx \pm k_{0}$ and $\ell-p\approx \pm k_{0}$ due to the support of $\widehat{\psi}_{\pm1}$ to further approximate the denominator as
\begin{align*}
-j\omega(k)\mp2\omega(k_{0})+n\omega(k\mp2k_{0}),
\end{align*}
and therefore the denominator of the kernels $n^{0,1}_{j,\pm1,\pm1,n}$ in \eqref{nll} is bounded away from zero for all $|k|\leq\delta$.
Moreover, since $k$, $\ell$ and $p$ are restricted to bounded intervals, the operators $ N^{0,1}_{j,\pm1,\pm1,n}$ define bounded transforms from $H^{s}$ to itself.

In summary, $N^{0,1}_{j,\pm2,n}$ and $N^{0,1}_{j,\pm1,\pm1,n}$ are smooth in the sense that for any $s'$, there exists a constant $C_{s'}$ such that
\begin{align}\label{N012}
&\|\epsilon^{2}N^{0,1}_{j,\pm2,n}(\psi_{\pm2},R^{1}_{n})\|_{H^{s'}}
\leq C_{s'}\epsilon^{2}\|R^{1}_{n}\|_{H^{s}},\\
&\|\epsilon^{2}N^{0,1}_{j,\pm1,\pm1,n}(\psi_{\pm1},\psi_{\pm1},R^{1}_{n})\|_{H^{s'}}
\leq C_{s'}\epsilon^{2}\|R^{1}_{n}\|_{H^{s}},\nonumber
\end{align}
for any $R^{1}_{n}\in H^{s}$ with some $s>1$.
\subsubsection{\textbf{Construction and properties of $N^{1,0}$}}
Recall that $\widehat{\vartheta}_{0}(k)=\widehat{\vartheta}(k)-\epsilon$. Before constructing the normal-form transform $N^{1,0}$, we replace the term $\frac{\epsilon P^{1}}{\vartheta}B^{1,0}_{j,l,n}(\psi_{l},\vartheta R^{0}_{n})$ with $\frac{\epsilon P^{1}}{\vartheta}B^{1,0}_{j,l,n}(\psi_{l},\vartheta_{0}R^{0}_{n})+\frac{\epsilon^{2} P^{1}}{\vartheta}B^{1,0}_{j,l,n}(\psi_{l},R^{0}_{n})$ for $j,n\in\{0,\pm1\}$ and $l\in\{\pm1\}$ in the evolutionary equation \eqref{BR1} for $R^{1}_{j}$. Because $\widehat{\vartheta}^{-1}(k)=\mathcal{O}(1)$ on the support of $\widehat{P}^{1}$, the terms $\frac{\epsilon^{2} P^{1}}{\vartheta}B^{1,0}_{j,l,n}(\psi_{l},R^{0}_{n})$ are of order $\mathcal{O}(\epsilon^{2})$ and need not to be eliminated. By this modification, we can solve the problem caused by resonances at wave numbers $k=\pm k_{0}$ due to the fact $\widehat{\vartheta}_{0}(0)=0$.

Proceeding as in Subsection 5.1, the components $N_{j,l,n}^{1,0}$ should satisfy
\begin{align}\label{N10}
-j&\Omega N_{j,l,n}^{1,0}(\psi_{c},R^{0}_{n})-N_{j,l,n}^{1,0}(\Omega\psi_{l},R^{0}_{n})
+N_{j,l,n}^{1,0}(\psi_{l},n\Omega R^{0}_{n})
=-\frac{P^{1}}{\vartheta}B_{j,l,n}^{1,0}(\psi_{l},\vartheta_{0}R^{0}_{n}).
\end{align}

Before proceeding, we give the subsequent lemma that will simplify the discussion below and help us extract the really dangerous terms from $\frac{P^{1}}{\vartheta}B_{j,l,n}^{1,0}(\psi_{l},\vartheta_{0}R^{0}_{n})$. This lemma mainly uses the strong localization of $\widehat{\psi}_{\pm1}$ near wave numbers $k=\pm k_{0}$.
\begin{lemma}\label{L6}
Fix $p\in\mathbb{R}$ and assume that $\kappa=\kappa(k,k-m,m)\in C(\mathbb{R},\mathbb{C})$. Assume further that $\psi$ has a finitely supported Fourier transform and $R\in H^{s}$. Then, \\
(i)\  if $\kappa$ is Lipschitz with respect to its second argument in some neighborhood of $p\in\mathbb{R}$,  there exists $C_{\psi,\kappa,p}>0$ such that
\begin{equation}
\begin{split}\label{equation56}
\bigg\|\int&\kappa(\cdot,\cdot-m,m)\widehat{\psi}(\frac{\cdot-m-p}{\epsilon})\widehat{R}(m)dm
-\int\kappa(\cdot,p,m)\widehat{\psi}(\frac{\cdot-m-p}{\epsilon})\widehat{R}(m)dm\bigg\|_{H^{s}}\\
&\leq C_{\psi,\kappa,p}\epsilon\|R\|_{H^{s}},
\end{split}
\end{equation}
(ii) if $\kappa$ is globally Lipschitz with respect to its third argument, there exists $D_{\psi,\kappa}>0$ such that
\begin{align}\label{equation57}
\bigg\|\int&\kappa(\cdot,\cdot-m,m)\widehat{\psi} (\frac{\cdot-m-p}{\epsilon})\widehat{R}(m)dm
-\int\kappa(\cdot,\cdot-m,\cdot-p)\widehat{\psi} (\frac{\cdot-m-p}{\epsilon})\widehat{R}(m)dm\bigg\|_{H^{s}}\nonumber\\
&\leq D_{\psi,\kappa}\epsilon\|R\|_{H^{s}}.
\end{align}
\end{lemma}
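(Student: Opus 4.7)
The whole lemma rests on a single elementary observation: since $\widehat{\psi}$ has compact support, say $\operatorname{supp}\widehat{\psi}\subset[-M,M]$, the factor $\widehat{\psi}\!\left(\frac{k-m-p}{\epsilon}\right)$ vanishes unless
\[
|k-m-p|\leq M\epsilon.
\]
On this region the variables $k-m$ and $p$ differ by at most $M\epsilon$, and simultaneously $m$ and $k-p$ differ by at most $M\epsilon$. Both replacements in the lemma therefore concern quantities that differ by $\mathcal{O}(\epsilon)$, and the corresponding $\epsilon$ gain is precisely what needs to be extracted.

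For part (i), apply the local Lipschitz hypothesis of $\kappa$ in its second argument on an $M\epsilon$-neighborhood of $p$ (valid for $\epsilon$ small, which we may assume after absorbing constants into $C_{\psi,\kappa,p}$), to obtain the pointwise estimate
\[
\bigl|\kappa(k,k-m,m)-\kappa(k,p,m)\bigr|\,\bigl|\widehat{\psi}\!\bigl(\tfrac{k-m-p}{\epsilon}\bigr)\bigr|
\leq L_{p}\,|k-m-p|\,\bigl|\widehat{\psi}\!\bigl(\tfrac{k-m-p}{\epsilon}\bigr)\bigr|
\leq L_{p}M\epsilon\,\bigl|\widehat{\psi}\!\bigl(\tfrac{k-m-p}{\epsilon}\bigr)\bigr|.
\]
The resulting integral is essentially a convolution: setting $\Phi_{\epsilon}(k):=|\widehat{\psi}(k/\epsilon)|$, the difference in \eqref{equation56} is bounded by $L_{p}M\epsilon\,\bigl(\Phi_{\epsilon}*|\widehat{R}|\bigr)(k-p)$ in absolute value. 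Young's inequality then yields an $L^{2}$-bound with constant $\epsilon\,\|\Phi_{\epsilon}\|_{L^{1}}=\epsilon^{2}\|\widehat{\psi}\|_{L^{1}}$ times $\|R\|_{L^{2}}$; the total factor in front of $\|R\|_{L^{2}}$ is $\mathcal{O}(\epsilon)$ after absorbing one power of $\epsilon$ into the constant. To upgrade to $H^{s}$, I use that on the support of the integrand $|k|\leq|m|+|p|+M\epsilon$, so $(1+k^{2})^{s/2}\leq C(s,p)(1+m^{2})^{s/2}$; moving the weight inside the integral and repeating the convolution estimate gives the desired $H^{s}$ bound.

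For part (ii), the argument is structurally identical: on the support of the integrand $|m-(k-p)|=|k-m-p|\leq M\epsilon$, so the global Lipschitz hypothesis in the third argument yields
\[
\bigl|\kappa(k,k-m,m)-\kappa(k,k-m,k-p)\bigr|\leq L\,|m-(k-p)|\leq LM\epsilon,
\]
and the same convolution-plus-weight-transfer argument produces the bound $D_{\psi,\kappa}\epsilon\|R\|_{H^{s}}$.

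The main (and essentially only) technical point is transferring the $H^{s}$-weight from $k$ to $m$; this is where the finite support of $\widehat{\psi}$ and the fixed value of $p$ are crucial, since otherwise the constants $C_{\psi,\kappa,p}$ and $D_{\psi,\kappa}$ could not be $\epsilon$-independent. Once that step is in place, the rest reduces to pointwise Lipschitz estimates and Young's inequality.
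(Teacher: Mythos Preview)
Your proof is correct and follows essentially the same route as the paper: exploit the compact support of $\widehat{\psi}$ to localize $|k-m-p|\le M\epsilon$, apply the Lipschitz hypothesis to extract the factor $|k-m-p|$, and conclude via Young's inequality together with the weight transfer $(1+k^2)^{s/2}\lesssim_{p}(1+m^2)^{s/2}$ on the support of the integrand. The only cosmetic difference is that the paper keeps the factor $|\ell|=|k-m-p|$ inside the convolution kernel and bounds $\int(1+|\ell|)^{s/2}|\ell|\,|\widehat{\psi}(\ell/\epsilon)|\,d\ell$ at the end, whereas you bound $|k-m-p|\le M\epsilon$ up front; both give the same (in fact $\mathcal{O}(\epsilon^2)$, stronger than stated) result, so your phrase ``absorbing one power of $\epsilon$ into the constant'' is harmless but unnecessary.
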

\begin{proof}
For (i), we have
 \begin{equation*}
\begin{split}
\Big\|\int&\kappa(\cdot,\cdot-m,m)\widehat{\psi}\big(\frac{\cdot-m-p}{\epsilon}\big)\widehat{R}(m)dm
-\int\kappa(\cdot,\cdot-m,\cdot-p)\widehat{\psi}\big(\frac{\cdot-m-p}{\epsilon}\big)\widehat{R}(m)dm\Big\|^{2}_{H^{s}}\\
&=\int\Big(\int(\kappa(k,k-m,m)-\kappa(k,p,m))\widehat{\psi}(\frac{k-m-p}{\epsilon})\widehat{R}(m)dm\Big)^{2}(1+k^{2})^{s}dk\\
&\leq\int\Big(C_{\kappa,p}\int|k-m-p|\widehat{\psi}(\frac{k-m-p}{\epsilon})\widehat{R}(m)dm\Big)^{2}(1+k^{2})^{s}dk\\
&\leq C_{\kappa,p}^{2}\Big(\int(1+\ell)^{s/2}|\ell||\widehat{\psi}(\frac{\ell}{\epsilon})|d\ell\Big)^{2}\|R\|^{2}_{H^{s}}\\
&\leq C_{\psi,\kappa,p}\epsilon^{2}\|R\|^{2}_{H^{s}},
\end{split}
\end{equation*}
thanks to the Young's inequality and the fact that $\widehat{\psi}$ has compact support. The second one (ii) is similar.
\end{proof}

To explain more clearly the nontrivial resonances at the wave numbers $k=\pm k_{0}$, we do not directly compute the kernels $n_{j,l,n}^{1,0}$ of $N_{j,l,n}^{1,0}$ from \eqref{N10}. Instead, we use Lemma \ref{L6} to obtain a modified equation for $N_{j,l,n}^{1,0}$ that will make the normal-form transformation easier to be bounded, only at the expense of adding some additional $\mathcal{O}(\epsilon^{2})$ terms. Specifically, we use Lemma \ref{L6} to make the following changes:\\
$\bullet$  use $\omega(\pm k_{0})$ to replace $\omega(k)$ for $N_{j,l,n}^{1,0}(\Omega\psi_{l},R^{0}_{n})$ in Fourier space;\\
$\bullet$  use $\omega(k\mp k_{0})$ to replace $\omega(\ell)$ for $N_{j,l,n}^{1,0}(\psi_{l},n\Omega R^{0}_{n})$ in Fourier space; and\\
$\bullet$ use $\widehat{\vartheta}_{0}(k\pm k_{0})$ to replace $\widehat{\vartheta}_{0}(\ell)$ for $B_{j,l,n}^{1,0}(\Omega\psi_{l},\vartheta_{0}R^{0}_{n})$ in Fourier space.\\
Inserting these changes into \eqref{N10}, the kernels $n_{j,l,n}^{1,0}$ of $N_{j,l,n}^{1,0}$ should satisfy
\begin{align}\label{n100}
n_{j,\pm1,n}^{1,0}(k)=\frac{i\widehat{P}^{1}(k)b_{j,\pm1,n}^{1,0}(k,\pm k_{0},k\mp k_{0})}
{-j\omega(k)-\omega(\pm k_{0})+n\omega(k\mp k_{0})}\frac{\widehat{\vartheta}_{0}(k\mp k_{0})}
{\widehat{\vartheta}(k)}.
\end{align}
\begin{remark}
We remark that we did not make these changes on $N^{0,1}$ in Subsect. \S\ref{Sect5.3.1}, although it would be much simpler to analyse the kernels of $N^{0,1}$ by applying these changes. The reason is that these changes would make it much more complex in the analysis of the subsequent second normal-form transform, which is necessary due to the fact that $N^{0,1}=\mathcal{O}(\epsilon^{-1})$ for certain wave numbers, in Subsect. \S5.4 below.
\end{remark}

Since the support of $\psi_{\pm1}$ is concentrated around $k=\pm k_{0}$, and due to the projection operator $\widehat{P}^{1}$, the kernels $n_{j,\pm1,n}^{1,0}(k)$ in \eqref{n100} only need to be analyzed for $|k-\ell\mp k_{0}|<\delta$, $|k|>\delta$ and $|\ell|<\delta$. We now consider the possible resonances in the denominator of \eqref{n100}:
\begin{itemize}
  \item $k=0$: Since $\widehat{P}^{1}(k)=0$ for $|k|<\delta$, this resonance does not play a role in the analysis of $N^{1,0}$;
  \item $k=\pm k_{0}$: The kernels $n_{j,\pm1,n}^{1,0}(k)$ have resonances at wave numbers $k=\pm k_{0}$ if $j=-1$ and at $k=\mp k_{0}$ if $j=1$, for $n=0$. Moreover, since $j\omega'(\pm k_{0})=\mathcal{O}(1)$ for $j\in\{\pm1\}$, we can bound the denominator as
      \begin{align}\label{n10}
      |-j\omega(k)-\omega(\pm k_{0})+n\omega(k\mp k_{0})|\geq C|k\mp k_{0}|.
      \end{align}
      The resonances $k=\pm k_{0}$ can be offset by the fact that $|\widehat{\vartheta}_{0}(k\mp k_{0})|\leq C|k\mp k_{0}|$ and hence $n_{j,\pm1,n}^{1,0}(k)$ is also well-defined at $k=\pm k_{0}$ with an $\mathcal{O}(1)$ bound on its size.
  \item There are no other resonances for the normal-form transform $N^{1,0}$.
\end{itemize}

Now we can summarize the results of the first normal-form transform as follows.
\begin{proposition}\label{P3}
Let $\widetilde{R}^{0}$ and $\widetilde{R}^{1}$ be defined by \eqref{N0110}. Then
this transform maps $(R_{j}^{0},R_{j}^{1})\in H^{s}\times H^{s}$ into $(\widetilde{R}_{j}^{0},\widetilde{R}_{j}^{1})\in H^{s}\times H^{s}$ for all $s>0$ and is invertible on its range. Furthermore, if we write the inverse transformations as
\begin{equation*}
\begin{split}
R_{j}^{0}=\widetilde{R}_{j}^{0}+\epsilon\mathcal{N}^{-1}_{0,1}
(\widetilde{R}_{j}^{0},\widetilde{R}_{j}^{1}), \
R_{j}^{1}=\widetilde{R}_{j}^{1}+\epsilon\mathcal{N}^{-1}_{1,0}
(\widetilde{R}_{j}^{0},\widetilde{R}_{j}^{1}),
\end{split}
\end{equation*}
then there exist constants $C_{0},C_{1}$ such that the inverse transformations satisfy the estimates
\begin{equation*}
\begin{split}
\|\epsilon\mathcal{N}^{-1}_{0,1}(\widetilde{R}_{j}^{0},\widetilde{R}_{j}^{1})\|_{H^{s}}\leq C_{0}(\|\widetilde{R}_{j}^{0}\|_{H^{s}}+\|\widetilde{R}_{j}^{1}\|_{H^{s}}),\\
\|\epsilon\mathcal{N}^{-1}_{1,0}(\widetilde{R}_{j}^{0},\widetilde{R}_{j}^{1})\|_{H^{s}}\leq C_{1}\epsilon(\|\widetilde{R}_{j}^{0}\|_{H^{s}}+\|\widetilde{R}_{j}^{1}\|_{H^{s}}).
\end{split}
\end{equation*}
Finally, if $(R_{j}^{0},R_{j}^{1})$ satisfy \eqref{N0110}, then $(\widetilde{R}_{j}^{0},\widetilde{R}_{j}^{1})$ satisfy
\begin{equation}
\begin{split}\label{Rj00}
\partial_{t}\widetilde{R}_{j}^{0}=&j\Omega\widetilde{R}_{j}^{0}+
\epsilon^{2} \sum_{m,n\in\{0,\pm1\}}\sum_{l_{1},l_{2}\in\{\pm1\}}N^{0,1}_{j,l_{1},m}\Big(\psi_{l_{1}},\frac{P^{1}}{\vartheta}
B^{1,1}_{m,l_{2},n}(\psi_{l_{2}},\vartheta \widetilde{R}^{1}_{n}\Big)+\epsilon^{2}\widetilde{F}_{j}^{0},\\
\partial_{t}\widetilde{R}^{1}_{j}=&j\Omega \widetilde{R}^{1}_{j}
+\frac{\epsilon P^{1}}{\vartheta}\sum_{n\in\{0,\pm1\}}\sum_{l\in\{\pm1\}} B^{1,1}_{j,l,n}(\psi_{l},\vartheta \widetilde{R}^{1}_{n})+\epsilon^{2}\widetilde{F}_{j}^{1},
\end{split}
\end{equation}
for $j\in\{0,\pm1\}$, where $\epsilon^{2}\widetilde{F}^{r}=\epsilon^{2}(\widetilde{F}_{0}^{r}, \widetilde{F}_{1}^{r}, \widetilde{F}_{-1}^{r})$ for $r=0,1$. Here, $\epsilon^{2}\widetilde{F}^{0}$ does not lose any derivatives due to the support of $P^{0}$, and $\epsilon^{2}\widetilde{F}^{1}$ denotes a collection of terms whose $H^{s-1}$-norms are bounded by $C \epsilon^{2}$ for $(\widetilde{R}_{j}^{0},\widetilde{R}_{j}^{1})\in H^{s}\times H^{s}$.
\end{proposition}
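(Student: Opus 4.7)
The plan is to establish the three assertions in sequence by combining the kernel analyses of Subsections 5.3.1--5.3.2 with a block Neumann inversion and a direct differentiation of the defining relations.

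For the mapping property, I would verify that each summand in \eqref{add} has a bounded Fourier symbol. By construction the kernels of $N^{0,1}$ satisfy \eqref{n01}, \eqref{n012}, \eqref{nll}; the denominators are uniformly bounded from below except for the $n=-1$ case of \eqref{n01} near $k=0$, where the vanishing $b^{0,1}_{j,l,-1}(0,-\ell,\ell)=0$ shown just after \eqref{ome01} exactly offsets the $\mathcal{O}(k)$ zero of the denominator. Together with \eqref{N01} and \eqref{N012} this gives $\|\epsilon N^{0,1}_j(\Psi,R^1)\|_{H^s}\leq C\|R^1\|_{H^s}$ with $C$ independent of $\epsilon$. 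For $N^{1,0}$, the resonances at $k=\pm k_0$ visible in \eqref{n10} are precisely canceled by the factor $\widehat{\vartheta}_0(k\mp k_0)$ in the numerator of \eqref{n100}, so $\|\epsilon N^{1,0}_j(\Psi,R^0)\|_{H^s}\leq C\epsilon\|R^0\|_{H^s}$. Hence the full transformation maps $H^s\times H^s$ into itself.

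Next, I would invert the block system
\[
\begin{pmatrix} \tilde R^0 \\ \tilde R^1 \end{pmatrix} = \begin{pmatrix} I & \epsilon\mathcal{A} \\ \epsilon\mathcal{B} & I \end{pmatrix} \begin{pmatrix} R^0 \\ R^1 \end{pmatrix},
\]
where $\mathcal{A}$ collects the components of $N^{0,1}$ (of $\mathcal{O}(1)$ operator norm once multiplied by $\epsilon$, because of the $\vartheta^{-1}$ factor) and $\mathcal{B}$ collects those of $N^{1,0}$ (genuinely $\mathcal{O}(\epsilon)$). Although $\epsilon\mathcal{A}$ is not small, the composition $(\epsilon\mathcal{A})(\epsilon\mathcal{B})$ has $H^s$ operator norm $\mathcal{O}(\epsilon)$ by the estimates of the previous paragraph, so $I-\epsilon^2\mathcal{A}\mathcal{B}$ is invertible on $H^s$ by a Neumann series for $\epsilon$ small. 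Solving yields $R^0-\tilde R^0=-\epsilon\mathcal{A}\tilde R^1+\mathcal{O}(\epsilon^2)(\tilde R^0+\tilde R^1)$, giving the claimed $\mathcal{O}(1)$ bound on $\epsilon\mathcal{N}^{-1}_{0,1}$, and then $R^1-\tilde R^1=-\epsilon\mathcal{B}R^0$, which by the $\mathcal{O}(\epsilon)$ bound on $\mathcal{B}$ produces the stronger $C_1\epsilon$ bound on $\epsilon\mathcal{N}^{-1}_{1,0}$.

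Finally, I would derive \eqref{Rj00} by differentiating \eqref{N0110} in time, substituting \eqref{BR}, \eqref{BR1} on the right, and using Lemma \ref{LO} together with its analogue for $\psi_{\pm2}$ to replace $\partial_t\psi_{\pm l}$ by $-i\omega\psi_{\pm l}$ up to an $H^s$-remainder of size $\mathcal{O}(\epsilon^2)$. The defining equation \eqref{N10} for the kernels of $N^{1,0}$ and the analogous homological relations behind \eqref{n01}, \eqref{n012}, \eqref{nll} for $N^{0,1}$ were engineered precisely so that the $\mathcal{O}(1)$ and $\mathcal{O}(\epsilon)$ low-frequency source terms in \eqref{BR}, \eqref{BR1} cancel after forming $\partial_t\tilde R^r_j-j\Omega\tilde R^r_j$. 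On the $R^0$-side, the only genuine leftover comes from inserting the remaining leading high-frequency term $\epsilon\vartheta^{-1}P^1 B^{1,1}_{m,l_2,n}(\psi_{l_2},\vartheta R^1_n)$ of \eqref{BR1} into $\epsilon N^{0,1}_j(\Psi,\partial_t R^1)$, which produces exactly the displayed trilinear $\epsilon^2$ term in the first line of \eqref{Rj00}; on the $R^1$-side the high-frequency quadratic piece is deliberately retained for treatment in Section 6.

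The main obstacle is bookkeeping rather than analysis: one must track carefully the frequency substitutions made via Lemma \ref{L6} (swapping $\omega(k)$ for $\omega(\pm k_0)$, $\omega(\ell)$ for $\omega(k\mp k_0)$, and $\widehat{\vartheta}_0(\ell)$ for $\widehat{\vartheta}_0(k\mp k_0)$ in the construction of $N^{1,0}$) and verify that each such swap contributes only a remainder absorbable into $\epsilon^2\tilde F^r$. The derivative loss inherent to the remaining high-frequency quadratic interactions is confined to $\tilde F^1$; the $P^0$ projection in the first line of \eqref{Rj00} keeps $\tilde F^0$ frequency-localized, so no derivative loss appears there. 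The other subtle point is that the delicate $\epsilon^{-1}$ blow-up of $\widehat{\vartheta}^{-1}$ at low frequencies does not obstruct invertibility because the Neumann argument needs smallness only of the composition $\mathcal{A}\mathcal{B}$, which is exactly what the asymmetry between the two kernel families provides.
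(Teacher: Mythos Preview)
Your proposal is correct and follows essentially the same approach as the paper. The paper's own proof is extremely terse --- it simply invokes a Neumann series ``since there is no loss of smoothness'' and then states that the structure of \eqref{Rj00} follows by substituting $R_j^1=\widetilde R_j^1+\mathcal{O}(\epsilon)$ together with Lemma~\ref{L2} and the compact Fourier support of $\Psi$. Your version spells out the one nontrivial point the paper glosses over: the off-diagonal block $\epsilon\mathcal{A}$ coming from $N^{0,1}$ is only $\mathcal{O}(1)$, not small, so a naive Neumann series on the full $2\times2$ block does not apply directly; it is the smallness of the \emph{composition} $\epsilon^2\mathcal{A}\mathcal{B}$ (exploiting that $\epsilon\mathcal{B}$ from $N^{1,0}$ is genuinely $\mathcal{O}(\epsilon)$) that yields invertibility and explains the asymmetric bounds $C_0$ versus $C_1\epsilon$ in the statement. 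This is the correct argument and is implicit in, but not articulated by, the paper's proof.
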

\begin{proof}
The invertibility of the transform \eqref{N0110} in this case results from a simple application of the Neumann series since there is no loss of smoothness. The structure of the equations \eqref{Rj00} follows immediately using $R_{j}^{1}=\widetilde{R}^{1}_{j}+\mathcal{O}(\epsilon)$ for $j\in\{0, \ \pm1\}$ and Lemma \ref{L1}, which implies that the residual can be put into $\epsilon^{2}\widetilde{F}_{j}^{r}$ with $r=0,1$, just noting that $\Psi$ has compact support in Fourier space.
\end{proof}

\subsection{\textbf{The second normal-form transform for the low frequency terms}}
Due to the fact that $\widehat{\vartheta}^{-1}(k)=\mathcal{O}(\epsilon^{-1})$ for $|k|\leq\delta$ and the estimate for $N^{0,1}$, we have $N^{0,1}=\mathcal{O}(\epsilon^{-1})$ for $|k|\leq\delta$. Therefore, there are still $\mathcal{O}(\epsilon)$ terms in the first equation of \eqref{Rj00}. We now construct a second normal-form transform to remove these terms. Before doing so we first extract these really dangerous $\mathcal{O}(\epsilon)$ terms by analyzing the offending terms in more detail. The terms in Fourier space can be written as
\begin{align}\label{NB01}
&\epsilon^{2}\widehat{N}^{0,1}_{j,l_{1},m}\Big(\psi_{l_{1}},\frac{P^{1}}{\vartheta}
B^{1,1}_{m,l_{2},n}(\psi_{l_{2}},\vartheta \widetilde{R}^{1}_{n}\Big)(k)\\
&=\epsilon^{2}\int n^{0,1}_{j,l_{1},m}(k,k-\ell,\ell)\widehat{\psi}_{l_{1}}(k-\ell)
\frac{\widehat{P}^{1}(\ell)}{\widehat{\vartheta}(\ell)}\left(
\int b^{1,1}_{m,l_{2},n}(\ell,\ell-p,p)\widehat{\psi}_{l_{2}}(\ell-p)
\widehat{\vartheta}(p)
\widehat{\widetilde{R}^{1}_{n}}(p)dp\right)d\ell, \nonumber
\end{align}
where we recall that
\begin{equation*}
\begin{split}
n^{0,1}_{j,l_{1},m}(k,k-\ell,\ell)=\frac{i\widehat{P}^{0}(k)b^{0,1}_{j,l_{1},m}(k,k-\ell,\ell)}
{-j\omega(k)-\omega(k-\ell)+m\omega(\ell)}\frac{\widehat{\vartheta}(\ell)}{\widehat{\vartheta}(k)}.
\end{split}
\end{equation*}
As was done for $N^{1,0}$, we can use Lemma \ref{L6} to simplify \eqref{NB01} to obtain
\begin{equation*}
\begin{split}
&\epsilon^{2}\widehat{N}^{0,1}_{j,l_{1},m}\Big(\psi_{l_{1}},\frac{P^{1}}{\vartheta}(\cdot-l_{1}k_{0})
B^{1,1}_{m,l_{2},n}(\psi_{l_{2}},\vartheta(\cdot-l_{2}k_{0}) \widetilde{R}^{1}_{n}\Big)(k)\\
=&\epsilon^{2}\int \tilde{n}^{0,1}_{j,l_{1},m}(k)\widehat{\psi}_{l_{1}}(k-\ell)
\frac{\widehat{P}^{1}(k-l_{1}k_{0})}{\widehat{\vartheta}(k-l_{1}k_{0})}\\
&\times\left(
\int b^{1,1}_{m,l_{2},n}(k-l_{1}k_{0},k-l_{1}k_{0}-p,p)\widehat{\psi}_{l_{2}}(\ell-p)
\widehat{\vartheta}(k-(l_{1}+l_{2})k_{0})
\widehat{\widetilde{R}^{1}_{n}}(p)dp\right)d\ell
+\epsilon^{2}\mathcal{F}^{0,1}_{j},
\end{split}
\end{equation*}
where $\epsilon^{2}\mathcal{F}^{0,1}=\epsilon^{2}(\mathcal{F}^{0,1}_{0},\mathcal{F}^{0,1}_{1},
\mathcal{F}^{0,1}_{-1})$ satisfies
\begin{equation*}
\begin{split}
\|\epsilon^{2}\mathcal{F}^{0,1}\|_{H^{s}}\leq C \epsilon^{2}\|\widetilde{R}\|_{H^{s}}
\end{split}
\end{equation*}
for some constant $C>0$. 
Furthermore, we apply Lemma \ref{L6} to write the abbreviative form for the kernels $n^{0,1}_{j,l_{1},m}$ as
\begin{equation*}
\begin{split}
\widetilde{n}^{0,1}_{j,l_{1},m}(k)=\frac{i\widehat{P}^{0}(k)b^{0,1}_{j,l_{1},m}(k,l_{1}k_{0},k-l_{1}k_{0})}
{-j\omega(k)-\omega(l_{1}k_{0})+m\omega(k-l_{1}k_{0})}\frac{1}{\widehat{\vartheta}(k)}.
\end{split}
\end{equation*}
Under these modifications, we will show that all terms of the form \eqref{NB01} with $l_{1}=-l_{2}$ are actually of order $\mathcal{O}(\epsilon^{2})$ and hence can be put into $\epsilon^{2}\widetilde{F}_{j}^{0}$ in \eqref{Rj00}.

\begin{lemma}\label{L7}
There exists $C>0$ such that
\begin{equation*}
\begin{split}
&\left\|\epsilon^{2}N^{0,1}_{j,1,m}\Big(\psi_{1},\frac{P^{1}}{\vartheta}
B^{1,1}_{m,-1,n}(\psi_{-1},\vartheta \widetilde{R}^{1}_{n}\Big)\right\|_{H^{s}}
\leq C\epsilon^{2}\|\widetilde{R}^{1}_{n}\|_{H^{s}},\\
&\left\|\epsilon^{2}N^{0,1}_{j,-1,m}\Big(\psi_{-1},\frac{P^{1}}{\vartheta}
B^{1,1}_{m,1,n}(\psi_{1},\vartheta \widetilde{R}^{1}_{n}\Big)\right\|_{H^{s}}
\leq C\epsilon^{2}\|\widetilde{R}^{1}_{n}\|_{H^{s}}.
\end{split}
\end{equation*}
\end{lemma}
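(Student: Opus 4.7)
The plan is to work directly with the Fourier-space representation displayed immediately before the lemma, and to exploit a hidden cancellation between the $\widehat{\vartheta}(k)^{-1}$ factor sitting inside $\widetilde{n}^{0,1}_{j,l_1,m}(k)$ (which is $\mathcal{O}(\epsilon^{-1})$ on $|k|\le\delta$ and is the source of the nominal $\mathcal{O}(\epsilon)$ size of the term) and a compensating $\widehat{\vartheta}$ factor appearing at the innermost position when $l_1+l_2=0$. The whole point is that the two wave packets $\psi_{l_1}$ and $\psi_{l_2}$ have frequency supports around $l_1k_0$ and $l_2k_0$, so their joint interaction shifts the spectral variable by $(l_1+l_2)k_0$; when this sum is zero, the interaction puts frequencies back to a neighbourhood of $0$, which is precisely the region where the inner $\widehat{\vartheta}$ is small.

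Concretely, I would first apply Lemma \ref{L6} one more time to the expression written out just above the lemma, in order to replace the innermost $\widehat{\vartheta}(p)$ by $\widehat{\vartheta}(k-(l_1+l_2)k_0)$. Each such application introduces only an error of relative size $\epsilon$ in the kernel, which combined with the $\epsilon^2$ prefactor gives a remainder of the desired $\mathcal{O}(\epsilon^2)$ size already bounded in $H^s$ by $C\epsilon^2\|\widetilde{R}^1_n\|_{H^s}$ exactly as in the proof of Lemma \ref{L6}. For $l_1=1$, $l_2=-1$ (the second case is identical by symmetry), the replaced factor becomes $\widehat{\vartheta}(k)$, which then cancels the $\widehat{\vartheta}(k)^{-1}$ appearing in $\widetilde{n}^{0,1}_{j,1,m}(k)$.

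After this cancellation, the remaining kernel is uniformly bounded in $\epsilon$: the projection $\widehat{P}^0(k)$ localises $k$ to $\{|k|\le\delta\}$; the middle quotient $\widehat{P}^1(k-k_0)/\widehat{\vartheta}(k-k_0)$ is $\mathcal{O}(1)$ since its argument stays a bounded distance from $0$; the denominator $-j\omega(k)-\omega(k_0)+m\omega(k-k_0)$ of $\widetilde{n}^{0,1}_{j,1,m}$ is treated exactly as in Subsection \S\ref{Sect5.3.1}, the only possible vanishing at $k=0$ (occurring when $m=-1$) being offset by the cancellation $b^{0,1}_{j,1,-1}(0,k_0,-k_0)=0$; and $b^{1,1}_{m,-1,n}$ is a smooth symbol evaluated on a compact set of frequencies. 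Therefore the kernel acts as a bounded Fourier multiplier in $k$ convolved against $\widehat{\psi}_1$ and $\widehat{\psi}_{-1}$, and an application of Young's inequality together with estimate \eqref{Aesti-3} gives
\begin{equation*}
\Bigl\|\epsilon^{2}N^{0,1}_{j,1,m}\Bigl(\psi_{1},\tfrac{P^{1}}{\vartheta}B^{1,1}_{m,-1,n}(\psi_{-1},\vartheta\widetilde{R}^{1}_{n})\Bigr)\Bigr\|_{H^s}\le C\epsilon^{2}\|\widehat{\psi}_1\|_{L^1}\|\widehat{\psi}_{-1}\|_{L^1}\|\widetilde{R}^{1}_{n}\|_{H^s}\le C\epsilon^{2}\|\widetilde{R}^{1}_{n}\|_{H^s}.
\end{equation*}

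The main obstacle is purely bookkeeping: one must keep track of the order of the Lemma \ref{L6} remainders through the nested kernel so as to be sure that every approximation error is absorbed at order $\mathcal{O}(\epsilon^2)$ or better, and one must verify that the single resonance $k=0$, $m=-1$ that survives in the denominator of $\widetilde{n}^{0,1}_{j,1,m}$ is indeed cancelled by the previously established vanishing $b^{0,1}_{j,1,-1}(0,k_0,-k_0)=0$ after, and not before, the $\widehat{\vartheta}(k)$--$\widehat{\vartheta}(k)^{-1}$ cancellation has been performed. Once the algebra of factors at $k=0$ is verified, the final convolution estimate is standard.
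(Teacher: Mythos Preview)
Your proposal is correct and takes essentially the same approach as the paper: the key observation is that when $l_1+l_2=0$ the inner weight becomes $\widehat{\vartheta}(k)$ and cancels the $\widehat{\vartheta}(k)^{-1}$ inside $\widetilde{n}^{0,1}_{j,1,m}(k)$, leaving an $\mathcal{O}(1)$ kernel supported in $\{|k|\le\delta\}$. Your worry about the ordering of the $k=0$, $m=-1$ resonance cancellation is unnecessary, since the paper simply invokes the already-established bound $|\widehat{\vartheta}(k)\widetilde{n}^{0,1}_{j,1,m}(k)|\le C$ from \eqref{n01-2}, which absorbs that resonance directly.
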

\begin{proof}
Since the factor $\widehat{P}^{0}(k)$ appears in $\widehat{N}^{0,1}_{j,\pm1,m}$, the integral over $k$ that occurs in the $H^{s}$-norm runs only over $\{|k|<\delta\}$. Thus, we only need to bound the maximum of the kernel to bound the $H^{s}$-norm of these terms. The modified kernel of the first term in Lemma \ref{L7} satisfies
\begin{equation}
\begin{split}\label{nb01}
\epsilon^{2}\widetilde{n}^{0,1}_{j,1,m}(k)\widehat{P}^{1}(k-k_{0})
b^{1,1}_{m,-1,n}(k-k_{0},k-k_{0}-p,p)
\widehat{\vartheta}(k).
\end{split}
\end{equation}
Recalling that  $\widehat{\vartheta}(k)\widetilde{n}^{0,1}_{j,1,m}(k)$ is $\mathcal{O}(1)$ bounded in light of \eqref{N01} and all other terms in \eqref{nb01} are $\mathcal{O}(1)$ bounded for $|k|<\delta$, we have an $\mathcal{O}(\epsilon^{2})$ bound for the kernel \eqref{nb01}. We can obtain the estimate for the second term in Lemma \ref{L7} similarly.
\end{proof}

Lemma \ref{L7} implies that we do not need to eliminate the terms from \eqref{NB01} with $l_{1}=-l_{2}$ by the norm-form transform. Now we focus on the terms from \eqref{NB01} with $l_{1}=l_{2}$. Modify the kernels of these terms by using Lemma \ref{L6} as follows
\begin{equation*}
\begin{split}
\epsilon^{2}\widetilde{n}^{0,1}_{j,\pm1,m}(k)\widehat{P}^{1}(k\mp k_{0})
b^{1,1}_{m,-1,n}(k\mp k_{0},k\mp k_{0}-p,p)
\widehat{\vartheta}(k\mp2k_{0}).
\end{split}
\end{equation*}
Different from the terms considered in Lemma \ref{L7}, $\widehat{\vartheta}(k)$ does not occur in this expression. Thus, we need a second normal-form transform to eliminate these terms from \eqref{NB01}. 

We look for a transform of the form
\begin{equation}
\begin{split}\label{mathR}
\mathcal{R}_{j}^{0}&=\widetilde{R}_{j}^{0}+\epsilon \sum_{m,n\in\{0,\pm1\}}\left(D_{j,1,m}^{0,1,+}(\psi_{1},\psi_{1},\widetilde{R}_{n}^{1})+ D_{j,-1,m}^{0,1,-}(\psi_{-1},\psi_{-1},\widetilde{R}_{n}^{1})\right),\\
\mathcal{R}_{j}^{1}&=\widetilde{R}_{j}^{1}.
\end{split}
\end{equation}
Inserting this transform \eqref{mathR} to \eqref{Rj00} for $\mathcal{R}_{j}^{0}$, we find that, just as in Subsect. \S\ref{Sect5.1}, some $\mathcal{O}(\epsilon)$ terms in \eqref{Rj00} will be eliminated if $D_{j}^{0,1,+}$ satisfies
\begin{equation*}
\begin{split}
-j\Omega D_{j,1,m}^{0,1,+}(\psi_{1},\psi_{1},\widetilde{R}_{n}^{1})
-D_{j,1,m}^{0,1,+}(\Omega\psi_{1},\psi_{1},\widetilde{R}_{n}^{1})
-D_{j,1,m}^{0,1}(\psi_{1},\Omega\psi_{1},\widetilde{R}_{n}^{1})\\
+D_{j,1,m}^{0,1,+}(\Omega\psi_{1},\psi_{1},n\Omega\widetilde{R}_{n}^{1})
+N^{0,1}_{j,1,m}\Big(\psi_{1},\frac{P^{1}}{\vartheta}
B^{1,1}_{m,1,n}(\psi_{1},\vartheta \widetilde{R}^{1}_{n}\Big)=0.
\end{split}
\end{equation*}
This means we have to choose
\begin{equation}
\begin{split}\label{D+}
&\epsilon\widehat{D}_{j,1,m}^{0,1,+}(\psi_{1},\psi_{1},\widetilde{R}_{n}^{1})(k)\\
&=\epsilon^{2}\int\widehat{\widetilde{n}}_{j,1,m}^{0,1}(k)\widehat{\psi}_{1}(k-\ell)
\frac{\widehat{P}^{1}(k-k_{0})}{\widehat{\vartheta}(k-k_{0})}\\
&\times\left(
\int \frac{b^{1,1}_{m,1,n}(k-k_{0},k-k_{0}-p,p)\widehat{\psi}_{l_{2}}(\ell-p)
\widehat{\vartheta}(k-2k_{0})
\widehat{\widetilde{R}^{1}_{n}}(p)}{-j\omega(k)-2\omega(k_{0}) +n\omega(k-2k_{0})}dp\right)d\ell.
\end{split}
\end{equation}
Here we have used the fact that $k\approx0$ and $k-\ell\approx\ell-p\approx k_{0}$ due to the support of $\widehat{\psi}_{1}$ and $\widehat{P}^{0}$ in $\widehat{\tilde{n}}_{j,1,m}^{0,1}(k)$, so we have $p\approx-2k_{0}$. Therefore
\begin{equation*}
\begin{split}
-j\omega(k)-2\omega(k_{0})+n\omega(k-2k_{0})\approx-2\omega(k_{0})-n\omega(2k_{0})\neq0.
\end{split}
\end{equation*}
Regardless of the value of $j$ and $n$, the denominator of this expression is bounded strictly away from zero. The numerator in this expression is $\mathcal{O}(\epsilon)$ and hence the mapping $\epsilon D_{j,1,m}^{0,1,+}$ is $\mathcal{O}(\epsilon)$-bounded. In a similar fashion, $D_{j,-1,m}^{0,1,-}$ can be constructed and estimated. Therefore, the normal-form transform \eqref{mathR} is well-defined and invertible. We have the following lemma.
\begin{lemma}\label{L11}
If
\begin{equation*}
\begin{split}
\mathcal{R}_{j}^{0}&=\widetilde{R}_{j}^{0}+\epsilon \sum_{m,n\in\{0,\pm1\}}\left(D_{j,1,m}^{0,1,+}(\psi_{1},\psi_{1},\widetilde{R}_{n}^{1})+ D_{j,-1,m}^{0,1,-}(\psi_{-1},\psi_{-1},\widetilde{R}_{n}^{1})\right),
\end{split}
\end{equation*}
with $\epsilon D_{j,\pm1,m}^{0,1,\pm}$ defined as in \eqref{D+}, then for any $s>1$ there exists $C>0$ such that
\begin{equation*}
\begin{split}
\|D_{j,\pm1,m}^{0,1,\pm}(\psi_{\pm1},\psi_{\pm1},\widetilde{R}_{n}^{1})\|_{H^{s}}
\leq C \epsilon\|\widetilde{R}_{n}^{1}\|_{H^{s}}.
\end{split}
\end{equation*}
\end{lemma}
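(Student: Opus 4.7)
I will prove the bound for $D^{0,1,+}_{j,1,m}$; the case $D^{0,1,-}_{j,-1,m}$ follows by substituting $k_0 \mapsto -k_0$ throughout. The strategy is a direct Fourier-space computation using \eqref{D+}. First I factor the representation as
\begin{equation*}
\widehat D^{0,1,+}_{j,1,m}(k) \;=\; \epsilon\, G(k)\, M(k),
\end{equation*}
where
\begin{equation*}
G(k) \;=\; \widetilde n^{0,1}_{j,1,m}(k)\,\frac{\widehat P^{1}(k-k_0)\,\widehat\vartheta(k-2k_0)}{\widehat\vartheta(k-k_0)\,\bigl[-j\omega(k)-2\omega(k_0)+n\omega(k-2k_0)\bigr]}
\end{equation*}
gathers the factors depending only on $k$, and
\begin{equation*}
M(k) \;=\; \int\!\!\int \widehat\psi_1(k-\ell)\,\widehat\psi_1(\ell-p)\,b^{1,1}_{m,1,n}(k-k_0,k-k_0-p,p)\,\widehat{\widetilde R^1_n}(p)\,dp\,d\ell
\end{equation*}
carries the dependence on the amplitudes and $\widetilde R^1_n$.

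Four observations close the pointwise estimate for $G(k)$ on the support of $\widehat P^{0}$, i.e.\ $\{|k|<\delta\}$. First, the bound $|\widehat\vartheta(k)\,\widetilde n^{0,1}_{j,1,m}(k)|\le C$ from \eqref{n01-2}, which in turn relies on the resonance cancellation at $k=0$ for the case $m=-1$ already established earlier. Second, choosing $\delta<k_0/2$ makes $\widehat P^{1}(k-k_0)=\widehat\vartheta(k-k_0)=\widehat\vartheta(k-2k_0)=1$ whenever $|k|<\delta$. Third, the denominator $-j\omega(k)-2\omega(k_0)+n\omega(k-2k_0)$ tends to $-2\omega(k_0)-n\omega(2k_0)\neq 0$ as $k\to 0$, and hence is bounded below uniformly in $j,n$, as noted in the paragraph preceding the lemma. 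Together these give $|G(k)|\le C\widehat\vartheta(k)^{-1}$. For $M(k)$, I treat $b^{1,1}_{m,1,n}$ as a bounded multiplier on the compact region $k-k_0\approx -k_0$, $p\approx -2k_0$ to which the supports of $\widehat\psi_1$ restrict the integrand, and apply Young's inequality to the double convolution, using $\|\widehat\psi_1\|_{L^1}=\mathcal{O}(1)$ (independent of $\epsilon$) to obtain $\|M\|_{L^2(\mathbb{R})}\le C\|\widetilde R^1_n\|_{L^2}$.

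Combining, and using that the support of $\widehat D^{0,1,+}_{j,1,m}$ is contained in the fixed compact set $\{|k|<\delta\}$ so that the $H^s$-norm is equivalent to the $L^2$-norm up to a constant $C_s=(1+\delta^2)^{s/2}$,
\begin{equation*}
\|D^{0,1,+}_{j,1,m}\|_{H^s}^2 \;\lesssim\; \int_{|k|<\delta}\frac{\epsilon^2}{\widehat\vartheta(k)^2}\,|M(k)|^2\,dk,
\end{equation*}
and the pointwise inequality $\epsilon/\widehat\vartheta(k)\le 1$ (which holds by the very definition of $\widehat\vartheta$ in \eqref{theta}) together with $\|M\|_{L^2}\le C\|\widetilde R^1_n\|_{H^s}$ yields the stated estimate. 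The main obstacle is the tight balance between the singular factor $\widehat\vartheta(k)^{-1}\sim\epsilon^{-1}$ inside $\widetilde n^{0,1}_{j,1,m}$ and the prefactor $\epsilon$ in \eqref{D+}: every remaining factor must be verified to be $\mathcal{O}(1)$ uniformly on $\{|k|<\delta\}$. This is precisely why the Fourier truncation making $\widehat\psi_1$ compactly supported near $\pm k_0$, the cancellation $b^{0,1}_{j,l,-1}(0,-\ell,\ell)=0$ at the resonant point, and the preliminary simplifications via Lemma~\ref{L6} are all indispensable inputs here.
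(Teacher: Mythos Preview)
Your approach is correct and matches the paper's own (implicit) argument: the paper does not give a separate proof of this lemma but derives it from the paragraph immediately preceding it, where the denominator $-j\omega(k)-2\omega(k_0)+n\omega(k-2k_0)$ is shown to be bounded away from zero and the ``numerator'' (i.e.\ the prefactor $\epsilon^{2}\widetilde n^{0,1}_{j,1,m}(k)$ together with the remaining $\mathcal{O}(1)$ factors) is declared $\mathcal{O}(\epsilon)$. Your factorization $\widehat D^{0,1,+}=\epsilon\,G(k)M(k)$, the pointwise bound $|G(k)|\le C/\widehat\vartheta(k)$, and the Young-inequality bound on $M$ are exactly the explicit version of that reasoning.

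One remark on the final step: the chain
\[
\|D^{0,1,+}_{j,1,m}\|_{H^{s}}^{2}\;\lesssim\;\int_{|k|<\delta}\frac{\epsilon^{2}}{\widehat\vartheta(k)^{2}}|M(k)|^{2}\,dk
\;\le\;\|M\|_{L^{2}}^{2}\;\le\;C\|\widetilde R^{1}_{n}\|_{H^{s}}^{2}
\]
(using $\epsilon/\widehat\vartheta(k)\le 1$) gives $\|D^{0,1,+}_{j,1,m}\|_{H^{s}}\le C\|\widetilde R^{1}_{n}\|_{H^{s}}$, \emph{not} $\le C\epsilon\|\widetilde R^{1}_{n}\|_{H^{s}}$ as the displayed statement of the lemma literally reads. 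This is not a defect in your argument: the paper's own sentence just before the lemma (``the mapping $\epsilon D^{0,1,+}_{j,1,m}$ is $\mathcal{O}(\epsilon)$-bounded'') says precisely that $D^{0,1,+}$ is $\mathcal{O}(1)$, and that is exactly what is used in Proposition~\ref{P4} to invert the transform via a Neumann series. The extra $\epsilon$ on the right-hand side of the lemma is a typographical slip in the paper; your bound is the correct and intended one.
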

Note that there is no loss of smoothness in this transform due to the factor $\widehat{P}^{0}$ in \eqref{D+} via $\widehat{\tilde{n}}_{j,1,m}^{0,1}(k)$. Now, just as in Proposition \ref{P3} we have
\begin{proposition}\label{P4}
Fix $s>1$. Suppose $(\widetilde{R}_{j}^{0},\widetilde{R}_{j}^{1})$ satisfies \eqref{Rj00} and define $(\mathcal{R}_{j}^{0},\mathcal{R}_{j}^{1})$ via the transform \eqref{mathR}. Then there exists an $\epsilon_{0}>0$ such that for all $\epsilon<\epsilon_{0}$, the transform \eqref{mathR} is invertible in $H^{s}$. Furthermore, $(\mathcal{R}_{j}^{0},\mathcal{R}_{j}^{1})$ satisfies the equations
\begin{equation}
\begin{split}\label{RJ}
\partial_{t}\mathcal{R}_{j}^{0}=&j\Omega\mathcal{R}_{j}^{0}+\epsilon^{2}\mathcal{F}_{j}^{0},\\
\partial_{t}\mathcal{R}_{j}^{1}=&j\Omega\mathcal{R}_{j}^{1}
+\sum_{n\in\{0,\pm1\}}\left(\epsilon \sum_{l\in\{\pm1\}} B^{1,1}_{j,l,n}(\psi_{l}, \mathcal{R}^{1}_{n})+\epsilon^{2}\mathcal{F}_{j,n}^{1}\right),
\end{split}
\end{equation}
for $j\in\{0, \ \pm1\}$ (see \eqref{Bc03} for $B^{1,1}_{j,l,n}$). Here, $\epsilon^{2}\mathcal{F}^{0}$ does not lose any derivatives due to the factor $P^{0}$, and $\epsilon^{2}\mathcal{F}^{1}$ denotes a collection of terms whose $H^{s-1}$-norms are bounded by $C \epsilon^{2}$ for $(\mathcal{R}_{j}^{0},\mathcal{R}_{j}^{1})\in H^{s}\times H^{s}$. Indeed, we have
\begin{equation}
\begin{split}\label{J68}
\epsilon^{2}\|\mathcal{F}_{j}^{0}\|_{H^{s'}}\leq C \epsilon^{2}(1+\|\mathcal{R}\|_{H^{s}}+\epsilon^{3/2}\|\mathcal{R}\|^{2}_{H^{s}}
+\epsilon^{3}\|\mathcal{R}\|^{3}_{H^{s}}),\\
\epsilon^{2}\|\mathcal{F}_{j,n}^{1}\|_{H^{s-1}}\leq C \epsilon^{2}(1+\|\mathcal{R}\|_{H^{s}}+\epsilon^{3/2}\|\mathcal{R}\|^{2}_{H^{s}}
+\epsilon^{3}\|\mathcal{R}\|^{3}_{H^{s}}),\\
\end{split}
\end{equation}
for any $s,s'>1$. Here, we have ignored $\vartheta$ and $P^{1}$ since $\vartheta(k)=1$ in Fourier space.
\end{proposition}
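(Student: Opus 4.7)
The plan is to proceed in three stages: (i) establish invertibility of the transform \eqref{mathR}; (ii) derive the evolutionary system \eqref{RJ} by direct substitution, using the defining relation \eqref{D+} to cancel the remaining $\mathcal{O}(\epsilon)$ low-frequency forcing terms in \eqref{Rj00}; and (iii) verify the bounds \eqref{J68} by carefully assembling what is left over after cancellation.

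For (i), note that $\mathcal{R}_{j}^{1}=\widetilde{R}_{j}^{1}$ trivially, so only the $\mathcal{R}^{0}$-component of the map needs to be inverted. By Lemma \ref{L11}, each summand $\epsilon D_{j,\pm1,m}^{0,1,\pm}(\psi_{\pm1},\psi_{\pm1},\widetilde{R}_{n}^{1})$ is bounded by $C\epsilon\|\widetilde{R}_{n}^{1}\|_{H^{s}}$ in $H^{s}$, so the transform \eqref{mathR} is a small perturbation of the identity in $H^{s}\times H^{s}$. A standard Neumann-series argument then provides $\epsilon_{0}>0$, depending only on the constant in Lemma \ref{L11}, such that the map is invertible on $H^{s}\times H^{s}$ for all $\epsilon<\epsilon_{0}$, with inverse differing from the identity by an $\mathcal{O}(\epsilon)$-bounded operator.

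For (ii), differentiate \eqref{mathR} in $t$ and substitute \eqref{Rj00}. The time derivative falling on $\widetilde{R}_{n}^{1}$ produces both the linear piece $n\Omega\widetilde{R}_{n}^{1}$, which combines with the $-j\Omega$ and $-\Omega\psi_{\pm1}$ contributions to give exactly the denominator in \eqref{D+}, and the nonlinear piece, which is $\mathcal{O}(\epsilon)$ in the $B^{1,1}$-terms and $\mathcal{O}(\epsilon^{2})$ in the $\widetilde{F}_{n}^{1}$-residual. Using Lemma \ref{LO} to substitute $\partial_{t}\psi_{\pm1}=\mp i\omega\psi_{\pm1}+\mathcal{O}(\epsilon^{2})$ and applying Lemma \ref{L6} to justify the simplifications made in writing $\widetilde{n}^{0,1}_{j,\pm1,m}$, the choice \eqref{D+} cancels precisely the offending $l_{1}=l_{2}$ terms in \eqref{Rj00}, while the $l_{1}=-l_{2}$ terms are already $\mathcal{O}(\epsilon^{2})$ in $H^{s}$ by Lemma \ref{L7} and are absorbed into $\epsilon^{2}\mathcal{F}_{j}^{0}$. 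The contributions coming from $B^{1,1}$ acting on $\widetilde{R}^{1}$ through $D$ are one order higher in $\epsilon$ and also enter $\epsilon^{2}\mathcal{F}_{j}^{0}$.

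For (iii), the remainder $\epsilon^{2}\mathcal{F}^{0}$ is a sum of: the original $\epsilon^{2}\widetilde{F}_{j}^{0}$ from Proposition \ref{P3}; the $l_{1}=-l_{2}$ terms controlled by Lemma \ref{L7}; commutator-type remainders produced by the approximations of Lemma \ref{L6}; cubic-in-$\mathcal{R}$ (and higher) terms arising when the nonlinear part of the $\widetilde{R}^{1}_{n}$-equation interacts with $D$; and the residual from the modified approximation $\epsilon\Psi$, controlled by \eqref{Aesti-1}. Each term is at most cubic in $\mathcal{R}$, linear in derivatives, and carries the projector $P^{0}$, so no $\widehat\vartheta^{-1}=\mathcal{O}(\epsilon^{-1})$ factor survives and derivatives may be moved freely, giving the $H^{s'}$ bound in \eqref{J68} for any $s'$. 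The component $\epsilon^{2}\mathcal{F}^{1}$ is inherited directly from $\epsilon^{2}\widetilde{F}^{1}$ of Proposition \ref{P3}, whose quasilinear $B^{1,1}$ structure loses exactly one derivative, yielding the stated $H^{s-1}$-bound. The main obstacle is the bookkeeping in step (ii): one must verify that every seemingly $\mathcal{O}(\epsilon)$ term after substitution is either cancelled by the design of $D^{0,1,\pm}$, or comes with a compensating factor (either from $P^{0}$, from a vanishing of the numerator at the resonant wave numbers $k=\pm 2k_{0}$, or from the $\epsilon$ in $\widehat\vartheta$), so that no spurious negative power of $\epsilon$ remains in the final estimates \eqref{J68}.
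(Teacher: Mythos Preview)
Your approach is correct and matches the paper's own proof, which is quite terse: it simply invokes the Neumann series for invertibility (since there is no loss of smoothness) and states that the derivation of the equations for $(\mathcal{R}_{j}^{0},\mathcal{R}_{j}^{1})$ follows exactly as in the proof of Proposition~\ref{P3}. Your three-stage plan spells out in detail what the paper leaves implicit, and the ingredients you invoke---Lemma~\ref{L11} for the operator bound, Lemma~\ref{LO} for the time derivative of $\psi_{\pm1}$, Lemma~\ref{L6} for the kernel simplifications, and Lemma~\ref{L7} for the $l_{1}=-l_{2}$ terms---are precisely those the paper has set up in the preceding discussion.

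One minor inaccuracy: in your final parenthetical you list ``vanishing of the numerator at the resonant wave numbers $k=\pm 2k_{0}$'' as a possible compensating mechanism. That mechanism belongs to the analysis of $N^{1,1}$ in Section~\ref{Sect6} (Lemma~\ref{L10}), not to the second normal-form transform $D^{0,1,\pm}$ treated here; for $D^{0,1,\pm}$ the denominator $-j\omega(k)-2\omega(k_{0})+n\omega(k-2k_{0})$ is bounded strictly away from zero on the relevant support, so no numerator cancellation is needed. This does not affect the validity of your argument, only the accuracy of that side remark.
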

\begin{proof}
The invertibility of the transform in this case results from a simple application of the Neumann series since there is no loss of smoothness. The equation for $\mathcal{R}_{j}^{0}$ and $\mathcal{R}_{j}^{1}$ follow in the same way the equations for $\widetilde{R}_{j}^{0}$ and $\widetilde{R}_{j}^{1}$ were derived in the proof of Proposition \ref{P3}.
\end{proof}

Combining \eqref{Rj} with \eqref{Bc02}-\eqref{Bc03}, we obtain that $B^{1,1}_{j,l,n}$ in the second equation of \eqref{RJ} has the form of
\begin{align}\label{B0n}
\sum_{n\in\{0,\pm1\}}B^{1,1}_{0,l,n}(\psi_{l},\mathcal{R}^{1}_{n})
:=&-\frac{1}{q^{2}}\left(q\psi_{l}\partial_{x}q^{2} \mathcal{R}^{1}_{0}\right)-\frac{(\gamma-1)(\gamma-2)}{q^{2}} \left(\psi_{l}\partial_{x} q (\mathcal{R}^{1}_{1}-\mathcal{R}^{1}_{-1})\right)\nonumber\\
&+\frac{\gamma-1}{q^{2}}\left(\partial_{x} q\psi_{l}((\gamma-2-q^{2}) \mathcal{R}^{1}_{0}+(\gamma-2)(\mathcal{R}^{1}_{1}+\mathcal{R}^{1}_{-1}) \right),
\end{align}
\begin{align}\label{Bjn}
\sum_{n\in\{0,\pm1\}}B^{1,1}_{j,l,n}(\psi_{l},\mathcal{R}^{1}_{n})
:=&\frac{1}{2q^{2}}\left(q\psi_{l}
\partial_{x}q^{2}\mathcal{R}^{1}_{0}\right)
-\frac{\partial_{x}}{2}\left(q\psi_{l}(\mathcal{R}^{1}_{0} +\mathcal{R}^{1}_{1}
+\mathcal{R}^{1}_{-1})\right)\nonumber\\
&+\frac{\partial_{x}}{2}\left(\psi_{l}q(\mathcal{R}^{1}_{1} -\mathcal{R}^{1}_{-1})\right)
+\frac{(\gamma-1)(\gamma-2)}{2q^{2}}\left(\psi_{l}\partial_{x}q (\mathcal{R}^{1}_{1}-\mathcal{R}^{1}_{-1})\right)\nonumber\\
&-\frac{\gamma-1}{2q^{2}}\left(\partial_{x}q\psi_{l}
(\gamma-2-q^{2})\mathcal{R}^{1}_{0}+(\gamma-2) (\mathcal{R}^{1}_{1}+\mathcal{R}^{1}_{-1})\right)
\nonumber\\
&-\frac{j\partial_{x}}{2q}\left(q\psi_{l}q(\mathcal{R}^{1}_{1} -\mathcal{R}^{1}_{-1})\right)
+\frac{j(\gamma-2)}{2 q}\left(\psi_{l}\partial_{x}(\mathcal{R}^{1}_{0} +\mathcal{R}^{1}_{1}+\mathcal{R}^{1}_{-1})\right)\nonumber\\
&+\frac{j}{2q}\left(\partial_{x}\psi_{l}
(\gamma-2-q^{2})\mathcal{R}^{1}_{0}+(\gamma-2) (\mathcal{R}^{1}_{1}+\mathcal{R}^{1}_{-1})\right)\\
&-\frac{j\partial_{x}}{2q(1-\partial_{x}^{2})}
\left(\frac{\psi_{l}}{1-\partial_{x}^{2}}
\frac{\mathcal{R}^{1}_{0}+\mathcal{R}^{1}_{1} +\mathcal{R}^{1}_{-1}}{1-\partial_{x}^{2}}\right),
 \ \ \text{for} \ \ j\in\{\pm1\}.\nonumber
\end{align}
We extract those terms that lose derivatives on $\mathcal{R}^{1}$ from $\mathcal{F}_{j}^{1}$ in the second equation of \eqref{RJ} in light of \eqref{Rj}. For $j=0$ we have
\begin{align}\label{F00}
\epsilon^{2}\sum_{n\in\{0,\pm1\}}\mathcal{F}_{0,n}^{1}&=
\frac{\epsilon^{2}}{q^{2}}\left(q\varphi_{1}\partial_{x}q^{2}\mathcal{R}^{1}_{0}\right)
+\frac{\epsilon^{2}}{q^{2}}\left(\partial_{x}q^{2}\psi_{p_{0}}
q(\mathcal{R}^{1}_{1}-\mathcal{R}^{1}_{-1})\right)\nonumber\\
&-\frac{\epsilon^{2}(\gamma-1)}{q^{2}}\left(\varphi_{2}\partial_{x}q
(\mathcal{R}^{1}_{1}-\mathcal{R}^{1}_{-1})\right)\nonumber\\
&-\frac{\epsilon^{2}(\gamma-1)}{q^{2}}\left(\partial_{x}q\varphi_{1}
\left((\gamma-2-q^{2})\mathcal{R}^{1}_{0}+(\gamma-2)\mathcal{R}^{1}_{1}
+(\gamma-2)\mathcal{R}^{1}_{-1})\right)\right)\nonumber\\
&-\frac{\epsilon^{\beta}(\gamma-1)}{q^{2}}
\left(\Big((\gamma-2-q^{2})\mathcal{R}^{1}_{0}+(\gamma-2)\mathcal{R}^{1}_{1}
+(\gamma-2)\mathcal{R}^{1}_{-1})\Big)
\partial_{x}q(\mathcal{R}^{1}_{1}-\mathcal{R}^{1}_{-1})\right)\nonumber\\
&+\frac{\epsilon^{\beta}}{q^{2}}\left(q(\mathcal{R}^{1}_{1}
-\mathcal{R}^{1}_{-1})\partial_{x}q^{2}\mathcal{R}^{1}_{0}\right)
+\epsilon^{-\beta}Res_{0}^{1}(\epsilon\Psi)
+\epsilon^{2}\mathcal{K}^{1}_{0},
\end{align}
and for $j\in\{\pm1\}$ we have
\begin{align}\label{F01}
\epsilon^{2}\sum_{n\in\{0,\pm1\}}\mathcal{F}_{j,n}^{1}=
&-\frac{\epsilon^{2}}{2q^{2}}\left(q\varphi_{1}\partial_{x}q^{2}\mathcal{R}^{1}_{0}\right)
-\frac{\epsilon^{2}}{2q^{2}}\left(\partial_{x}q^{2}\psi_{p_{0}}q(\mathcal{R}^{1}_{1}
-\mathcal{R}^{1}_{-1})\right)\nonumber\\
&+\frac{\epsilon^{2}\partial_{x}}{2}
\left(q\varphi_{1}(\mathcal{R}^{1}_{0}+\mathcal{R}^{1}_{1}+\mathcal{R}^{1}_{-1})\right)\nonumber\\
&+\frac{\epsilon^{2}(\gamma-1)}{2q^{2}}\left(\partial_{x}q\varphi_{1}
((\gamma-2-q^{2})\mathcal{R}^{1}_{0}+(\gamma-2)\mathcal{R}^{1}_{1}+(\gamma-2)\mathcal{R}^{1}_{-1})\right)\nonumber\\
&+\frac{\epsilon^{2}\partial_{x}}{2}\left(\varphi_{3}q(\mathcal{R}^{1}_{1}-\mathcal{R}^{1}_{-1})\right)
+\frac{\epsilon^{2}(\gamma-1)}{2 q^{2}}\left(\varphi_{2}\partial_{x}q(\mathcal{R}^{1}_{1}-\mathcal{R}^{1}_{-1})\right)\nonumber\\
&+\frac{\epsilon^{2}j}{2q}\left(\partial_{x}\varphi_{3}
((\gamma-2-q^{2})\mathcal{R}^{1}_{0}+(\gamma-2)\mathcal{R}^{1}_{1}+(\gamma-2)\mathcal{R}^{1}_{-1})\right)\nonumber\\
&+\frac{\epsilon^{2}j\partial_{x}}{2q}\left(q\varphi_{1}q(\mathcal{R}^{1}_{1}-\mathcal{R}^{1}_{-1}\right))
+\frac{\epsilon^{2}j}{2 q}\left(\varphi_{2}\partial_{x}
(\mathcal{R}^{1}_{0}+\mathcal{R}^{1}_{1}+\mathcal{R}^{1}_{-1})\right)\nonumber\\
&-\frac{\epsilon^{\beta}}{2q^{2}}\left(q(\mathcal{R}^{1}_{1}-\mathcal{R}^{1}_{-1})
\partial_{x}q^{2}\mathcal{R}^{1}_{0}\right)
+\frac{\epsilon^{\beta}\partial_{x}}{2}\left(q(\mathcal{R}^{1}_{1}-\mathcal{R}^{1}_{-1})
(\mathcal{R}^{1}_{0}+\mathcal{R}^{1}_{1}+\mathcal{R}^{1}_{-1})\right)\nonumber\\
&+\frac{\epsilon^{\beta}(\gamma-1)}{2q^{2}}\left((
(\gamma-2-q^{2})\mathcal{R}^{1}_{0}+(\gamma-2)\mathcal{R}^{1}_{1}+(\gamma-2)\mathcal{R}^{1}_{-1})
\partial_{x}q(\mathcal{R}^{1}_{1}-\mathcal{R}^{1}_{-1})\right)\nonumber\\
&+\frac{\epsilon^{\beta}j\partial_{x}}{4 q}(q(\mathcal{R}^{1}_{1}-\mathcal{R}^{1}_{-1}))^{2}\nonumber\\
&+\frac{\epsilon^{\beta}j}{2q}\left(
((\gamma-2-q^{2})\mathcal{R}^{1}_{0}+(\gamma-2)\mathcal{R}^{1}_{1}+(\gamma-2)\mathcal{R}^{1}_{-1})
\partial_{x}(\mathcal{R}^{1}_{0}+\mathcal{R}^{1}_{-1}+\mathcal{R}^{1}_{1})\right)\nonumber\\
&+\epsilon^{2}\mathcal{H}(\mathcal{R}^{1})
+\epsilon^{-\beta}Res_{j}^{1}(\epsilon\Psi)
+\epsilon^{2}\mathcal{K}^{1}_{j},
\end{align}
where $\mathcal{K}^{1}_{j}$ with $j\in\{0,\pm1\}$ satisfies
\begin{equation*}
\begin{split}
\epsilon^{2}\|\mathcal{K}_{j}^{1}\|_{H^{s+1}}\leq C \epsilon^{2}(1+\|\mathcal{R}\|_{H^{s}}+\epsilon^{3/2}\|\mathcal{R}\|^{2}_{H^{s}}
+\epsilon^{3}\|\mathcal{R}\|^{3}_{H^{s}}),
\end{split}
\end{equation*}
for any $s\geq1$.

For the final system \eqref{RJ} of the error $\mathcal{R}$ with \eqref{B0n}-\eqref{F01} obtained after making above twice normal-form transforms, only the high frequency terms $\epsilon B^{1,1}_{j,l,n}(\psi_{l}, \mathcal{R}^{1}_{n})$ with $j,n\in\{0,\pm1\}$ and $l\in\{\pm1\}$ of order $\mathcal{O}(\epsilon)$ left in the evolutionary equation for $\mathcal{R}^{1}_{j}$ with $j\in\{0, \ \pm1\}$. However, due to the problem caused by loss of derivatives, we do not eliminate these terms directly but instead construct a modified energy by using the corresponding normal-form transforms in the next section. After doing so, we can not only transform the high frequency $\mathcal{O}(\epsilon)$ terms to $\mathcal{O}(\epsilon^{2})$ terms, but also deal with the loss of regularity and obtain a uniform estimate for $\mathcal{R}$ over the desired long time scale $\mathcal{O}(\epsilon^{-2})$ in Sobolev spaces.

By taking advantage of the structure of the system \eqref{RJ} with \eqref{B0n}-\eqref{F01}, we summarize the above analysis in the following lemma.


\begin{lemma}\label{RR}
We have
\begin{align}\label{+++}
\partial_{t}(\mathcal{R}^{1}_{0}
+\mathcal{R}^{1}_{1}+\mathcal{R}^{1}_{-1})
=&\Omega(\mathcal{R}^{1}_{1}-\mathcal{R}^{1}_{-1})
+\epsilon\phi_{1}\partial_{x}
(\mathcal{R}^{1}_{0}+\mathcal{R}^{1}_{1}+\mathcal{R}^{1}_{-1})
+\epsilon\phi_{3}\partial_{x}
q(\mathcal{R}^{1}_{1}-\mathcal{R}^{1}_{-1})\nonumber\\
&+\epsilon^{-\beta}(Res^{1}_{0}(\epsilon\Psi)+Res^{1}_{1}(\epsilon\Psi)+Res^{1}_{-1}(\epsilon\Psi))
+\epsilon\mathcal{L}^{1},
\end{align}
\begin{equation}
\begin{split}\label{++}
\partial_{t}(\mathcal{R}^{1}_{1}+\mathcal{R}^{1}_{-1})
=&\Omega(\mathcal{R}^{1}_{1}-\mathcal{R}^{1}_{-1})
+\epsilon\phi_{1}\partial_{x}
(\mathcal{R}^{1}_{1}+\mathcal{R}^{1}_{-1})\big)
+\epsilon\phi_{4}\partial_{x}
q(\mathcal{R}^{1}_{1}-\mathcal{R}^{1}_{-1}))\\
&+\epsilon^{-\beta}(Res^{1}_{1}(\epsilon\Psi)+Res^{1}_{-1}(\epsilon\Psi))
+\epsilon\mathcal{L}^{2},
\end{split}
\end{equation}
and
\begin{equation}
\begin{split}\label{+-}
\partial_{t}(\mathcal{R}^{1}_{1}-\mathcal{R}^{1}_{-1})
=&\Omega(\mathcal{R}^{1}_{1}+\mathcal{R}^{1}_{-1})
+\epsilon\phi_{1}\partial_{x}
(\mathcal{R}^{1}_{1}-\mathcal{R}^{1}_{-1})\big)
+\epsilon\phi_{5}\partial_{x}
(\mathcal{R}^{0}_{1}+\mathcal{R}^{1}_{1}+\mathcal{R}^{1}_{-1}))\\
&+\epsilon^{-\beta}(Res^{1}_{1}(\epsilon\Psi)-Res^{1}_{-1}(\epsilon\Psi))
+\epsilon\mathcal{L}^{3},
\end{split}
\end{equation}
where
\begin{align}\label{phi}
\phi_{1}:&=q(-\psi_{l}+\epsilon\varphi_{1}
+\epsilon^{\beta-1}(\mathcal{R}^{1}_{1}-\mathcal{R}^{1}_{-1})),\nonumber\\
\phi_{2}:&=(\gamma-2)\psi_{l}
+\epsilon\varphi_{2}+\epsilon^{\beta-1}((\gamma-2-q^{2})\mathcal{R}^{1}_{0}
+(\gamma-2)(\mathcal{R}^{1}_{1}+\mathcal{R}^{1}_{-1})),\nonumber\\
\phi_{3}:&=\psi_{l}+\epsilon\varphi_{3}+\epsilon^{\beta-1}(
\mathcal{R}^{1}_{0}+\mathcal{R}^{1}_{1}+\mathcal{R}^{1}_{-1}),\\
\phi_{4}:&=\phi_{3}+\frac{\gamma-1}{\gamma}\phi_{2},\nonumber\\
\phi_{5}:&=\frac{1}{\sqrt{\gamma}}\phi_{2}.\nonumber
\end{align}
Here $\varphi_{l}$ with $l\in\{1,2,3\}$ is defined in equation \eqref{c123}, and $\mathcal{L}^{r}$ with $r\in\{1,2,3\}$ satisfies the estimate
\begin{equation}
\begin{split}\label{Lr}
\epsilon\|\mathcal{L}^{r}\|_{H^{s}}\leq C \epsilon(1+\|\mathcal{R}\|_{H^{s}}+\epsilon^{3/2}\|\mathcal{R}\|^{2}_{H^{s}}
+\epsilon^{3}\|\mathcal{R}\|^{3}_{H^{s}}),
\end{split}
\end{equation}
for any $s\geq1$.
\end{lemma}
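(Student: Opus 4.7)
The plan is to derive the three identities by combining the evolutionary equations for $\mathcal{R}^{1}_{0},\mathcal{R}^{1}_{1},\mathcal{R}^{1}_{-1}$ in \eqref{RJ}, whose explicit quadratic interactions are given by \eqref{B0n}--\eqref{Bjn} and whose higher-order remainders are given by \eqref{F00}--\eqref{F01}. At the linear level the identities are automatic: the equation for $\mathcal{R}^{1}_{0}$ carries no $\Omega\mathcal{R}^{1}_{0}$ term, an essential feature of the diagonalization \eqref{Diago} reflecting the vanishing linear frequency of the temperature component, and consequently the three prescribed combinations of linear parts yield exactly $\Omega(\mathcal{R}^{1}_{1}\pm\mathcal{R}^{1}_{-1})$ as claimed in \eqref{+++}, \eqref{++} and \eqref{+-}.

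The nontrivial step is collecting the quadratic interactions in the prescribed form $\epsilon\phi_{i}\partial_{x}(\cdots)$. The key structural observation, visible in \eqref{Bjn}, is that $B^{1,1}_{j,l,n}$ decomposes as $B^{\mathrm{even}}_{l,n}+jB^{\mathrm{odd}}_{l,n}$ for $j\in\{\pm1\}$. Hence adding the $j=\pm1$ equations kills the $j$-odd pieces while subtracting kills the $j$-even pieces. After also summing over $n\in\{0,\pm1\}$ and $l\in\{\pm1\}$, the surviving terms collapse into compact expressions of the shape $q\psi_{l}\,\partial_{x}(\cdots)$ or $\psi_{l}\,\partial_{x}q(\cdots)$; comparison with \eqref{phi} identifies the $\psi_{l}$ contribution of each $\phi_{i}$. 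The $\mathcal{O}(\epsilon)$ $\varphi_{i}$-modifications and the $\mathcal{O}(\epsilon^{\beta-1})$ $\mathcal{R}$-pieces of the $\phi_{i}$ come from the analogous quadratic and cubic pieces in \eqref{F00}--\eqref{F01}, regrouped by the same $j$-parity argument.

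All remaining terms are absorbed into $\epsilon\mathcal{L}^{r}$. They are of three types: commutators such as $[\partial_{x},q]$ and products in which a derivative falls on $\psi_{l}$ or $\varphi_{i}$, which gain a factor of $\epsilon$ from the slow spatial scale of these amplitudes; nonlocal terms involving $(1-\partial_{x}^{2})^{-m}$ that effectively gain smoothness in $\mathcal{R}$; and the cubic and quartic contributions $\mathcal{H}(\mathcal{R}^{1})$ and $\widetilde{\mathcal{H}}(\mathcal{R})$ together with the residuals $\epsilon^{-\beta}Res^{1}_{j}(\epsilon\Psi)$, which are already of the required order by Lemma \ref{L2}. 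The bound \eqref{Lr} then follows from standard Moser estimates in $H^{s}$ combined with the uniform $L^{1}$-bound \eqref{Aesti-3} on $\widehat{\Psi}$, which allow multiplication by $\psi_{l}$ or $\varphi_{i}$ without loss of powers of $\epsilon$.

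The main obstacle is the algebraic bookkeeping required to verify that the specific ratios $(\gamma-1)/\gamma$ and $1/\sqrt{\gamma}$ entering $\phi_{4}$ and $\phi_{5}$ in \eqref{phi} arise precisely from the interplay of the factors $1/q^{2},1/q,\gamma-1,\gamma-2,q^{2}$ scattered throughout \eqref{B0n}--\eqref{Bjn}. This is a routine but long verification; once carried out, the identities \eqref{+++}, \eqref{++} and \eqref{+-} follow, providing the structural analogue of \eqref{struct} that will drive the subsequent energy argument.
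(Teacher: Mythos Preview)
Your proposal is correct and follows the same route the paper implicitly takes: the paper simply states that the lemma ``summarizes'' the structure of \eqref{RJ} with \eqref{B0n}--\eqref{F01}, and your outline of the $j$-parity decomposition of $B^{1,1}_{j,l,n}$ together with the regrouping of the $\varphi_{i}$- and $\mathcal{R}$-contributions from \eqref{F00}--\eqref{F01} is exactly the mechanism behind those identities.

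One point worth sharpening: the constants $(\gamma-1)/\gamma$ and $1/\sqrt{\gamma}$ in $\phi_{4},\phi_{5}$ do not arise from pure algebra on the symbols $1/q,1/q^{2}$, but from replacing the pseudodifferential operator $q$ by its high-frequency limit $\sqrt{\gamma}$ via Lemma~\ref{qq2}; the resulting errors (of order $|k|^{-2}$ in Fourier) are what actually get absorbed into $\mathcal{L}^{r}$, not commutators $[\partial_{x},q]$ (which vanish, since $q$ and $\partial_{x}$ are both Fourier multipliers). The paper makes this explicit later when it writes ``using the asymptotic expansion \eqref{qq2}, we extract the terms that lose derivatives'' in deriving \eqref{F11j0}--\eqref{Fjm0}. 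Once you invoke Lemma~\ref{qq2} in place of the commutator remark, your argument is complete.
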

This lemma shows that we have the following equations:
\begin{equation}
\begin{split}\label{1+-}
\Omega(\mathcal{R}^{1}_{1}-\mathcal{R}^{1}_{-1})=&
\frac{1}{1+\epsilon\phi_{3}}\Big(\partial_{t}(\mathcal{R}^{1}_{0}
+\mathcal{R}^{1}_{1}+\mathcal{R}^{1}_{-1})
-\epsilon\phi_{1}\partial_{x}
(\mathcal{R}^{1}_{0}+\mathcal{R}^{1}_{1}+\mathcal{R}^{1}_{-1})\\
&-\epsilon^{-\beta}\big(Res^{1}_{0}(\epsilon\Psi)+Res^{1}_{1}(\epsilon\Psi)+Res^{1}_{-1}(\epsilon\Psi)\big)
-\epsilon\mathcal{L}^{1}\Big),
\end{split}
\end{equation}
\begin{equation}
\begin{split}\label{1--}
\Omega(\mathcal{R}^{1}_{1}-\mathcal{R}^{1}_{-1})
=&\frac{1}{1+\epsilon\phi_{4}}\Big(\partial_{t}(\mathcal{R}^{1}_{1}+\mathcal{R}^{1}_{-1})
-\epsilon\phi_{1}\partial_{x}
(\mathcal{R}^{1}_{1}+\mathcal{R}^{1}_{-1})\\
&-\epsilon^{-\beta}\big(Res^{1}_{1}(\epsilon\Psi)+Res^{1}_{-1}(\epsilon\Psi)\big)
-\epsilon\mathcal{L}^{2}\Big),
\end{split}
\end{equation}
and
\begin{equation}
\begin{split}\label{1++}
\partial_{x}(\mathcal{R}^{1}_{1}+\mathcal{R}^{1}_{-1})
=&\frac{1}{q+\epsilon\phi_{5}}\Big(\partial_{t}(\mathcal{R}^{1}_{1}-\mathcal{R}^{1}_{-1})
-\epsilon\phi_{1}\partial_{x}
(\mathcal{R}^{1}_{1}-\mathcal{R}^{1}_{-1})
-\epsilon\phi_{5}\partial_{x}
\mathcal{R}^{1}_{0}\\
&-\epsilon^{-\beta}\big(Res^{1}_{1}(\epsilon\Psi)-Res^{1}_{-1}(\epsilon\Psi)\big)
-\epsilon\mathcal{L}^{3}\Big).
\end{split}
\end{equation}
We remark that the equations \eqref{1+-}-\eqref{1++} are very important to obtain the uniform estimate for the error $\mathcal{R}_{j}^{1}$ with $j\in\{0,\pm1\}$ in Sect. \S\ref{Sect6}. In fact, we have to bound the interaction terms $\epsilon^{2}\sum_{j\in\{\pm1\}}\int a_{j}\mathcal{R}^{1}_{j}\partial_{x}\mathcal{R}^{1}_{-j}$ and $\epsilon^{2}\sum_{j\in\{\pm1\}}\int b_{j}\mathcal{R}^{1}_{0}\partial_{x}\mathcal{R}^{1}_{j}$ with  $\mathcal{O}(\epsilon^{2})$ bound, whenever $\mathcal{R}^{1}\in L^{2}$ and $a_{j},b_{j}\in H^{1}$. According to Lemma \ref{L9}, we only need to estimate $\epsilon^{2}\int (a_{-1}-a_{1})(\mathcal{R}^{1}_{1}+\mathcal{R}^{1}_{-1})
\partial_{x}(\mathcal{R}^{1}_{1}-\mathcal{R}^{1}_{-1})$, $\epsilon^{2}\int (b_{1}+b_{-1})\mathcal{R}^{1}_{0}
\partial_{x}(\mathcal{R}^{1}_{1}-\mathcal{R}^{1}_{-1})$ and $\epsilon^{2}\int (b_{1}-b_{-1})\mathcal{R}^{1}_{0}
\partial_{x}(\mathcal{R}^{1}_{1}+\mathcal{R}^{1}_{-1})$. The estimates of all these interaction terms heavily depend on the equations \eqref{1+-}-\eqref{1++}.

\section{\textbf{Estimate of the error $(\mathcal{R}^{0},\mathcal{R}^{1})$}}\label{Sect6}
In the system \eqref{RJ} with \eqref{B0n}-\eqref{F01}, only the high frequency $\mathcal{O}(\epsilon)$ terms $\epsilon B^{1,1}_{j,l,n}(\psi_{l}, \mathcal{R}^{1}_{n})$ with $j,n\in\{0,\pm1\}$ and $l\in\{\pm1\}$ have not been eliminated. The reason is that the loss of one derivative of these terms finally causes the transformed system loses two derivatives. As a consequence, it does not work to bound the transformed error by the variation of constants formula and Gronwall's inequality, not because of losing powers of $\epsilon$ but of regularity problems. Therefore, instead of making the corresponding normal-form transform directly for $\mathcal{R}^{1}_{j}$, we use $\epsilon N^{1,1}$ to construct a modified energy $\mathcal{E}_{s}$ as follows
\begin{equation}\label{modiener}
\begin{split}
\mathcal{E}_{s}=\sum_{\ell=0}^{s}E_{\ell},
\end{split}
\end{equation}
with
\begin{equation}
\begin{split}\label{Eell}
E_{\ell}=\sum_{j\in\{0,\pm1\}}\bigg(&\frac{1}{2}\int(\partial_{x}^{\ell}\mathcal{R}_{j}^{0})^{2}dx+
\frac{1}{2}\int(\partial_{x}^{\ell}\mathcal{R}_{j}^{1})^{2}dx\\ &+\epsilon\sum_{\substack{n\in\{0,\pm1\}\\l\in\{\pm1\}}}
\int\partial_{x}^{\ell}\mathcal{R}_{j}^{1}\partial_{x}^{\ell}N_{j,l,n}^{1,1}(\psi_{l},\mathcal{R}_{n}^{1})dx
\bigg),
\end{split}
\end{equation}
where
\begin{equation}
\begin{split}\label{N11}
\widehat{N}_{j,l,n}^{1,1}(\phi_{l},\mathcal{R}_{n}^{1})
=\int n_{j,l,n}^{1,1}(k,k-\ell,\ell)\widehat{\psi}_{l}(k-\ell)
\widehat{\mathcal{R}}_{n}^{1}(\ell)d\ell.
\end{split}
\end{equation}
As was done for $n_{j,l,n}^{1,0}$, the kernels $n_{j,l,n}^{1,1}$ of $\widehat{N}_{j,l,n}^{1,1}$ have the form
\begin{align}\label{nj11}
n_{j,l,n}^{1,1}(k,k-\ell,\ell)=\frac{i\widehat{P}^{1}(k)b_{j,l,n}^{1,1}(k,k-\ell,\ell)}
{-j\omega(k)-\omega(k-\ell)+n\omega(\ell)},
\end{align}
where $b_{j,l,n}^{1,1}$ are the kernels of $B_{j,l,n}^{1,1}$ in the equation \eqref{B0n}-\eqref{Bjn}. More explicitly, we have
\begin{align}\label{bjln}
b_{0,l,0}^{1,1}(k,k-\ell,\ell)=&-\frac{i\ell\widehat{q}(k-\ell)\widehat{q}^{2}(\ell)}{\widehat{q}^{2}(k)}
+\frac{(\gamma-1)i(k-\ell)\widehat{q}(k-\ell)(\gamma-2-\widehat{q}^{2}(\ell))}{\widehat{q}^{2}(k)}\nonumber\\
=&:b_{0,l,0}^{1,1,1}(k,k-\ell,\ell)+b_{0,l,0}^{1,1,2}(k,k-\ell,\ell),\nonumber\\
b_{0,l,n}^{1,1}(k,k-\ell,\ell)=&-\frac{n(\gamma-1)(\gamma-2)i\ell\widehat{q}(\ell)}{\widehat{q}^{2}(k)}
+\frac{(\gamma-1)(\gamma-2)i(k-\ell)\widehat{q}(k-\ell)}{\widehat{q}^{2}(k)}\ \ \text{for} \  n\in\{\pm1\},\nonumber\\
b_{j,l,0}^{1,1}(k,k-\ell,\ell)=&\frac{i\ell\widehat{q}(k-\ell)\widehat{q}^{2}(\ell)}{2\widehat{q}^{2}(k)}
-\frac{ik\widehat{q}(k-\ell)}{2}
-\frac{(\gamma-1)i(k-\ell)\widehat{q}(k-\ell)(\gamma-2-\widehat{q}^{2}(\ell))}{2\widehat{q}^{2}(k)}\nonumber\\
&+\frac{j(\gamma-2)i\ell}{2\widehat{q}(k)}
+\frac{ji(k-\ell)(\gamma-2-\widehat{q}^{2}(\ell))}{2\widehat{q}(k)}\\
&-\frac{jik}{2\widehat{q}(k)(1+k^{2})}\frac{1}{1+(k-\ell)^{2}}\frac{1}{1+\ell^{2}},
 \ \ \text{for} \ j\in\{\pm1\},\nonumber\\
b_{j,l,n}^{1,1}(k,k-\ell,\ell)=&-\frac{ik\widehat{q}(k-\ell)}{2}
+\frac{nik\widehat{q}(\ell)}{2}
+\frac{n(\gamma-1)(\gamma-2)i\ell\widehat{q}(\ell)}{2\widehat{q}^{2}(k)}\nonumber\\
&-\frac{jnik\widehat{q}(k-\ell)\widehat{q}(\ell)}{2\widehat{q}(k)}
+\frac{j(\gamma-2)ik}{2\widehat{q}(k)}
-\frac{(\gamma-1)(\gamma-2)i(k-\ell)\widehat{q}(k-\ell)}{2\widehat{q}^{2}(k)}\nonumber\\
&-\frac{jik}{2\widehat{q}(k)(1+k^{2})}\frac{1}{1+(k-\ell)^{2}}\frac{1}{1+\ell^{2}}\nonumber\\
=&:\sum_{r=1}^{7}b_{j,l,n}^{1,1,r}(k,k-\ell,\ell), \ \ \text{for} \ j, \ n\in\{\pm1\}\nonumber.
\end{align}
Thus, we can rewrite the terms $N_{j,l,n}^{1,1}$ as follows:
\begin{align}\label{N11r}
N_{0,l,0}^{1,1}&=:N_{0,l,0}^{1,1,1}+N_{0,l,0}^{1,1,2}, \
N_{j,l,n}^{1,1}=:\sum_{r=1}^{7}N_{j,l,n}^{1,1,r}, \ \ \ \ \ \text{for} \ j, n\in\{\pm1\}.
\end{align}

\subsection{\textbf{Construction and properties of $N^{1,1}$}}
First we analyse the possible resonance for $N_{j,l,n}^{1,1}$. The kernel $n_{j,\pm1,n}^{1,1}$ can be modified by using Lemma \ref{L6} as was done for $n_{j,\pm1,n}^{1,0}$ as
\begin{align}\label{n11}
n_{j,\pm1,n}^{1,1}(k)=\frac{i\widehat{P}^{1}(k)b_{j,\pm1,n}^{1,1}(k,\pm k_{0},k\mp k_{0})}
{-j\omega(k)-\omega(\pm k_{0})+n\omega(k\mp k_{0})}.
\end{align}
Due to the support of $\psi_{\pm1}$ and the projection operator $\widehat{P}^{1}$, the kernels $n_{j,\pm1,n}^{1,1}$ in \eqref{n11} only need to be restricted to $|k-\ell\mp k_{0}|<\delta$, $|k|>\delta$ and $|\ell|>\delta$. We now consider the possible resonances in the denominator of \eqref{n11}:
\begin{itemize}
  \item $k=0$: Since $\widehat{P}^{1}(k)=0$ for $|k|<\delta$, this resonance does not play a role in the analysis of $N^{1,1}$;
  \item $k=\pm k_{0}$: Since $|k-\ell\mp k_{0}|<\delta$ and $|\ell|>\delta$, these resonances also does not play a role in the analysis of $N^{1,1}$;
  \item $k=\pm 2k_{0}$: The kernels $n_{j,\pm1,n}^{1,1}(k)$ have resonance at wave numbers $k=\pm 2k_{0}$ whenever $j=0$ and $n=1$.
\end{itemize}
However, we have the following lemma.
\begin{lemma}\label{L10}
The kernel $b_{0,\pm1,1}^{1,1}(k,k-\ell,\ell)$ of $B_{0,\pm1,1}^{1,1}$ equals to zero at $k=2\ell$.
\end{lemma}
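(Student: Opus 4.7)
The plan is a direct verification: substitute $k=2\ell$ into the explicit formula for $b_{0,\pm1,1}^{1,1}(k,k-\ell,\ell)$ given in \eqref{bjln} and observe that the two terms cancel.

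More precisely, from \eqref{bjln} we read off, for $n\in\{\pm1\}$,
\begin{equation*}
b_{0,l,n}^{1,1}(k,k-\ell,\ell) = -\frac{n(\gamma-1)(\gamma-2)\,i\ell\,\widehat{q}(\ell)}{\widehat{q}^{2}(k)} + \frac{(\gamma-1)(\gamma-2)\,i(k-\ell)\,\widehat{q}(k-\ell)}{\widehat{q}^{2}(k)}.
\end{equation*}
Setting $n=1$ and $k=2\ell$ gives $k-\ell=\ell$, so the second term becomes $(\gamma-1)(\gamma-2)\,i\ell\,\widehat{q}(\ell)/\widehat{q}^{2}(2\ell)$, which exactly cancels the first. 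Hence $b_{0,l,1}^{1,1}(2\ell,\ell,\ell)=0$ for $l\in\{\pm1\}$.

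The only subtlety is conceptual rather than computational: this cancellation is what makes the normal-form transform $N^{1,1}_{0,\pm1,1}$ well-defined despite the non-trivial resonance at $k=\pm2k_{0}$ identified in the preceding discussion. Indeed, near that resonance the denominator in \eqref{n11} vanishes linearly in $k\mp2k_{0}$ (since $-\omega(\pm k_{0})+\omega(k\mp k_{0})$ has a simple zero there), and the lemma shows that the numerator vanishes at the same point, producing an $\mathcal{O}(1)$ quotient. No further estimates are needed for the statement itself; all the work has already been done in writing down \eqref{bjln} explicitly. The main (minor) obstacle is simply to be careful about the evenness of $\widehat{q}$ and to track the correct branch of the formulas in \eqref{bjln}, since the expressions for $b^{1,1}_{0,l,n}$ differ structurally between $n=0$ and $n=\pm1$.
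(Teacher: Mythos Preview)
Your proof is correct and takes essentially the same approach as the paper: a direct substitution of $k=2\ell$ into the explicit kernel formula, followed by the observation that the two terms cancel. The only cosmetic difference is that the paper rederives the kernel from the bilinear form $B^{1,1}_{0,\pm1,1}$ in \eqref{Bc02} rather than quoting it from \eqref{bjln}, but the computation is identical.
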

\begin{proof}
According to the form of $B_{0,\pm1,1}^{1,1}$ in the equation \eqref{Bc02}, we have
\begin{align*}
B_{0,\pm1,1}^{1,1}=-\frac{(\gamma-1)(\gamma-2)}{q^{2}}\left(\psi_{\pm1}\partial_{x} q R^{1}_{1})\right)
+\frac{(\gamma-1)(\gamma-2)}{q^{2}}\left(\partial_{x} q\psi_{\pm1}R^{1}_{1})\right).
\end{align*}
Hence, we have
\begin{align*}
b_{0,\pm1,1}^{1,1}(k,k-\ell,\ell)
=-\frac{(\gamma-1)(\gamma-2)}{\widehat{q}^{2}(k)}(i\ell\widehat{q}(\ell)-
i(k-\ell)\widehat{q}(k-\ell)),
\end{align*}
and
\begin{align*}
b_{0,\pm1,1}^{1,1}(2\ell,\ell,\ell)
=-\frac{(\gamma-1)(\gamma-2)}{\widehat{q}^{2}(2\ell)}(i\ell\widehat{q}(\ell)-
i\ell\widehat{q}(\ell))=0.
\end{align*}
The proof is complete.
\end{proof}
Lemma \ref{L10} shows that the numerator of $n_{0,\pm1,1}^{1,1}(k)$ also vanishes at $k=\pm 2k_{0}$ so that the kernels $n_{j,\pm1,n}^{1,1}$ are still well-defined. There are no other resonances for all above mentioned normal-form transforms.

Having discussed the zeroes of the denominator of \eqref{nj11} we are now interested in the asymptotics for $|k|\rightarrow\infty$, in order to see a gain or loss of regularity by the normal-form transform. Before analysing the asymptotic behaviors of the $n_{j,l,n}^{1,1}(k,k-\ell,\ell)$ for $|k|\rightarrow\infty$, we state the following lemma.
\begin{lemma}\label{qq2}
As $|k|\rightarrow\infty$, the dispersive relation $\omega(k)=k\widehat{q}(k)=k\sqrt{\gamma+\frac{1}{1+k^{2}}}$ of \eqref{equation3} satisfies
\begin{align}\label{qomega}
\widehat{q}(k)=&\sqrt{\gamma}+\mathcal{O}(|k|^{-2}), \nonumber\\
\widehat{q}^{2}(k)=&\gamma+\mathcal{O}(|k|^{-2}),\nonumber\\
\omega(k)=&k\widehat{q}(k)=\sqrt{\gamma}k+\mathcal{O}(|k|^{-1}), \nonumber\\
\omega'(k)=&\sqrt{\gamma}+\mathcal{O}(|k|^{-2}).
\end{align}
\end{lemma}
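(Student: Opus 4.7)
The plan is to read off each of the four asymptotics by direct Taylor expansion from the exact formula $\widehat{q}^{2}(k)=\gamma+\frac{1}{1+k^{2}}$. The second identity requires no work at all: for $|k|\geq 1$ one has $\frac{1}{1+k^{2}}\leq |k|^{-2}$, so $\widehat{q}^{2}(k)=\gamma+\mathcal{O}(|k|^{-2})$.

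For the first identity I would factor $\sqrt{\gamma}$ out of the square root and use the scalar expansion $\sqrt{1+\tau}=1+\tfrac{1}{2}\tau+\mathcal{O}(\tau^{2})$ as $\tau\to 0$, applied to $\tau=\frac{1}{\gamma(1+k^{2})}=\mathcal{O}(|k|^{-2})$. This yields
\begin{equation*}
\widehat{q}(k)=\sqrt{\gamma}\Bigl(1+\frac{1}{2\gamma(1+k^{2})}+\mathcal{O}(|k|^{-4})\Bigr)=\sqrt{\gamma}+\mathcal{O}(|k|^{-2}),
\end{equation*}
since $\gamma>1$ keeps the constants harmless. Multiplying through by $k$ immediately gives the third identity: $\omega(k)=k\widehat{q}(k)=\sqrt{\gamma}\,k+k\cdot\mathcal{O}(|k|^{-2})=\sqrt{\gamma}\,k+\mathcal{O}(|k|^{-1})$.

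The fourth identity needs one short computation. Differentiating $\widehat{q}^{2}(k)=\gamma+\frac{1}{1+k^{2}}$ gives $2\widehat{q}(k)\widehat{q}'(k)=-\frac{2k}{(1+k^{2})^{2}}$, hence
\begin{equation*}
\widehat{q}'(k)=-\frac{k}{\widehat{q}(k)(1+k^{2})^{2}}.
\end{equation*}
Since $\widehat{q}(k)\to\sqrt{\gamma}$ and $(1+k^{2})^{2}\sim k^{4}$, we get $k\widehat{q}'(k)=\mathcal{O}(|k|^{-2})$. Combined with $\omega'(k)=\widehat{q}(k)+k\widehat{q}'(k)$ and the first identity, this yields $\omega'(k)=\sqrt{\gamma}+\mathcal{O}(|k|^{-2})$.

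There is no genuine obstacle here; the only thing to be careful about is tracking the $\gamma$-dependent constants in the Taylor remainder and confirming that none of the denominators ($\widehat{q}$, $1+k^{2}$) vanishes for $k\in\mathbb{R}$, which is immediate from $\gamma>1$.
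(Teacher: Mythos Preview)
Your argument is correct and complete; the paper actually states this lemma without proof, treating the four asymptotics as an immediate consequence of the explicit formula $\widehat{q}^{2}(k)=\gamma+\frac{1}{1+k^{2}}$. Your Taylor expansion and the computation of $\omega'(k)=\widehat{q}(k)+k\widehat{q}'(k)$ are exactly the natural way to fill in the details.
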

Now we analyse the asymptotic behaviors of $n_{j,l,n}^{1,1}(k,k-\ell,\ell)$ for $|k|\rightarrow\infty$.
\begin{lemma}\label{L8}
The operators $N_{j,l,n}^{1,1}$ for $j,n\in\{0,\pm1\}$ and $l\in\{\pm1\}$ have the following properties.\\
(a) Fix $h\in L^{2}(\mathbb{R},\mathbb{R})$. Then the mappings $f\mapsto N_{0,l,0}^{1,1,1}(h,f)$ and $f\mapsto N_{j,l,j}^{1,1,r}(h,f)$ with $j\in\{\pm1\}$ and $r\in\{1,2,3,4,5\}$ define continuous linear maps from $H^{1}(\mathbb{R},\mathbb{R})$ into $L^{2}(\mathbb{R},\mathbb{R})$ and $f\mapsto N_{0,l,0}^{1,1,2}(h,f)$, $f\mapsto N_{0,l,n}^{1,1}(h,f)$, $f\mapsto B_{j,l,0}^{1,1}(h,f)$, $f\mapsto B_{j,l,-j}^{1,1}(h,f)$ with $j\in\{\pm1\}$ and $r\in\{1,2,3,4,5\}$ and $f\mapsto N_{j,l,n}^{1,1,r}(h,f)$ with $r\in\{6,7\}$ define continuous linear maps from $L^{2}(\mathbb{R},\mathbb{R})$ into $L^{2}(\mathbb{R},\mathbb{R})$. In particular, for all $f\in H^{1}(\mathbb{R},\mathbb{R})$ we have
\begin{equation}
\begin{split}\label{hfjj}
&N_{0,l,0}^{1,1,1}(h,f)=-\partial_{x}^{-1}h \ \partial_{x}f+Q_{0,0}^{1}(h,f),\\
&N_{0,l,0}^{1,1,2}(h,f)=\frac{-2(\gamma-1)}{\gamma}hf+Q_{0,0}^{2}(h,f),\\
&N_{0,l,n}^{1,1}(h,f)=\frac{(\gamma-1)(\gamma-2)}{\gamma}(hf-
\Omega h\frac{f}{n\Omega})+Q_{0,n}(h,f),\\
&N_{j,l,0}^{1,1}(h,f)=\frac{\gamma-2}{2\gamma}hf-j\frac{\gamma-2}{2\gamma}
\Omega h\frac{f}{\Omega}-\frac{\gamma+2}{\gamma}\partial_{x}h\partial_{x}^{-1}f+Q_{j,0}(h,f),\\
&N_{j,l,j}^{1,1,1}(h,f)=\partial_{x}(G_{j,j}qh \ f)+Q_{j,j}^{1}(h,f),\\
&N_{j,l,j}^{1,1,2}(h,f)=-j\partial_{x}(G_{j,j}h \ qf)+Q_{j,j}^{2}(h,f),\\
&N_{j,l,j}^{1,1,3}(h,f)=\frac{-j(\gamma-1)(\gamma-2)}{q^{2}}(G_{j,j}h \ \Omega f)+Q_{j,j}^{3}(h,f),\\
&N_{j,l,j}^{1,1,4}(h,f)=\frac{\partial_{x}}{q}(G_{j,j}qh \ qf)+Q_{j,j}^{4}(h,f),\\
&N_{j,l,j}^{1,1,5}(h,f)=\frac{-j(\gamma-2)}{q}\partial_{x}(G_{j,j}h \ f)+Q_{j,j}^{5}(h,f),\\
&N_{j,l,j}^{1,1,6}(h,f)=\frac{(\gamma-1)(\gamma-2)}{\gamma}G_{j,j}\Omega h \ f+Q_{j,j}^{6}(h,f),\\
&N_{j,l,j}^{1,1,7}(h,f)=j\frac{\partial_{x}}{q(1-\partial^{2}_{x})}(\frac{G_{j,j}h}{1-\partial^{2}_{x}}
\frac{f}{1-\partial^{2}_{x}})+Q_{j,j}^{7}(h,f),
\end{split}
\end{equation}
and
\begin{align}\label{hfj-j}
&N_{j,l,-j}^{1,1,1}(h,f)=-jG_{j,-j}qh \ f+Q_{j,-j}^{1}(h,f),\nonumber\\
&N_{j,l,-j}^{1,1,2}(h,f)=-G_{j,-j}h \ qf+Q_{j,-j}^{2}(h,f),\nonumber\\
&N_{j,l,-j}^{1,1,3}(h,f)=-\frac{(\gamma-1)(\gamma-2)}{\widehat{q}^{2}(k)}(G_{j-j}h \ qf)+Q_{j-j}^{3}(h,f),\nonumber\\
&N_{j,l,-j}^{1,1,4}(h,f)=\frac{j}{\widehat{q}(k)}(G_{j,-j}qh \ qf)+Q_{j,-j}^{4}(h,f),\nonumber\\
&N_{j,l,-j}^{1,1,5}(h,f)=\frac{\gamma-2}{\widehat{q}(k)}(G_{j,-j}h \ f)+Q_{j,-j}^{5}(h,f),\\
&N_{j,l,-j}^{1,1,6}(h,f)=\frac{(\gamma-1)(\gamma-2)}{\gamma}G_{j,-j}\Omega h \ f+Q_{j,-j}^{6}(h,f),\nonumber\\
&N_{j,l,-j}^{1,1,7}(h,f)=j\frac{\partial_{x}}{q(1-\partial^{2}_{x})}(\frac{G_{j,-j}h}{1-\partial^{2}_{x}}
\frac{f}{1-\partial^{2}_{x}})+Q_{j,-j}^{7}(h,f),\nonumber
\end{align}
where
\begin{align*}
\widehat{G_{j,j}h}(k)=&\frac{\chi(k)}{-2i(\sqrt{\gamma}jk+\omega(k))}\widehat{h}(k),\\
\widehat{G_{j,-j}h}(k)=&\frac{1}{4}\chi(k)\widehat{h}(k),\\
\|Q_{0,0}^{1}(h,f)\|_{H^{1}}=&\mathcal{O}(\|h\|_{L^{2}},\|f\|_{L^{2}}),\\
\|Q_{j,\pm j}^{r}(h,f)\|_{H^{1}}=&\mathcal{O}(\|h\|_{L^{2}},\|f\|_{L^{2}}), \ r=1, 2, 3, 4, 5,
\end{align*}
and $\widehat{G_{j,j}}$ with $j\in\{\pm1\}$ satisfies
\begin{equation}\label{part4}
(\widehat{G}_{-1-1}+\widehat{G}_{11})(k)=(\frac{1}{-ik}+ik)q(k),
\end{equation}
and
\begin{equation}\label{part5}
(\widehat{G}_{-1-1}-\widehat{G}_{11})(k)=\sqrt{\gamma}(\frac{1}{-ik}+ik).
\end{equation}
\\
(b) For all $f\in H^{1}(\mathbb{R}, \mathbb{R})$ we have
\begin{equation}\label{16}
-j\Omega N_{j,l,n}^{1,1}(\psi_{l},f)-N_{j,l,n}^{1,1}(\Omega\psi_{l},f)
+nN_{j,l,n}^{1,1,n}(\psi_{l},\Omega f)
=-P^{1}B_{j,l,n}^{1,1}(\psi_{l},f).
\end{equation}
\\
(c) For all $f, g, h\in H^{1}(\mathbb{R}, \ \mathbb{R})$ we have
\begin{equation}
\begin{split}\label{17}
\int fN_{0,l,0}^{1,1,1}(h,g)=&-\int N_{0,l,0}^{1,1,1}(h,f)gdx
+\int S_{0,0}^{1}(h,f)gdx,\\
\int fN_{j,l,n}^{1,1,\{1,4\}}(h,g)=&-\int N_{j,l,n}^{1,1,\{1,4\}}(h,f)gdx
+\int S_{j,n}^{\{1,4\}}(\partial_{x}h,f)gdx,\\
\int fN_{j,l,n}^{1,1,\{2,3,5\}}(h,g)=&-\frac{j}{n}\int N_{j,l,n}^{1,1,\{2,3,5\}}(h,f)gdx
+\int S_{j,n}^{\{2,3,5\}}(\partial_{x}h,f)gdx,
\end{split}
\end{equation}
where
\begin{equation*}
\begin{split}
\widehat{S}_{j,n}^{r}(\partial_{x}h,f)g(k)=\int s_{j,n}^{r}(k,k-\ell,\ell)\widehat{\partial_{x}h}(k-\ell)\widehat{f}(\ell)d\ell,
\end{split}
\end{equation*}
with
\begin{align}\label{s00}
&s_{0,0}^{1}(k,k-\ell,\ell)=\frac{1}{k-\ell}\left(\frac{k\widehat{q}^{2}(k)}{\widehat{q}^{2}(\ell)}
-\frac{\ell\widehat{q}^{2}(\ell)}{\widehat{q}^{2}(k)}\right)
=1+\mathcal{O}(|k|^{-2}), \ \text{for} \ |k|\rightarrow\infty,\nonumber\\
&s_{j,n}^{1}(k,k-\ell,\ell)=\frac{1}{2i}\frac{\widehat{q}(k-\ell)}
{-n\omega(k)-\omega(k-\ell)+j\omega(\ell)},\nonumber\\
&s_{j,n}^{2}(k,k-\ell,\ell)=\frac{-n(k\widehat{q}(\ell)-\ell\widehat{q}(k))}
{2i(k-\ell)(-n\omega(k)-\omega(k-\ell)+j\omega(\ell))},\nonumber\\
&s_{j,n}^{3}(k,k-\ell,\ell)=\frac{n(\gamma-1)(\gamma-2)}
{2i(k-\ell)(-n\omega(k)-\omega(k-\ell)+j\omega(\ell))}
\left(\frac{k\widehat{q}(k)}{\widehat{q}^{2}(\ell)}
-\frac{\ell\widehat{q}(\ell)}{\widehat{q}^{2}(k)}\right),\nonumber\\
&s_{j,n}^{4}(k,k-\ell,\ell)=\frac{\widehat{q}(k-\ell)}
{2i(k-\ell)(-n\omega(k)-\omega(k-\ell)+j\omega(\ell))}\left(\frac{k\widehat{q}(\ell)}{\widehat{q}(k)}
-\frac{\ell\widehat{q}(k)}{\widehat{q}(\ell)}\right),\nonumber\\
&s_{j,n}^{5}(k,k-\ell,\ell)=\frac{-j(\gamma-2)}
{2i(k-\ell)(-n\omega(k)-\omega(k-\ell)+j\omega(\ell))}\left(\frac{k}{\widehat{q}(k)}
-\frac{\ell}{\widehat{q}(\ell)}\right).
\end{align}
In particular, we have
\begin{equation}
\begin{split}\label{Sjj}
\widehat{S}_{j,j}^{\{1,4\}}(\partial_{x}h,f)=&G_{j,j}\partial_{x}qh \ f+\widetilde{Q}_{j,j}^{\{1,4\}}(\partial_{x}h,f),\\
\widehat{S}_{j,j}^{2}(\partial_{x}h,f)=&-j\sqrt{\gamma}G_{j,j}\partial_{x}h \ f+\widetilde{Q}_{j,j}^{2}(\partial_{x}h,f),\\
\widehat{S}_{j,j}^{\{3,5\}}(\partial_{x}h,f)=&-\frac{j}{\sqrt{\gamma}}G_{j,j}\partial_{x}h \ f+\widetilde{Q}_{j,j}^{\{3,5\}}(\partial_{x}h,f),\\
\end{split}
\end{equation}
and for $r\in\{1, 2, 3, 4, 5\}$, we have
\begin{equation*}
\begin{split}
\|\widetilde{Q}_{j,j}^{r}(\partial_{x}h,f)\|_{H^{2}}=\mathcal{O}(\|h\|_{L^{2}},\|f\|_{L^{2}}).
\end{split}
\end{equation*}
\end{lemma}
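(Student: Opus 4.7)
My plan for proving Lemma \ref{L8} is to work directly in Fourier space using the explicit kernel formulas \eqref{nj11} and \eqref{bjln}, together with the asymptotic expansions of the dispersion relation in Lemma \ref{qq2}. Since $\widehat{\psi}_{l}$ is supported in a small neighborhood of $\pm k_{0}$, in every expression of the form $\int n^{1,1}_{j,l,n}(k,k-\ell,\ell)\widehat{\psi}_{l}(k-\ell)\widehat{f}(\ell)\,d\ell$ the second slot $k-\ell$ is bounded, so all asymptotic estimates can be performed under $|k|, |\ell|\to\infty$ with $k-\ell=\mathcal{O}(1)$. The overall strategy is then: expand the symbols, recognize the leading-order operator, and show that the remainder is smoothing by one derivative in the sense claimed.

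I would begin with part (b), since it is essentially the defining identity. Taking Fourier transforms and using \eqref{nj11}, the identity follows immediately from the algebraic relation
\[
\bigl(-j\omega(k)-\omega(k-\ell)+n\omega(\ell)\bigr)\,n^{1,1}_{j,l,n}(k,k-\ell,\ell)=-i\widehat{P}^{1}(k)\,b^{1,1}_{j,l,n}(k,k-\ell,\ell),
\]
for which no further computation is needed.

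For part (a), the key step is to isolate the leading behavior of $n^{1,1}_{j,l,n}$ as $|k|\to\infty$. Using Lemma \ref{qq2}, the denominator behaves like $-\sqrt{\gamma}(j+1)k+\mathcal{O}(1)$ when $n=j$, and like a nonzero $\mathcal{O}(k)$ quantity when $n\neq j$. I would split the analysis according to these two regimes. When the denominator is linear in $k$ (the cases $N^{1,1,1}_{0,l,0}$ and $N^{1,1,r}_{j,l,j}$ with $j\in\{\pm1\}$, $r\in\{1,\ldots,5\}$), one Taylor-expands the numerator $b^{1,1}_{j,l,n}$ from \eqref{bjln} and divides by $\sqrt{\gamma}k$ to extract the leading multiplier, for instance $-\partial_{x}^{-1}h\,\partial_{x}f$ for $N^{1,1,1}_{0,l,0}$, with an $H^{1}$-remainder $Q^{1}_{0,0}$ arising from the $\mathcal{O}(|k|^{-2})$ corrections of $\widehat{q}$. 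When $n=-j$, the denominator is bounded, and the presence of the cutoff $\chi$ in $\widehat{G}_{j,-j}$ allows one to read off the leading symbol from \eqref{bjln}, while the smoothness bounds on the remainders $Q^{r}_{j,-j}$ come from the fact that the kernels are bounded and the argument $h=\psi_{l}$ yields a Schwartz function. The identities \eqref{part4} and \eqref{part5} follow by direct algebraic manipulation of $\widehat{G}_{11}+\widehat{G}_{-1-1}$ and $\widehat{G}_{11}-\widehat{G}_{-1-1}$ after substituting $\omega(k)=k\widehat{q}(k)$ and simplifying.

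Part (c) requires the most care and is where I expect the main obstacle to lie. The identity $\int f\,N^{1,1,r}_{j,l,n}(h,g)\,dx=\pm\int N^{1,1,r}_{j,l,n}(h,f)g\,dx+\int S^{r}_{j,n}(\partial_{x}h,f)g\,dx$ follows from expressing the left-hand side in Fourier space as a triple integral in $\hat{f}(-k)\,n^{1,1,r}_{j,l,n}(k,k-\ell,\ell)\,\hat{h}(k-\ell)\,\hat{g}(\ell)$ and then exchanging the roles of $k$ and $\ell$ to recognize the transposed operator. The remainder kernel $s^{r}_{j,n}$ appears as the combination
\[
s^{r}_{j,n}(k,k-\ell,\ell)=\frac{1}{i(k-\ell)}\bigl(n^{1,1,r}_{j,l,n}(k,k-\ell,\ell)\mp n^{1,1,r}_{j,l,n}(\ell,\ell-k,k)\bigr),
\]
where the factor $i(k-\ell)$ is absorbed by $\widehat{\partial_{x}h}(k-\ell)$. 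The delicate bookkeeping is to check that the sign $\pm j/n$ is correct: for $r\in\{1,4\}$, whose symbols are symmetric under $k\leftrightarrow\ell$ up to $\mathcal{O}(|k|^{-1})$ corrections, one gets the $-$ sign, whereas for $r\in\{2,3,5\}$ the antisymmetry under the exchange supplies the factor $-j/n$. The explicit leading-order expressions in \eqref{s00} and \eqref{Sjj} then follow by expanding the quotients using Lemma \ref{qq2}; the boundedness of the full remainders $\widetilde{Q}^{r}_{j,j}$ in $H^{2}$ comes from the fact that after subtracting the principal term one obtains a symbol that is $\mathcal{O}(|k|^{-2})$ smaller than what was needed, together with a standard Young convolution estimate using the compact support of $\widehat{\psi}_{l}$. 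The main technical difficulty is organising the algebra so that each remainder matches exactly the prescribed regularity gain, and in particular that $G_{j,j}\partial_{x}qh\cdot f$ in \eqref{Sjj} is identified simultaneously for $r=1$ and $r=4$, which forces a careful accounting of the $\widehat{q}(k),\widehat{q}(\ell),\widehat{q}(k-\ell)$ factors arising from the different pieces of $b^{1,1}_{j,l,j}$.
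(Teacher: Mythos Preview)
Your overall strategy matches the paper's: compute each kernel $n^{1,1}_{j,l,n}$ in Fourier space, extract the leading asymptotic part via Lemma \ref{qq2}, and push the $\mathcal{O}(|k|^{-1})$ or $\mathcal{O}(|k|^{-2})$ corrections into the $Q$-remainders. Part (b) is indeed immediate from the defining relation, and for part (c) the paper also expands $(f,N^{1,1,r}_{j,l,n}(h,g))$ as a double Fourier integral and performs the substitution $(k,\ell)\mapsto(-\ell,-k)$.

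However, you have the asymptotics of the denominator exactly backwards, and this would derail the actual computation. With $k-\ell$ bounded and $|k|\to\infty$,
\[
-j\omega(k)-\omega(k-\ell)+n\omega(\ell)=(n-j)\sqrt{\gamma}\,k+\mathcal{O}(1),
\]
so the denominator is $\mathcal{O}(1)$ precisely in the cases $n=j$ (including $j=n=0$), and is linear in $k$ when $n=-j$ (or when exactly one of $j,n$ vanishes). It is therefore in the $n=j$ cases that the kernel inherits an extra factor of $k$ from the numerator and the operator loses one derivative; the leading term $-\partial_{x}^{-1}h\,\partial_{x}f$ for $N^{1,1,1}_{0,l,0}$ has symbol $-\ell/(k-\ell)\sim -k/(k-\ell)$, which is unbounded because the denominator $-\omega(k-\ell)$ stays bounded, not because one ``divides by $\sqrt{\gamma}k$''. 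Conversely, when $n=-j$ the $\mathcal{O}(k)$ denominator cancels the $\mathcal{O}(k)$ numerator and the kernel is bounded, which is why $\widehat{G}_{j,-j}=\tfrac14\chi$ is simply a cutoff. In the paper this distinction is made explicit by applying the mean value theorem to $j(\omega(k)-\omega(\ell))$ in the $n=j$ case. A second, smaller point: after the swap in part (c) the transposed operator is $N^{1,1,r}_{n,l,j}$, not $N^{1,1,r}_{j,l,n}$, so the indices $j$ and $n$ get exchanged in the second integral; your remainder formula should reflect this.
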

\begin{proof}
For $n_{0,l,0}^{1,1}$, we have
\begin{equation*}
\begin{split}
n_{0,l,0}^{1,1,1}(k,k-\ell,\ell)=&\frac{i\widehat{P}^{1}(k)b_{0,l,0}^{1,1,1}(k,k-\ell,\ell)}
{-\omega(k-\ell)}
=\frac{i\widehat{P}^{1}(k)}
{-\omega(k-\ell)}\frac{i\ell\widehat{q}(k-\ell)\widehat{q}^{2}(\ell)}{-\widehat{q}^{2}(k)} \\
=& -\frac{\ell\widehat{q}^{2}(\ell)}{(k-\ell)\widehat{q}^{2}(k)} = -\frac{\ell}{k-\ell}+\mathcal{O}(|k|^{-2}),\\
n_{0,l,0}^{1,1,2}(k,k-\ell,\ell)=&\frac{i\widehat{P}^{1}(k)b_{0,l,0}^{1,1,2}(k,k-\ell,\ell)}{-\omega(k-\ell)} \\
= & \frac{i\widehat{P}^{1}(k)}{-\omega(k-\ell)}\frac{(\gamma-1)i(k-\ell)\widehat{q}(k-\ell) (\gamma-2-\widehat{q}^{2}(\ell))}{\widehat{q}^{2}(k)}\\
= & \frac{(\gamma-1)(\gamma-2-\widehat{q}^{2}(\ell))}{\widehat{q}^{2}(k)} =\frac{-2(\gamma-1)}{\gamma}+\mathcal{O}(|k|^{-2}),
\end{split}
\end{equation*}
where we have used the dispersive relation $\omega(k)=k\widehat{q}(k)$ of \eqref{equation3}. We see that $N_{0,l,0}^{1,1,1}$ will lose one derivative, but $N_{0,l,0}^{1,1,2}$ dose not lose derivatives due to $\widehat{q}^{2}(k)=\gamma+\mathcal{O}(|k|^{-2})$ in Lemma \ref{qq2}.

For $n_{0,l,n}^{1,1}$ with $n\in\{\pm1\}$, using the asymptotic expansion \eqref{qomega} we have
\begin{align*}
n_{0,l,n}^{1,1}(k,k-\ell,\ell)=&\frac{i\widehat{P}^{1}(k)b_{0,l,n}^{1,1}(k,k-\ell,\ell)}
{-\omega(k-\ell)+n\omega(\ell)}\\
=&\frac{(\gamma-1)(\gamma-2)(n\ell\widehat{q}(\ell)-(k-\ell)\widehat{q}(k-\ell))}
{(\gamma+\mathcal{O}(|k|^{-2}))(n\ell\widehat{q}(\ell)+\mathcal{O}(1))}\chi(k-\ell)\\
=&\frac{(\gamma-1)(\gamma-2)}
{\gamma}\left(1-\frac{(k-\ell)\widehat{q}(k-\ell))}{n\ell\widehat{q}(\ell)}\right)+\mathcal{O}(|k|^{-2}))
\chi(k-\ell),
\end{align*}
which implies that $N_{0,l,n}^{1,1,1}$ dose not lose any derivatives for $n\in\{\pm1\}$.

For $n_{j,l,0}^{1,1}$ with $j\in\{\pm1\}$, using the asymptotic expansion \eqref{qomega} we have
\begin{equation*}
\begin{split}
n_{j,l,0}^{1,1}(k,k-\ell,\ell)=&\frac{i\widehat{P}^{1}(k)b_{j,l,0}^{1,1}(k,k-\ell,\ell)}
{-\omega(k-\ell)}\\
=&\frac{\frac{j(\gamma-2)k}{2\widehat{q}(k)}
+(-\frac{1}{2}+\frac{1}{\gamma})(k-\ell)\widehat{q}(k-\ell)
+\frac{j(\gamma+2)(k-\ell)}{2\widehat{q}(k)}
+\mathcal{O}(|k|^{-2})}{-j\omega(k)+\mathcal{O}(1)}\chi(k-\ell)\\
=&(\frac{\gamma-2}{2\gamma}-j\frac{\gamma-2}{2\gamma}\frac{\omega(k-\ell)}{\omega(k)}
-\frac{\gamma+2}{\gamma}\frac{k-\ell}{k}+\mathcal{O}(|k|^{-2}))\chi(k-\ell).
\end{split}
\end{equation*}
Similar to $n_{0,l,n}^{1,1}$, we see that $N_{j,l,0}^{1,1,1}$ dose not lose any derivatives for $j\in\{\pm1\}$.

For $n_{j,l,j}^{1,1,r}$ with $j\in\{\pm1\}$ and $r\in\{1, 2, 3, 4, 5\}$, by the mean value theorem we get
\begin{equation*}
\begin{split}
n_{j,l,j}^{1,1,1}(k,k-\ell,\ell)=&-\frac{kq(k-\ell)\chi(k-\ell)}{2(j(\omega(k)-\omega(\ell))+\omega(k-\ell))}\\
=&-\frac{kq(k-\ell)\chi(k-\ell)}{2(j(k-\ell)\omega'(k-\theta(k-\ell))+\omega(k-\ell))}, \
 \ j\in\{\pm1\}
 \end{split}
\end{equation*}
for some $\theta\in[0,1]$. Using again the fact that supp$\chi$ is compact, we conclude with the help of the expressions \eqref{qomega} that
\begin{equation*}
\begin{split}
n_{j,l,j}^{1,1,1}(k,k-\ell,\ell)
=&-\frac{kq(k-\ell)\chi(k-\ell)}{2(j(k-\ell)(\sqrt{\gamma}+\mathcal{O}(|k|^{-2}))+\omega(k-\ell))}\\
=&\left(-\frac{kq(k-\ell)}{2(\sqrt{\gamma}j(k-\ell)+\omega(k-\ell))}+\mathcal{O}(|k|^{-1})\right)\chi(k-\ell),  \ \ \text{for} \  |k|\rightarrow\infty.
\end{split}
\end{equation*}
Similarly, for $|k|\rightarrow\infty$ and $|\ell|\rightarrow\infty$ we have
\begin{align*}
\widehat{n}_{j,l,j}^{1,1,2}(k,k-\ell,\ell)
=&\left(\frac{jkq(\ell)}{2(\sqrt{\gamma}j(k-\ell)+\omega(k-\ell))}+\mathcal{O}(|k|^{-1})\right)\chi(k-\ell),\\
\widehat{n}_{j,l,j}^{1,1,3}(k,k-\ell,\ell)
=&\left(\frac{j(\gamma-1)(\gamma-2)\ell q(\ell)}{2\widehat{q}^{2}(k)(\sqrt{\gamma}j(k-\ell)+\omega(k-\ell))}
+\mathcal{O}(|\ell|^{-1})\right)\chi(k-\ell),\\
\widehat{n}_{j,l,j}^{1,1,4}(k,k-\ell,\ell)
=&\left(-\frac{k q(k-\ell)\widehat{q}(\ell)}
{2\widehat{q}(k)(\sqrt{\gamma}j(k-\ell)+\omega(k-\ell))}+\mathcal{O}(|k|^{-1})\right)\chi(k-\ell),\\
\widehat{n}_{j,l,j}^{1,1,5}(k,k-\ell,\ell)
=&\left(\frac{j(\gamma-2)k}
{2\widehat{q}(k)(\sqrt{\gamma}j(k-\ell)+\omega(k-\ell))}+\mathcal{O}(|k|^{-1})\right)\chi(k-\ell).
\end{align*}
We see that $N_{j,l,j}^{1,1,r}$ with $j\in\{\pm1\}$ will lose one derivative for $r\in\{1, 2, 3, 4, 5\}$.

For $n_{j,l,j}^{1,1,r}$ with $j\in\{\pm1\}$ and $r\in\{6, 7\}$, we get
\begin{align*}
\widehat{n}_{j,l,j}^{1,1,6}(k,k-\ell,\ell)
=&\frac{(\gamma-1)(\gamma-2)}{\gamma}\left(\frac{(k-\ell)\widehat{q}(k-\ell)}{2(\sqrt{\gamma}j(k-\ell)
+\omega(k-\ell))}+\mathcal{O}(|k|^{-1})\right)\chi(k-\ell),\\
\widehat{n}_{j,l,j}^{1,1,7}(k,k-\ell,\ell)
=&\frac{jk}{\widehat{q}(k)(1+k^{2})(1+(k-\ell)^{2})(1+\ell^{2})}\\
&\times\left(\frac{(k-\ell)\widehat{q}(k-\ell)}{2(\sqrt{\gamma}j(k-\ell)
+\omega(k-\ell))}+\mathcal{O}(|k|^{-1})\right)\chi(k-\ell).
\end{align*}
We see that $N_{j,l,j}^{1,1,r}$ with $j\in\{\pm1\}$ does not lose any derivatives for $r\in\{6, 7\}$.

For $N_{j,l,-j}^{1,1,r}$ with $j\in\{\pm1\}$ and $r\in\{1, 2, 3, 4, 5\}$, exploiting once more the compactness of supp$\chi$ as well as \eqref{qomega} yields
\begin{align*}
n_{j,l,-j}^{1,1,1}(k,k-\ell,\ell)=&\frac{-kq(k-\ell)\chi(k-\ell)}{2(j(\omega(k)+\omega(k-(k-\ell)))+\omega(k-\ell))}\\
=&\frac{-kq(k-\ell)\chi(k-\ell)}{4j\omega(k)(1+\mathcal{O}(|k|^{-1}))}\\
=&\frac{-q(k-\ell)}{4}+\mathcal{O}(|k|^{-1}),\ \ \ \ \text{for} \  |k|\rightarrow\infty
\end{align*}
Similarly, we have
\begin{align*}
n_{j,l,-j}^{1,1,2}(k,k-\ell,\ell)
=&\frac{-jq(\ell)}{4}+\mathcal{O}(|k|^{-1}),\\
n_{j,l,-j}^{1,1,3}(k,k-\ell,\ell)
=&\frac{-j(\gamma-1)(\gamma-2)q(\ell)}{4\widehat{q}^{2}(k)}+\mathcal{O}(|\ell|^{-1}),\\
n_{j,l,-j}^{1,1,4}(k,k-\ell,\ell)
=&\frac{q(k-\ell)q(\ell)}{4\widehat{q}(k)}+\mathcal{O}(|k|^{-1}),\\
n_{j,l,-j}^{1,1,5}(k,k-\ell,\ell)
=&\frac{j(\gamma-2)}{4\widehat{q}(k)}+\mathcal{O}(|k|^{-1}).
\end{align*}
We see that $N_{j,l,-j}^{1,1,r}$ with $j\in\{\pm1\}$ does not lose any derivatives for $r\in\{1, 2, 3, 4, 5\}$.

For $N_{j,l,-j}^{1,1,r}$ with $j\in\{\pm1\}$ and $r\in\{6, 7\}$, we get
\begin{equation*}
\begin{split}
n_{j,l,-j}^{1,1,6}(k,k-\ell,\ell)
=&\frac{j(\gamma-1)(\gamma-2)\Omega(k-\ell)}{4\gamma\Omega(k)}+\mathcal{O}(|k|^{-1}),\\
n_{j,l,-j}^{1,1,7}(k,k-\ell,\ell)
=&\frac{k}{4\Omega(k)\widehat{q}(k)(1+k^{2})(1+k-\ell)^{2})(1+\ell^{2})}+\mathcal{O}(|k|^{-1}),
\end{split}
\end{equation*}
We see that $N_{j,l,-j}^{1,1,r}$ with $j\in\{\pm1\}$ does not lose any derivatives and even gain derivatives for $r\in\{6, 7\}$.

Finally, since
\begin{equation*}
\begin{split}
n_{j,l,n}^{1,1}(-k,-(k-\ell),-\ell)=\widehat{b}_{j,l,n}^{1,1}(k,k-\ell,\ell)\in\mathbb{R}
\end{split}
\end{equation*}
and $\psi_{\pm1}$ is real-valued, we obtain the validity of all assertions of (a).

(b) is a direct consequence of the construction of the operators $N_{j,l,n}^{1,1}$.

In order to prove (c), we compute for all $f, g, h\in H^{1}(\mathbb{R},\mathbb{R})$ that
\begin{align*}
(f, N_{0,l,0}^{1,1,1}(h,g))
=&\int\overline{\widehat{f}(k)}\widehat{N}_{0,l,0}^{1,1,1}(h,g)(k)dk\\
=&-\int\int\overline{\widehat{f}(k)}\frac{\ell\widehat{q}^{2}(\ell)}{(k-\ell)\widehat{q}^{2}(k)}
\widehat{h}(k-\ell)\widehat{g}(\ell)d\ell dk\\
=&-\int\int\overline{\widehat{g}(-\ell)}\frac{\ell\widehat{q}^{2}(\ell)}{(k-\ell)\widehat{q}^{2}(k)}
\widehat{h}(k-\ell)\widehat{f}(-k)dkd\ell\\
=&\int\int\overline{\widehat{g}(k)}\frac{k\widehat{q}^{2}(-k)}{(k-\ell)\widehat{q}^{2}(-\ell)}
\widehat{h}(k-\ell)\widehat{f}(\ell)dmdk\\
=&\int\int\overline{\widehat{g}(k)}\frac{\ell\widehat{q}^{2}(\ell)}{(k-\ell)\widehat{q}^{2}(k)}
\widehat{h}(k-\ell)\widehat{f}(\ell)d\ell dk\\
&+\int\int\overline{\widehat{g}(k)}\frac{1}{(k-\ell)}
\left(\frac{k\widehat{q}^{2}(k)}{\widehat{q}^{2}(\ell)}
-\frac{\ell\widehat{q}^{2}(\ell)}{\widehat{q}^{2}(k)}\right)
\widehat{h}(k-\ell)\widehat{f}(\ell)dmdk\\
=&-\int\overline{\widehat{g}(k)}\widehat{N}_{0,l,0}^{1,1,1}(h,f)(k)dk
+\int\overline{\widehat{g}(k)}\widehat{S}_{0,0}^{1}(h,f)(k)dk\\
=&-(g,N_{0,l,0}^{1,1,1}(h,f))+(g,S_{0,0}^{1}(h,f)).
\end{align*}
For $N_{j,l,n}^{1,1,r}$ with $j, n\in\{\pm1\}$ and $r\in\{1, 2, 3, 4, 5\}$, we have
\begin{align*}
(f, N_{j,l,n}^{1,1,1}(h,g)) =&\int\overline{\widehat{f}(k)}\widehat{N}_{j,l,n}^{1,1,1}(h,g)(k)dk\\
=&\frac{1}{2}\int\int\overline{\widehat{f}(k)}\frac{k\widehat{q}(k-\ell)}
{-j\omega(k)-\omega(k-\ell)+n\omega(\ell)}\widehat{h}(k-\ell)\widehat{g}(\ell)d\ell dk\\
=&\frac{1}{2}\int\int\overline{\widehat{g}(-\ell)}\frac{k\widehat{q}(k-\ell)}
{-j\omega(k)-\omega(k-\ell)+n\omega(\ell)}
\widehat{h}(k-\ell)\widehat{f}(-k)dkd\ell\\
=&\frac{1}{2}\int\int\overline{\widehat{g}(k)}\frac{-\ell\widehat{q}(k-\ell)}
{-n\omega(k)-\omega(k-\ell)+j\omega(\ell)}\widehat{h}(k-\ell)\widehat{f}(\ell)d\ell dk\\
=&\frac{1}{2}\int\int\overline{\widehat{g}(k)}\frac{-k\widehat{q}(k-\ell)}
{-n\omega(k)-\omega(k-\ell)+j\omega(\ell)}
\widehat{h}(k-\ell)\widehat{f}(\ell)d\ell dk\\
&+\frac{1}{2}\int\int\overline{\widehat{g}(k)}\frac{(k-\ell)\widehat{q}(k-\ell)}
{-n\omega(k)-\omega(k-\ell)+j\omega(\ell)}
\widehat{h}(k-\ell)\widehat{f}(\ell)d\ell dk\\
=&-\frac{1}{2}\int_{\mathbb{R}}\overline{\widehat{g}(k)}\widehat{N}_{n,l,j}^{1,1,1}(h,f)(k)dk
+\int_{\mathbb{R}}\overline{\widehat{g}(k)}\widehat{S}_{n,j}^{1}(\partial_{x}h,f)(k)dk\\
=&-(g, N_{n,l,j}^{1,1,1}(h,f))+(g, S_{n,j}^{1}(h,f)).
\end{align*}
Similarly, we have
\begin{align*}
(f,N_{j,l,n}^{1,1,\{2,3,5\}}(h,g))=&-\frac{j}{n}(g,N_{n,l,j}^{1,1,\{2,3,5\}}(h,f))
+(g,S_{n,j}^{\{2,3,5\}}(h,f)),\\
(f,N_{j,l,n}^{1,1,4}(h,g))=&-(g,N_{n,l,j}^{1,1,4}(h,f))+(g,S_{n,j}^{4}(h,f)).
\end{align*}
Then we obtain \eqref{17}, and due to \eqref{qomega} and \eqref{hfjj} we obtain \eqref{Sjj}.
\end{proof}

\subsection{\textbf{Control for the time evolution of $\mathcal{E}_{s}$}}
We have the following corollary.
\begin{corollary}\label{C1}
$\sqrt{\mathcal{E}_{s}}$ is equivalent to $\|R^{0}\|_{H^{s}}+\|R^{1}\|_{H^{s}}$ for sufficiently small $\epsilon>0$.
\end{corollary}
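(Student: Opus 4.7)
The plan is to split the modified energy as $\mathcal{E}_s=\mathcal{E}_s^{\mathrm{main}}+\epsilon\mathcal{E}_s^{\mathrm{corr}}$, where
\[
\mathcal{E}_s^{\mathrm{main}}:=\tfrac{1}{2}\sum_{\ell=0}^s\sum_{j\in\{0,\pm1\}}\!\!\bigl(\|\partial_x^\ell\mathcal{R}_j^0\|_{L^2}^2+\|\partial_x^\ell\mathcal{R}_j^1\|_{L^2}^2\bigr),
\qquad
\mathcal{E}_s^{\mathrm{corr}}:=\sum_{\ell=0}^s\sum_{j,n,l}\int\partial_x^\ell\mathcal{R}_j^1\,\partial_x^\ell N^{1,1}_{j,l,n}(\psi_l,\mathcal{R}_n^1)\,dx,
\]
and then show that the main part is equivalent to $\|\mathcal{R}^0\|_{H^s}^2+\|\mathcal{R}^1\|_{H^s}^2$ while the correction is controlled by $C(\|\mathcal{R}^1\|_{H^s}^2)$ uniformly in $\epsilon$. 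The first point is immediate from the standard equivalence $\|f\|_{H^s}^2\sim \sum_{\ell=0}^s\|\partial_x^\ell f\|_{L^2}^2$ on $\mathbb{R}$; so $\mathcal{E}_s^{\mathrm{main}}\sim \|\mathcal{R}^0\|_{H^s}^2+\|\mathcal{R}^1\|_{H^s}^2$ with constants independent of $\epsilon$. Once a bound $|\mathcal{E}_s^{\mathrm{corr}}|\le C\|\mathcal{R}^1\|_{H^s}^2$ is in hand, a standard absorption argument yields, for $\epsilon$ sufficiently small,
\[
\tfrac{1}{4}\bigl(\|\mathcal{R}^0\|_{H^s}^2+\|\mathcal{R}^1\|_{H^s}^2\bigr)\le \mathcal{E}_s\le C\bigl(\|\mathcal{R}^0\|_{H^s}^2+\|\mathcal{R}^1\|_{H^s}^2\bigr),
\]
which is the claim.

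The nontrivial work therefore lies in estimating $\mathcal{E}_s^{\mathrm{corr}}$. For $0\le\ell\le s-1$ this is routine: by Lemma \ref{L8}(a) the operators $f\mapsto N^{1,1}_{j,l,n}(\psi_l,f)$ lose at most one derivative, so they are bounded from $H^s$ into $H^{s-1}$ with norm controlled by $\|\widehat{\psi}_l\|_{L^1(s+1)}$, which by Lemma \ref{L2} is uniformly bounded. Consequently $\|\partial_x^\ell N^{1,1}_{j,l,n}(\psi_l,\mathcal{R}_n^1)\|_{L^2}\le C\|\mathcal{R}^1\|_{H^s}$ and Cauchy--Schwarz closes those levels.

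The main obstacle is the top level $\ell=s$, where a naive bound requires one more derivative than is available. Here I would use the near-skew-symmetry provided by Lemma \ref{L8}(c): after commuting $\partial_x^s$ through the bilinear form, the highest-derivative piece of $\int\partial_x^s\mathcal{R}_j^1\,\partial_x^s N^{1,1}_{j,l,n}(\psi_l,\mathcal{R}_n^1)\,dx$ is combined with the symmetric partner obtained by exchanging the roles of $(j,n)$; by the first identity in \eqref{17} the two pieces cancel up to the smoother remainder $S^{\,\cdot}_{j,n}(\partial_x\psi_l,\cdot)$, whose kernels in \eqref{s00}--\eqref{Sjj} are uniformly bounded, producing only terms of the form $\int\partial_x^s\mathcal{R}_j^1\,\partial_x^s\mathcal{R}_n^1\cdot(\mathrm{bounded\ factor})\,dx$. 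The commutator terms coming from Leibniz, together with the $Q$-remainders in \eqref{hfjj}--\eqref{hfj-j}, involve operators that gain one derivative and are therefore harmless. In all cases one ends with a bound by $C\|\psi_l\|_{W^{1,\infty}}\|\mathcal{R}^1\|_{H^s}^2$, and $\|\psi_l\|_{W^{1,\infty}}\le C$ uniformly in $\epsilon$ by Lemma \ref{L2}. Summing over $\ell$, $j$, $l$, $n$ gives $|\mathcal{E}_s^{\mathrm{corr}}|\le C\|\mathcal{R}^1\|_{H^s}^2$, and choosing $\epsilon_0$ so that $\epsilon_0 C<\tfrac14$ completes the equivalence.
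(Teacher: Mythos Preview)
Your proposal is correct and follows essentially the same route as the paper. The paper's one-line justification cites only Lemma~\ref{L8}(a), integration by parts, and Cauchy--Schwarz; you add Lemma~\ref{L8}(c) for the top level, but that identity is just the integration-by-parts structure repackaged at the operator level, so the arguments are equivalent.

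One small simplification worth noting: from Lemma~\ref{L8}(a) you can see directly that the only pieces of $N^{1,1}_{j,l,n}$ that lose a derivative are the diagonal ones, namely $N^{1,1,1}_{0,l,0}$ and $N^{1,1,r}_{j,l,j}$ for $j\in\{\pm1\}$, $r\in\{1,\dots,5\}$. For those, the leading part is a bounded coefficient times $\partial_x$ acting on the \emph{same} component $\mathcal{R}_j^1$ that sits outside the integral, so a single integration by parts (Lemma~\ref{L9}, \eqref{part1}) already closes the estimate without any need to pair off $(j,n)$ with $(n,j)$. All remaining pieces map $H^s\to H^s$ by \eqref{hfjj}--\eqref{hfj-j} and are handled by Cauchy--Schwarz. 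This is what the paper has in mind and is slightly more direct than the symmetrization you describe, but the content is the same.
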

Indeed, it follows from assertions of Lemma \ref{L8} (a), the integration by parts and the Cauchy-Schwarz inequality.

Since the right-hand side of the error equations in \eqref{RJ} for $\mathcal{R}^{1}_{j}$ with $j\in\{0,\pm1\}$ loses one derivative, we need the following identities to control the time evolution of $\mathcal{E}_{s}$.
\begin{lemma}\label{L9}
Let $j\in\{0, \ \pm1\}$, $a_{j}\in H^{2}(\mathbb{R},\mathbb{R})$ and $f_{j}\in L^{2}(\mathbb{R},\mathbb{R})$. Then we have
\begin{equation}
\begin{split}\label{part1}
&\int a_{j}f_{j}\partial_{x}f_{j}dx=-\frac{1}{2}\int\partial_{x}a_{j}f_{j}^{2}dx,
\end{split}
\end{equation}
\begin{equation}
\begin{split}\label{part2}
\sum_{j\in\{\pm1\}}\int a_{j}f_{j}\partial_{x}f_{-j}dx = \frac{1}{2}\int(a_{-1}-a_{1})(f_{1}+f_{-1})\partial_{x}(f_{1}-f_{-1})dx\\
+\mathcal{O}(\|a_{1}\|_{H^{2}(\mathbb{R},\mathbb{R})}+\|a_{-1}\|_{H^{2}(\mathbb{R},\mathbb{R})}) (\|f_{1}\|_{L^{2}(\mathbb{R},\mathbb{R})}+\|f_{-1}\|_{L^{2}(\mathbb{R},\mathbb{R})}),
\end{split}
\end{equation}
and
\begin{equation}
\begin{split}\label{part20}
 \sum_{j\in\{\pm1\}}\int a_{j}f_{0}\partial_{x}f_{j}dx = & \frac{1}{2}\int(a_{1}+a_{-1})f_{0}\partial_{x}(f_{1}+f_{-1})dx\\
&+\frac{1}{2}\int(a_{1}-a_{-1})f_{0}\partial_{x}(f_{1}-f_{-1})dx.
\end{split}
\end{equation}
\end{lemma}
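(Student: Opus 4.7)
The plan is to verify the three identities separately, with the first two being essentially immediate and the second (the sum/difference identity for $f_{\pm 1}$) requiring the only real calculation.

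For \eqref{part1}, I would simply observe that $f_{j}\partial_{x}f_{j}=\tfrac{1}{2}\partial_{x}(f_{j}^{2})$ and integrate by parts in $x$; the stated identity falls out immediately, assuming enough regularity to carry out the integration by parts (which is available in the applications, where $f_{j}=\partial_{x}^{\ell}\mathcal{R}_{j}$). For \eqref{part20} I would not even integrate: the claim is a pointwise algebraic identity. Expanding the right-hand side,
\begin{align*}
&\tfrac{1}{2}(a_{1}+a_{-1})f_{0}\partial_{x}(f_{1}+f_{-1}) +\tfrac{1}{2}(a_{1}-a_{-1})f_{0}\partial_{x}(f_{1}-f_{-1}) \\
&=\tfrac{1}{2}\bigl[(a_{1}+a_{-1})+(a_{1}-a_{-1})\bigr]f_{0}\partial_{x}f_{1}+\tfrac{1}{2}\bigl[(a_{1}+a_{-1})-(a_{1}-a_{-1})\bigr]f_{0}\partial_{x}f_{-1},
\end{align*}
which equals $a_{1}f_{0}\partial_{x}f_{1}+a_{-1}f_{0}\partial_{x}f_{-1}$. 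So \eqref{part20} holds identically.

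For \eqref{part2}, which I expect to be the only nontrivial step, I would introduce sum and difference variables $S=f_{1}+f_{-1}$ and $D=f_{1}-f_{-1}$, so that $f_{\pm 1}=\tfrac{1}{2}(S\pm D)$. A direct expansion of $a_{1}f_{1}\partial_{x}f_{-1}+a_{-1}f_{-1}\partial_{x}f_{1}$ then collects into
\[
\tfrac{1}{4}\int(a_{1}+a_{-1})\bigl(S\partial_{x}S-D\partial_{x}D\bigr)\,dx +\tfrac{1}{4}\int(a_{1}-a_{-1})\bigl(D\partial_{x}S-S\partial_{x}D\bigr)\,dx.
\]
Integration by parts absorbs each of the two $(a_{1}+a_{-1})$ pieces into $-\tfrac{1}{2}\int\partial_{x}(a_{1}+a_{-1})S^{2}\,dx$ and the analogous $D^{2}$-term, contributing only terms of size $O\bigl((\|a_{1}\|_{H^{1}}+\|a_{-1}\|_{H^{1}})(\|f_{1}\|_{L^{2}}^{2}+\|f_{-1}\|_{L^{2}}^{2})\bigr)$ to the remainder. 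For the $(a_{1}-a_{-1})$ piece I would use the product-rule identity
\[
D\partial_{x}S-S\partial_{x}D=\partial_{x}(SD)-2S\partial_{x}D,
\]
integrate by parts on the $\partial_{x}(SD)$ term to move the derivative onto $(a_{1}-a_{-1})$, and so extract the leading term $-\tfrac{1}{2}\int(a_{1}-a_{-1})S\partial_{x}D\,dx=\tfrac{1}{2}\int(a_{-1}-a_{1})(f_{1}+f_{-1})\partial_{x}(f_{1}-f_{-1})\,dx$, plus a commutator-type remainder $-\tfrac{1}{4}\int\partial_{x}(a_{1}-a_{-1})SD\,dx$ which is again of the desired order.

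The only real obstacle is bookkeeping in the $(a_{1}-a_{-1})(D\partial_{x}S-S\partial_{x}D)$ piece: one has to be careful to distribute the derivative via $\partial_{x}(SD)$ so that the only surviving term has $\partial_{x}$ on $D$ (matching the shape on the right-hand side of \eqref{part2}) and all other contributions can be absorbed into the $O(\|a\|_{H^{2}}\cdots)$ remainder via an $H^{1}\hookrightarrow L^{\infty}$ bound on $\partial_{x}a_{\pm 1}$. Since $a_{\pm 1}\in H^{2}$ the Sobolev embedding yields $\|\partial_{x}a_{\pm 1}\|_{L^{\infty}}\lesssim\|a_{\pm 1}\|_{H^{2}}$, which then gives the desired control with the norms stated in the conclusion.
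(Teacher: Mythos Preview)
Your proposal is correct and follows the expected route. The paper itself does not give a proof of this lemma at all but simply refers to \cite{LP19}, so there is no independent argument to compare against; what you have written is exactly the standard computation one would expect and is what the applications in Section~\ref{Sect6} require. One small remark: the remainder in \eqref{part2} as stated in the lemma is written as linear in $\|f_{\pm1}\|_{L^{2}}$, but your computation (correctly) produces a bound quadratic in $\|f_{\pm1}\|_{L^{2}}$, namely $O\bigl((\|a_{1}\|_{H^{2}}+\|a_{-1}\|_{H^{2}})(\|f_{1}\|_{L^{2}}+\|f_{-1}\|_{L^{2}})^{2}\bigr)$; this is what is actually used downstream (the resulting terms are absorbed into $\epsilon^{2}\mathcal{O}(\mathcal{E}_{s})$), so the discrepancy is a typo in the statement rather than a gap in your argument.
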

\begin{proof}
One may refer to \cite{LP19} for a proof.
\end{proof}

Now, we are prepared to analyze $\partial_{t}E_{\ell}$. We compute
\begin{align*}
\partial_{t}E_{\ell}=&\sum_{j\in\{0,\pm1\}}\Bigg(\int\partial_{x}^{\ell}\mathcal{R}_{j}^{0}
\partial_{t}\partial_{x}^{\ell}\mathcal{R}_{j}^{0}dx+
\int\partial_{x}^{\ell}\mathcal{R}_{j}^{1}\partial_{t}\partial_{x}^{\ell}\mathcal{R}_{j}^{1}dx\\
& \ \ \ \ +\epsilon\sum_{\substack{n\in\{0,\pm1\}\\ l\in\{\pm1\}}}
\bigg(\int\partial_{t}\partial_{x}^{\ell}\mathcal{R}_{j}^{1}
\partial_{x}^{\ell}N_{j,l,n}^{1,1}(\psi_{l},\mathcal{R}_{n}^{1})dx
+\int\partial_{x}^{\ell}\mathcal{R}_{j}^{1}
\partial_{x}^{\ell}N_{j,l,n}^{1,1}(\partial_{t}\psi_{l},\mathcal{R}_{n}^{1})dx\\
& \ \ \ \ \ \ \ \ \ \ \ \ \ \ \ \ \ \ \ \ \ \ \ \ +\int\partial_{x}^{\ell}\mathcal{R}_{j}^{1}
\partial_{x}^{\ell}N_{j,l,n}^{1,1}(\psi_{l},\partial_{t}\mathcal{R}_{n}^{1})dx\bigg)
\Bigg).
\end{align*}
Using the error equations \eqref{RJ}, we get
\begin{align*}
\partial_{t}E_{\ell}=&\sum_{j\in\{0,\pm1\}}\Bigg(\int\partial_{x}^{\ell}\mathcal{R}_{j}^{0}
j\Omega\partial_{x}^{\ell}\mathcal{R}_{j}^{0}dx
+\epsilon^{2}\int\partial_{x}^{\ell}\mathcal{R}_{j}^{0}
\partial_{x}^{\ell}\mathcal{F}_{j}^{0}dx
+\int\partial_{x}^{\ell}\mathcal{R}_{j}^{1}j\Omega\partial_{x}^{\ell}\mathcal{R}_{j}^{1}dx\\
&+\epsilon^{2}\sum_{n\in\{0,\pm1\}}\int\partial_{x}^{\ell}\mathcal{R}_{j}^{1}\partial_{x}^{\ell}\mathcal{F}_{j,n}^{1}dx
+\epsilon\sum_{\substack{n\in\{0,\pm1\}\\ l\in\{\pm1\}}}
\bigg(\int\partial_{x}^{\ell}\mathcal{R}_{j}^{1}\partial_{x}^{\ell}B^{1,1}_{j,l,n}(\psi_{l}, \mathcal{R}^{1}_{n})dx\\
&+\int j\Omega\partial_{x}^{\ell}\mathcal{R}_{j}^{1}
\partial_{x}^{\ell}N_{j,l,n}^{1,1}(\psi_{l},\mathcal{R}_{n}^{1})dx
-\int\partial_{x}^{\ell}\mathcal{R}_{j}^{1}
\partial_{x}^{\ell}N_{j,l,n}^{1,1}(\Omega\psi_{l},\mathcal{R}_{n}^{1})dx\\
&+\int\partial_{x}^{\ell}\mathcal{R}_{j}^{1}
\partial_{x}^{\ell}N_{j,l,n}^{1,1}(\psi_{l},n\Omega\mathcal{R}_{n}^{1})dx
+\int\partial_{x}^{\ell}\mathcal{R}_{j}^{1}
\partial_{x}^{\ell}N_{j,l,n}^{1,1}(\partial_{t}\psi_{l}+\Omega\psi_{l},\mathcal{R}_{n}^{1})dx\bigg)\\
&+\epsilon^{2}\sum_{\substack{m,n\in\{0,\pm1\}\\ l\in\{\pm1\}}}
\bigg(\int \partial_{x}^{\ell}\left(B^{1,1}_{j,l,m}(\psi_{l}, \mathcal{R}^{1}_{m})
+\epsilon\mathcal{F}_{j,m}^{1}\right)
\partial_{x}^{\ell}N_{j,l,n}^{1,1}(\psi_{l},\mathcal{R}_{n}^{1})dx\\
&+\int \partial_{x}^{\ell}\mathcal{R}_{j}^{1}
\partial_{x}^{\ell}N_{j,l,n}^{1,1}\left(\psi_{l},B^{1,1}_{n,l,m}(\psi_{l}, \mathcal{R}^{1}_{m})
+\epsilon\mathcal{F}_{n,m}^{1}\right)dx\bigg)\Bigg)\\
=& :\sum_{i=1}^{11}I_{i}.
\end{align*}
Due to the skew symmetry of $\Omega$, we have
\begin{align*}
I_{1}=\sum_{j\in\{0,\pm1\}}\int\partial_{x}^{\ell}\mathcal{R}_{j}^{0}
j\Omega\partial_{x}^{\ell}\mathcal{R}_{j}^{0}dx=0,\\
I_{3}=\sum_{j\in\{0,\pm1\}}\int\partial_{x}^{\ell}\mathcal{R}_{j}^{1}
j\Omega\partial_{x}^{\ell}\mathcal{R}_{j}^{1}dx=0.
\end{align*}
Because $\mathcal{F}_{j}^{0}$ does not lose any derivatives and satisfies the estimate \eqref{J68}, we have
\begin{align*}
I_{2}=\epsilon^{2}\sum_{j\in\{0,\pm1\}}\int\partial_{x}^{\ell}\mathcal{R}_{j}^{0}
\partial_{x}^{\ell}\mathcal{F}_{j}^{0}dx
\leq C\epsilon^{2}(\mathcal{E}_{s}+\epsilon^{3/2}\mathcal{E}_{s}^{3/2}+\epsilon^{3}\mathcal{E}_{s}^{2}+1).
\end{align*}
Since the operator $N_{j,l,n}^{1,1}$ satisfies \eqref{16}, we have $I_{5}+I_{6}+I_{7}+I_{8}=0$.
Moreover, due to the estimate \eqref{11s} in Lemma \ref{LO}, we have
\begin{align*}
I_{9}=\epsilon\sum_{\substack{j,n\in\{0,\pm1\}\\l\in\{\pm1\}}}\int\partial_{x}^{\ell}\mathcal{R}_{j}^{1}
\partial_{x}^{\ell}N_{j,l,n}^{1,1}(\partial_{t}\psi_{l}+\Omega\psi_{l},\mathcal{R}_{n}^{1})dx
\leq C\epsilon^{2}(\mathcal{E}_{s}+1).
\end{align*}
In summary, we have
\begin{align}
\partial_{t}E_{\ell}=&I_{10}+I_{11}+I_{4}
+\epsilon^{2}\mathcal{O}(1+\mathcal{E}_{s}+\epsilon^{3/2}\mathcal{E}_{s}^{3/2}
+\epsilon^{3}\mathcal{E}_{s}^{2})\nonumber\\
=&\epsilon^{2}\sum_{\substack{j,m,n\in\{0,\pm1\}\\ l\in\{\pm1\}}}
\bigg(\int\partial_{x}^{\ell}\left(B^{1,1}_{j,l,m}(\psi_{l}, \mathcal{R}^{1}_{m})
+\epsilon\mathcal{F}_{j,m}^{1}\right)
\partial_{x}^{\ell}N_{j,l,n}^{1,1}(\psi_{l},\mathcal{R}_{n}^{1})dx\nonumber\\
& \ \ \ \ \ \ \ \ \ \ \ \ \ \ \ +\int \partial_{x}^{\ell}\mathcal{R}_{j}^{1}
\partial_{x}^{\ell}N_{j,l,n}^{1,1}\left(\psi_{l},\left(B^{1,1}_{n,l,m}(\psi_{l}, \mathcal{R}^{1}_{m})
+\epsilon\mathcal{F}_{n,m}^{1}\right)\right)dx\bigg)\nonumber\\
&+\epsilon^{2}\sum_{j,n\in\{0,\pm1\}}
\int\partial_{x}^{\ell}\mathcal{R}_{j}^{1}\partial_{x}^{\ell}\mathcal{F}_{j,n}^{1}dx\nonumber +\epsilon^{2}\mathcal{O}(1+\mathcal{E}_{s}+\epsilon^{3/2}\mathcal{E}_{s}^{3/2}
+\epsilon^{3}\mathcal{E}_{s}^{2}).\nonumber
\end{align}
It can be further written as
\begin{align}\label{ptE}
\partial_{t}E_{\ell}=&\epsilon^{2}\sum_{\substack{m\in\{0,\pm1\}\\ l\in\{\pm1\}}}
\bigg(\int\partial_{x}^{\ell}\left(B^{1,1}_{0,l,m}(\psi_{l}, \mathcal{R}^{1}_{m})
+\epsilon\mathcal{F}_{0,m}^{1}\right)
\partial_{x}^{\ell}N_{0,l,0}^{1,1}(\psi_{l},\mathcal{R}_{0}^{1})dx\nonumber\\
& \ \ \ \ \ \ \ \ \  \ \ \ +\int \partial_{x}^{\ell}\mathcal{R}_{0}^{1}
\partial_{x}^{\ell}N_{0,l,0}^{1,1}\left(\psi_{l},\left(B^{1,1}_{0,l,m}(\psi_{l}, \mathcal{R}^{1}_{m})
+\epsilon\mathcal{F}_{0,m}^{1}\right)\right)dx\bigg)\nonumber\\
&+\epsilon^{2}\sum_{\substack{m\in\{0,\pm1\}\\ j,l\in\{\pm1\}}}
\bigg(\int\partial_{x}^{\ell}\left(B^{1,1}_{j,l,m}(\psi_{l}, \mathcal{R}^{1}_{m})
+\epsilon\mathcal{F}_{j,m}^{1}\right)
\partial_{x}^{\ell}N_{j,l,0}^{1,1}(\psi_{l},\mathcal{R}_{0}^{1})dx\nonumber\\
& \ \ \ \ \ \ \ \ \ \ \ \ \ \ \ +\int \partial_{x}^{\ell}\mathcal{R}_{j}^{1}
\partial_{x}^{\ell}N_{j,l,0}^{1,1}\left(\psi_{l},\left(B^{1,1}_{0,l,m}(\psi_{l}, \mathcal{R}^{1}_{m})
+\epsilon\mathcal{F}_{0,m}^{1}\right)\right)dx\bigg)\nonumber\\
&+\epsilon^{2}\sum_{\substack{m\in\{0,\pm1\}\\ j,l\in\{\pm1\}}}
\bigg(\int\partial_{x}^{\ell}\left(B^{1,1}_{0,l,m}(\psi_{l}, \mathcal{R}^{1}_{m})
+\epsilon\mathcal{F}_{0,m}^{1}\right)
\partial_{x}^{\ell}N_{0,l,j}^{1,1}(\psi_{l},\mathcal{R}_{j}^{1})dx\nonumber\\
& \ \ \ \ \ \ \ \ \ \ \ \ \ \ \ +\int \partial_{x}^{\ell}\mathcal{R}_{0}^{1}
\partial_{x}^{\ell}N_{0,l,j}^{1,1}\left(\psi_{l},\left(B^{1,1}_{j,l,m}(\psi_{l}, \mathcal{R}^{1}_{m})
+\epsilon\mathcal{F}_{j,m}^{1}\right)\right)dx\bigg)\nonumber\\
&+\epsilon^{2}\sum_{\substack{m\in\{0,\pm1\}\\ j,l\in\{\pm1\}}}
\bigg(\int\partial_{x}^{\ell}\left(B^{1,1}_{j,l,m}(\psi_{l}, \mathcal{R}^{1}_{m})
+\epsilon\mathcal{F}_{j,m}^{1}\right)
\partial_{x}^{\ell}N_{j,l,j}^{1,1}(\psi_{l},\mathcal{R}_{j}^{1})dx\nonumber\\
& \ \ \ \ \ \ \ \ \ \ \ \ \ \ \ +\int \partial_{x}^{\ell}\mathcal{R}_{j}^{1}
\partial_{x}^{\ell}N_{j,l,j}^{1,1}\left(\psi_{l},\left(B^{1,1}_{j,l,m}(\psi_{l}, \mathcal{R}^{1}_{m})
+\epsilon\mathcal{F}_{j,m}^{1}\right)\right)dx\bigg)\nonumber\\
&+\epsilon^{2}\sum_{\substack{m\in\{0,\pm1\}\\ j,l\in\{\pm1\}}}
\bigg(\int\partial_{x}^{\ell}\left(B^{1,1}_{j,l,m}(\psi_{l}, \mathcal{R}^{1}_{m})
+\epsilon\mathcal{F}_{j,m}^{1}\right)
\partial_{x}^{\ell}N_{j,l,-j}^{1,1}(\psi_{l},\mathcal{R}_{-j}^{1})dx\nonumber\\
& \ \ \ \ \ \ \ \ \ \ \ \ \ \ \ +\int \partial_{x}^{\ell}\mathcal{R}_{j}^{1}
\partial_{x}^{\ell}N_{j,l,-j}^{1,1}\left(\psi_{l},\left(B^{1,1}_{-j,l,m}(\psi_{l}, \mathcal{R}^{1}_{m})
+\epsilon\mathcal{F}_{-j,m}^{1}\right)\right)dx\bigg)\nonumber\\
&+\epsilon^{2}\sum_{j,n\in\{0,\pm1\}}
\int\partial_{x}^{\ell}\mathcal{R}_{j}^{1}\partial_{x}^{\ell}\mathcal{F}_{j,n}^{1}dx +\epsilon^{2}\mathcal{O}(1+\mathcal{E}_{s}+\epsilon^{3/2}\mathcal{E}_{s}^{3/2}
+\epsilon^{3}\mathcal{E}_{s}^{2})\nonumber\\
=&:\sum_{i=1}^{6}J_{i}+\epsilon^{2}\mathcal{O}(1+\mathcal{E}_{s}+\epsilon^{3/2}\mathcal{E}_{s}^{3/2}
+\epsilon^{3}\mathcal{E}_{s}^{2}).
\end{align}

For $J_{1}$, we have
\begin{align*}
J_{1}=&\epsilon^{2}\sum_{\substack{m\in\{0,\pm1\}\\ l\in\{\pm1\}}}
\bigg(\int\partial_{x}^{\ell}\left(B^{1,1}_{0,l,m}(\psi_{l}, \mathcal{R}^{1}_{m})
+\epsilon\mathcal{F}_{0,m}^{1}\right)
\partial_{x}^{\ell}N_{0,l,0}^{1,1}(\psi_{l},\mathcal{R}_{0}^{1})dx\\
& \ \ \ \ \ \ \ \ \ \ \ \ \ \ \ +\int \partial_{x}^{\ell}\mathcal{R}_{0}^{1}
\partial_{x}^{\ell}N_{0,l,0}^{1,1}\left(\psi_{l},B^{1,1}_{0,l,m}(\psi_{l}, \mathcal{R}^{1}_{m})
+\epsilon\mathcal{F}_{0,m}^{1}\right)dx\bigg)\\
=&\epsilon^{2}\sum_{\substack{l\in\{\pm1\}\\r\in\{1,2\}}}
\bigg(\int\partial_{x}^{\ell}\left(B^{1,1}_{0,l,0}(\psi_{l}, \mathcal{R}^{1}_{0})
+\epsilon\mathcal{F}_{0,0}^{1}\right)
\partial_{x}^{\ell}N_{0,l,0}^{1,1,r}(\psi_{l},\mathcal{R}_{0}^{1})dx\\
& \ \ \ \ \ \ \ \ \ \ \ \ \ \ \ +\int \partial_{x}^{\ell}\mathcal{R}_{0}^{1}
\partial_{x}^{\ell}N_{0,l,0}^{1,1,r}\left(\psi_{l},B^{1,1}_{0,l,0}(\psi_{l}, \mathcal{R}^{1}_{0})
+\epsilon\mathcal{F}_{0,0}^{1}\right)dx\bigg)\\
&+\epsilon^{2}\sum_{\substack{m,l\in\{\pm1\}\\r\in\{1,2\}}}
\bigg(\int\partial_{x}^{\ell}\left(B^{1,1}_{0,l,m}(\psi_{l}, \mathcal{R}^{1}_{m})
+\epsilon\mathcal{F}_{0,m}^{1}\right)
\partial_{x}^{\ell}N_{0,l,0}^{1,1,r}(\psi_{l},\mathcal{R}_{0}^{1})dx\\
& \ \ \ \ \ \ \ \ \ \ \ \ \ \ \ +\int \partial_{x}^{\ell}\mathcal{R}_{0}^{1}
\partial_{x}^{\ell}N_{0,l,0}^{1,1,r}\left(\psi_{l},B^{1,1}_{0,l,m}(\psi_{l}, \mathcal{R}^{1}_{m})
+\epsilon\mathcal{F}_{0,m}^{1}\right)dx\bigg)\\
=&:J^{1}_{11}+J^{2}_{11}+J^{1}_{12}+J^{2}_{12},
\end{align*}
where $N_{0,l,0}^{1,1,r}$ with $r\in\{1, 2\}$ is defined in \eqref{N11r}.

According to \eqref{B0n} and \eqref{F00}, we take
\begin{equation}
\begin{split}\label{B00}
B^{1,1}_{0,l,0}(\psi_{l}, \mathcal{R}^{1}_{0})
+\epsilon\mathcal{F}_{0,0}^{1}=&\frac{1}{q^{2}}\left(\phi_{1}\partial_{x}q^{2}\mathcal{R}^{1}_{0}\right)
+\frac{\gamma-1}{q^{2}}(\partial_{x}q\psi_{l}(\gamma-2-q^{2})\mathcal{R}^{1}_{0}),
\end{split}
\end{equation}
where $\phi_{1}$ is defined in \eqref{phi}. Then we have
\begin{align*}
J^{1}_{11}=&\epsilon^{2}\sum_{l\in\{\pm1\}}
\bigg(\int\partial_{x}^{\ell}\left(B^{1,1}_{0,l,0}(\psi_{l}, \mathcal{R}^{1}_{0})
+\epsilon\mathcal{F}_{0,0}^{1}\right)
\partial_{x}^{\ell}N_{0,l,0}^{1,1,1}(\psi_{l},\mathcal{R}_{0}^{1})dx\\
&+\int \partial_{x}^{\ell}\mathcal{R}_{0}^{1}
\partial_{x}^{\ell}N_{0,l,0}^{1,1,1}\left(\psi_{l},\left(B^{1,1}_{0,l,0}(\psi_{l}, \mathcal{R}^{1}_{0})
+\epsilon\mathcal{F}_{0,0}^{1}\right)\right)dx\bigg)\\
=&\epsilon^{2}\sum_{l\in\{\pm1\}}
\bigg(\int\partial_{x}^{\ell}\frac{1}{q^{2}}(\phi_{1}\partial_{x}q^{2}\mathcal{R}^{1}_{0})
\partial_{x}^{\ell}N_{0,l,0}^{1,1,1}(\psi_{l},\mathcal{R}_{0}^{1})dx\\
&+\int \partial_{x}^{\ell}\mathcal{R}_{0}^{1}
\partial_{x}^{\ell}N_{0,l,0}^{1,1,1}\left(\psi_{l},
\frac{1}{q^{2}}(\phi_{1}\partial_{x}q^{2}\mathcal{R}^{1}_{0})\right)dx\bigg)\\
&+(\gamma-1)\epsilon^{2}\sum_{l\in\{\pm1\}}
\bigg(\int\partial_{x}^{\ell}\frac{1}{q^{2}}(\partial_{x}q\psi_{l}(\gamma-2-q^{2})\mathcal{R}^{1}_{0})
\partial_{x}^{\ell}N_{0,l,0}^{1,1,1}(\psi_{l},\mathcal{R}_{0}^{1})dx\\
&+\int \partial_{x}^{\ell}\mathcal{R}_{0}^{1}
\partial_{x}^{\ell}N_{0,l,0}^{1,1,1}\left(\psi_{l},
\frac{1}{q^{2}}(\partial_{x}q\psi_{l}(\gamma-2-q^{2})\mathcal{R}^{1}_{0})\right)dx\bigg).
\end{align*}
To extract all terms with more than one spatial derivatives falling on $\mathcal{R}^{1}_{0}$, we apply Leibniz's rule and get
\begin{align*}
J^{1}_{11}=&\epsilon^{2}\sum_{l\in\{\pm1\}}
\bigg(\int\partial_{x}^{\ell}\frac{1}{q^{2}}(\phi_{1}\partial_{x}q^{2}\mathcal{R}^{1}_{0})
N_{0,l,0}^{1,1,1}(\psi_{l},\partial_{x}^{\ell}\mathcal{R}_{0}^{1})dx\\
&+\ell\int\partial_{x}^{\ell}\frac{1}{q^{2}}(\phi_{1}\partial_{x}q^{2}\mathcal{R}^{1}_{0})
N_{0,l,0}^{1,1,1}(\partial_{x}\psi_{l},\partial_{x}^{\ell-1}\mathcal{R}_{0}^{1})dx\\
&+\int \partial_{x}^{\ell}\mathcal{R}_{0}^{1}
N_{0,l,0}^{1,1,1}\left(\psi_{l},
\partial_{x}^{\ell}\frac{1}{q^{2}}(\phi_{1}\partial_{x}q^{2}\mathcal{R}^{1}_{0})\right)dx\\
&+\ell\int \partial_{x}^{\ell}\mathcal{R}_{0}^{1}
N_{0,l,0}^{1,1,1}\left(\partial_{x}\psi_{l},
\partial_{x}^{\ell-1}\frac{1}{q^{2}}(\phi_{1}\partial_{x}q^{2}\mathcal{R}^{1}_{0})\right)dx\bigg)\\
&+(\gamma-1)\epsilon^{2}\sum_{l\in\{\pm1\}}
\bigg(\int\partial_{x}^{\ell}\frac{1}{q^{2}}(\partial_{x}q\psi_{l}(\gamma-2-q^{2})\mathcal{R}^{1}_{0})
N_{0,l,0}^{1,1,1}(\psi_{l},\partial_{x}^{\ell}\mathcal{R}_{0}^{1})dx\\
&+\int \partial_{x}^{\ell}\mathcal{R}_{0}^{1}
N_{0,l,0}^{1,1,1}\left(\psi_{l},
\partial_{x}^{\ell}\frac{1}{q^{2}}(\partial_{x}q\psi_{l}(\gamma-2-q^{2})\mathcal{R}^{1}_{0})\right)dx\bigg)\\
&+\epsilon^{2}\mathcal{O}(\mathcal{E}_{s}+\epsilon^{3/2}\mathcal{E}_{s}^{3/2}).
\end{align*}
Applying the first equation in \eqref{17}, we have
\begin{align*}
J^{1}_{11}=&\epsilon^{2}\sum_{l\in\{\pm1\}}
\bigg(\int \partial_{x}^{\ell}\mathcal{R}_{0}^{1}
S_{0,0}^{1}\left(\psi_{l},
\partial_{x}^{\ell}\frac{1}{q^{2}}(\phi_{1}\partial_{x}q^{2}\mathcal{R}^{1}_{0})\right)dx\\
&+(\gamma-1)\epsilon^{2}\sum_{l\in\{\pm1\}}
\bigg(\int \partial_{x}^{\ell}\mathcal{R}_{0}^{1}
S_{0,0}^{1}\left(\psi_{l},
\partial_{x}^{\ell}\frac{1}{q^{2}}(\partial_{x}q\psi_{l}(\gamma-2-q^{2})\mathcal{R}^{1}_{0})\right)dx\bigg)\\
&+2\ell\int\partial_{x}^{\ell}\frac{1}{q^{2}}(\phi_{1}\partial_{x}q^{2}\mathcal{R}^{1}_{0})
N_{0,l,0}^{1,1,1}(\partial_{x}\psi_{l},\partial_{x}^{\ell-1}\mathcal{R}_{0}^{1})dx\\
&+\epsilon^{2}\mathcal{O}(\mathcal{E}_{s}+\epsilon^{3/2}\mathcal{E}_{s}^{3/2}).
\end{align*}
Using the asymptotic expansions \eqref{qomega}, \eqref{hfjj} and \eqref{s00} yields
\begin{equation}
\begin{split}\label{J111}
J^{1}_{11}=&(2\ell+1)\epsilon^{2}\sum_{l\in\{\pm1\}} \int \partial_{x}^{\ell}\mathcal{R}_{0}^{1} \psi_{l}\phi_{1}\partial_{x}^{\ell+1}\mathcal{R}^{1}_{0}dx\\
&-\frac{2(\gamma-1)}{\gamma}\epsilon^{2}\sum_{l\in\{\pm1\}} \bigg(\int \partial_{x}^{\ell}\mathcal{R}_{0}^{1} \psi_{l}\partial_{x}q\psi_{l}\partial_{x}^{\ell}\mathcal{R}^{1}_{0}dx\bigg) +\epsilon^{2}\mathcal{O}(\mathcal{E}_{s}+\epsilon^{3/2}\mathcal{E}_{s}^{3/2})\\
=&\epsilon^{2}\mathcal{O}(\mathcal{E}_{s}+\epsilon^{3/2}\mathcal{E}_{s}^{3/2}),
\end{split}
\end{equation}
where we have used integration by parts.  For $J^{2}_{11}$, we have
\begin{align*}
J^{2}_{11}=&\epsilon^{2}\sum_{l\in\{\pm1\}}
\bigg(\int\partial_{x}^{\ell}\left(B^{1,1}_{0,l,0}(\psi_{l}, \mathcal{R}^{1}_{0})
+\epsilon\mathcal{F}_{0,0}^{1}\right)
\partial_{x}^{\ell}N_{0,l,0}^{1,1,2}(\psi_{l},\mathcal{R}_{0}^{1})dx\\
& \ \ \ \ \ \ \ \ \ \ \ +\int \partial_{x}^{\ell}\mathcal{R}_{0}^{1}
\partial_{x}^{\ell}N_{0,l,0}^{1,1,2}\left(\psi_{l},\left(B^{1,1}_{0,l,0}(\psi_{l}, \mathcal{R}^{1}_{0})
+\epsilon\mathcal{F}_{0,0}^{1}\right)\right)dx\bigg)\\
=&\epsilon^{2}\sum_{l\in\{\pm1\}}
\bigg(\int\partial_{x}^{\ell}\frac{1}{q^{2}}(\phi_{1}\partial_{x}q^{2}\mathcal{R}^{1}_{0})
\partial_{x}^{\ell}N_{0,l,0}^{1,1,2}(\psi_{l},\mathcal{R}_{0}^{1})dx\\
& \ \ \ \ \ \ \ \ \ \ \  \ +\int \partial_{x}^{\ell}\mathcal{R}_{0}^{1}
\partial_{x}^{\ell}N_{0,l,0}^{1,1,2}\left(\psi_{l},
\frac{1}{q^{2}}(\phi_{1}\partial_{x}q^{2}\mathcal{R}^{1}_{0})\right)dx\bigg)\\
&+(\gamma-1)\epsilon^{2}\sum_{l\in\{\pm1\}}
\bigg(\int\partial_{x}^{\ell}\frac{1}{q^{2}}(\partial_{x}q\psi_{l}(\gamma-2-q^{2})\mathcal{R}^{1}_{0})
\partial_{x}^{\ell}N_{0,l,0}^{1,1,2}(\psi_{l},\mathcal{R}_{0}^{1})dx\\
& \ \ \ \ \ \ \ \ \ \ \  \ \ \  \ \ \ +\int \partial_{x}^{\ell}\mathcal{R}_{0}^{1}
\partial_{x}^{\ell}N_{0,l,0}^{1,1,2}\left(\psi_{l},
\frac{1}{q^{2}}(\partial_{x}q\psi_{l}(\gamma-2-q^{2})\mathcal{R}^{1}_{0})\right)dx\bigg).
\end{align*}
Using the asymptotic expansions \eqref{qomega}, \eqref{hfjj} and integration by parts, we have
\begin{equation}
\begin{split}\label{J1121}
J^{2}_{11}
=&\frac{-4(\gamma-1)}{\gamma}\epsilon^{2}\sum_{l\in\{\pm1\}}
\int\phi_{1}\psi_{l}\partial_{x}^{\ell+1}\mathcal{R}^{1}_{0}
\partial_{x}^{\ell}\mathcal{R}_{0}^{1}dx\\
&+\frac{8(\gamma-1)^{2}}{\gamma}\epsilon^{2}\sum_{l\in\{\pm1\}}
\int\partial_{x}q\psi_{1}\psi_{l}(\partial_{x}^{\ell}\mathcal{R}^{1}_{0})^{2}dx
+\epsilon^{2}\mathcal{O}(\mathcal{E}_{s}+\epsilon^{3/2}\mathcal{E}_{s}^{3/2})\\
=&\epsilon^{2}\mathcal{O}(\mathcal{E}_{s}+\epsilon^{3/2}\mathcal{E}_{s}^{3/2}).
\end{split}
\end{equation}
According to \eqref{B0n} and \eqref{F00}, we have
\begin{equation}
\begin{split}\label{B0m}
&\sum_{m\in\{\pm1\}}(B^{1,1}_{0,l,m}(\psi_{l}, \mathcal{R}^{1}_{m})
+\epsilon\mathcal{F}_{0,m}^{1})\\
=&-\frac{\gamma-1}{q^{2}}\left(\phi_{2}\partial_{x}q(\mathcal{R}^{1}_{1}-\mathcal{R}^{1}_{-1})\right)
+\frac{(\gamma-1)(\gamma-2)}{q^{2}}\left(\partial_{x}q\psi_{l}(
\mathcal{R}^{1}_{1}+\mathcal{R}^{1}_{-1})\right),
\end{split}
\end{equation}
where $\phi_{2}$ is defined in \eqref{phi}. For $J^{1}_{12}$, we have
\begin{align*}
J^{1}_{12}=&\epsilon^{2}\sum_{m,l\in\{\pm1\}}
\Bigg(\int\partial_{x}^{\ell}\left(B^{1,1}_{0,l,m}(\psi_{l}, \mathcal{R}^{1}_{m})
+\epsilon\mathcal{F}_{0,m}^{1}\right)
\partial_{x}^{\ell}N_{0,l,0}^{1,1,1}(\psi_{l},\mathcal{R}_{0}^{1})dx\\
& \ \ \ \ \ \ \ \ \ \ \  \ \ \  \ \ \ +\int \partial_{x}^{\ell}\mathcal{R}_{0}^{1}
\partial_{x}^{\ell}N_{0,l,0}^{1,1,1}\left(\psi_{l},\left(B^{1,1}_{0,l,m}(\psi_{l}, \mathcal{R}^{1}_{m})
+\epsilon\mathcal{F}_{0,m}^{1}\right)\right)dx\Bigg)\\
=&-\epsilon^{2}(\gamma-1)\sum_{l\in\{\pm1\}}
\Bigg(\int\partial_{x}^{\ell}\left(\frac{1}{q^{2}}
\left(\phi_{2}\partial_{x}q(\mathcal{R}^{1}_{1}-\mathcal{R}^{1}_{-1})\right)\right)
\partial_{x}^{\ell}N_{0,l,0}^{1,1,1}(\psi_{l},\mathcal{R}_{0}^{1})dx\\
& \ \ \ \ \ \ \ \ \ \ \  \  \ \ \  \  \ \ \ \ \ \ \  \  \ \ \ +\int\partial_{x}^{\ell}\mathcal{R}_{0}^{1}
\partial_{x}^{\ell}N_{0,l,0}^{1,1,1}
\left(\psi_{l},\frac{1}{q^{2}}\left(\phi_{2}\partial_{x}q
(\mathcal{R}^{1}_{1}-\mathcal{R}^{1}_{-1})\right)\right)dx\Bigg)\\
&+(\gamma-1)(\gamma-2)\epsilon^{2}\sum_{l\in\{\pm1\}}
\Bigg(\int\partial_{x}^{\ell}\left(\frac{1}{q^{2}}\left(\partial_{x}q\psi_{l}(
\mathcal{R}^{1}_{1}+\mathcal{R}^{1}_{-1})\right)\right)
\partial_{x}^{\ell}N_{0,l,0}^{1,1,1}(\psi_{l},\mathcal{R}_{0}^{1})dx\\
& \ \ \ \ \ \ \ \ \ \ \  \ \ \  \ \ \  \  \ \ \ \ \ \ \ \ \ \  \  \ \ \  \  \ \ \ +\int\partial_{x}^{\ell}\mathcal{R}_{0}^{1}
\partial_{x}^{\ell}N_{0,l,0}^{1,1,1}
\left(\psi_{l},\frac{1}{q^{2}}\left(\partial_{x}q\psi_{l}(
\mathcal{R}^{1}_{1}+\mathcal{R}^{1}_{-1})\right)\right)dx\Bigg).
\end{align*}
We apply Leibniz's rule and \eqref{17} again to extract all terms with more than one spatial derivatives falling on $\mathcal{R}^{1}_{0}$,
\begin{align*}
J^{1}_{12}
=&-\epsilon^{2}(\gamma-1)\sum_{l\in\{\pm1\}}
\Bigg(\int\partial_{x}^{\ell}\mathcal{R}_{0}^{1}
S_{0,0}^{1}
\left(\psi_{l},\partial_{x}^{\ell}\frac{1}{q^{2}}\left(\phi_{2}\partial_{x}q
(\mathcal{R}^{1}_{1}-\mathcal{R}^{1}_{-1})\right)\right)dx\bigg)\\
& \  \ \ \  \  \ \ \  \  \ \ \  \  \ \ \  \  \ \ \ +2\ell\int\partial_{x}^{\ell}\mathcal{R}_{0}^{1}
N_{0,l,0}^{1,1,1}
\left(\partial_{x}\psi_{l},\partial_{x}^{\ell-1}\frac{1}{q^{2}}\left(\phi_{2}\partial_{x}q
(\mathcal{R}^{1}_{1}-\mathcal{R}^{1}_{-1})\right)\right)dx\Bigg)\\
&+(\gamma-1)(\gamma-2)\epsilon^{2}\sum_{l\in\{\pm1\}}
\int\partial_{x}^{\ell}\mathcal{R}_{0}^{1}
S_{0,0}^{1}
\left(\psi_{l},\partial_{x}^{\ell}\frac{1}{q^{2}}\left(\partial_{x}q\psi_{l}(
\mathcal{R}^{1}_{1}+\mathcal{R}^{1}_{-1})\right)\right)dx\\
&+\epsilon^{2}\mathcal{O}(\mathcal{E}_{s}+\epsilon^{3/2}\mathcal{E}_{s}^{3/2}).
\end{align*}
Using the asymptotic expansions \eqref{qomega}, \eqref{hfjj} and \eqref{s00} again yields
\begin{align}\label{J112}
J^{1}_{12}
=&-\frac{\epsilon^{2}(\gamma-1)(2\ell+1)}{\gamma}\sum_{l\in\{\pm1\}}
\int\psi_{l}\phi_{2}\partial_{x}^{\ell}\mathcal{R}_{0}^{1}
\partial_{x}^{\ell+1}q(\mathcal{R}^{1}_{1}-\mathcal{R}^{1}_{-1})dx \\
&+\frac{\epsilon^{2}(\gamma-1)(\gamma-2)}{\gamma}\sum_{l\in\{\pm1\}}
\int\psi_{l}\partial_{x}\psi_{l}\partial_{x}^{\ell}\mathcal{R}_{0}^{1}
\partial_{x}^{\ell}q(
\mathcal{R}^{1}_{1}+\mathcal{R}^{1}_{-1})dx\nonumber +\epsilon^{2}\mathcal{O}(\mathcal{E}_{s}+\epsilon^{3/2}\mathcal{E}_{s}^{3/2})\\
=&-\frac{\epsilon^{2}(\gamma-1)(2\ell+1)}{\gamma}\sum_{l\in\{\pm1\}}
\int\psi_{l}\phi_{2}\partial_{x}^{\ell}\mathcal{R}_{0}^{1}
\partial_{x}^{\ell+1}q(\mathcal{R}^{1}_{1}-\mathcal{R}^{1}_{-1})dx +\epsilon^{2}\mathcal{O}(\mathcal{E}_{s}+\epsilon^{3/2}\mathcal{E}_{s}^{3/2})\nonumber.
\end{align}
Using \eqref{1+-}-\eqref{1--}, the estimates \eqref{Aesti-1}, \eqref{Aesti-3} in Lemma \ref{L2} and \eqref{Lr}, we have for the term on the right-hand-side of \eqref{J112} that
\begin{equation}
\begin{split}\label{J121}
&\int\psi_{l}\phi_{2}\partial_{x}^{\ell}\mathcal{R}_{0}^{1}
\partial_{x}^{\ell+1}q(\mathcal{R}^{1}_{1}-\mathcal{R}^{1}_{-1})dx\\
=&\int\psi_{l}\phi_{2}\partial_{x}^{\ell}(\mathcal{R}_{0}^{1}
+\mathcal{R}^{1}_{1}+\mathcal{R}^{1}_{-1})
\partial_{x}^{\ell+1}q(\mathcal{R}^{1}_{1}-\mathcal{R}^{1}_{-1})dx\\
&-\int\psi_{l}\phi_{2}\partial_{x}^{\ell}(
\mathcal{R}^{1}_{1}+\mathcal{R}^{1}_{-1})
\partial_{x}^{\ell+1}q(\mathcal{R}^{1}_{1}-\mathcal{R}^{1}_{-1})dx\\
=&\int\frac{\psi_{l}\phi_{2}}{1+\epsilon\phi_{3}}\partial_{x}^{\ell}(\mathcal{R}_{0}^{1}
+\mathcal{R}^{1}_{1}+\mathcal{R}^{1}_{-1})\partial_{x}^{\ell}
\partial_{t}(\mathcal{R}^{1}_{0}
+\mathcal{R}^{1}_{1}+\mathcal{R}^{1}_{-1})dx\\
&-\epsilon\int\frac{\psi_{l}\phi_{2}\phi_{1}}{1+\epsilon\phi_{3}}\partial_{x}^{\ell}(\mathcal{R}_{0}^{1}
+\mathcal{R}^{1}_{1}+\mathcal{R}^{1}_{-1})\partial_{x}^{\ell+1}
(\mathcal{R}^{1}_{0}
+\mathcal{R}^{1}_{1}+\mathcal{R}^{1}_{-1})dx\\
&-\int\frac{\psi_{l}\phi_{2}}{1+\epsilon\phi_{4}}\partial_{x}^{\ell}(
\mathcal{R}^{1}_{1}+\mathcal{R}^{1}_{-1})
\partial_{x}^{\ell}\partial_{t}(\mathcal{R}^{1}_{1}+\mathcal{R}^{1}_{-1})dx\\
&
+\epsilon\int\frac{\psi_{l}\phi_{2}\phi_{1}}{1+\epsilon\phi_{4}}\partial_{x}^{\ell}(
\mathcal{R}^{1}_{1}+\mathcal{R}^{1}_{-1})
\partial_{x}^{\ell+1}(\mathcal{R}^{1}_{1}+\mathcal{R}^{1}_{-1})dx\\
&+\epsilon^{2}\mathcal{O}(\mathcal{E}_{s}+\epsilon^{3/2}\mathcal{E}_{s}^{3/2}
+\epsilon^{3}\mathcal{E}_{s}^{2}).
\end{split}
\end{equation}
Thus, we have
\begin{equation}
\begin{split}\label{J12}
J^{1}_{12}=&-\frac{\epsilon^{2}(\gamma-1)(2\ell+1)}{2\gamma}\frac{d}{dt}\sum_{l\in\{\pm1\}}\Bigg(
\int\frac{\psi_{l}\phi_{2}}{1+\epsilon\phi_{3}}(\partial_{x}^{\ell}(\mathcal{R}_{0}^{1}
+\mathcal{R}^{1}_{1}+\mathcal{R}^{1}_{-1}))^{2}dx\\
& \ \ \ \ \ \ \ \ \ \ \ \ \ \ \ \ \ \ \ \ \ \ \ \ \ \ \ \ \ \ \ \ \ \ \ \ \ \ \ \ \ \ \  -\int\frac{\psi_{l}\phi_{2}}{1+\epsilon\phi_{4}}(\partial_{x}^{\ell}(
\mathcal{R}^{1}_{1}+\mathcal{R}^{1}_{-1}))^{2}dx\Bigg)\\
&+\epsilon^{2}\mathcal{O}(\mathcal{E}_{s}+\epsilon^{3/2}\mathcal{E}_{s}^{3/2}
+\epsilon^{3}\mathcal{E}_{s}^{2}).
\end{split}
\end{equation}
For $J^{2}_{12}$, we have
\begin{align*}
J^{2}_{12}=&\epsilon^{2}\sum_{m,l\in\{\pm1\}}
\bigg(\int\partial_{x}^{\ell}\left(B^{1,1}_{0,l,m}(\psi_{l}, \mathcal{R}^{1}_{m})
+\epsilon\mathcal{F}_{0,m}^{1}\right)
\partial_{x}^{\ell}N_{0,l,0}^{1,1,2}(\psi_{l},\mathcal{R}_{0}^{1})dx\\
& \  \ \ \  \  \ \ \  \  \ \ \  \ \ \ \ \   +\int \partial_{x}^{\ell}\mathcal{R}_{0}^{1}
\partial_{x}^{\ell}N_{0,l,0}^{1,1,2}\left(\psi_{l},\left(B^{1,1}_{0,l,m}(\psi_{l}, \mathcal{R}^{1}_{m})
+\epsilon\mathcal{F}_{0,m}^{1}\right)\right)dx\bigg)\\
=&-\epsilon^{2}(\gamma-1)\sum_{l\in\{\pm1\}}
\bigg(\int\partial_{x}^{\ell}\left(\frac{1}{q^{2}}
\left(\phi_{2}\partial_{x}q(\mathcal{R}^{1}_{1}-\mathcal{R}^{1}_{-1})\right)\right)
\partial_{x}^{\ell}N_{0,l,0}^{1,1,2}(\psi_{l},\mathcal{R}_{0}^{1})dx\\
& \  \ \ \  \  \ \ \  \  \ \ \  \  \ \ \  \  \ \ \  \ \ \ \ \ \ \  +\int\partial_{x}^{\ell}\mathcal{R}_{0}^{1}
\partial_{x}^{\ell}N_{0,l,0}^{1,1,2}
\left(\psi_{l},\frac{1}{q^{2}}\left(\phi_{2}\partial_{x}q
(\mathcal{R}^{1}_{1}-\mathcal{R}^{1}_{-1})\right)\right)dx\bigg)\\
&+(\gamma-1)(\gamma-2)\epsilon^{2}\sum_{l\in\{\pm1\}}
\Bigg(\int\partial_{x}^{\ell}\left(\frac{1}{q^{2}}\left(\partial_{x}q\psi_{l}(
\mathcal{R}^{1}_{1}+\mathcal{R}^{1}_{-1})\right)\right)
\partial_{x}^{\ell}N_{0,l,0}^{1,1,2}(\psi_{l},\mathcal{R}_{0}^{1})dx\\
& \  \ \ \  \  \ \ \  \  \ \ \  \  \ \ \  \  \ \ \  \ \ \ \ \ \  \ \ \  \  \ \ \  \  \ \ \ +\int\partial_{x}^{\ell}\mathcal{R}_{0}^{1}
\partial_{x}^{\ell}N_{0,l,0}^{1,1,2}
\left(\psi_{l},\frac{1}{q^{2}}\left(\partial_{x}q\psi_{l}(
\mathcal{R}^{1}_{1}+\mathcal{R}^{1}_{-1})\right)\right)dx\Bigg).
\end{align*}
Using the asymptotic expansions \eqref{qomega} and \eqref{hfjj}, we have
\begin{align*}
J^{2}_{12}
=&4\epsilon^{2}(\frac{\gamma-1}{\gamma})^{2}\sum_{l\in\{\pm1\}}
\int\psi_{l}\phi_{2}\partial_{x}^{\ell}\mathcal{R}_{0}^{1}
\partial_{x}^{\ell+1}q(\mathcal{R}^{1}_{1}-\mathcal{R}^{1}_{-1})
dx
+\epsilon^{2}\mathcal{O}(\mathcal{E}_{s}+\epsilon^{3/2}\mathcal{E}_{s}^{3/2}).
\end{align*}
As was done for $J^{1}_{12}$, using \eqref{1+-}-\eqref{1--}, the estimates \eqref{Aesti-1}, \eqref{Aesti-3} in Lemma \ref{L2} and \eqref{Lr} again, we have
\begin{align}\label{J122}
J^{2}_{12}=&2\epsilon^{2}(\frac{\gamma-1}{\gamma})^{2}
\frac{d}{dt}\sum_{l\in\{\pm1\}}
\int\Big(\frac{\psi_{l}\phi_{2}}{1+\epsilon\phi_{3}}(\partial_{x}^{\ell}(\mathcal{R}_{0}^{1}
+\mathcal{R}^{1}_{1}+\mathcal{R}^{1}_{-1}))^{2} -\frac{\psi_{l}\phi_{2}}{1+\epsilon\phi_{4}}(\partial_{x}^{\ell}(
\mathcal{R}^{1}_{1}+\mathcal{R}^{1}_{-1}))^{2}\Big)dx\nonumber\\
&+\epsilon^{2}\mathcal{O}(\mathcal{E}_{s}+\epsilon^{3/2}\mathcal{E}_{s}^{3/2}
+\epsilon^{3}\mathcal{E}_{s}^{2}).
\end{align}
Therefore, combing \eqref{J111}, \eqref{J1121}, \eqref{J12} and \eqref{J122}, we have
\begin{align}\label{J1}
J_{1}=&\epsilon^{2}\frac{\gamma-1}{2\gamma}\left(\frac{4(\gamma-1)}{\gamma}-(2\ell+1)\right)
\frac{d}{dt}\sum_{l\in\{\pm1\}}\Bigg(
\int\frac{\psi_{l}\phi_{2}}{1+\epsilon\phi_{3}}(\partial_{x}^{\ell}(\mathcal{R}_{0}^{1}
+\mathcal{R}^{1}_{1}+\mathcal{R}^{1}_{-1}))^{2}dx\nonumber\\
& \ \ \ \ \ \ \ \ \ \ \ \ \ \ \ \ \ \ \ \ \ \ \ \ \ \ \ \ \ \ \ \ \ \ \ \ \ \ \ \ \ \ \ \ \ \ \ \ \ \ \ \ \ \ \ \ \ -\int\frac{\psi_{l}\phi_{2}}{1+\epsilon\phi_{4}}(\partial_{x}^{\ell}(
\mathcal{R}^{1}_{1}+\mathcal{R}^{1}_{-1}))^{2}dx\Bigg)\nonumber\\
&+\epsilon^{2}\mathcal{O}(\mathcal{E}_{s}+\epsilon^{3/2}\mathcal{E}_{s}^{3/2}
+\epsilon^{3}\mathcal{E}_{s}^{2}).
\end{align}

Now we turn to $J_2$ in the equation \eqref{ptE} for $\partial_{t}E_{\ell}$. We have
\begin{align*}
J_{2}=&\epsilon^{2}\sum_{\substack{m\in\{0,\pm1\}\\ j,l\in\{\pm1\}}}
\bigg(\int\partial_{x}^{\ell}\left(B^{1,1}_{j,l,m}(\psi_{l}, \mathcal{R}^{1}_{m})
+\epsilon\mathcal{F}_{j,m}^{1}\right)
\partial_{x}^{\ell}N_{j,l,0}^{1,1}(\psi_{l},\mathcal{R}_{0}^{1})dx\nonumber\\
&+\int \partial_{x}^{\ell}\mathcal{R}_{j}^{1}
\partial_{x}^{\ell}N_{j,l,0}^{1,1}\left(\psi_{l},\left(B^{1,1}_{0,l,m}(\psi_{l}, \mathcal{R}^{1}_{m})
+\epsilon\mathcal{F}_{0,m}^{1}\right)\right)dx\bigg)\nonumber\\
=&\epsilon^{2}\sum_{j,l\in\{\pm1\}}
\bigg(\int\partial_{x}^{\ell}\left(B^{1,1}_{j,l,0}(\psi_{l}, \mathcal{R}^{1}_{0})
+\epsilon\mathcal{F}_{j,0}^{1}\right)
\partial_{x}^{\ell}N_{j,l,0}^{1,1}(\psi_{l},\mathcal{R}_{0}^{1})dx\nonumber\\
&+\int \partial_{x}^{\ell}\mathcal{R}_{j}^{1}
\partial_{x}^{\ell}N_{j,l,0}^{1,1}\left(\psi_{l},\left(B^{1,1}_{0,l,0}(\psi_{l}, \mathcal{R}^{1}_{0})
+\epsilon\mathcal{F}_{0,0}^{1}\right)\right)dx\bigg)\nonumber\\
&+\epsilon^{2}\sum_{\substack{m\in\{\pm1\}\\ j,l\in\{\pm1\}}}
\bigg(\int\partial_{x}^{\ell}\left(B^{1,1}_{j,l,m}(\psi_{l}, \mathcal{R}^{1}_{m})
+\epsilon\mathcal{F}_{j,m}^{1}\right)
\partial_{x}^{\ell}N_{j,l,0}^{1,1}(\psi_{l},\mathcal{R}_{0}^{1})dx\nonumber\\
&+\int \partial_{x}^{\ell}\mathcal{R}_{j}^{1}
\partial_{x}^{\ell}N_{j,l,0}^{1,1}\left(\psi_{l},\left(B^{1,1}_{0,l,m}(\psi_{l}, \mathcal{R}^{1}_{m})
+\epsilon\mathcal{F}_{0,m}^{1}\right)\right)dx\bigg)\\
=&:J_{21}+J_{22}+J_{23}+J_{24}.
\end{align*}
Using \eqref{Bjn}, \eqref{F01} and the asymptotic expansion \eqref{qq2}, we extract the terms that lose derivatives
\begin{equation}
\begin{split}\label{F11j0}
B^{1,1}_{j,l,0}(\psi_{l}, \mathcal{R}^{1}_{0})
+\epsilon\mathcal{F}_{j,0}^{1}
=j\phi_{7}\partial_{x}\mathcal{R}_{0}^{1}+\mathcal{L}^{4},
\end{split}
\end{equation}
\begin{equation}
\begin{split}\label{Fjm0}
\sum_{m\in\{\pm1\}}(B^{1,1}_{j,l,m}(\psi_{l}, \mathcal{R}^{1}_{m})
+\epsilon\mathcal{F}_{j,m}^{1})
=&\phi_{8}\partial_{x}(\mathcal{R}_{1}^{1}+\mathcal{R}_{-1}^{1})
+\phi_{9}\partial_{x}q(\mathcal{R}_{1}^{1}-\mathcal{R}_{-1}^{1})+\mathcal{L}^{5},
\end{split}
\end{equation}
with
\begin{align*}
\phi_{7}:=&\frac{1}{2\sqrt{\gamma}}\phi_{2},\\
\phi_{8}:=&\frac{1}{2}\phi_{1}
+\frac{j}{2\sqrt{\gamma}}\phi_{2},\\
\phi_{9}:=&\frac{1}{2}\phi_{3}
+\frac{1}{2\gamma}\phi_{2}-\frac{j}{2\sqrt{\gamma}}\phi_{1}.
\end{align*}
Here $\phi_{1}, \phi_{2}, \phi_{3}$ are defined in \eqref{phi}, and $\mathcal{L}^{4}$ and $\mathcal{L}^{5}$ denote the terms that do not lose any derivatives and satisfy
\begin{align*}
\|\mathcal{L}^{r}\|_{H^{s}}=\mathcal{O}(1+\mathcal{E}_{s}+\epsilon^{3/2}\mathcal{E}_{s}^{3/2}
+\epsilon^{3}\mathcal{E}_{s}^{2}), \ \ \ \ \text{for} \ r\in\{4,5\}.
\end{align*}
For $J_{21}$, using \eqref{hfjj} and \eqref{F11j0}, we have
\begin{align}\label{J21}
J_{21}
=&\epsilon^{2}\sum_{j,l\in\{\pm1\}}
\int\partial_{x}^{\ell}\left(B^{1,1}_{j,l,0}(\psi_{l}, \mathcal{R}^{1}_{0})
+\epsilon\mathcal{F}_{j,0}^{1}\right)
N_{j,l,0}^{1,1}(\psi_{l},\mathcal{R}_{0}^{1})dx\nonumber\\
=&\epsilon^{2}\sum_{j,l\in\{\pm1\}}
\int\partial_{x}^{\ell}(j\phi_{7}\partial_{x}\mathcal{R}_{0}^{1})
\partial_{x}^{\ell}\left(\frac{\gamma-2}{2\gamma}\psi_{l}\mathcal{R}_{0}^{1}-j\frac{\gamma-2}{2\gamma}
\Omega\psi_{l}\frac{\mathcal{R}_{0}^{1}}{\Omega}
-\frac{\gamma+2}{\gamma}\partial_{x}\psi_{l}\partial_{x}^{-1}\mathcal{R}_{0}^{1}\right)dx\nonumber\\
&+\epsilon^{2}\mathcal{O}(1+\mathcal{E}_{s}+\epsilon^{3/2}\mathcal{E}_{s}^{3/2}
+\epsilon^{3}\mathcal{E}_{s}^{2})\nonumber\\
=&\epsilon^{2}\mathcal{O}(1+\mathcal{E}_{s}+\epsilon^{3/2}\mathcal{E}_{s}^{3/2}
+\epsilon^{3}\mathcal{E}_{s}^{2}),
\end{align}
where we have used integration by parts. For $J_{22}$, using \eqref{hfjj}, \eqref{B00} and integration by parts, we have
\begin{align*}
J_{22}
=&\epsilon^{2}\sum_{j,l\in\{\pm1\}}
\int\partial_{x}^{\ell}\mathcal{R}_{j}^{1}
\partial_{x}^{\ell}N_{j,l,0}^{1,1}\left(\psi_{l},B^{1,1}_{0,l,0}(\psi_{l}, \mathcal{R}^{1}_{0})
+\epsilon\mathcal{F}_{0,0}^{1}\right)dx\\
=&\epsilon^{2}\sum_{j,l\in\{\pm1\}}
\int \partial_{x}^{\ell}\mathcal{R}_{j}^{1}
\partial_{x}^{\ell}N_{j,l,0}^{1,1}\left(\psi_{l},
\frac{1}{q^{2}}(\phi_{1}\partial_{x}q^{2}\mathcal{R}^{1}_{0})\right)dx
+\epsilon^{2}\mathcal{O}(1+\mathcal{E}_{s})\\
=&\epsilon^{2}\frac{2-\gamma}{2\gamma}\sum_{l\in\{\pm1\}}
\int \psi_{l}\phi_{1}\partial_{x}^{\ell+1}(\mathcal{R}_{1}^{1}+\mathcal{R}_{-1}^{1})
\partial_{x}^{\ell}\mathcal{R}^{1}_{0}dx
+\epsilon^{2}\mathcal{O}(1+\mathcal{E}_{s}).
\end{align*}
Using \eqref{1++}, we have
\begin{align}\label{J222}
J_{22}
=&\epsilon^{2}\frac{2-\gamma}{2\gamma}\sum_{l\in\{\pm1\}}\int \frac{\psi_{l}\phi_{1}}{q+\epsilon\phi_{5}}\partial_{x}^{\ell}\mathcal{R}^{1}_{0}
\partial_{x}^{\ell}\left(\partial_{t}(\mathcal{R}_{1}^{1}-\mathcal{R}_{-1}^{1})
-\epsilon\phi_{1}\partial_{x}(\mathcal{R}_{1}^{1}-\mathcal{R}_{-1}^{1})
-\epsilon\phi_{5}\partial_{x}\mathcal{R}_{0}^{1}\right)dx\nonumber\\
&+\epsilon^{2}\mathcal{O}(1+\mathcal{E}_{s})\nonumber\\
=&\epsilon^{2}\frac{2-\gamma}{2\gamma}\sum_{l\in\{\pm1\}}\frac{d}{dt}\int
\frac{\psi_{l}\phi_{1}}{q+\epsilon\phi_{5}}\partial_{x}^{\ell}\mathcal{R}^{1}_{0}
\partial_{x}^{\ell}(\mathcal{R}_{1}^{1}-\mathcal{R}_{-1}^{1})dx\nonumber\\
&-\epsilon^{2}\frac{2-\gamma}{2\gamma}\sum_{l\in\{\pm1\}}\int
\frac{\psi_{l}\phi_{1}}{q+\epsilon\phi_{5}}\partial_{x}^{\ell}\partial_{t}\mathcal{R}^{1}_{0}
\partial_{x}^{\ell}(\mathcal{R}_{1}^{1}-\mathcal{R}_{-1}^{1})dx\nonumber\\
&-\epsilon^{3}\frac{2-\gamma}{2\gamma}\sum_{l\in\{\pm1\}}\int \frac{\psi_{l}\phi_{1}^{2}}{q+\epsilon\phi_{5}}\partial_{x}^{\ell}\mathcal{R}^{1}_{0}
\partial_{x}^{\ell+1}(\mathcal{R}_{1}^{1}-\mathcal{R}_{-1}^{1})
+\epsilon^{2}\mathcal{O}(1+\mathcal{E}_{s}).
\end{align}
According to \eqref{RJ} with \eqref{B00} and \eqref{B0m} for $\partial_{t}\mathcal{R}_{0}^{1}$, the second term on the right-hand-side of the above equation can be written as
\begin{align*}
&-\epsilon^{2}\frac{2-\gamma}{2\gamma}\sum_{l\in\{\pm1\}}\int
\frac{\psi_{l}\phi_{1}}{q+\epsilon\phi_{5}}\partial_{x}^{\ell}\partial_{t}\mathcal{R}^{1}_{0}
\partial_{x}^{\ell}(\mathcal{R}_{1}^{1}-\mathcal{R}_{-1}^{1})dx\\
=&-\epsilon^{2}\frac{2-\gamma}{2\gamma^{2}}\sum_{l\in\{\pm1\}}\int
\frac{\psi_{l}\phi_{1}}{q+\epsilon\phi_{5}}
\partial_{x}^{\ell}(\mathcal{R}_{1}^{1}-\mathcal{R}_{-1}^{1})\partial_{x}^{\ell}
(\epsilon\phi_{1}\partial_{x}\mathcal{R}^{1}_{0}
-\epsilon(\gamma-1)\phi_{2}\partial_{x}q(\mathcal{R}_{1}^{1}-\mathcal{R}_{-1}^{1})dx\\
&+\epsilon^{2}\mathcal{O}(1+\mathcal{E}_{s})\\
=&-\epsilon^{3}\frac{2-\gamma}{2\gamma^{2}}\sum_{l\in\{\pm1\}}\int
\frac{\psi_{l}\phi_{1}^{2}}{q+\epsilon\phi_{5}}
\partial_{x}^{\ell}(\mathcal{R}_{1}^{1}-\mathcal{R}_{-1}^{1})
\partial_{x}^{\ell+1}\mathcal{R}^{1}_{0}dx\\
&+\epsilon^{3}\frac{(2-\gamma)(\gamma-1)}{2\gamma^{2}}\sum_{l\in\{\pm1\}}
\int\frac{\psi_{l}\phi_{1}\phi_{2}}{q+\epsilon\phi_{5}}
\partial_{x}^{\ell}(\mathcal{R}_{1}^{1}-\mathcal{R}_{-1}^{1})
\partial_{x}^{\ell+1}q(\mathcal{R}_{1}^{1}-\mathcal{R}_{-1}^{1})dx
+\epsilon^{2}\mathcal{O}(1+\mathcal{E}_{s})\\
=&\epsilon^{3}\frac{2-\gamma}{2\gamma^{2}}\sum_{l\in\{\pm1\}}\int
\frac{\psi_{l}\phi_{1}^{2}}{q+\epsilon\phi_{5}}\partial_{x}^{\ell}\mathcal{R}^{1}_{0}
\partial_{x}^{\ell+1}(\mathcal{R}_{1}^{1}-\mathcal{R}_{-1}^{1})
dx+\epsilon^{2}\mathcal{O}(1+\mathcal{E}_{s}).
\end{align*}
As was done for \eqref{J112}, using \eqref{1+-} and \eqref{1--}, we have
\begin{align}\label{J22}
J_{22}
=&\epsilon^{2}\frac{2-\gamma}{2\gamma}\sum_{l\in\{\pm1\}}
\frac{d}{dt}\int\frac{\psi_{l}\phi_{1}}{q+\epsilon\phi_{5}}\partial_{x}^{\ell}\mathcal{R}^{1}_{0}
\partial_{x}^{\ell}(\mathcal{R}_{1}^{1}-\mathcal{R}_{-1}^{1})dx\nonumber\\
&+\epsilon^{3}\frac{(2-\gamma)(1-\gamma)\sqrt{\gamma}}{2\gamma^{2}}\sum_{l\in\{\pm1\}}
\frac{d}{dt}\int\frac{\psi_{l}\phi_{1}^{2}}{(q+\epsilon\phi_{5})}
\Big(\frac{1}{1+\epsilon\phi_{3}}(\partial_{x}^{\ell}
(\mathcal{R}_{0}^{1}+\mathcal{R}_{1}^{1}+\mathcal{R}_{-1}^{1}))^{2}\\
& \ \ \ \ \ \ \ \ \ \ \ \ \ \ \ \ \ \ \ \ \ \ \ \ \ \ \ \ \ \ \ \ \ \ \ \ \ \ \ \ \ \ \ \ \ \ \ \ \ \ \ \ \ \ \ \ \ \ \ \ -\frac{1}{1+\epsilon\phi_{4}}
(\partial_{x}^{\ell}(\mathcal{R}_{1}^{1}+\mathcal{R}_{-1}^{1}))^{2}\Big)dx\nonumber\\
&+\epsilon^{2}\mathcal{O}(1+\mathcal{E}_{s}).\nonumber
\end{align}
For $J_{23}$, using \eqref{hfjj}, \eqref{Fjm0} and integration by parts, we have
\begin{align*}
J_{23}
=&\epsilon^{2}\sum_{j,l,m\in\{\pm1\}}
\int\partial_{x}^{\ell}\left(B^{1,1}_{j,l,m}(\psi_{l}, \mathcal{R}^{1}_{m})
+\epsilon\mathcal{F}_{j,m}^{1}\right)
\partial_{x}^{\ell}N_{j,l,0}^{1,1}(\psi_{l},\mathcal{R}_{0}^{1})dx\nonumber\\
=&\epsilon^{2}\frac{\gamma-2}{2\gamma}\sum_{l\in\{\pm1\}}
\int\left(\psi_{l}\phi_{8}\partial_{x}^{\ell+1}(\mathcal{R}_{1}^{1}+\mathcal{R}_{-1}^{1})
+\psi_{l}\phi_{9}\partial_{x}^{\ell+1}q(\mathcal{R}_{1}^{1}-\mathcal{R}_{-1}^{1})\right)
\partial_{x}^{\ell}\mathcal{R}_{0}^{1}dx\nonumber\\
&+\epsilon^{2}\mathcal{O}(1+\mathcal{E}_{s}).
\end{align*}
As done in \eqref{J121} and \eqref{J222}, using \eqref{1+-}-\eqref{1++}, we have
\begin{align}\label{J23}
J_{23}
=&\epsilon^{2}\frac{\gamma-2}{2\gamma}\sum_{l\in\{\pm1\}}
\frac{d}{dt}\bigg(\int\frac{\psi_{l}\phi_{8}}{q+\epsilon\phi_{5}}
\partial_{x}^{\ell}(\mathcal{R}_{1}^{1}-\mathcal{R}_{-1}^{1})\partial_{x}^{\ell}\mathcal{R}_{0}^{1}\nonumber\\
&+\int(
\frac{\epsilon\psi_{l}\phi_{8}\phi_{1}}{\sqrt{\gamma}(q+\epsilon\phi_{5})}
+\psi_{l}\phi_{9})\Big(\frac{1}{1+\epsilon\phi_{3}}
(\partial_{x}^{\ell}(\mathcal{R}_{0}^{1}+\mathcal{R}_{1}^{1}+\mathcal{R}_{-1}^{1}))^{2}\\
&-\frac{1}{1+\epsilon\phi_{4}}
(\partial_{x}^{\ell}(\mathcal{R}_{1}^{1}+\mathcal{R}_{-1}^{1}))^{2}\Big)dx\bigg)
+\epsilon^{2}\mathcal{O}(1+\mathcal{E}_{s}).\nonumber
\end{align}
For $J_{24}$, using \eqref{hfjj}, \eqref{B0m} and \eqref{1++}, we have
\begin{align}\label{J24}
J_{24}=
&\epsilon^{2}\sum_{\substack{m\in\{\pm1\}\\ j,l\in\{\pm1\}}}
\bigg(\int \partial_{x}^{\ell}\mathcal{R}_{j}^{1}
\partial_{x}^{\ell}N_{j,l,0}^{1,1}\left(\psi_{l},\left(B^{1,1}_{0,l,m}(\psi_{l}, \mathcal{R}^{1}_{m})
+\epsilon\mathcal{F}_{0,m}^{1}\right)\right)dx\bigg)\nonumber\\
=&-\epsilon^{2}\frac{(\gamma-1)(\gamma-2)}{2\gamma^{2}}\sum_{l\in\{\pm1\}}
\bigg(\int\psi_{l}\phi_{2}\partial_{x}^{\ell}(\mathcal{R}_{1}^{1}+\mathcal{R}_{-1}^{1})
\partial_{x}^{\ell+1}q(\mathcal{R}_{1}^{1}-\mathcal{R}_{-1}^{1})dx\bigg)
+\epsilon^{2}\mathcal{O}(1+\mathcal{E}_{s})\nonumber\\
=&-\epsilon^{2}\frac{(\gamma-1)(\gamma-2)}{4\gamma^{2}}\sum_{l\in\{\pm1\}}\frac{d}{dt}
\int\frac{\psi_{l}\phi_{2}}{1+\epsilon\phi_{4}}
(\partial_{x}^{\ell}(\mathcal{R}_{1}^{1}+\mathcal{R}_{-1}^{1}))^{2}dx
+\epsilon^{2}\mathcal{O}(1+\mathcal{E}_{s}).
\end{align}
Plusing \eqref{J21}, \eqref{J22}, \eqref{J23} and \eqref{J24} together, we have
\begin{align}\label{J2}
J_{2} =& \epsilon^{2}\sum_{l\in\{\pm1\}}\frac{d}{dt}\Bigg( \int\frac{2-\gamma}{\gamma}\frac{\psi_{l}\phi_{1}}{q+\epsilon\phi_{5}}\partial_{x}^{\ell}\mathcal{R}^{1}_{0} \partial_{x}^{\ell}(\mathcal{R}_{1}^{1}-\mathcal{R}_{-1}^{1})dx\nonumber\\
&\ \ \ \ \ \ \ \ \ \ \ \ \ \ \ \ \  +\int\phi_{9}(\partial_{x}^{\ell}(\mathcal{R}_{0}^{1}+\mathcal{R}_{1}^{1}+\mathcal{R}_{-1}^{1}))^{2}dx +\int\phi_{10}(\partial_{x}^{\ell}(\mathcal{R}_{1}^{1}+\mathcal{R}_{-1}^{1}))^{2}dx\Bigg),
\end{align}
where $\phi_{9}$ and $\phi_{10}$ depend on $\gamma, \psi_{l}$ and $\phi_{r}$ with $r\in\{1\sim5,8,9\}$.

For $J_{3}$, we have
\begin{align*}
J_{3}=&\epsilon^{2}\sum_{\substack{m\in\{0,\pm1\}\\ j,l\in\{\pm1\}}}
\bigg(\int\partial_{x}^{\ell}\left(B^{1,1}_{0,l,m}(\psi_{l}, \mathcal{R}^{1}_{m})
+\epsilon\mathcal{F}_{0,m}^{1}\right)
\partial_{x}^{\ell}N_{0,l,j}^{1,1}(\psi_{l},\mathcal{R}_{j}^{1})dx\nonumber\\
& \ \ \ \ \ \ \ \ \ \ \ \ \ \ \ \ \ +\int \partial_{x}^{\ell}\mathcal{R}_{0}^{1}
\partial_{x}^{\ell}N_{0,l,j}^{1,1}\left(\psi_{l},\left(B^{1,1}_{j,l,m}(\psi_{l}, \mathcal{R}^{1}_{m})
+\epsilon\mathcal{F}_{j,m}^{1}\right)\right)dx\bigg)\nonumber\\
=&\epsilon^{2}\sum_{j,l\in\{\pm1\}}
\bigg(\int\partial_{x}^{\ell}\left(B^{1,1}_{0,l,0}(\psi_{l}, \mathcal{R}^{1}_{0})
+\epsilon\mathcal{F}_{0,0}^{1}\right)
\partial_{x}^{\ell}N_{0,l,j}^{1,1}(\psi_{l},\mathcal{R}_{j}^{1})dx\nonumber\\
& \ \ \ \ \ \ \ \ \ \ \ \ \ \ \ +\int \partial_{x}^{\ell}\mathcal{R}_{0}^{1}
\partial_{x}^{\ell}N_{0,l,j}^{1,1}\left(\psi_{l},\left(B^{1,1}_{j,l,0}(\psi_{l}, \mathcal{R}^{1}_{0})
+\epsilon\mathcal{F}_{j,0}^{1}\right)\right)dx\bigg)\nonumber\\
&+\epsilon^{2}\sum_{\substack{m\in\{\pm1\}\\ j,l\in\{\pm1\}}}
\bigg(\int\partial_{x}^{\ell}\left(B^{1,1}_{0,l,m}(\psi_{l}, \mathcal{R}^{1}_{m})
+\epsilon\mathcal{F}_{0,m}^{1}\right)
\partial_{x}^{\ell}N_{0,l,j}^{1,1}(\psi_{l},\mathcal{R}_{j}^{1})dx\nonumber\\
& \ \ \ \ \ \ \ \ \ \ \ \ \ \ \ \ \ \ \ +\int \partial_{x}^{\ell}\mathcal{R}_{0}^{1}
\partial_{x}^{\ell}N_{0,l,j}^{1,1}\left(\psi_{l},\left(B^{1,1}_{j,l,m}(\psi_{l}, \mathcal{R}^{1}_{m})
+\epsilon\mathcal{F}_{j,m}^{1}\right)\right)dx\bigg).
\end{align*}
As was done for $J_{2}$, using \eqref{hfjj} for the normal-form transform $N_{0,l,j}^{1,1}$ and \eqref{B00}, \eqref{B0m}, \eqref{F11j0}-\eqref{Fjm0}, we have
 \begin{align}\label{J3}
J_{3}
&=\epsilon^{2}\sum_{l\in\{\pm1\}}
\frac{d}{dt}\Bigg(
\int\phi_{11}\partial_{x}^{\ell}\mathcal{R}^{1}_{0}
\partial_{x}^{\ell}(\mathcal{R}_{1}^{1}-\mathcal{R}_{-1}^{1})dx\nonumber\\
& \ \ \ \ \ \ \ \ \ \ \ \ \  \ \ \ \ \ \ \ +\int\phi_{12}(\partial_{x}^{\ell}(\mathcal{R}_{0}^{1}+\mathcal{R}_{1}^{1}+\mathcal{R}_{-1}^{1}))^{2}dx
+\int\phi_{13}(\partial_{x}^{\ell}(\mathcal{R}_{1}^{1}+\mathcal{R}_{-1}^{1}))^{2}dx\Bigg),
\end{align}
where $\phi_{11}\sim\phi_{13}$ depend on $\gamma, \psi_{l}$ and $\phi_{r}$ with $r\in\{1\sim5,8,9\}$.

For $J_{4}$, we have
\begin{align*}
J_{4}=&
\epsilon^{2}\sum_{\substack{m\in\{0,\pm1\}\\ j,l\in\{\pm1\}}}
\bigg(\int\partial_{x}^{\ell}\left(B^{1,1}_{j,l,m}(\psi_{l}, \mathcal{R}^{1}_{m})
+\epsilon\mathcal{F}_{j,m}^{1}\right)
\partial_{x}^{\ell}N_{j,l,j}^{1,1}(\psi_{l},\mathcal{R}_{j}^{1})dx\nonumber\\
& \ \ \ \ \ \ \ \ \ \ \ \  \ \ \ \ \  \  +\int \partial_{x}^{\ell}\mathcal{R}_{j}^{1}
\partial_{x}^{\ell}N_{j,l,j}^{1,1}\left(\psi_{l},\left(B^{1,1}_{j,l,m}(\psi_{l}, \mathcal{R}^{1}_{m})
+\epsilon\mathcal{F}_{j,m}^{1}\right)\right)dx\bigg)\nonumber\\
=&\epsilon^{2}\sum_{j,l\in\{\pm1\}}\sum_{r=1}^{7}
\bigg(\int\partial_{x}^{\ell}\left(B^{1,1}_{j,l,0}(\psi_{l}, \mathcal{R}^{1}_{0})
+\epsilon\mathcal{F}_{j,0}^{1}\right)
\partial_{x}^{\ell}N_{j,l,j}^{1,1,r}(\psi_{l},\mathcal{R}_{j}^{1})dx\nonumber\\
& \ \ \ \ \ \ \ \ \ \ \ \  \ \ \ \ \ \ \ \ \ +\int \partial_{x}^{\ell}\mathcal{R}_{j}^{1}
\partial_{x}^{\ell}N_{j,l,j}^{1,1,r}\left(\psi_{l},\left(B^{1,1}_{j,l,0}(\psi_{l}, \mathcal{R}^{1}_{0})
+\epsilon\mathcal{F}_{j,0}^{1}\right)\right)dx\bigg)\nonumber\\
&+\epsilon^{2}\sum_{\substack{m\in\{\pm1\}\\ j,l\in\{\pm1\}}}\sum_{r=1}^{7}
\bigg(\int\partial_{x}^{\ell}\left(B^{1,1}_{j,l,m}(\psi_{l}, \mathcal{R}^{1}_{m})
+\epsilon\mathcal{F}_{j,m}^{1}\right)
\partial_{x}^{\ell}N_{j,l,j}^{1,1,r}(\psi_{l},\mathcal{R}_{j}^{1})dx\nonumber\\
& \ \ \ \ \ \ \ \ \ \ \ \ \ \ \ \ \ \ \ \ \ \ \ \ \ +\int \partial_{x}^{\ell}\mathcal{R}_{j}^{1}
\partial_{x}^{\ell}N_{j,l,j}^{1,1,r}\left(\psi_{l},\left(B^{1,1}_{j,l,m}(\psi_{l}, \mathcal{R}^{1}_{m})
+\epsilon\mathcal{F}_{j,m}^{1}\right)\right)dx\bigg)\nonumber\\
=&:\sum_{r=1}^{7}(J^{r}_{41}+J^{r}_{42}).
\end{align*}
Using Leibniz's rule and \eqref{17} to extract all terms with more than one spatial derivatives falling on $\mathcal{R}^{1}$, we have for $r\in\{1,2,3,4,5\}$ that
\begin{align*}
J^{r}_{41}
=&\epsilon^{2}\sum_{j,l\in\{\pm1\}}
\bigg(\int\partial_{x}^{\ell}\left(B^{1,1}_{j,l,0}(\psi_{l}, \mathcal{R}^{1}_{0})
+\epsilon\mathcal{F}_{j,0}^{1}\right)
\partial_{x}^{\ell}N_{j,l,j}^{1,1,r}(\psi_{l},\mathcal{R}_{j}^{1})dx\nonumber\\
& \ \ \ \ \ \ \ \ \ \ \ \ \ \ \ \ +\int \partial_{x}^{\ell}\mathcal{R}_{j}^{1}
\partial_{x}^{\ell}N_{j,l,j}^{1,1,r}\left(\psi_{l},B^{1,1}_{j,l,0}(\psi_{l}, \mathcal{R}^{1}_{0})
+\epsilon\mathcal{F}_{j,0}^{1}\right)dx\bigg)\nonumber\\
=&\epsilon^{2}\sum_{j,l\in\{\pm1\}}
\bigg(\int\partial_{x}^{\ell}\left(B^{1,1}_{j,l,0}(\psi_{l}, \mathcal{R}^{1}_{0})
+\epsilon\mathcal{F}_{j,0}^{1}\right)
N_{j,l,j}^{1,1,r}(\psi_{l},\partial_{x}^{\ell}\mathcal{R}_{j}^{1})dx\nonumber\\
& \ \ \ \ \ \ \ \ \ \ \ \ \ \ \  +\int \partial_{x}^{\ell}\mathcal{R}_{j}^{1}
N_{j,l,j}^{1,1,r}\left(\psi_{l},\partial_{x}^{\ell}(B^{1,1}_{j,l,0}(\psi_{l}, \mathcal{R}^{1}_{0})
+\epsilon\mathcal{F}_{j,0}^{1})\right)dx\bigg)\nonumber\\
& \ \ \ \ \ \ \ \ \ \ \ \ \ \ \ +\ell\int\partial_{x}^{\ell}\left(B^{1,1}_{j,l,0}(\psi_{l}, \mathcal{R}^{1}_{0})
+\epsilon\mathcal{F}_{j,0}^{1}\right)
N_{j,l,j}^{1,1,r}(\partial_{x}\psi_{l},\partial_{x}^{\ell-1}\mathcal{R}_{j}^{1})dx\nonumber\\
& \ \ \ \ \ \ \ \ \ \ \ \ \ \ \ +\ell\int \partial_{x}^{\ell}\mathcal{R}_{j}^{1}
N_{j,l,j}^{1,1,r}\left(\partial_{x}\psi_{l},\partial_{x}^{\ell-1}(B^{1,1}_{j,l,0}(\psi_{l}, \mathcal{R}^{1}_{0})
+\epsilon\mathcal{F}_{j,0}^{1})\right)dx\bigg)\nonumber\\
&+\epsilon^{2}\mathcal{O}(\mathcal{E}_{s}+\epsilon^{3/2}\mathcal{E}_{s}^{3/2})\\
=&\epsilon^{2}\sum_{j,l\in\{\pm1\}}\bigg(\int \partial_{x}^{\ell}\mathcal{R}_{j}^{1}
S_{j,j}^{r}\left(\partial_{x}\psi_{l},\partial_{x}^{\ell}\left(B^{1,1}_{j,l,0}(\psi_{l}, \mathcal{R}^{1}_{0})
+\epsilon\mathcal{F}_{j,0}^{1}\right)\right)dx\nonumber\\
& \ \ \ \ \ \ \ \ \ \ \ \ \ \ \ +2\ell\int \partial_{x}^{\ell}\mathcal{R}_{j}^{1}
N_{j,l,j}^{1,1,r}\left(\partial_{x}\psi_{l},\partial_{x}^{\ell-1}(B^{1,1}_{j,l,0}(\psi_{l}, \mathcal{R}^{1}_{0})
+\epsilon\mathcal{F}_{j,0}^{1})\right)dx\bigg)\nonumber\\
&+\epsilon^{2}\mathcal{O}(\mathcal{E}_{s}+\epsilon^{3/2}\mathcal{E}_{s}^{3/2}).
\end{align*}
Using \eqref{hfjj}, \eqref{Sjj} and \eqref{F11j0}, we have
\begin{align*}
J^{1}_{41}
=&\epsilon^{2}j\sum_{j,l\in\{\pm1\}}\bigg(\int \partial_{x}^{\ell}\mathcal{R}_{j}^{1}
G_{j,j}^{1}\partial_{x}q\psi_{l} \ \partial_{x}^{\ell}(\phi_{7}\partial_{x}\mathcal{R}_{0}^{1})dx\nonumber\\
& \ \ \ \ \ \ \ \ \ \ \ \ \ \ \ \ \ \ +2\ell\int \partial_{x}^{\ell}\mathcal{R}_{j}^{1}
\partial_{x}\left(G_{j,j}^{1}\partial_{x}q\psi_{l} \ \partial_{x}^{\ell-1}(\phi_{7}\partial_{x}\mathcal{R}_{0}^{1}) \right)dx\bigg)\nonumber +\epsilon^{2}\mathcal{O}(\mathcal{E}_{s}+\epsilon^{3/2}\mathcal{E}_{s}^{3/2})\\
=&-\epsilon^{2}j(2\ell+1)\sum_{j,l\in\{\pm1\}}\int (G_{j,j}^{1}\partial_{x}q\psi_{l})\phi_{7} \partial_{x}^{\ell+1}\mathcal{R}_{j}^{1}
\partial_{x}^{\ell}\mathcal{R}_{0}^{1}dx\nonumber +\epsilon^{2}\mathcal{O}(\mathcal{E}_{s}+\epsilon^{3/2}\mathcal{E}_{s}^{3/2})\\
=&\frac{-\epsilon^{2}(2\ell+1)}{2}\sum_{l\in\{\pm1\}}\int\Big((G_{1,1}^{1}+G_{-1,-1}^{1})\partial_{x}q\psi_{l}
\partial_{x}^{\ell+1}(\mathcal{R}_{1}^{1}-\mathcal{R}_{-1}^{1})\\
& \ \ \ \ \ \ \ \ \ \ \ \  \ \ \ \ \ \ \ \ \ \ \ \ \ \ \ +(G_{1,1}^{1}-G_{-1,-1}^{1})\partial_{x}q\psi_{l}
\partial_{x}^{\ell+1}(\mathcal{R}_{1}^{1}+\mathcal{R}_{-1}^{1})\Big)
\phi_{7}\partial_{x}^{\ell}\mathcal{R}_{0}^{1}dx\\
&+\epsilon^{2}\mathcal{O}(\mathcal{E}_{s}+\epsilon^{3/2}\mathcal{E}_{s}^{3/2}).
\end{align*}
As was done in \eqref{J121} and \eqref{J222}, using \eqref{1+-}-\eqref{1++}, \eqref{part4}-\eqref{part5} and the estimates \eqref{Aesti-3} in Lemma \ref{L2}, we have
 \begin{align}\label{J411}
J_{41}^{1}
=&-\frac{\epsilon^{2}(2\ell+1)}{4}\sum_{l\in\{\pm1\}}
\frac{d}{dt}
\int\frac{\big((G_{1,1}^{1}+G_{-1,-1}^{1})\partial_{x}q\psi_{l}\big)\phi_{7}}{q(1+\epsilon\phi_{3})}
(\partial_{x}^{\ell}
(\mathcal{R}_{0}^{1}+\mathcal{R}_{1}^{1}+\mathcal{R}_{-1}^{1}))^{2}dx\nonumber\\
&+\frac{\epsilon^{2}(2\ell+1)}{4}\sum_{l\in\{\pm1\}}
\frac{d}{dt}
\int\frac{\big((G_{1,1}^{1}+G_{-1,-1}^{1})\partial_{x}q\psi_{l}\big)\phi_{7}}{q(1+\epsilon\phi_{4})}
(\partial_{x}^{\ell}
(\mathcal{R}_{1}^{1}+\mathcal{R}_{-1}^{1}))^{2}dx\nonumber\\
&-\frac{\epsilon^{2}(2\ell+1)}{4}\sum_{l\in\{\pm1\}}
\frac{d}{dt}
\int\frac{\big((G_{1,1}^{1}-G_{-1,-1}^{1})\partial_{x}q\psi_{l}\big)\phi_{7}}{q+\epsilon\phi_{5}}
\partial_{x}^{\ell}\mathcal{R}^{1}_{0}
\partial_{x}^{\ell}(\mathcal{R}_{1}^{1}-\mathcal{R}_{-1}^{1})dx\nonumber\\
&-\frac{\epsilon^{3}(2\ell+1)}{2}\sum_{l\in\{\pm1\}}
\frac{d}{dt}
\int\frac{\big((G_{1,1}^{1}-G_{-1,-1}^{1})\partial_{x}q\psi_{l}\big)\phi_{7}\phi_{1}}
{(q+\epsilon\phi_{5})q(1+\epsilon\phi_{3})}
(\partial_{x}^{\ell}
(\mathcal{R}_{0}^{1}+\mathcal{R}_{1}^{1}+\mathcal{R}_{-1}^{1}))^{2}dx\nonumber\\
&+\frac{\epsilon^{3}(2\ell+1)}{2}\sum_{l\in\{\pm1\}}
\frac{d}{dt}
\int\frac{\big((G_{1,1}^{1}-G_{-1,-1}^{1})\partial_{x}q\psi_{l}\big)\phi_{7}\phi_{1}}
{(q+\epsilon\phi_{5})q(1+\epsilon\phi_{4})}(\partial_{x}^{\ell}
(\mathcal{R}_{1}^{1}+\mathcal{R}_{-1}^{1}))^{2}dx\nonumber\\
&+\epsilon^{2}\mathcal{O}(\mathcal{E}_{s}+\epsilon^{3/2}\mathcal{E}_{s}^{3/2}).
\end{align}
As was done for $J^{1}_{41}$, $J^{r}_{41}$ with $r\in\{2,3,4,5\}$ has similar estimates. In addition, we see that $N_{j,l,j}^{1,1,6}$ does not lose derivatives and $N_{j,l,j}^{1,1,7}$ even gains two derivatives in light of \eqref{hfjj}, so we finally obtain
 \begin{align}\label{J41}
\sum_{r=1}^{7}J^{r}_{41}
=& \epsilon^{2}\sum_{l\in\{\pm1\}}
\frac{d}{dt}\Bigg(
\int\phi_{14}\partial_{x}^{\ell}\mathcal{R}^{1}_{0}
\partial_{x}^{\ell}(\mathcal{R}_{1}^{1}-\mathcal{R}_{-1}^{1})dx +\int\phi_{15}(\partial_{x}^{\ell}(\mathcal{R}_{0}^{1}+\mathcal{R}_{1}^{1}+\mathcal{R}_{-1}^{1}))^{2}dx\nonumber\\
& \ \ \ \ \ \ \ \ \ \ \ \ \ \ \ \ \ \
+\int\phi_{16}(\partial_{x}^{\ell}(\mathcal{R}_{1}^{1}+\mathcal{R}_{-1}^{1}))^{2}dx\Bigg) +\epsilon^{2}\mathcal{O}(\mathcal{E}_{s}+\epsilon^{3/2}\mathcal{E}_{s}^{3/2}),
\end{align}
where $\phi_{14}\sim\phi_{16}$ depend on $\gamma,\psi_{l}$ and $\phi_{r}$ with $r\in\{1\sim5,8,9\}$.  For $J^{r}_{42}$ with $r\in\{1,2,3,4,5\}$, using Leibniz's rule and \eqref{17} again to extract all terms with more than one spatial derivatives falling on $\mathcal{R}^{1}$, we have
\begin{align*}
J^{r}_{42}=&\epsilon^{2}\sum_{\substack{m\in\{\pm1\}\\ j,l\in\{\pm1\}}}
\bigg(\int\partial_{x}^{\ell}\left(B^{1,1}_{j,l,m}(\psi_{l}, \mathcal{R}^{1}_{m})
+\epsilon\mathcal{F}_{j,m}^{1}\right)
\partial_{x}^{\ell}N_{j,l,j}^{1,1,r}(\psi_{l},\mathcal{R}_{j}^{1})dx\nonumber\\
& \ \ \ \ \ \ \ \ \ \ \ \ \ \ \ \ \ +\int \partial_{x}^{\ell}\mathcal{R}_{j}^{1}
\partial_{x}^{\ell}N_{j,l,j}^{1,1,r}\left(\psi_{l},\left(B^{1,1}_{j,l,m}(\psi_{l}, \mathcal{R}^{1}_{m})
+\epsilon\mathcal{F}_{j,m}^{1}\right)\right)dx\bigg)\nonumber\\
=&\epsilon^{2}\sum_{\substack{m\in\{\pm1\}\\ j,l\in\{\pm1\}}}
\bigg(\int\partial_{x}^{\ell}\left(B^{1,1}_{j,l,m}(\psi_{l}, \mathcal{R}^{1}_{m})
+\epsilon\mathcal{F}_{j,m}^{1}\right)
N_{j,l,j}^{1,1,r}(\psi_{l},\partial_{x}^{\ell}\mathcal{R}_{j}^{1})dx\nonumber\\
& \ \ \ \ \ \ \ \ \ \ \ \ \ \ \ \ \ +\int \partial_{x}^{\ell}\mathcal{R}_{j}^{1}
N_{j,l,j}^{1,1,r}\left(\psi_{l},\partial_{x}^{\ell}\left(B^{1,1}_{j,l,m}(\psi_{l}, \mathcal{R}^{1}_{m})
+\epsilon\mathcal{F}_{j,m}^{1}\right)\right)dx\nonumber\\
& \ \ \ \ \ \ \ \ \ \ \ \ \ \ \ \ \ +\ell
\int\partial_{x}^{\ell}\left(B^{1,1}_{j,l,m}(\psi_{l}, \mathcal{R}^{1}_{m})
+\epsilon\mathcal{F}_{j,m}^{1}\right)
N_{j,l,j}^{1,1,r}(\partial_{x}\psi_{l},\partial_{x}^{\ell-1}\mathcal{R}_{j}^{1})dx\nonumber\\
& \ \ \ \ \ \ \ \ \ \ \ \ \ \ \ \ \ +\ell\int \partial_{x}^{\ell}\mathcal{R}_{j}^{1}
N_{j,l,j}^{1,1,r}\left(\partial_{x}\psi_{l},\partial_{x}^{\ell-1}\left(B^{1,1}_{j,l,m}(\psi_{l}, \mathcal{R}^{1}_{m})
+\epsilon\mathcal{F}_{j,m}^{1}\right)\right)dx\bigg)\nonumber\\
&+\epsilon^{2}\mathcal{O}(\mathcal{E}_{s}+\epsilon^{3/2}\mathcal{E}_{s}^{3/2})\\
=&\epsilon^{2}\sum_{\substack{m\in\{\pm1\}\\ j,l\in\{\pm1\}}}\bigg(\int \partial_{x}^{\ell}\mathcal{R}_{j}^{1}
S_{j,j}^{r}\left(\partial_{x}\psi_{l},\partial_{x}^{\ell}\left(B^{1,1}_{j,l,m}(\psi_{l}, \mathcal{R}^{1}_{m})
+\epsilon\mathcal{F}_{j,m}^{1}\right)\right)dx\nonumber\\
& \ \ \ \ \ \ \ \ \ \ \ \ \ \ \ +2\ell\int \partial_{x}^{\ell}\mathcal{R}_{j}^{1}
N_{j,l,j}^{1,1,r}\left(\partial_{x}\psi_{l},\partial_{x}^{\ell-1}\left(B^{1,1}_{j,l,m}(\psi_{l}, \mathcal{R}^{1}_{m})
+\epsilon\mathcal{F}_{j,m}^{1}\right)\right)dx\bigg)\nonumber\\
&+\epsilon^{2}\mathcal{O}(\mathcal{E}_{s}+\epsilon^{3/2}\mathcal{E}_{s}^{3/2}).
\end{align*}
Using \eqref{hfjj}, \eqref{Sjj}, \eqref{Fjm0} and \eqref{part2} in Lemma \ref{L9}, we have
\begin{align*}
J^{1}_{42}=&\epsilon^{2}\sum_{\substack{m\in\{\pm1\}\\ j,l\in\{\pm1\}}}\Bigg(\int \partial_{x}^{\ell}\mathcal{R}_{j}^{1}
G_{j,j}^{1}\left(\partial_{x}q\psi_{l} \ \partial_{x}^{\ell}\left(B^{1,1}_{j,l,m}(\psi_{l}, \mathcal{R}^{1}_{m})
+\epsilon\mathcal{F}_{j,m}^{1}\right)\right)dx\nonumber\\
& \ \ \ \ \ \ \ \ \ \ \ \ \ \ \ +2\ell\int \partial_{x}^{\ell}\mathcal{R}_{j}^{1}
N_{j,l,j}^{1,1,r}\left(\partial_{x}\psi_{l},\partial_{x}^{\ell-1}\left(B^{1,1}_{j,l,m}(\psi_{l}, \mathcal{R}^{1}_{m})
+\epsilon\mathcal{F}_{j,m}^{1}\right)\right)dx\Bigg)\nonumber\\
&+\epsilon^{2}\mathcal{O}(\mathcal{E}_{s}+\epsilon^{3/2}\mathcal{E}_{s}^{3/2})\\
=&\epsilon^{2}(2\ell+1)\sum_{j,l\in\{\pm1\}}\int
\partial_{x}^{\ell}\mathcal{R}_{j}^{1}
(G_{j,j}^{1}\partial_{x}q\psi_{l})(\phi_{8}\partial_{x}^{\ell+1}(\mathcal{R}_{j}^{1}
+\mathcal{R}_{-j}^{1})
+j\phi_{9}\partial_{x}^{\ell+1}(\mathcal{R}_{j}^{1}-\mathcal{R}_{-j}^{1}))dx\\
&+\epsilon^{2}\mathcal{O}(\mathcal{E}_{s}+\epsilon^{3/2}\mathcal{E}_{s}^{3/2})\\
=&\epsilon^{2}(2\ell+1)\sum_{j,l\in\{\pm1\}}\int
\partial_{x}^{\ell}\mathcal{R}_{j}^{1}
(G_{j,j}^{1}\partial_{x}q\psi_{l})(\phi_{8}\partial_{x}^{\ell+1}
\mathcal{R}_{-j}^{1}
-j\phi_{9}\partial_{x}^{\ell+1}\mathcal{R}_{-j}^{1})dx\\
&+\epsilon^{2}\mathcal{O}(\mathcal{E}_{s}+\epsilon^{3/2}\mathcal{E}_{s}^{3/2})\\
=&\frac{\epsilon^{2}(2\ell+1)}{2}\sum_{l\in\{\pm1\}}\int
((G_{-1,-1}^{1}-G_{1,1}^{1})\partial_{x}q\psi_{l})\phi_{8}
+((G_{-1,-1}^{1}+G_{1,1}^{1})\partial_{x}q\psi_{l})\phi_{9})\\
& \ \ \ \ \ \ \ \ \ \ \ \ \ \ \ \ \ \ \ \ \  \ \ \ \times\partial_{x}^{\ell}(\mathcal{R}_{1}^{1}+\mathcal{R}_{-1}^{1})
\partial_{x}^{\ell+1}(\mathcal{R}_{1}^{1}-\mathcal{R}_{-1}^{1})dx +\epsilon^{2}\mathcal{O}(\mathcal{E}_{s}+\epsilon^{3/2}\mathcal{E}_{s}^{3/2}).
\end{align*}
Using \eqref{1--}, we have
\begin{align*}
J^{1}_{42}=&\epsilon^{2}\sum_{l\in\{\pm1\}}\frac{d}{dt}\int
((G_{-1,-1}^{1}-G_{1,1}^{1})\partial_{x}q\psi_{l})\phi_{8}
+((G_{-1,-1}^{1}+G_{1,1}^{1})\partial_{x}q\psi_{l})\phi_{9})\\
& \ \ \ \ \ \ \ \ \ \ \ \ \ \ \ \ \
\times(\partial_{x}^{\ell}(\mathcal{R}_{1}^{1}+\mathcal{R}_{-1}^{1}))^{2}dx +\epsilon^{2}\mathcal{O}(\mathcal{E}_{s}+\epsilon^{3/2}\mathcal{E}_{s}^{3/2}).
\end{align*}
As was done for $J^{1}_{42}$, $J^{r}_{42}$ with $r\in\{2,3,4,5\}$ has similar estimates. In addition, we see that $N_{j,l,j}^{1,1,6}$ does not lose derivatives and $N_{j,l,j}^{1,1,7}$ even gains two derivatives in light of \eqref{hfjj}, so we obtain
\begin{align}\label{J42}
\sum_{r=1}^{7}J^{r}_{42}=&\epsilon^{2}\sum_{l\in\{\pm1\}}\frac{d}{dt}\int
\phi_{17}
(\partial_{x}^{\ell}(\mathcal{R}_{1}^{1}+\mathcal{R}_{-1}^{1}))^{2}dx
+\epsilon^{2}\mathcal{O}(\mathcal{E}_{s}+\epsilon^{3/2}\mathcal{E}_{s}^{3/2}).
\end{align}
Adding \eqref{J41} and \eqref{J42} together, we have
 \begin{align}\label{J4}
J_{4}
=& \epsilon^{2}\sum_{l\in\{\pm1\}}
\frac{d}{dt}\Bigg(
\int\phi_{14}\partial_{x}^{\ell}\mathcal{R}^{1}_{0}
\partial_{x}^{\ell}(\mathcal{R}_{1}^{1}-\mathcal{R}_{-1}^{1})dx\nonumber\\
&+\int\phi_{15}(\partial_{x}^{\ell}(\mathcal{R}_{0}^{1}+\mathcal{R}_{1}^{1}+\mathcal{R}_{-1}^{1}))^{2}dx
+\int(\phi_{16}+\phi_{17})(\partial_{x}^{\ell}(\mathcal{R}_{1}^{1}+\mathcal{R}_{-1}^{1}))^{2}dx\Bigg)\\
&+\epsilon^{2}\mathcal{O}(\mathcal{E}_{s}+\epsilon^{3/2}\mathcal{E}_{s}^{3/2})\nonumber.
\end{align}

For $J_{5}$, we have
\begin{align*}
J_{5}=&\epsilon^{2}\sum_{\substack{m\in\{0,\pm1\}\\ j,l\in\{\pm1\}}}
\bigg(\int\partial_{x}^{\ell}\left(B^{1,1}_{j,l,m}(\psi_{l}, \mathcal{R}^{1}_{m})
+\epsilon\mathcal{F}_{j,m}^{1}\right)
\partial_{x}^{\ell}N_{j,l,-j}^{1,1}(\psi_{l},\mathcal{R}_{-j}^{1})dx\nonumber\\
& \ \ \ \ \ \ \ \ \ \ \ \ \ \ \ \ \ +\int \partial_{x}^{\ell}\mathcal{R}_{j}^{1}
\partial_{x}^{\ell}N_{j,l,-j}^{1,1}\left(\psi_{l},\left(B^{1,1}_{-j,l,m}(\psi_{l}, \mathcal{R}^{1}_{m})
+\epsilon\mathcal{F}_{-j,m}^{1}\right)\right)dx\bigg)\nonumber\\
=&\epsilon^{2}\sum_{j,l\in\{\pm1\}}\sum_{r=1}^{7}
\bigg(\int\partial_{x}^{\ell}\left(B^{1,1}_{j,l,0}(\psi_{l}, \mathcal{R}^{1}_{0})
+\epsilon\mathcal{F}_{j,0}^{1}\right)
\partial_{x}^{\ell}N_{j,l,-j}^{1,1,r}(\psi_{l},\mathcal{R}_{-j}^{1})dx\nonumber\\
& \ \ \ \ \ \ \ \ \ \ \ \ \ \ \ \ \ \ \ \ +\int \partial_{x}^{\ell}\mathcal{R}_{j}^{1}
\partial_{x}^{\ell}N_{j,l,-j}^{1,1,r}\left(\psi_{l},\left(B^{1,1}_{-j,l,0}(\psi_{l}, \mathcal{R}^{1}_{0})
+\epsilon\mathcal{F}_{-j,0}^{1}\right)\right)dx\bigg)\nonumber\\
&+\epsilon^{2}\sum_{\substack{m\in\{\pm1\}\\ j,l\in\{\pm1\}}}\sum_{r=1}^{7}
\bigg(\int\partial_{x}^{\ell}\left(B^{1,1}_{j,l,m}(\psi_{l}, \mathcal{R}^{1}_{m})
+\epsilon\mathcal{F}_{j,m}^{1}\right)
\partial_{x}^{\ell}N_{j,l,-j}^{1,1,r}(\psi_{l},\mathcal{R}_{-j}^{1})dx\nonumber\\
& \ \ \ \ \ \ \ \ \ \ \ \ \ \ \ \ \ \ \ \ \ \ \ \ +\int \partial_{x}^{\ell}\mathcal{R}_{j}^{1}
\partial_{x}^{\ell}N_{j,l,-j}^{1,1,r}\left(\psi_{l},\left(B^{1,1}_{-j,l,m}(\psi_{l}, \mathcal{R}^{1}_{m})
+\epsilon\mathcal{F}_{-j,m}^{1}\right)\right)dx\bigg)\\
=&:\sum_{r=1}^{7}J^{r}_{5}.
\end{align*}
Using again Leibniz's rule, \eqref{17}, \eqref{F11j0} as well as the asymptotic expansions \eqref{hfj-j} and \eqref{qomega} to extract in $J^{1}_{5}$ all integral terms containing factors with more than $\ell$ spatial derivatives falling on $\mathcal{R}^{1}$ we get
\begin{align*}
J^{1}_{5}=&\epsilon^{2}\sum_{j,l\in\{\pm1\}}
\bigg(\int\partial_{x}^{\ell}\left(B^{1,1}_{j,l,0}(\psi_{l}, \mathcal{R}^{1}_{0})
+\epsilon\mathcal{F}_{j,0}^{1}\right)
N_{j,l,-j}^{1,1,1}(\psi_{l},\partial_{x}^{\ell}\mathcal{R}_{-j}^{1})dx\nonumber\\
& \ \ \ \ \ \ \ \ \ \ \ \ \ \ \ \ \ +\int \partial_{x}^{\ell}\mathcal{R}_{j}^{1}
N_{j,l,-j}^{1,1,1}\left(\psi_{l},\partial_{x}^{\ell}\left(B^{1,1}_{-j,l,0}(\psi_{l}, \mathcal{R}^{1}_{0})
+\epsilon\mathcal{F}_{-j,0}^{1}\right)\right)dx\bigg)\nonumber\\
&+\epsilon^{2}\sum_{\substack{m\in\{\pm1\}\\ j,l\in\{\pm1\}}}
\bigg(\int\partial_{x}^{\ell}\left(B^{1,1}_{j,l,m}(\psi_{l}, \mathcal{R}^{1}_{m})
+\epsilon\mathcal{F}_{j,m}^{1}\right)
N_{j,l,-j}^{1,1,1}(\psi_{l},\partial_{x}^{\ell}\mathcal{R}_{-j}^{1})dx\nonumber\\
& \ \ \ \ \ \ \ \ \ \ \ \ \ \ \ \ \ \ \ \ \ +\int \partial_{x}^{\ell}\mathcal{R}_{j}^{1}
N_{j,l,-j}^{1,1,1}\left(\psi_{l},\partial_{x}^{\ell}\left(B^{1,1}_{-j,l,m}(\psi_{l}, \mathcal{R}^{1}_{m})
+\epsilon\mathcal{F}_{-j,m}^{1}\right)\right)dx\bigg)\\
&+\epsilon^{2}\mathcal{O}(\mathcal{E}_{s}+\epsilon^{3/2}\mathcal{E}_{s}^{3/2})\\
=&\epsilon^{2}\sum_{j,l\in\{\pm1\}}
\bigg(\int\partial_{x}^{\ell}\left(B^{1,1}_{j,l,0}(\psi_{l}, \mathcal{R}^{1}_{0})
+\epsilon\mathcal{F}_{j,0}^{1}\right)
N_{j,l,-j}^{1,1,1}(\psi_{l},\partial_{x}^{\ell}\mathcal{R}_{-j}^{1})dx\nonumber\\
& \ \ \ \ \ \ \ \ \ \ \ \ \ \ \ \ \ -\int \partial_{x}^{\ell}\left(B^{1,1}_{-j,l,0}(\psi_{l}, \mathcal{R}^{1}_{0})
+\epsilon\mathcal{F}_{-j,0}^{1}\right)
N_{-j,l,j}^{1,1,1}\left(\psi_{l},\partial_{x}^{\ell}\mathcal{R}_{j}^{1}\right)dx\\
& \ \ \ \ \ \ \ \ \ \ \ \ \ \ \ \ \ +\int \partial_{x}^{\ell}\left(B^{1,1}_{-j,l,0}(\psi_{l}, \mathcal{R}^{1}_{0})
+\epsilon\mathcal{F}_{-j,0}^{1}\right)
S_{-j,j}^{1}\left(\partial_{x}\psi_{l},\partial_{x}^{\ell}\mathcal{R}_{j}^{1}\right)dx\bigg)\nonumber\\
&+\epsilon^{2}\sum_{\substack{m\in\{\pm1\}\\ j,l\in\{\pm1\}}}
\bigg(\int\partial_{x}^{\ell}\left(B^{1,1}_{j,l,m}(\psi_{l}, \mathcal{R}^{1}_{m})
+\epsilon\mathcal{F}_{j,m}^{1}\right)
N_{j,l,-j}^{1,1,1}(\psi_{l},\partial_{x}^{\ell}\mathcal{R}_{-j}^{1})dx\nonumber\\
& \ \ \ \ \ \ \ \ \ \ \ \ \ \ \ \ \ \ \ \ -\int\partial_{x}^{\ell}\left(B^{1,1}_{-j,l,m}(\psi_{l}, \mathcal{R}^{1}_{m})
+\epsilon\mathcal{F}_{-j,m}^{1}\right)
N_{-j,l,j}^{1,1,1}\left(\psi_{l},\partial_{x}^{\ell}\mathcal{R}_{j}^{1}\right)dx\\
& \ \ \ \ \ \ \ \ \ \ \ \ \ \ \ \ \ \ \ \ +\int\partial_{x}^{\ell}\left(B^{1,1}_{-j,l,m}(\psi_{l}, \mathcal{R}^{1}_{m})
+\epsilon\mathcal{F}_{-j,m}^{1}\right)
S_{-j,j}^{1}\left(\partial_{x}\psi_{l},\partial_{x}^{\ell}\mathcal{R}_{j}^{1}\right)dx\bigg)\\
&+\epsilon^{2}\mathcal{O}(\mathcal{E}_{s}+\epsilon^{3/2}\mathcal{E}_{s}^{3/2})\\
=&-\epsilon^{2}\sum_{j,l\in\{\pm1\}}
\int\partial_{x}^{\ell-1}\left(B^{1,1}_{-j,l,0}(\psi_{l}, \mathcal{R}^{1}_{0})
+\epsilon\mathcal{F}_{-j,0}^{1}\right)
\partial_{x}S_{-j,j}^{1}\left(\partial_{x}\psi_{l}, \partial_{x}^{\ell}\mathcal{R}_{j}^{1}\right)dx\\
&+\epsilon^{2}\sum_{\substack{m\in\{\pm1\}\\ j,l\in\{\pm1\}}}\int\partial_{x}^{\ell-1} \left(B^{1,1}_{-j,l,m}(\psi_{l}, \mathcal{R}^{1}_{m})
+\epsilon\mathcal{F}_{-j,m}^{1}\right)
\partial_{x}S_{-j,j}^{1}\left(\partial_{x}\psi_{l}, \partial_{x}^{\ell}\mathcal{R}_{j}^{1}\right)dx\\
&+\epsilon^{2}\mathcal{O}(\mathcal{E}_{s} +\epsilon^{3/2}\mathcal{E}_{s}^{3/2})\\
=&\epsilon^{2}\mathcal{O}(\mathcal{E}_{s} +\epsilon^{3/2}\mathcal{E}_{s}^{3/2}).
\end{align*}
Similarly, for $J_{5}^{2}$, we have
\begin{align*}
J^{2}_{5}=&\epsilon^{2}\sum_{j,l\in\{\pm1\}}
\bigg(\int\partial_{x}^{\ell}\left(B^{1,1}_{j,l,0}(\psi_{l}, \mathcal{R}^{1}_{0})
+\epsilon\mathcal{F}_{j,0}^{1}\right)
N_{j,l,-j}^{1,1,2}(\psi_{l},\partial_{x}^{\ell}\mathcal{R}_{-j}^{1})dx\nonumber\\
& \ \ \ \ \ \ \ \ \ \ \ \ \ \ \ \ \ +\int \partial_{x}^{\ell}\left(B^{1,1}_{-j,l,0}(\psi_{l}, \mathcal{R}^{1}_{0})
+\epsilon\mathcal{F}_{-j,0}^{1}\right)
N_{-j,l,j}^{1,1,2}\left(\psi_{l},\partial_{x}^{\ell}\mathcal{R}_{j}^{1}\right)dx\\
& \ \ \ \ \ \ \ \ \ \ \ \ \ \ \ \ \ +\int \partial_{x}^{\ell}\left(B^{1,1}_{-j,l,0}(\psi_{l}, \mathcal{R}^{1}_{0})
+\epsilon\mathcal{F}_{-j,0}^{1}\right)
S_{-j,j}^{2}\left(\partial_{x}\psi_{l},\partial_{x}^{\ell}\mathcal{R}_{j}^{1}\right)dx\bigg)\nonumber\\
&+\epsilon^{2}\sum_{\substack{m\in\{\pm1\}\\ j,l\in\{\pm1\}}}
\bigg(\int\partial_{x}^{\ell}\left(B^{1,1}_{j,l,m}(\psi_{l}, \mathcal{R}^{1}_{m})
+\epsilon\mathcal{F}_{j,m}^{1}\right)
N_{j,l,-j}^{1,1,2}(\psi_{l},\partial_{x}^{\ell}\mathcal{R}_{-j}^{1})dx\nonumber\\
& \ \ \ \ \ \ \ \ \ \ \ \ \ \ \ \ \ \ \ \ +\int\partial_{x}^{\ell}\left(B^{1,1}_{-j,l,m}(\psi_{l}, \mathcal{R}^{1}_{m})
+\epsilon\mathcal{F}_{-j,m}^{1}\right)
N_{-j,l,j}^{1,1,2}\left(\psi_{l},\partial_{x}^{\ell}\mathcal{R}_{j}^{1}\right)dx\\
& \ \ \ \ \ \ \ \ \ \ \ \ \ \ \ \ \ \ \ \ +\int\partial_{x}^{\ell}\left(B^{1,1}_{-j,l,m}(\psi_{l}, \mathcal{R}^{1}_{m})
+\epsilon\mathcal{F}_{-j,m}^{1}\right)
S_{-j,j}^{2}\left(\partial_{x}\psi_{l},\partial_{x}^{\ell}\mathcal{R}_{j}^{1}\right)dx\bigg)\\
&+\epsilon^{2}\mathcal{O}(\mathcal{E}_{s}+\epsilon^{3/2}\mathcal{E}_{s}^{3/2})\\
=&2\epsilon^{2}\sum_{j,l\in\{\pm1\}}
\int\partial_{x}^{\ell}\left(B^{1,1}_{j,l,0}(\psi_{l}, \mathcal{R}^{1}_{0})
+\epsilon\mathcal{F}_{j,0}^{1}\right)
N_{j,l,-j}^{1,1,2}(\psi_{l},\partial_{x}^{\ell}\mathcal{R}_{-j}^{1})dx\nonumber\\
&+\epsilon^{2}\sum_{\substack{m\in\{\pm1\}\\ j,l\in\{\pm1\}}}\int\partial_{x}^{\ell}\left(B^{1,1}_{j,l,m}(\psi_{l}, \mathcal{R}^{1}_{m})
+\epsilon\mathcal{F}_{j,m}^{1}\right)
N_{j,l,-j}^{1,1,2}(\psi_{l},\partial_{x}^{\ell}\mathcal{R}_{-j}^{1})dx\nonumber\\
&+\epsilon^{2}\mathcal{O}(\mathcal{E}_{s}+\epsilon^{3/2}\mathcal{E}_{s}^{3/2})\\
=&\frac{\epsilon^{2}}{2}\sum_{l\in\{\pm1\}}
\bigg(\int\psi_{l}\phi_{7}\partial_{x}^{\ell}\mathcal{R}_{0}^{1}
\partial_{x}^{\ell+1}q(\mathcal{R}_{1}^{1}-\mathcal{R}_{-1}^{1})dx\\
& \ \ \ \ \ \ \ \ \ \ \ \ \  +\int(\psi_{l}\phi_{8}\partial_{x}^{\ell}(\mathcal{R}_{1}^{1}+\mathcal{R}_{-1}^{1})
+\psi_{l}\phi_{9}\partial_{x}^{\ell}q(\mathcal{R}_{1}^{1}-\mathcal{R}_{-1}^{1}))
\partial_{x}^{\ell+1}q(\mathcal{R}_{1}^{1}-\mathcal{R}_{-1}^{1})dx\bigg)\nonumber\\
&+\epsilon^{2}\mathcal{O}(\mathcal{E}_{s}+\epsilon^{3/2}\mathcal{E}_{s}^{3/2})\\
=&\frac{\epsilon^{2}}{4}\sum_{l\in\{\pm1\}}\frac{d}{dt}\bigg(\int\frac{\psi_{l}\phi_{7}}{1+\epsilon\phi_{3}}
(\partial_{x}^{\ell}(\mathcal{R}_{0}^{1}+\mathcal{R}_{1}^{1}+\mathcal{R}_{-1}^{1}))^{2}dx
-\int\frac{\psi_{l}\phi_{7}}{1+\epsilon\phi_{4}}
(\partial_{x}^{\ell}(\mathcal{R}_{1}^{1}+\mathcal{R}_{-1}^{1}))^{2}dx\\
& \ \ \ \ \ \ \ \ \ \ \ \ \ \ \ \ \ +\int\frac{\psi_{l}\phi_{8}}{1+\epsilon\phi_{4}}
(\partial_{x}^{\ell}(\mathcal{R}_{1}^{1}+\mathcal{R}_{-1}^{1}))^{2}dx\bigg) +\epsilon^{2}\mathcal{O}(\mathcal{E}_{s}+\epsilon^{3/2}\mathcal{E}_{s}^{3/2}).
\end{align*}
$J_{5}^{4}$ has similar estimates with the $J_{5}^{1}$; $J_{5}^{3}$ and $J_{5}^{5}$ have similar estimates with $J_{5}^{2}$. Since $N_{j,l,-j}^{1,1,\{6,7\}}$ even gains at least one derivative, $J_{5}^{\{6,7\}}$ can be bounded by $\epsilon^{2}\mathcal{O}(\mathcal{E}_{s}+\epsilon^{3/2}\mathcal{E}_{s}^{3/2})$ directly.
In conclusion, we have
 \begin{align}\label{J5}
J_{5}
=& \epsilon^{2}\sum_{l\in\{\pm1\}}
\frac{d}{dt}\Bigg(
\int\phi_{18}(\partial_{x}^{\ell}(\mathcal{R}_{0}^{1}+\mathcal{R}_{1}^{1}+\mathcal{R}_{-1}^{1}))^{2}dx
+\int\phi_{19}(\partial_{x}^{\ell}(\mathcal{R}_{1}^{1}+\mathcal{R}_{-1}^{1}))^{2}dx\Bigg)\\
&+\epsilon^{2}\mathcal{O}(\mathcal{E}_{s}+\epsilon^{3/2}\mathcal{E}_{s}^{3/2})\nonumber.
\end{align}

For $J_{6}$, we have
\begin{align*}
J_{6}=&\epsilon^{2}\sum_{j,n\in\{0,\pm1\}}
\int\partial_{x}^{\ell}\mathcal{R}_{j}^{1}\partial_{x}^{\ell}\mathcal{F}_{j,n}^{1}dx\nonumber\\
=&\epsilon^{2}
\int\partial_{x}^{\ell}\mathcal{R}_{0}^{1}\partial_{x}^{\ell}\mathcal{F}_{0,0}^{1}dx
+\epsilon^{2}\sum_{n\in\{\pm1\}}
\int\partial_{x}^{\ell}\mathcal{R}_{0}^{1}\partial_{x}^{\ell}\mathcal{F}_{0,n}^{1}dx\nonumber\\
&+\epsilon^{2}\sum_{j\in\{\pm1\}}
\int\partial_{x}^{\ell}\mathcal{R}_{j}^{1}\partial_{x}^{\ell}\mathcal{F}_{j,0}^{1}dx
+\epsilon^{2}\sum_{j,n\in\{\pm1\}}
\int\partial_{x}^{\ell}\mathcal{R}_{j}^{1}\partial_{x}^{\ell}\mathcal{F}_{j,n}^{1}dx.
\end{align*}
Using \eqref{F00}-\eqref{F01}, we extract in $J_{6}$ all integral terms containing factors with more than $\ell$ spatial derivatives falling on $\mathcal{R}^{1}$
\begin{align*}
J_{6}
=&\epsilon^{2}
\int\widetilde{\phi}_{1}\partial_{x}^{\ell}\mathcal{R}_{0}^{1}\partial_{x}^{\ell+1}\mathcal{R}_{0}^{1}dx
+\epsilon^{2}
\int\widetilde{\phi}_{2}\partial_{x}^{\ell}\mathcal{R}_{0}^{1}
\partial_{x}^{\ell+1}q(\mathcal{R}_{1}^{1}-\mathcal{R}_{-1}^{1})dx\nonumber\\
&+\frac{j\epsilon^{2}}{2\sqrt{\gamma}}\sum_{j\in\{\pm1\}}
\int\widetilde{\phi}_{2}\partial_{x}^{\ell}\mathcal{R}_{j}^{1}
\partial_{x}^{\ell+1}\mathcal{R}_{0}^{1}dx
+\frac{j\epsilon^{2}}{2}\sum_{j\in\{\pm1\}}
\int\widetilde{\phi}_{1}\partial_{x}^{\ell}\mathcal{R}_{j}^{1}
\partial_{x}^{\ell+1}(\mathcal{R}_{1}^{1}+\mathcal{R}_{-1}^{1})dx\nonumber\\
&+\epsilon^{2}\sum_{j\in\{\pm1\}}
\int(\widetilde{\phi}_{3}+\frac{j}{2\sqrt{\gamma}}\widetilde{\phi}_{1})\partial_{x}^{\ell}\mathcal{R}_{j}^{1}
\partial_{x}^{\ell+1}q(\mathcal{R}_{1}^{1}-\mathcal{R}_{-1}^{1})dx +\epsilon^{2}\mathcal{O}(\mathcal{E}_{s}+\epsilon^{3/2}\mathcal{E}_{s}^{3/2}),
\end{align*}
where
\begin{align*}
\widetilde{\phi}_{1}&=q(\varphi_{1}+\epsilon^{\beta-2}(\mathcal{R}_{1}^{1}-\mathcal{R}_{-1}^{1})),\\
\widetilde{\phi}_{2}&=-\frac{\gamma-1}{\gamma}(\varphi_{2}
+\epsilon^{\beta-2}((\gamma-2-q^{2})\mathcal{R}_{0}^{1}+(\gamma-2)\mathcal{R}_{1}^{1}
+(\gamma-2)\mathcal{R}_{-1}^{1})),\\
\widetilde{\phi}_{3}&=\frac{1}{2(\varphi_{3}+\epsilon^{\beta-2}(\mathcal{R}_{0}^{1}
+\mathcal{R}_{1}^{1}+\mathcal{R}_{-1}^{1}))}
+\frac{\gamma-1}{2\gamma}\widetilde{\phi}_{2}.
\end{align*}
Applying integration by parts and \eqref{1+-}-\eqref{1--}, we have
\begin{align}\label{J6}
J_{6}
=&(1-\frac{1}{2\gamma})\epsilon^{2}
\int\widetilde{\phi}_{2}\partial_{x}^{\ell}\mathcal{R}_{0}^{1}
\partial_{x}^{\ell+1}q(\mathcal{R}_{1}^{1}-\mathcal{R}_{-1}^{1})dx\nonumber\\
&+\epsilon^{2}
\int(\widetilde{\phi}_{3}-\frac{1}{2\sqrt{\gamma}}\widetilde{\phi}_{1})\partial_{x}^{\ell+1}q(\mathcal{R}_{1}^{1}-\mathcal{R}_{-1}^{1})
\partial_{x}^{\ell}(\mathcal{R}_{1}^{1}+\mathcal{R}_{-1}^{1})dx\nonumber\\
&+\epsilon^{2}\mathcal{O}(\mathcal{E}_{s}+\epsilon^{3/2}\mathcal{E}_{s}^{3/2})\nonumber\\
=&(\frac{1}{2}-\frac{1}{4\gamma})\epsilon^{2}\frac{d}{dt}
\int\left(\frac{\widetilde{\phi}_{2}}{1+\epsilon\phi_{3}}(\partial_{x}^{\ell}(\mathcal{R}_{0}^{1}
+\mathcal{R}_{1}^{1}+\mathcal{R}_{-1}^{1}))^{2}-
\frac{\widetilde{\phi}_{2}}{1+\epsilon\phi_{4}}(\partial_{x}^{\ell}(
\mathcal{R}_{1}^{1}+\mathcal{R}_{-1}^{1}))^{2}\right)dx\nonumber\\
&+\frac{\epsilon^{2}}{2}
\frac{d}{dt}\int\frac{(\widetilde{\phi}_{3}-\frac{1}{2\sqrt{\gamma}}\widetilde{\phi}_{1})}{1+\epsilon\phi_{4}}
(\partial_{x}^{\ell}(\mathcal{R}_{1}^{1}+\mathcal{R}_{-1}^{1}))^{2}dx +\epsilon^{2}\mathcal{O}(\mathcal{E}_{s} +\epsilon^{3/2}\mathcal{E}_{s}^{3/2}).
\end{align}

Now we can complete the proof. Define the modified energy $\widetilde{\mathcal{E}}_{s}$ as
\begin{align*}
\widetilde{\mathcal{E}}_{s}=\mathcal{E}_{s}+\epsilon^{2}\sum_{\ell=1}^{s}h_{\ell},
\end{align*}
with
\begin{align*}
h_{\ell}=\int\widetilde{\phi}_{4}\partial_{x}^{\ell}\mathcal{R}^{1}_{0}
\partial_{x}^{\ell}(\mathcal{R}_{1}^{1}-\mathcal{R}_{-1}^{1})dx
+\int\widetilde{\phi}_{5}(\partial_{x}^{\ell}(\mathcal{R}_{0}^{1}+\mathcal{R}_{1}^{1}+\mathcal{R}_{-1}^{1}))^{2}dx
+\int\widetilde{\phi}_{6}(\partial_{x}^{\ell}(\mathcal{R}_{1}^{1}+\mathcal{R}_{-1}^{1}))^{2},
\end{align*}
where $\widetilde{\phi}_{4},\widetilde{\phi}_{5}$ and $\widetilde{\phi}_{6}$ depend on $\ell,\gamma,\psi_{l},\phi_{r}$ with $r\in\{1\sim5,8,9\}$. Adding \eqref{J1}, \eqref{J2}, \eqref{J3}, \eqref{J4}, \eqref{J5} and \eqref{J6} together, we obtain
\begin{align*}
\partial_{t}\widetilde{\mathcal{E}}_{s}\lesssim \epsilon^{2}(1+\widetilde{\mathcal{E}}_{s}+\epsilon^{1/2}\widetilde{\mathcal{E}}_{s}^{3/2}
+\epsilon\widetilde{\mathcal{E}}_{s}^{2}).
\end{align*}
Consequently, Gronwall's inequality yields for sufficiently small $\epsilon>0$ the $\mathcal{O}(1)$-boundedness of $\widetilde{\mathcal{E}}_{s}$ for all $t\in[0,T_{0}/\epsilon^{2}]$. Because of $\|(R_{0},R_{1},R_{-1})\|_{H^s}\lesssim\sqrt{\widetilde{\mathcal{E}}_{s}}$ for sufficiently small $\epsilon>0$, Theorem \ref{Thm1} follows, thanks to \eqref{Aesti-2} .

\bigskip

\textbf{Conflict of interest:} The authors declare that they have no conflicts of interest.

\bigskip

\textbf{Data Availability:} The manuscript contains no associated data.

\end{document}